\documentclass[11pt,reqno]{amsart}

\usepackage[utf8]{inputenc}

\usepackage{amsmath}
\usepackage{amssymb}
\usepackage{amsfonts}
\usepackage{mathtools} 

\usepackage[thmtools-compat]{keytheorems}
\makeatletter
\keytheoremset{restate-counters={section}} 
\makeatother

\usepackage{mathrsfs} 
\usepackage{mleftright} 
\mleftright 

\usepackage[x11names]{xcolor}

\usepackage{tikz-cd}

\usepackage[breaklinks,colorlinks=true,linkcolor=blue,citecolor=green!60!black,backref]{hyperref} 
\hypersetup{pdftex}
\usepackage{hypcap}

\usepackage{zref-clever}
\zcsetup{cap=true}

\let\cref\zcref
\newcommand{\Cref}[1]{\zcref[S]{#1}}

\makeatletter
\newcommand*{\namecref}[1]{%
  \zref@extract{#1}{zc@type}%
}
\makeatother

\zcRefTypeSetup{equation}{ 
  Name-sg =, name-sg =, Name-pl =, name-pl =, Name-sg-ab =, name-sg-ab =, Name-pl-ab =, name-pl-ab =
}

\AddToHook{env/thm/begin}{%
    \zcsetup{countertype={thm=theorem}}}
\AddToHook{env/rmk/begin}{%
    \zcsetup{countertype={thm=remark}}}
\AddToHook{env/lem/begin}{%
    \zcsetup{countertype={thm=lemma}}}
\AddToHook{env/cor/begin}{%
    \zcsetup{countertype={thm=corollary}}}
\AddToHook{env/prop/begin}{%
    \zcsetup{countertype={thm=proposition}}}
\AddToHook{env/conj/begin}{%
    \zcsetup{countertype={thm=conjecture}}}
\AddToHook{env/defn/begin}{%
    \zcsetup{countertype={thm=definition}}}
\AddToHook{env/ex/begin}{%
    \zcsetup{countertype={thm=example}}}

\zcRefTypeSetup{cd}{
  Name-sg = Diagram , name-sg = diagram , Name-pl = Diagrams , name-pl = diagrams ,
}
\AddToHook{env/cd/begin}{%
    \zcsetup{countertype={equation=diagram}}}

\zcRefTypeSetup{defnprop}{
  Name-sg = Definition/Proposition , name-sg = definition/proposition , Name-pl = Definitions/Propositions , name-pl = definitions/propositions ,
}
\AddToHook{env/defn-prop/begin}{%
    \zcsetup{countertype={thm=defnprop}}}

\zcRefTypeSetup{conj}{
    Name-sg = Conjecture , name-sg = conjecture , Name-pl = Conjectures , name-pl = conjectures ,
}

\zcRefTypeSetup{claim}{
  Name-sg = Claim , name-sg = claim , Name-pl = Claims , name-pl = claims ,
}

\zcRefTypeSetup{problem}{
  Name-sg = Problem , name-sg = problem , Name-pl = Problems , name-pl = problems ,
}

\usepackage{autonum}

\makeatletter
\AtBeginDocument{%
    \autonum@generatePatchedReferenceCSL{Cref}%
}
\makeatother

\usepackage[shortlabels]{enumitem}

\usepackage{csquotes}
\MakeOuterQuote{"}

\makeatletter
\def\@cite#1#2{[{#1\if@tempswa ,\penalty400~#2\fi}]}
\makeatother

\usetikzlibrary{arrows}
\usetikzlibrary{babel}
\tikzcdset{every label/.append style = {font = \small}}
\tikzset{
    arrowdecor/.style={anchor=south, rotate=90, inner sep=.5mm}
}
\usepackage{rotating}

\makeatletter
\newcommand{\thesislanguage}[2]{%
  \@ifpackageloaded{babel}{\iflanguage{ngerman}{#1}{#2}}{#2}%
}
\makeatother

\newtheorem{thm}{Theorem}[section]
\newtheorem*{thm*}{Theorem}

\newtheorem*{conj*}{Conjecture}

\newtheorem{lem}[thm]{Lemma}
\newtheorem*{lem*}{Lemma}

\newtheorem*{prop*}{Proposition}

\theoremstyle{definition}
\newtheorem{defn}[thm]{Definition}
\newtheorem*{defn*}{Definition}

\newtheorem{defn-prop}[thm]{Definition/Proposition}

\newtheorem*{convention*}{Convention}

\theoremstyle{remark}
\newtheorem{rmk}[thm]{Remark}
\newtheorem*{rmk*}{Remark}

\newtheoremstyle{claim}
  {0.5\topsep} 
  {0\topsep} 
  {} 
  {} 
  {\itshape} 
  {:} 
  {.5em} 
  {} 
\theoremstyle{claim}

\newtheorem*{claim*}{Claim}
\counterwithin*{claim}{thm}



\newtheoremstyle{Normal}{}{}{}{}{\bfseries}{:}{.5em}{}
\theoremstyle{Normal}

\counterwithin*{problem}{section}

\makeatletter
\let\oldendproof\endproof
\def\endproof{\oldendproof\aftergroup\@afterindentfalse\aftergroup\@afterheading}
\makeatother

\newcounter{autoItemCounter}

\newlist{xxthmlist}{enumerate}{1}
\setlist[xxthmlist]{
    label=(\roman{xxthmlisti})\protect\label{item:\theautoItemCounter\thexxthmlisti},
    ref=(\roman{xxthmlisti})
}

\usepackage{aliascnt}
\newaliascnt{cd}{equation}

\makeatletter

\newenvironment{cd*}{%
    $$
    \begin{tikzcd}[sep=large]
}{%
    \end{tikzcd}
    $$\@ignoretrue
}
\makeatother

\makeatletter
\g@addto@macro\bfseries{\boldmath}
\makeatother

\newcommand{\noop}[1]{}

\newcommand{\inj}{\hookrightarrow}
\newcommand{\weakto}{\rightharpoonup}

\AtBeginDocument{
    \renewcommand{\epsilon}{\varepsilon}
}

\renewcommand{\setminus}{\smallsetminus}

\DeclareMathSymbol{\colonrel}{\mathrel}{operators}{"3A}

\makeatletter
\newcommand{\DeclareMathActive}[2]{%
  \expandafter\edef\csname keep@#1@code\endcsname{\mathchar\the\mathcode`#1 }
  \begingroup\lccode`~=`#1\relax
  \lowercase{\endgroup\def~}{#2}%
  \AtBeginDocument{\mathcode`#1="8000 }%
}

\newcommand{\std}[1]{\csname keep@#1@code\endcsname}
\patchcmd{\newmcodes@}{\mathcode`\-\relax}{\std@minuscode\relax}{}{%
  \PackageWarning{preamble_v3}{Failed to patch \string\newmcodes@ for minus mathcode handling}%
}
\AtBeginDocument{\edef\std@minuscode{\the\mathcode`-}}

\DeclareMathSymbol{\origcolon}{\mathpunct}{operators}{"3A}%

\DeclareRobustCommand{\coloneq}{\mathrel{\mathop{\origcolon}\mkern-6mu=}}

\def\@mathcoloneq={\coloneq}

\DeclareMathActive{:}{\@ifnextchar={\@mathcoloneq}{\nobreak\mskip1mu\mathpunct{}\nonscript \mkern-\thinmuskip{\std{:}\mskip6muplus1mu\relax}}}

\makeatother

\expandafter\let\expandafter\originall\csname\encodingdefault\string\l\endcsname
\DeclareRobustCommand*\l
    {\ifmmode\ell\else\expandafter\originall\fi}

\newcommand{\acts}{\raisebox{-0.155em}{\scalebox{0.88}{ \rotatebox[origin=lb]{90}{$\curvearrowleft$}}}\,}

\usepackage{accents}
\DeclareFontFamily{U}{matha}{\hyphenchar\font45}
\DeclareFontShape{U}{matha}{m}{n}{
    <5> <6> <7> <8> <9> <10> gen * matha
    <10.95> matha10 <12> <14.4> <17.28> <20.74> <24.88> matha12
}{}
\DeclareSymbolFont{matha}{U}{matha}{m}{n}
\DeclareFontSubstitution{U}{matha}{m}{n}
\DeclareMathSymbol{\tinsubset}{3}{matha}{"80}

\let\oldbigwedge\bigwedge
\renewcommand{\bigwedge}{\textstyle\oldbigwedge}

\newcommand{\Ric}{\mathrm{Ric}}

\newcommand{\an}{\mathrm{an}}

\DeclareMathOperator{\Aut}{Aut}
\newcommand{\aut}{\mathfrak{aut}}

\DeclareMathOperator{\_div}{div}
\renewcommand{\div}{\_div}

\DeclareMathOperator{\Isom}{Isom}

\DeclareMathOperator{\Lie}{Lie}

\DeclareMathOperator{\tr}{tr}

\renewcommand{\Re}{\operatorname{Re}}
\renewcommand{\Im}{\operatorname{Im}}

\expandafter\let\expandafter\originald\csname\encodingdefault\string\d\endcsname
\DeclareRobustCommand*\d
    {\ifmmode\mathop{}\!d\else\expandafter\originald\fi}

\newcommand{\p}{\mathrlap{~~\text{.}}}
\expandafter\let\expandafter\originalc\csname\encodingdefault\string\c\endcsname
\DeclareRobustCommand*\c
    {\ifmmode\mathrlap{~~\text{,}}\else\expandafter\originalc\fi}

\newcommand{\demph}{\textbf}

\makeatletter
\newcommand{\ovl}[1]{%
  \overline{\raisebox{0pt}[\dimexpr\height+0.6pt\relax]{\m@th$#1$}}%
}
\makeatother

\makeatletter
\let\save@mathaccent\mathaccent
\newcommand*\if@single[3]{%
    \setbox0\hbox{${\mathaccent"0362{#1}}^H$}%
    \setbox2\hbox{${\mathaccent"0362{\kern0pt#1}}^H$}%
    \ifdim\ht0=\ht2 #3\else #2\fi
}
\newcommand*\rel@kern[1]{\kern#1\dimexpr\macc@kerna}
\newcommand*\widebar[1]{\@ifnextchar^{{\wide@bar{#1}{0}}}{\wide@bar{#1}{1}}}
\newcommand*\wide@bar[2]{\if@single{#1}{\wide@bar@{#1}{#2}{1}}{\wide@bar@{#1}{#2}{2}}}
\newcommand*\wide@bar@[3]{%
    \begingroup
    \def\mathaccent##1##2{%
        \let\mathaccent\save@mathaccent
        \if#32 \let\macc@nucleus\first@char \fi
        \setbox\z@\hbox{$\macc@style{\macc@nucleus}_{}$}%
        \setbox\tw@\hbox{$\macc@style{\macc@nucleus}{}_{}$}%
        \dimen@\wd\tw@
        \advance\dimen@-\wd\z@
        \divide\dimen@ 3
        \@tempdima\wd\tw@
        \advance\@tempdima-\scriptspace
        \divide\@tempdima 10
        \advance\dimen@-\@tempdima
        \ifdim\dimen@>\z@ \dimen@0pt\fi
        \rel@kern{0.6}\kern-\dimen@
        \if#31
        \overline{\rel@kern{-0.6}\kern\dimen@\macc@nucleus\rel@kern{0.4}\kern\dimen@}%
        \advance\dimen@0.4\dimexpr\macc@kerna
        \let\final@kern#2%
        \ifdim\dimen@<\z@ \let\final@kern1\fi
        \if\final@kern1 \kern-\dimen@\fi
        \else
        \overline{\rel@kern{-0.6}\kern\dimen@#1}%
        \fi
    }%
    \macc@depth\@ne
    \let\math@bgroup\@empty \let\math@egroup\macc@set@skewchar
    \mathsurround\z@ \frozen@everymath{\mathgroup\macc@group\relax}%
    \macc@set@skewchar\relax
    \let\mathaccentV\macc@nested@a
    \if#31
    \macc@nested@a\relax111{#1}%
    \else
    \def\gobble@till@marker##1\endmarker{}%
    \futurelet\first@char\gobble@till@marker#1\endmarker
    \ifcat\noexpand\first@char A\else
    \def\first@char{}%
    \fi
    \macc@nested@a\relax111{\first@char}%
    \fi
    \endgroup
}
\makeatother

\def\semicolon{;}
\def\applytolist#1{
    \expandafter\def\csname multi#1\endcsname##1{
        \def\multiack{##1}\ifx\multiack\semicolon
            \def\next{\relax}
        \else
            \csname #1\endcsname{##1}
            \def\next{\csname multi#1\endcsname}
        \fi
        \next}
    \csname multi#1\endcsname}

\def\calchar#1{\expandafter\newcommand\csname c#1\endcsname{{\mathcal #1}}}
\applytolist{calchar}QWERTYUIOPLKJHGFDSAZXCVBNM;
\def\bbchar#1{\expandafter\newcommand\csname bb#1\endcsname{{\mathbb #1}}}
\applytolist{bbchar}QWERTYUIOPLKJHGFDSAZXCVBNM;
\def\bfchar#1{\expandafter\newcommand\csname bf#1\endcsname{{\mathbf #1}}}
\applytolist{bfchar}qwertyuiopasdfghjklzxcvbnmQWERTYUIOPLKJHGFDSAZXCVBNM;
\def\sfc#1{\expandafter\newcommand\csname s#1\endcsname{{\sf #1}}}
\applytolist{sfc}QWERTYUIOPLKJHGFDSAZXCVBNM;
\def\fchar#1{\expandafter\newcommand\csname f#1\endcsname{{\mathfrak #1}}}
\applytolist{fchar}qwertyuopasdfghjkzxcvbnmQWERTYUIOPLKJHGFDSAZXCVBNM;
\def\scchar#1{\expandafter\newcommand\csname sc#1\endcsname{{\mathscr #1}}}
\applytolist{scchar}QWERTYUIOPLKJHGFDSAZXCVBNM;

\usepackage[shortlabels]{enumitem}
\usepackage{esint}
\usepackage{mathtools}
\usepackage{amsfonts}
\usepackage{geometry}
\usepackage{amsmath,amssymb}
\usepackage{microtype}

\newcommand{\envalias}[2]{\newenvironment{#1}{\begin{#2}}{\end{#2}}}
\envalias{theorem}{thm}
\envalias{lemma}{lem}
\envalias{definition}{defn}
\envalias{corollary}{cor}
\envalias{proposition}{prop}
\envalias{remark}{rmk}

\numberwithin{equation}{section}
\envalias{conjecture}{conj}

\DeclareMathOperator{\pr}{pr}
\newcommand{\PSH}{\mathrm{PSh}}
\DeclareMathOperator{\MA}{MA}

\usepackage[style=1,leftmargin=0pt,rightmargin=0pt]{mdframed}

\makeatletter
\def\l@section{\@tocline{1}{5pt plus1pt}{0pt}{0pt}{\normalfont\scshape}}
\def\l@subsection{\@tocline{2}{1pt}{1.8pc}{3.2pc}{\normalfont\small}}
\renewcommand{\tocsection}[3]{%
  \indentlabel{\@ifnotempty{#2}{\ignorespaces#1 #2.\quad}}%
  \MakeUppercase{#3}}
\renewcommand{\tocsubsection}[3]{%
  \indentlabel{\@ifnotempty{#2}{\ignorespaces#1 #2.\quad}}#3}
\makeatother

\begin{document}

\title[Matsushima, uniqueness, and K-polystability]
{A Matsushima theorem, uniqueness, and K-polystability for K\"ahler--Ricci shrinkers}
\author{Carlos Esparza}
\address{C.\ N.\ Yang Institute for Mathematics and Fundamental Physics,
1899 Shilian South Road, Hefei, Anhui 230088, China}
\email{ca2010es@gmail.com}
\author{Junsheng Zhang}
\address{Courant Institute of Mathematical Sciences\\
  New York University, 251 Mercer St\\
  New York, NY 10012\\}
\email{jz7561@nyu.edu}
\begin{abstract}
For noncompact smooth K\"ahler--Ricci shrinkers, we prove a Matsushima
theorem identifying the centralizer of the soliton field with the
complexification of its Killing centralizer, uniqueness up to biholomorphism
on a fixed complex manifold, and K-polystability of the associated polarized
Fano fibration.
\end{abstract}
\maketitle
\vspace{-2em}
\tableofcontents

\linespread{1.08}

\section{Introduction}
\label{sec:intro}
Throughout, a Kähler–Ricci shrinker refers to a complete shrinking gradient Kähler–Ricci soliton. In this paper we are concerned exclusively with 
noncompact K\"ahler--Ricci
shrinkers.
 Let $(X^n,\omega,J,f)$ be a noncompact
K\"ahler--Ricci shrinker. We normalize
\[
    \Ric(\omega) + \sqrt{-1}\partial\bar\partial f = \omega,
    \qquad
    \Delta_\omega f + f - |\nabla^{1, 0}f|^2 = 0,
    \qquad
    \rho \coloneq \sqrt{f + n}
    \p
\]
The real holomorphic Killing field $\xi\coloneq J\nabla f$ is called the soliton vector field. We will use the weighted Laplacian with the normalization $\Delta_{\omega, f} = \Delta_\omega - \frac12 (\nabla f)$. When the metric can be understood from context we just write $\Delta_f$ for $\Delta_{\omega, f}$.

\subsection{Main results}
\begin{theorem}[Noncompact Matsushima theorem]
\label{thm:introduction-subcritical-matsushima}
Let $\mathfrak h_\xi$ be the Lie algebra of real holomorphic vector fields commuting with
$\xi$.
Let $\mathfrak k_\xi$ be the Lie algebra of real holomorphic Killing
fields which commute with $\xi$. Then
\[
    \mathfrak h_\xi=\mathfrak k_\xi\oplus J\mathfrak k_\xi
    \p
\]
\end{theorem}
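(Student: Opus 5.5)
The plan follows the classical Matsushima argument in weighted form. The weighted measure $e^{-f}\omega^n$ is finite, and the weighted Laplacian $\Delta_f$ is self-adjoint on $L^2(e^{-f})$ with discrete spectrum. The obstacle compared to the compact case is growth control at infinity, so the first step is to establish polynomial growth.

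\textbf{Step 1 (growth).} Let $Y\in\mathfrak h_\xi$. I first show that $|Y|$ has at most polynomial growth in $\rho$. Since $[Y,\xi]=0$, the flow of $\xi$ (which is by isometries) preserves $Y$. I would expect an estimate $|Y|\le C\rho^{k}$ to come from holomorphicity together with the structure at infinity. One option is a Cheng--Yau/mean-value type estimate for the holomorphic quantity $|Y|^2$, using $\Delta_f|Y|^2\ge -|Y|^2$ or a similar weighted Bochner inequality. Another is to use the $\mathbb{C}^*$-action generated by $\xi - \sqrt{-1}J\xi$ on the Fano fibration, and expand $Y$ into eigencomponents of $\mathcal L_{\nabla f}$. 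I expect this step to be the main obstacle. My first attempt would be the Bochner route: $\Delta_f |Y|^2 \ge |\nabla Y|^2 - C|Y|^2$ (the relevant term is $\Ric_f=\omega$), then a weighted Moser iteration on $f$-sublevel sets.

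\textbf{Step 2 (holomorphy potentials).} Contracting gives $\iota_{Y^{1,0}}\omega$, a $\bar\partial$-closed $(0,1)$-form. By the weighted Hodge theory available on shrinkers (the weighted $\bar\partial$-Laplacian with $\Ric_f = \omega>0$ has a spectral gap and vanishing $L^2_f$ harmonic $(0,1)$-forms), this form equals $\bar\partial u$ for a complex function $u$ of polynomial growth. Polynomial growth places everything in $L^2(e^{-f})$, and the growth bound from Step 1 transfers to $u$ by elliptic estimates. Because $[Y,\xi]=0$ and $\xi$ is Killing and holomorphic, we can average $u$ over the torus generated by $\xi$ (the closure of its flow in $\Isom$), so $\xi(u)=0$ after normalization. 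The equation $\Ric+\sqrt{-1}\partial\bar\partial f=\omega$ then gives the Lichnerowicz-type identity $\bar\partial \nabla^{1,0} u=0 \iff \mathcal D_f^*\mathcal D_f u=0$, where $\mathcal D_f u=\bar\partial\nabla^{1,0}u$. The weighted Matsushima identity is $\mathcal D^*_f\mathcal D_f u = \Delta_f(\Delta_f u + u)+$ (terms involving $\xi(u)$ and $\nabla f$-derivatives of $u$). Since $\xi(u)=0$, $\mathcal L_\xi$ commutes with the real operator, and the relevant terms reduce so that $\mathcal D_f^*\mathcal D_f=\Delta_f(\Delta_f+1)$ up to a $\xi$-derivative term, which is a real operator on $\xi$-invariant functions. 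Here the normalization $\int u\,e^{-f}=0$ is imposed.

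\textbf{Step 3 (splitting).} Write $u=u_1+\sqrt{-1}u_2$ with $u_i$ real and $\xi$-invariant. Since the operator $\Delta_f+1$ restricted to $\xi$-invariant functions is real and the kernel condition reads $(\Delta_f+1)u=0$ (after integrating by parts against $e^{-f}$, justified by polynomial growth), both $u_1$ and $u_2$ are real holomorphy potentials, i.e.\ real eigenfunctions with eigenvalue $-1$. A real holomorphy potential $v$ gives the gradient field $\nabla v$, and $J\nabla v$ is Killing: real $\mathcal D_f v=0$ means $\nabla v$ is real holomorphic, so $J\nabla v$ is Killing. Hence $Y=\nabla u_1+J\nabla u_2$ decomposes into an element of $J\mathfrak k_\xi$ plus an element of $\mathfrak k_\xi$. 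The relevant Killing fields commute with $\xi$ because the $u_i$ are $\xi$-invariant.

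Directness of the sum follows since a field that is both Killing and $J$ of a Killing field is the gradient of a function $v$ whose gradient is Killing. Then $\nabla^2 v=0$, so $|\nabla v|$ is constant; integrating $\Delta_f v=-v$ forces $v=0$. The reverse inclusion $\mathfrak k_\xi\oplus J\mathfrak k_\xi\subset\mathfrak h_\xi$ is immediate, since $J$ preserves real holomorphic fields and $[JK,\xi]=J[K,\xi]=0$ by integrability.
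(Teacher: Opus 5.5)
Your Step~1 is a genuine gap, and it is exactly where the noncompact difficulty lies. Steps~2--3 are the standard Tian--Zhu argument, which the paper also runs once integrability is known.

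Neither route you suggest controls $|Y|_\omega$. The Bochner identity is only a subsolution inequality: for real holomorphic $Y$ commuting with $\xi$, hence with $\nabla f$, one gets $\Delta_f|Y|^2_\omega=|\nabla Y|^2-|Y|^2_\omega$ up to normalization. Moser iteration then bounds $\sup|Y|^2_\omega$ on a ball by an $L^2$ norm on a larger ball. Its local Sobolev constants are not controlled without curvature assumptions, and it gives no bound at all on those $L^2$ norms: nonnegative functions with $\Delta_f v\geq -v$ can grow arbitrarily fast.

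The eigencomponent idea is vacuous as stated. Holomorphy and $[Y,\xi]=0$ give $[Y,J\xi]=0$, so $Y$ is already invariant under the $\mathbb{C}^*$-action. Its correct form is algebraic. $Y$ preserves the finite-dimensional weight spaces of $\mathcal{O}(X)$ and of $H^0(X,-\ell K_X)$, so $Yh_\alpha$ and $\mathcal{L}_Y s_j$ are homogeneous of the same weights. The polynomial growth of homogeneous functions then gives $|Y|_{\omega_{\mathcal{A}}}\leq C(1+\rho^2)^A$ for the ambient metric.

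But this is the wrong metric. Using $|Y|^2_\omega\leq\tr_{\omega_{\mathcal{A}}}\omega\,|Y|^2_{\omega_{\mathcal{A}}}$ requires bounding $\omega$ from above by $\omega_{\mathcal{A}}$. The Schwarz-type estimates only bound $\omega$ from below. The pointwise upper bound the paper does obtain, in its Section~3, carries a factor $S^{-C}$ that may be exponentially large in $\rho^2$. So pointwise polynomial growth of $|Y|_\omega$ is not something you can expect to prove without curvature hypotheses.

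The missing idea is to aim only for $Y\in L^2(e^{-f}\omega^n)$. This follows from the weighted integral bound $\int_X\tr_{\omega_{\mathcal{A}}}\omega\,(1+\rho^2)^{-P_0}\,\omega^n<\infty$, which the paper proves as follows:
\begin{itemize}
\item Write $\omega_{\mathcal{A}}=\omega+\sqrt{-1}\partial\bar\partial u$ with the global potential $u=H_{\mathrm{aff}}-\rho^2+\ell^{-1}\log S$.
\item Show that $u$ grows polynomially in the proper exhaustion $G=(1+H_{\mathrm{aff}})^\nu$. This uses the two-sided bounds on $S=\sum_j|s_j|^2_{h_\omega^\ell}$ and the return-time estimate, which in turn rest on $R_\omega\geq 0$ and the growth of homogeneous functions.
\item Integrate $\tr_{\omega_{\mathcal{A}}}\omega$ by parts against $\eta(G/R)(1+G)^{-Q}\,\omega_{\mathcal{A}}^n$.
\item Convert measures with $\omega^n\leq C(1+\rho^2)^A\omega_{\mathcal{A}}^n$.
\end{itemize}

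With $Y\in L^2(d\mu_f)$ your remaining steps go through, but each appeal to polynomial growth must be replaced by a cutoff argument. It is simplest to use the explicit potential $\theta_Y=Y^{1,0}f-\operatorname{div}_{\omega^n}Y^{1,0}$. It is automatically $\xi$-invariant and satisfies $\Delta_f\theta_Y=-\theta_Y$. Testing against $\chi_R^2\overline{\theta_Y}$ gives $\theta_Y\in W^{1,2}(d\mu_f)$, which is what justifies the Tian--Zhu integration by parts. This makes H\"ormander, averaging, and your elliptic transfer of growth to $u$ unnecessary; the last of these is in any case unjustified without curvature control.

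Your directness step is also sketchy as written, since integrating $\Delta_f v=-v$ alone yields no contradiction. A clean argument: if $Z$ and $JZ$ are both Killing then $Z$ is parallel, so $\mathrm{Ric}(Z)=0$ and $[Z,\xi]=\nabla_Z\xi=J\nabla_Z\nabla f=JZ$, forcing $Z=0$.
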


\begin{theorem}[Uniqueness up to biholomorphisms]
\label{thm:introduction-unique-krs}
Let $(\omega_i,f_i,\xi_i=J\nabla^{\omega_i}f_i)$, $i=0,1$, be K\"ahler--Ricci shrinkers on a complex manifold $X$. Then there
exists a biholomorphism $\Psi\in\Aut(X)$ such that
\[
    \Psi^*\omega_1=\omega_0,
    \qquad
    \Psi_*\xi_0=\xi_1
    \p
\]
\end{theorem}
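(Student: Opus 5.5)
We follow the Bando--Mabuchi / Tian--Zhu strategy: first reduce to shrinkers with a common soliton field, then connect the two metrics by a continuity path and use the Matsushima theorem (\cref{thm:introduction-subcritical-matsushima}) to move along it by automorphisms.

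\emph{Step 1: aligning the soliton fields.} We first arrange $\xi_0=\xi_1$ after an automorphism. Both $\xi_i$ lie in the Lie algebra of complete holomorphic vector fields with reductive centralizers, by \cref{thm:introduction-subcritical-matsushima}. We characterize the soliton field among these by a weighted Futaki-type invariant (a Tian--Zhu functional) that is convex on the Lie algebra of a maximal torus. This requires a torus containing $\xi_0$, namely the closure of its flow in $\mathrm{Isom}(\omega_0)$, to be conjugate under $\Aut(X)$ to one containing $\xi_1$. For that we use that maximal compact subgroups of the relevant (noncompact, but finite-dimensional after restricting to fields of controlled growth) automorphism group are conjugate. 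Strict convexity then gives uniqueness of the critical point, and finiteness of the functional uses the Gaussian decay $e^{-f}$ and properness of $f$. So after pulling back we may assume $\xi_0=\xi_1=\xi$, and that both metrics are invariant under the torus $T$ generated by $\xi$.

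\emph{Step 2: potentials and energy.} Write $\omega_1=\omega_0+\sqrt{-1}\partial\bar\partial\varphi$ with $\varphi$ $T$-invariant. Here $\Ric$ and $\xi$ agree, so the forms are cohomologous via the soliton equation $\omega_i=\Ric(\omega_i)+\tfrac12\mathcal L_{J\xi}\omega_i$, and one can solve $\partial\bar\partial$ using the Hamiltonians $f_i$. We need to establish growth control: $\varphi$ should be of at most quadratic growth in $\rho$, or better $\varphi=O(\rho^{2-\epsilon})$, using the asymptotic conical/cylindrical structure of shrinkers and $f_1-f_0=\tfrac12 J\xi\cdot\varphi$ up to a constant. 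On this class we define the weighted Ding functional $D(\varphi)=-\int\varphi\,e^{-f}\omega^n$-type energy minus $\log\int e^{-f_0-\varphi}\omega_0^n$. Both shrinkers are critical points, and $D$ is convex along weak geodesics in the sense of Berndtsson, with the weight $e^{-\theta_\xi}$.

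\emph{Step 3: Berndtsson rigidity.} We join $\varphi_0=0$ and $\varphi_1=\varphi$ by a $C^{1,1}$-type $T$-invariant geodesic. Convexity plus criticality at the endpoints forces $D$ to be affine along it. Berndtsson's equality case then shows the geodesic is generated by a holomorphic vector field $V$ commuting with $\xi$. By \cref{thm:introduction-subcritical-matsushima}, $V\in\mathfrak k_\xi\oplus J\mathfrak k_\xi$, and its flow gives $\Psi$ with $\Psi^*\omega_1=\omega_0$ and $\Psi_*\xi=\xi$.

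\emph{Main obstacle.} The main obstacle is the noncompact analysis in Steps 2--3. We must construct geodesics with enough regularity and decay at infinity that the weighted integrals converge and we may integrate by parts. We must also make Berndtsson's equality argument work, including completeness of the resulting holomorphic field. We would first try to solve the geodesic equation by $\epsilon$-regularization on exhausting sublevel sets $\{f_0<R\}$. Barriers would come from the linear interpolation $t\varphi$ together with a convex function of $t$ and $\rho$. We would then derive uniform weighted $C^{1,1}$ estimates using the $T$-symmetry and the bounds $|\nabla f|^2\le f$.
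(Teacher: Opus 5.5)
Your architecture matches the paper's. Step 1 is essentially its proof that the soliton field is unique up to automorphism: Matsushima conjugates a maximal torus through $\xi_i$ into the isometries of $\omega_i$ by an element commuting with $\xi_i$, which makes $\xi_i$ a critical point of the strictly convex weighted volume $\bbW$ on that torus's Reeb cone. Steps 2--3 are its Ding/Berndtsson argument for fixed $\xi$. In Step 1, the conjugacy you need is that of \emph{maximal} tori of the polarized Fano fibration, which the paper quotes from the algebraic theory. It is not a statement about maximal compact subgroups of $\Aut(X)$, which is infinite-dimensional in general, nor about a group of fields of controlled growth (controlled with respect to which metric?).

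The genuine gap is the analytic core of Steps 2--3. You propose to control $\varphi$ via an asymptotically conical or cylindrical structure, and to build $C^{1,1}$ geodesics from uniform weighted estimates on $\{f_0<R\}$. None of this is available here:
\begin{itemize}
\item The theorem has no curvature hypotheses, and a general shrinker is not known to have any asymptotic model.
\item Laplacian or $C^{1,1}$ estimates for $\epsilon$-geodesics need curvature bounds on the reference metric, which $T$-symmetry and $|\nabla f|^2\le f$ do not provide.
\item A barrier built from $t\varphi$ plus a convex function of $(t,\rho)$ needs bounds on $|\partial\varphi|$ and on $\sqrt{-1}\partial\bar\partial\rho^2=\omega-\Ric(\omega)$, which are again curvature quantities.
\end{itemize}
Also, $\varphi=O(\rho^{2-\epsilon})$ fails in general: pulling back the Gaussian shrinker on $\bbC^n$ by a non-unitary $A\in GL(n,\bbC)$ keeps $\xi$ but makes $\varphi$ a nonzero Hermitian quadratic form plus a constant.

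The missing idea is to extract all growth control from the soliton flow $\gamma_s$ and the algebraic structure.
\begin{itemize}
\item The homogeneous affine coordinates have positive weights and polynomial growth. This gives $\int_0^s(R_\omega/\rho^2)(\gamma_q y)\,dq\le C$ on a transversal (\cref{lem:affine-return-time}), so $\rho^2(\gamma_s y)$ is comparable to $e^s$.
\item Integrating $\frac{d}{ds}\gamma_s^*\phi=\gamma_s^*f$ then puts both canonical weights $\phi_i$, defined by $e^{-\phi_i}=e^{-f_i}\omega_i^n$, into one class $\overline{\mathcal H}_2(m,M)$ of two-sided exponential growth along the flow (\cref{lem:psh-weight-growth}). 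This yields uniform Gaussian integrability and $|\phi_1-\phi_0|\le C(1+r^2)$.
\item The complexified flow gives the explicit subgeodesic $\max\{\gamma_{-a\tau}^*\phi_0-Bt,\ \gamma_{-a(1-\tau)}^*\phi_1-B(1-t)\}$ with the right boundary values (\cref{lem:psh-subgeodesic}).
\item The Perron envelope, which dominates this subgeodesic, is a locally bounded geodesic with $|\Phi_t-\Phi_u|\le L(1+r^2)|t-u|$ (\cref{prop:general-geodesic}). No $C^{1,1}$ regularity is needed.
\end{itemize}
This weak geodesic suffices for two reasons. First, the weighted energy $E_\xi$ is defined for locally bounded metrics via partial Legendre transforms. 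Second, Berndtsson's convexity and equality case are run through a global regularization that agrees with a fixed complete metric outside compact sets (\cref{lem:global-regularization}, \cref{prop:complete-berndtsson}).
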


\begin{theorem}[K-polystability]
\label{thm:kpolystability}
Let $(X,\omega,f)$ be a K\"ahler--Ricci
shrinker. Then its associated polarized Fano fibration is
K-polystable in the sense defined in \cite{SunZhangFano}.
\end{theorem}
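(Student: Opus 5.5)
We will prove \Cref{thm:kpolystability} following the strategy of Berman's proof that Kähler--Einstein Fano manifolds are K-polystable, adapted to the weighted setting and to the Fano fibration framework of \cite{SunZhangFano}.

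First we recall that the polarized Fano fibration $(X,\xi)\to Y$ has a weighted Ding or Futaki-type invariant attached to every test configuration, as defined in \cite{SunZhangFano}. We realize a test configuration $\mathcal X$ that is compatible with the torus $T=\overline{\exp(\mathbb R\xi)}$ as a degeneration. Via the filtration it induces on the ring of $T$-finite holomorphic functions of polynomial growth in $\rho$, it yields a geodesic ray $\phi_t$ of $\xi$-invariant weighted psh potentials emanating from $\omega$. We construct this ray by Phong--Sturm type envelopes or Bergman kernel approximation with respect to the weighted $L^2$ norms $\int |s|^2 e^{-f}\omega^n$. These norms are finite because $f\sim \rho^2$ grows quadratically and the volume growth is Euclidean.

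Second, we consider the weighted Ding functional
\[
D(\phi)=-E_f(\phi)-\log\int_X e^{-\phi}\,\Omega ,
\]
where $E_f$ is the $\xi$-equivariant (weighted) Monge--Ampère energy with density $e^{-f}$. Its critical points are exactly the Kähler--Ricci shrinkers. We show three properties of $D$. First, $D$ is convex along the ray, using Berndtsson's positivity of direct images; here the noncompactness is handled by exhausting with sublevel sets of $f$ and using the Gaussian decay $e^{-f}$. Second, $\omega$ minimizes $D$. Third, the slope $\lim_{t\to\infty} D(\phi_t)/t$ equals the Ding invariant of $\mathcal X$. Together these give $\mathrm{Ding}(\mathcal X)\ge 0$, which is K-semistability.

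Third, for polystability we treat the equality case $\mathrm{Ding}(\mathcal X)=0$. Then $D$ is affine along the ray, and Berndtsson's rigidity gives a holomorphic vector field $V$ commuting with $\xi$ whose flow generates the ray. By \Cref{thm:introduction-subcritical-matsushima}, $V\in \mathfrak k_\xi\oplus J\mathfrak k_\xi$. So the ray is induced by a one-parameter subgroup of $\mathrm{Aut}(X,\xi)$, and hence $\mathcal X$ is a product test configuration, as polystability requires. \Cref{thm:introduction-unique-krs} can be used to identify the central fibre with $X$.

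We expect the main obstacle to be the slope formula and the Berndtsson convexity/rigidity on a noncompact manifold. These need uniform weighted integrability of $e^{-\phi_t}$ and control of $\phi_t-\phi_0$ at infinity. We would first try to prove that $|\phi_t-\phi_0|\le Ct$ with polynomial growth in $\rho$, which the Gaussian weight absorbs. We would do this by comparing with the ray generated by the product configuration along $\xi$ and using the two-sided estimates on $f$.
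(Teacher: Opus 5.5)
\paragraph{Where the gap is.} Your semistability half follows essentially the same Berman-type route as the paper: a geodesic ray attached to the test configuration, Berndtsson convexity of the weighted Ding functional, minimality of the shrinker weight, and a slope at infinity given by the Futaki invariant. The paper builds the ray differently, taking it from the envelope construction of \cite{CE} on $\cX_{\bbD}$ rather than from Bergman kernels; that ray is squeezed between the ambient weight and the ambient weight plus $C(1+r^2)$. The genuine gap is in the equality case. When $\mathrm{Fut}_\xi(\eta)=0$, Berndtsson rigidity gives automorphisms $F_t$ of $X$ (the flow of $V$) with $F_t^*\Phi_t=\phi$. This is a statement about the ray on the fixed manifold $X$ and says nothing yet about the family $\cX$. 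So the step ``the ray is induced by a one-parameter subgroup, hence $\cX$ is a product'' does not follow. The Matsushima decomposition $V\in\mathfrak k_\xi\oplus J\mathfrak k_\xi$ does not help here. Nor can the uniqueness theorem identify the central fibre with $X$: $\cX_0$ is a priori a different, possibly singular, variety carrying no shrinker, whereas that theorem compares two shrinkers on one fixed complex manifold.

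\paragraph{The missing step.} You need to tie $F_t$ to the degenerating action $\lambda_t$. Set $G_t=\lambda_t\circ F_t:X\to X_{e^{-t}}$ and show that a subsequence converges to a $\bbT$-equivariant isomorphism $G_\infty:X\to\cX_0$. A product criterion (Donaldson; \cite[Proposition~3.6]{CE}) then identifies $\cX$ with $X\times\mathbb C$ equivariantly. In the paper this takes real work, in the following steps.
\begin{enumerate}
\item One needs the bound $\psi_{\mathrm{amb}}\le\Phi\le\psi_{\mathrm{amb}}+C(1+r^2)$ uniformly up to the central fibre, not just on finite time intervals. A growth estimate like your $|\phi_t-\phi_0|\le Ct$ does not give this.
\item Pulling this bound back by $G_t$ and combining it with the quadratic growth of $\phi$ along the flow of $X_0$ yields the $t$-uniform properness estimate $C^{-1}r^2\le r^2\circ G_t\le Cr^2$. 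This is the noncompact replacement for Berman's observation that $\lambda_t\circ F_t$ stays in a bounded subset of $GL(N+1)$.
\item That estimate gives local boundedness (so $G_\infty$ exists), properness, and nondegeneracy away from $\{r=0\}$.
\item One then shows $G_\infty$ contracts no positive-dimensional subvariety. Such a subvariety would lie in a fibre of $\pi$, contradicting $G_\infty^*\mathcal O(1)=-\ell K_X$ with $-K_X$ $\pi$-ample. Hence $G_\infty$ is a finite branched cover, and finally a biholomorphism.
\end{enumerate}
Without this step your argument establishes only K-semistability, not K-polystability.
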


When \(X\) is compact, the above three results were established in \cite{TianZhuUniqueness,BermanWittNystrom}. Under various curvature assumptions, analogous results were proved in \cite{CDS,EAC,CE}. In this paper, we establish all three results without any curvature assumptions. Uniqueness of the soliton vector field and the Matsushima theorem are proved independently in \cite{CD2026}.
The literature on Kähler--Einstein metrics on Fano manifolds, compact Kähler--Ricci shrinkers, and 
Ricci-flat Kähler cones is far too extensive to be surveyed here; we refer to 
\cite{SunZhangFano} for a brief historical account and apologize for the many contributions 
we are unable to mention. 

The approach of this paper is to connect the algebraic data of the  polarized Fano fibration \cite{SunZhangFano} to the geometry of the shrinker metric, without assuming curvature decay. The fibration supplies finitely many homogeneous holomorphic functions and pluri-anticanonical sections, giving an embedding of the shrinker into am ambient $\mathbb{P}^N \times \mathbb{C}^M$, but it not clear a priori how the shrinker metric is related to these coordinates.
The starting point of our discussion is the sharp polynomial-growth bound for homogeneous holomorphic functions proved by  \cite[Corollary~4.3]{LiZhang2026}, which gives us our first relation between the polarized Fano fibration structure and the metric geometry of the shrinker:
A function $f$ which is a polynomial with respect to the algebraic structure has polynomial growth with respect to the shrinker metric.
Crucially, the degree of the polynomial growth of $f$  is also determined algebraically, as the largest weight of its homogeneous components under the action of soliton vector field.
We combine this estimate with the Schwarz-type lower bound for the shrinker metric to prove (\cref{prop:affine-nondegeneracy}) that there exists a $c>0$ such that 
\[
    |\nabla^{1,0}f|^2=\rho^2-R_\omega\geq c\rho^2
    \quad\text{outside a compact set.}
\]
 This forms
the common analytic input for all three results. 

Together with polynomial control of the homogeneous pluri-anticanonical sections, these estimates yield curvature-free comparisons between the shrinker metrics and an ambient reference Kähler metric on $\mathbb{P}^N \times \mathbb{C}^M$, as well as weighted integral bounds for the shrinker metrics. These bounds provide the weighted integrability needed for Matsushima's theorem and for the uniqueness of the soliton vector field.

It also shows that the canonical anticanonical weights have quadratic growth along the flow of the soliton vector field. This observation both motivates and makes possible the adaptation of the argument in \cite{EAC} to develop a general pluripotential theory for polarized Fano fibrations.

For the proofs of uniqueness of Kähler--Ricci shrinkers and K-polystability, another crucial ingredient is \Cref{prop:general-geodesic}, which guarantees the existence of geodesics within the quadratic-growth class. Once this is established, most of the arguments in \cite{EAC} and \cite{CE} carry over to the general setting without modification. We therefore collect only the results needed here and provide additional details where the arguments require changes.

Independently of the results above, we use the conjugate heat kernel estimates in \cite{LWII} to give an analytic proof that the zero set of the soliton vector field is compact. This provides a new proof of the main result in \cite{SunZhangFano} and strengthens it to a uniform statement using the uniform entropy lower bound for K\"ahler--Ricci shrinkers proved in \cite{Zhang2026}.
\begin{theorem}\label{thm-uniform bound on critical values}
                There exists a constant $C_n$, depending only on the complex dimension $n$, such that $f(x) \leq C_n$ whenever $\nabla f(x) = 0$ for any K\"ahler--Ricci shrinker $(X^n, \omega, J,f)$. 
        \end{theorem}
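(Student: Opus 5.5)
We would prove the bound by comparing, at a critical point $p$ of $f$, the two-sided conjugate heat kernel estimates of \cite{LWII} with the entropy lower bound of \cite{Zhang2026}. The mechanism is simple. At a zero of $\nabla f$ the soliton flow fixes $p$, so the Ricci flow spacetime built from the shrinker is, at $p$, a static worldline. The conjugate heat kernel based at $(p,0)$ is then self-similar, and its location is pinned at $p$ for all earlier times. On the other hand, the standard shrinker conjugate heat kernel based at the singular time $t=1$ is $(4\pi\tau)^{-n}e^{-f}\,dV$. Its mass concentrates where $f\lesssim C$, because $\rho^2=f+n$ is comparable to $d(o,\cdot)^2/4$ by the Cao--Zhou estimates. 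If $f(p)$ were huge, $p$ would be far from where this measure concentrates. This would contradict the fact that $p$ is a fixed point of the self-similar structure.

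Concretely, we normalize the flow $g(t)=(1-t)\Phi_t^*g$ with $\Phi_t$ generated by $\nabla f/(1-t)$. Since $\nabla f(p)=0$, we have $\Phi_t(p)=p$ for all $t$. The first step is to record that $d\nu_t=(4\pi(1-t))^{-n}e^{-f\circ\Phi_t}dV_{g(t)}$ solves the conjugate heat equation and is the limit of conjugate heat kernels $\nu_{(x_k,t_k)}$ as $t_k\to1$. The second step is to use \cite{LWII}, which gives $H_n$-centers with variance at most $H_n\tau$ and Gaussian concentration. These bounds require a lower bound on the Perelman entropy $\mu$, and via \cite{Zhang2026} this lower bound depends only on $n$. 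From them we conclude that $\nu_t$ at $t=0$ has an $H$-center $z$ with $\mathrm{Var}\le H_n$ and $\nu_0(B(z,\sqrt{2H_n}))\ge1/2$. Since $\nu_0=(4\pi)^{-n}e^{-f}dV$, the center $z$ lies in a region where $f\le C_n$, using $|\nabla\sqrt f|\le 1$ to pass from the ball to the point.

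The third step, which is the crux, is to show that $d_{g(0)}(p,z)\le C_n$. We would use the conjugate heat kernel $K(p,0;\cdot,s)$ for $s<0$. By self-similarity and the fixedness of $p$, it is the pullback of a fixed profile under scalings centered at $p$. The monotonicity of $W_1$-distances between conjugate heat flows under Ricci flow then gives
\[
d_{W_1}^{g(s)}\bigl(\nu_{(p,0);s},\nu_s\bigr)\le d_{W_1}^{g(0)}\bigl(\delta_p,\nu_0\bigr).
\]
Scaling this back by $\Phi$ at time $s$, with parabolic factor $(1-s)$, both measures become fixed profiles. The right-hand side is at least roughly $d(p,z)-C_n$. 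The left-hand side, measured in $g(s)$, pulls back to $\sqrt{1-s}$ times a fixed quantity plus the variance contributions $\sqrt{H_n|s|}$. As $s\to-\infty$, the heat kernel based at $p$ spreads over scale $\sqrt{|s|}$ around $p$, and $\nu_s$ equals $\Phi_s$ scaled copies of $\nu_0$. So both centers, at distance $\sqrt{1-s}\,d_{g(0)}(p,z')$ apart, must stay within $C\sqrt{|s|}$. This yields a linear inequality. Organizing it carefully, the exponent in the pointwise Gaussian bound $K\le C\tau^{-n}e^{-d^2/C\tau}$ from \cite{LWII}, evaluated for the $e^{-f}$ profile, gives $f(p)\le C_n$ directly.

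The main obstacle is making this third step rigorous: justifying the limiting identification of $\nu_t$ with conjugate heat kernels on a noncompact complete shrinker without curvature bounds, and extracting the $H_n$-center comparison. We would lean entirely on the curvature-free results of \cite{LWII}, which are designed for Ricci shrinkers with entropy lower bounds. Once $d(p,z)\le C_n$, the bound $f(p)\le\tfrac14(d(p,z)+C)^2+C\le C_n$ follows.
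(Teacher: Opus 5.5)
Your third step has a genuine gap, and it is the entire difficulty rather than a technicality. The inequality $d^{g(s)}_{W_1}(\nu_{(p,0);s},\nu_s)\le d^{g}_{W_1}(\delta_p,\nu_0)$ can bound $d_g(p,z)$ from above only if you know a \emph{lower} bound on its left side of order $\sqrt{1-s}\,(d_g(p,z)-C_n)$. That amounts to knowing that the conjugate heat kernel based at the static point $(p,0)$ stays within $C_n\sqrt{|s|}$ of $p$ in $g(s)$. The estimate $d^g_{W_1}(\delta_p,\nu_0)\ge d_g(p,z)-C_n$ that you record points the wrong way.

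Your justification, that $K(p,0;\cdot,s)$ is a fixed profile scaled about $p$, is incorrect. Self-similarity only identifies $\nu_{(p,t_0);t}$ for equal ratios $(1-t)/(1-t_0)$, and with base time $0$ that ratio $1-s$ varies. In fact, since $\Phi_s(p)=p$, $(\Phi_s)_*\nu_s=\nu_0$, and $\Phi_s:(X,g(s))\to(X,(1-s)g)$ is an isometry, your own inequality rescales to
\[
d^{g}_{W_1}\bigl((\Phi_s)_*\nu_{(p,0);s},\nu_0\bigr)\le(1-s)^{-1/2}\,d^{g}_{W_1}(\delta_p,\nu_0)\longrightarrow0 .
\]
So the kernel emanating from $p$ migrates to where $\nu_0$ lives. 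Its $H_n$-center at time $s$ lies at $g(s)$-distance $\sqrt{1-s}\,d_g(p,z)+O\bigl(d_g(p,z)+C_n\sqrt{1-s}\bigr)$ from $p$.

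Hence the no-drift property you need holds exactly when $d_g(p,z)$ is bounded. It is equivalent to the statement you want to prove, and every inequality in your chain is consistent with arbitrarily large $f(p)$. Pointwise Gaussian bounds do not help either. At a critical point the scalar curvature equals the potential, so heuristically the reduced length of the static worldline is $\approx f(p)$. The kernel at $p$ is then of size roughly $|s|^{-n}e^{-f(p)}$, which is compatible with any Gaussian upper bound.

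Note also that, apart from the entropy bound, your argument never uses the complex structure. It would therefore bound critical values of $f$ on every Riemannian shrinker with entropy bounded below, which as far as I know is not known.

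The missing idea is Kähler: $\xi=J\nabla f$ is a Killing field for every $\omega_t$, so the heat kernel based at $x$ is invariant under its flow. The paper exploits this at short times $\tau\in[a,1]$. Your long-time regime, by the display above, only sees the universal profile $\nu_0$ and forgets $p$. The paper's argument runs as follows.

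\begin{itemize}
\item Write $d\nu_\tau=(2\pi\tau)^{-n}e^{-b}\,\omega^n_{-1-\tau}/n!$. Invariance gives $\xi_y b=\xi_x\log H$, so the Li--Wang Fisher bound yields $\tau\int(\xi b)^2\,d\nu_\tau\le|\xi(x)|^2=2(\rho^2-R_\omega)(x)$. This vanishes at a critical point.
\item Integrating the Killing identities ($\operatorname{div}\xi=0$, $\nabla_\xi\xi=-\tfrac12\nabla|\xi|^2$, and the Bochner formula for $|\xi|^2$) against $d\nu_\tau$ gives
\[
\int\bigl((\xi b)^2+|\nabla\xi|^2\bigr)\,d\nu_\tau=\frac1\tau\int|\xi|^2\,d\nu_\tau+\int E_\tau(\xi,\xi)\,d\nu_\tau .
\]
\item The exact moment $\int F\,d\nu_\tau=\rho^2(x)+n\tau$, together with $|\xi|^2=2\bigl(F-(1+\tau)^2R\bigr)$ and $\int R\,d\nu_\tau\le n/\tau$, forces $\int|\xi|^2\,d\nu_\tau\ge2\rho^2(x)-O(n/\tau)$.
\item Integrate in $d\tau/\tau$ over $[a,1]$, using $\int_a^1\!\int|\mathrm{Ric}|^2\,d\nu_\tau\,d\tau\le n/a$ and $\int_a^1\tau\!\int|E_\tau|^2\,d\nu_\tau\,d\tau\le-2\mu$. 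This gives
\[
\frac{R_\omega}{\rho^2}(x)\,\log(1/a)\le\frac{7n}{a\rho^2(x)}+2\sqrt{-2\mu\log(1/a)} .
\]
\item At a critical point the left side is $\log(1/a)$. The uniform bound $\mu\ge-C_n$ then bounds $\rho^2(x)$, and hence $f(x)$, by a constant depending only on $n$.
\end{itemize}
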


\subsection{Organization}
\Cref{sec:ambient-reference-comparison} proves the ambient comparison, and \cref{sec:weighted-matsushima} applies it to Matsushima and soliton-field uniqueness. \Cref{sec:pluripotential_theory_on_polarized_fano_fibrations}
develops pluripotential theory for general polarized Fano fibrations,
establishing several convexity properties and the existence of geodesic
segments.  \cref{sec:uniqueness} proves metric uniqueness for a
fixed soliton vector field.
\Cref{sec:universal-k-polystability} constructs the test-configuration ray and proves K-polystability for K\"ahler--Ricci shrinkers.

\subsection*{Acknowledgements}
The authors thank Charles Cifarelli for helpful comments on a draft of this paper. They also thank IGP in Hefei, where part of this work was carried out, for its hospitality. The second author thanks Song Sun for helpful discussions and IASM at Zhejiang University for its hospitality during part of this work.

\subsection*{Declaration on the use of AI}
The main ideas of this paper were developed by the authors. Large language models (LLMs) were used to assist with manuscript preparation and suggested Lemma~\ref{lem:psh-subgeodesic}. In addition, \cref{sec-ambient comparsion II} was developed primarily based on suggestions from LLMs. The authors take full responsibility for the content and correctness of the paper.

\section{Ambient metric comparison}
\label{sec:ambient-reference-comparison}
 Let $(X^n,\omega,J,f)$ be a noncompact
K\"ahler--Ricci shrinker.
Fix the homogeneous ambient system supplied by the Fano-fibration
construction \cite{SunZhangFano}:
\[
    \cA = (h_1, \ldots, h_N, \Psi):X \longrightarrow
    \bbC^N\times\bbP^M
    \p
\]
Here $\cA$ is a globally defined holomorphic embedding. The functions $h_\alpha$ are
homogeneous affine coordinates, so
\[
    \xi h_\alpha = \sqrt{-1}\lambda_\alpha h_\alpha,
    \quad \lambda_\alpha > 0
    \p
\]
We write
\[
    \pi = (h_1, \ldots, h_N):X \longrightarrow Y \subset \bbC^N
\]
for the affine Fano-fibration map. The projective component is
\[
    \Psi = [s_0:\cdots:s_M],
\]
where the sections $s_j \in H^0(X,-\ell K_X)$, for a fixed $\ell \geq 1$, form a finite homogeneous pluri-anticanonical system.
\begin{rmk} \label{rmk:normalLowWt}
    The eigenspaces of $\cL_\xi$ on $H^0(X, -\ell K_X)$ are finite-dimensional, so by adding more sections $s_i$ we may assume that for every $\nu_i$ such that $\cL_\xi s_i = \sqrt{-1} \nu_i s_i$, the eigenspace $\{s \mid \cL_\xi s = \sqrt{-1} \nu_i s\}$ is contained in the span of the $s_i$.
\end{rmk}

The ambient reference form is
\begin{equation} \label{eq:fixed-ambient-reference-form}
    \omega_{\cA} = \mathcal{A}^*(\omega_{\bbC^N}
              + \frac{1}{\ell}\omega_{\rm FS}) =: \omega_{\rm aff} + \frac{1}{\ell}\Psi^*\omega_{\rm FS}
    \p
\end{equation}
The Kähler metric $\omega_\cA$ is the curvature of a hermitian metric $\phi_\cA$ on $K_X^{-1} \cong \Psi^*\mathcal O_{\bbP^M}(1/\ell)$
\begin{equation}\label{eq:fixed-ambient-reference-metric}
    \phi_\cA = \sum_{\alpha=1}^N |z_\alpha|^2 + \frac{1}{\ell}h_\mathrm{FS}
\end{equation}
where $h_\mathrm{FS}$ is the canonical metric on $\cO_{\bbP^M}(1)$ with curvature $\omega_\mathrm{FS}$.

Let
\begin{equation}
    \quad X_0 = -\frac12 J\xi=\frac12 \nabla f, \quad \gamma_s=\exp(sX_0)
    \p
\end{equation}
For $t \in [-1, 0)$, set
\begin{equation} \label{eq:shrinkerRF}
    \omega_t = |t|\gamma_{-\log|t|}^*\omega,
    \qquad
    F(x, t) = |t|\rho^2(\gamma_{-\log|t|}(x))
    \p
\end{equation}
We recall the following identities, which will be used repeatedly throughout
the paper, often without explicit reference:

\[
    R_\omega + |\nabla^{1,0}f|^2 = \rho^2,
    \qquad
    \Delta_f \rho^2 = n - \rho^2
    \p
\]

    \begin{equation} \label{eq:self-similar-identities}
        \partial_t \omega_t = -\Ric(\omega_t),
        \qquad
        \partial_t F = -\gamma_{-\log|t|}^*R_\omega = -|t|R_{\omega_t},
        \qquad
        (\partial_t - \Delta_{\omega_t})F = -n
        \p
    \end{equation}

\subsection{Affine estimates}

We begin with polynomial control of the homogeneous affine coordinates obtained in \cite[Section~4]{LiZhang2026} and
its consequences for the soliton flow.

\begin{lemma}  \label{lem:homogeneous-holomorphic-polynomial-growth}
    Let $h$ be a global holomorphic function satisfying $\xi(h) = \sqrt{-1}\lambda h$, where $\lambda \geq 0$.  Then
    \begin{equation} \label{eq:homogeneous-function-polynomial-pointwise}
        |h| \leq C(1 + \rho)^\lambda,
        \qquad
        |\partial h|_\omega \leq C(1 + \rho^2)^A
    \end{equation}
    for constants $C, A < \infty$ depending on $h$.
\end{lemma}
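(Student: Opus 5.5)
The first bound is exactly the sharp polynomial-growth estimate of \cite[Corollary~4.3]{LiZhang2026}, which the introduction recalls. I would invoke it directly for $|h|\le C(1+\rho)^\lambda$. For completeness, here is how I would reconstruct it if needed.

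\emph{Step 1: the degree-$\lambda$ bound.} Along the flow $\gamma_s=\exp(sX_0)$ we have $\rho^2(\gamma_s x)$ growing like $e^{s}$ to leading order, since $X_0\rho^2=\tfrac12|\nabla f|^2=|\nabla^{1,0}f|^2\le\rho^2$. Homogeneity gives $X_0 h=-\tfrac12 J\xi h$. Because $h$ is holomorphic, $J\xi h=\sqrt{-1}\,\xi h=-\lambda h$, so $X_0h=\tfrac{\lambda}{2}h$ and hence $|h(\gamma_s x)|=e^{\lambda s/2}|h(x)|$. This links growth of $h$ to growth of $\rho$ along flow lines. I would prove the estimate on a compact "unit sphere" region $\{\rho\le R_0\}$ and transport it along the flow, provided $\rho$ grows at least like $e^{s/2}$.

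This transport is the delicate point: it needs a lower bound $|\nabla^{1,0} f|^2\ge c\rho^2$, which in the paper is \emph{derived from} this lemma. So the transport argument is circular, and I would not use it here. Instead I would cite \cite[Corollary~4.3]{LiZhang2026}, whose proof uses a weighted $L^2$/three-circle argument in the conjugate-heat-kernel picture. That argument shows $|h|^2$ is controlled by its weighted $L^2$ norm on $\{\rho\le r\}$, which scales as $r^{2\lambda}$ by homogeneity.

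\emph{Step 2: the gradient bound.} Here only polynomial growth of some degree is needed, so I would pass from $|h|$ to $|\partial h|$ by a local gradient estimate at the curvature-free scale of the shrinker. Consider the self-similar flow $\omega_t$ of \eqref{eq:shrinkerRF}. Since $h$ is holomorphic and $\omega_t=|t|\gamma^*_{-\log|t|}\omega$, the function $h_t = h\circ\gamma_{-\log|t|}=|t|^{-\lambda/2}h$ is holomorphic, so $|\partial h|^2_{\omega_t}$ pulled back is a multiple of $|\partial h|^2_\omega\circ\gamma$. Along the Ricci flow, $u=|\partial h|^2_{\omega_t}$ satisfies
\[
(\partial_t-\Delta_{\omega_t})u=-|\nabla\partial h|^2\le 0,
\]
because the Ricci term cancels for holomorphic functions under Kähler–Ricci flow. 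Likewise $(\partial_t-\Delta)|h|^2=-u$.

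I would apply a parabolic Bernstein-type argument to $u+K|h|^2$, or Moser iteration via the mean value inequality for subsolutions. This uses the conjugate heat kernel/heat kernel bounds of \cite{LWII}, available with no curvature assumptions, on parabolic neighbourhoods of $(x,-1)$ of scale comparable to $\rho(x)^{-1}$. The output is that $u$ at a point is bounded by a parabolic average of $|h|^2$ times a power of $\rho$. Combining this with Step 1 and with $F\le C\rho^2$ on the neighbourhood gives $|\partial h|_\omega\le C(1+\rho^2)^A$.

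The main obstacle is the lack of curvature control. The scale on which mean-value inequalities hold must be chosen using only the shrinker structure: the volume-noncollapsing from the entropy bound, and the Li–Wang heat kernel estimates. Any power loss there is absorbed into the unspecified exponent $A$.
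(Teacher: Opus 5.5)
Your Step~1 matches the paper. The lemma is stated there without proof, and both estimates are imported from \cite[Section~4]{LiZhang2026}, with the sharp bound being \cite[Corollary~4.3]{LiZhang2026}. You are also right that transporting along $\gamma_s$ would be circular. In fact even the nondegeneracy $|\nabla^{1,0}f|^2\ge\theta\rho^2$ would only give the exponent $\lambda/\theta$; the sharp exponent needs the bounded integral \eqref{eq:ScalarIntegralEstimate}, which is also derived from this lemma.

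\textbf{The gap is in Step~2}, which you present as a proof of the gradient bound. The mechanism you rely on, a mean-value or Moser estimate on parabolic cylinders of size $\rho(x)^{-1}$ around $(x,-1)$, is not available without curvature control. Volume at that scale is not the problem (note $R_\omega\le\rho^2$). But a parabolic Moser iteration needs cutoffs whose gradients and heat operator are controlled uniformly on the cylinder. Over a time interval of length $\rho^{-2}$ the metric changes by roughly $\rho^{-2}\Ric$, and only the trace $R_\omega$, not $|\Ric|$, is bounded by $\rho^2$. Working on a single time slice instead brings back the uncontrolled Ricci term in the Bochner formula for $|\partial h|^2$. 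So there is no estimate from which a power of $\rho$ could come, and saying that losses are absorbed into $A$ does not address this.

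The Bernstein quantity $u+K|h|^2$ also does not do what you want. It is a subsolution, but the maximum principle bounds it by its earlier values, which contain $u$ itself. You need the time-weighted quantity $(t-t_1)u+|h|^2$. Even then, a global maximum principle on noncompact $X$ requires some a priori growth control of $u=|\partial h|^2$, which is exactly what is being proved. The paper uses this very lemma to justify the maximum principle in Proposition~\ref{prop:ambient-lower-bound}.

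The curvature-free tool you were reaching for is global rather than local, and the paper already quotes it from \cite{LWII}. A holomorphic $h$ is harmonic for every $\omega_t$ and time-independent, so it solves the heat equation along \eqref{eq:shrinkerRF}. Given the polynomial bound of Step~1, the reproducing formula $h(x)=\int_X h\,d\nu_\tau$ can be justified by cutting off in $F$ together with the heat-kernel bounds of \cite{LWII}. Applying Cauchy--Schwarz and the Fisher-information bound \eqref{eq:fisher-input} along short paths in the $x$-variable then gives
\[
    |dh(w)|(x)\le\tau^{-1/2}|w|_\omega\Bigl(\int_X|h|^2\,d\nu_\tau\Bigr)^{1/2},
    \qquad w\in T_xX.
\]
On the slice $t=-1-\tau$ one has $\rho^2\le F$, because $X_0\rho^2=\rho^2-R_\omega\le\rho^2$. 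Hence Step~1 gives $|h|^2\le C(1+F)^\lambda$. Now extend the moment bounds of Lemma~\ref{lem:moments} to higher powers of $F$ by the same truncation argument, using $(\partial_t-\Delta)F=-n$ and $|\partial F|^2\le F$. Taking $\tau=1$ then yields $|\partial h|_\omega^2\le C(1+\rho^2)^\lambda$. Alternatively, cite \cite[Section~4]{LiZhang2026} for the gradient bound as well, as the paper does.
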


\begin{proposition}
\label{prop:ambient-lower-bound}
    There are constants $C, A < \infty$ such that
    \begin{equation} \label{eq:ambient-trace-lower-bound}
        \tr_{\omega}\omega_{\rm aff}
        \leq C(1+\rho^2)^A
        \p
    \end{equation}
Moreover, for every $\varepsilon>0$, there is $C_\varepsilon<\infty$ such that
    \begin{equation} \label{eq:parabolic-affine-schwarz}
        \tr_{\omega_t}\omega_{\rm aff}
            \leq C_\varepsilon e^{\varepsilon F}
        \qquad\text{on }X\times[-1, 0)
        \p
    \end{equation}
\end{proposition}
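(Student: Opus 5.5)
The first bound is immediate from the affine estimates; the parabolic bound then follows by applying it along the soliton flow and rescaling.

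\textbf{First bound.} Since $\omega_{\rm aff}=\sqrt{-1}\sum_{\alpha}\partial h_\alpha\wedge\bar\partial\bar h_\alpha$, we have
\[
    \tr_\omega\omega_{\rm aff}=\sum_{\alpha=1}^N|\partial h_\alpha|^2_\omega .
\]
Each $h_\alpha$ is a global holomorphic function with $\xi h_\alpha=\sqrt{-1}\lambda_\alpha h_\alpha$ and $\lambda_\alpha>0$. So \cref{lem:homogeneous-holomorphic-polynomial-growth} gives $|\partial h_\alpha|_\omega\le C(1+\rho^2)^{A}$. Summing over the finitely many $\alpha$ yields \eqref{eq:ambient-trace-lower-bound}, with $A$ replaced by $2A$.

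\textbf{Parabolic bound.} Write $s=-\log|t|\ge 0$ and $\omega_t=|t|\gamma_s^*\omega$. Then
\[
    \tr_{\omega_t}\omega_{\rm aff}(x)=|t|^{-1}\,\tr_{\omega}\big((\gamma_s^{-1})^*\omega_{\rm aff}\big)(\gamma_s(x)) .
\]
To evaluate the right-hand side I use homogeneity. The field $X_0=-\tfrac12 J\xi$ is real holomorphic, and $\xi h=\sqrt{-1}\lambda h$ gives $X_0h=\tfrac{\lambda}{2}h$. (I would check this sign carefully, since $Z=X_0-\sqrt{-1}JX_0$ acts on holomorphic functions.) Hence $h_\alpha\circ\gamma_{-s}=e^{-\lambda_\alpha s/2}h_\alpha$, and therefore
\[
    (\gamma_s^{-1})^*\omega_{\rm aff}=\sum_\alpha e^{-\lambda_\alpha s}\sqrt{-1}\,\partial h_\alpha\wedge\bar\partial\bar h_\alpha .
\]
It follows that
\[
    \tr_{\omega_t}\omega_{\rm aff}(x)\le \sum_\alpha |t|^{\lambda_\alpha-1}\,|\partial h_\alpha|^2_\omega(\gamma_s(x))
    \le C|t|^{-1}\big(1+\rho^2(\gamma_s(x))\big)^{2A},
\]
using $|t|^{\lambda_\alpha}\le1$. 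By definition $F(x,t)=|t|\rho^2(\gamma_s(x))$, so this becomes
\[
    \tr_{\omega_t}\omega_{\rm aff}\le C|t|^{-1}\big(1+F/|t|\big)^{2A}.
\]
This carries extra powers of $|t|^{-1}$, so a naive substitution is not sufficient.

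\textbf{Main obstacle.} The difficulty is absorbing the $|t|^{-1}$ factors into $e^{\varepsilon F}$. Using $\rho^2\ge n$ gives $F\ge n|t|$, which only helps when $t$ is bounded away from $0$. As $t\to0$ the bound $|t|^{-1-2A}$ blows up while $F$ can stay small near the critical set, so pure homogeneity fails there. I would exploit that the weights are positive and keep the factor $|t|^{\lambda_\alpha}$ rather than discarding it. Where $\rho$ is bounded the gradient bound is uniform, giving $|t|^{\lambda_\alpha-1}$; this still fails if some $\lambda_\alpha<1$.

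So the approach I would try first is a parabolic Schwarz lemma, i.e.\ a maximum principle argument.

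\begin{itemize}
    \item The map $\pi:(X,\omega_t)\to(\bbC^N,\omega_{\bbC^N})$ is holomorphic into a flat target. Under the Kähler--Ricci flow $\partial_t\omega_t=-\Ric(\omega_t)$, the Chern--Lu / Song--Tian computation gives
    \[
        (\partial_t-\Delta_{\omega_t})\log\tr_{\omega_t}\omega_{\rm aff}\le 0 .
    \]
    The Ricci terms cancel exactly, and the target curvature vanishes.
    \item By \eqref{eq:self-similar-identities}, $(\partial_t-\Delta_{\omega_t})F=-n$. Hence
    \[
        u=\log\tr_{\omega_t}\omega_{\rm aff}-\varepsilon F-n\varepsilon t
    \]
    is a subsolution.
    \item The first bound and $F\approx|t|\rho^2$ show that $u\to-\infty$ at spatial infinity for each fixed $t$, since polynomial growth loses to the Gaussian-type term $\varepsilon F$. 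The time-rescaled bound above gives local uniformity in $t$ on compact subintervals.
    \item At $t=-1$, the first bound gives $\sup u<\infty$.
    \item The maximum principle on $X\times[-1,t_1]$ then yields $u\le C_\varepsilon$ uniformly in $t_1<0$, which is \eqref{eq:parabolic-affine-schwarz}.
\end{itemize}

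The delicate point is justifying the maximum principle on a noncompact manifold without curvature bounds. I would handle this using only the properness $u\to-\infty$ at infinity, which comes from $F\ge c|t|\rho^2$ against polynomial growth. This forces the maximum over $X\times[-1,t_1]$ to be attained at an interior point, where the sign of $(\partial_t-\Delta_{\omega_t})u$ gives the contradiction.
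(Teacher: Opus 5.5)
Your proposal is correct and follows essentially the same route as the paper. The first bound is read off from \cref{lem:homogeneous-holomorphic-polynomial-growth}, and the scaling identity $\tr_{\omega_t}\omega_{\rm aff}=\sum_\alpha|t|^{\lambda_\alpha-1}|\partial h_\alpha|_\omega^2\circ\gamma_{-\log|t|}$ serves only to make the barrier $\varepsilon F+n\varepsilon t$ dominate uniformly at spatial infinity on strips $X\times[-1,t_0]$; the parabolic Schwarz inequality and the maximum principle then give a bound independent of $t_0$. The differences are cosmetic: the paper uses $\log(u+1)$ instead of $\log u$, which sidesteps zeros of the trace, and the maximum-principle step tacitly needs the usual $-\delta(t+1)$ perturbation to make the subsolution strict, in your argument as well as in the paper's.
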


\begin{proof}
    We recall the proof of \cref{eq:parabolic-affine-schwarz} from \cite[Section~4]{LiZhang2026}.
    Put $u = \tr_{\omega_t}\omega_{\rm aff} = \sum_\alpha|\partial h_\alpha|_{\omega_t}^2$.
    Using \cref{lem:homogeneous-holomorphic-polynomial-growth,eq:shrinkerRF} and that $h_\alpha$ has weight $\lambda_\alpha$ along the flow $\gamma_t$, we obtain
    \begin{equation} \label{eq:affine-derivative-scaling}
        u(t, x) = \sum_\alpha |\partial h_\alpha|_{\omega_t}^2(x) = \sum_\alpha |t|^{\lambda_\alpha - 1} |\partial h_\alpha|_\omega^2(\gamma_{-\log|t|}x)
        \p
    \end{equation}
    Fix $t_0\in(-1,0)$.
    Together with $\rho^2(\gamma_{-\log|t|}x)=F(x,t)/|t|$, the equation above gives
    \begin{equation} \label{eq:uPolyBound}
        u \leq C_{t_0}(1 + F)^A
        \qquad\text{on }X\times[-1, t_0]
        \p
    \end{equation}
    Moreover, $\rho^2\circ\gamma_{-\log|t|}\geq\rho^2$, so $F(\cdot,t)\geq|t_0|\rho^2$ on this strip. Thus $F(\cdot, t) \to \infty$ uniformly in $t \in [-1, t_0]$ at spatial infinity.

    Along K\"ahler--Ricci flow, the Bochner formula and Cauchy--Schwarz give respectively
    \[
        (\partial_t - \Delta_{\omega_t})u = -\sum_\alpha|\nabla\partial h_\alpha|_{\omega_t}^2,
            \qquad
        |\partial u|_{\omega_t}^2 \leq u\sum_\alpha|\nabla\partial h_\alpha|_{\omega_t}^2
        \p
    \]
    Hence
    \[
        (\partial_t - \Delta_{\omega_t}) \log(u + 1)
            = \frac{(\partial_t - \Delta_{\omega_t}) u}{u+1} + \frac{|\partial u|_{\omega_t}^2}{(u+1)^2}
            \leq \frac{(\partial_t - \Delta_{\omega_t}) u}{(u+1)^2}
            \leq 0
        \p
    \]
    For $\epsilon>0$, let
    \[
        Q_\varepsilon = \log(u + 1) - \varepsilon F - n\varepsilon t
        \p
    \]
    Then $(\partial_t-\Delta_{\omega_t})Q_\varepsilon \leq 0$.

    \Cref{eq:uPolyBound} shows that $Q_\varepsilon \to -\infty$ uniformly at spatial infinity. Thus by the maximum principle, its maximum is attained at $t=-1$, with a bound independent of $t_0$.  Letting $t_0\nearrow0$ proves
    \cref{eq:parabolic-affine-schwarz}.
\end{proof}

Choose $r_0\gg1$ above all critical values of $\rho^2=f+n$, and set
\begin{equation}
    \Sigma = \{\rho^2 = r_0\}
    \p
\end{equation}
Then the map
\[
    \Gamma:\Sigma\times[0, \infty) \longrightarrow \{\rho^2 \geq r_0\},
    \qquad
    \Gamma(y, s) = \gamma_s(y),
\]
is a diffeomorphism.

\begin{lemma}
\label{lem:affine-return-time}
    There is $C < \infty$ such that, whenever $x = \gamma_s(y)$ with $y \in \Sigma$ and $s \geq 0$,
    \begin{equation} \label{eq:affine-return-time}
        \log\frac{\rho^2(x)}{r_0} \leq s
        \leq \log\frac{\rho^2(x)}{r_0} + C
        \p
    \end{equation}
    Consequently,
    \begin{equation} \label{eq:ScalarIntegralEstimate}
        \int_0^s\frac{R_\omega}{\rho^2}(\gamma_q y)\, dq
        \leq C
        \p
    \end{equation}
\end{lemma}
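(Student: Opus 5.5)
The plan is to read off both inequalities from the ODE satisfied by $\rho^2$ along the flow $\gamma_s$. The lower bound follows from $R_\omega\ge 0$, and the upper bound from \cref{lem:homogeneous-holomorphic-polynomial-growth} applied to the affine coordinates $h_\alpha$.

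\emph{Step 1: the ODE for $\rho^2$.} Since $X_0=\frac12\nabla f$, we have $\frac{d}{ds}\rho^2(\gamma_s y)=X_0(\rho^2)=\frac12|\nabla f|^2$. In the paper's normalization this equals $|\nabla^{1,0}f|^2=\rho^2-R_\omega$; I will fix the constant so that it matches the identity $R_\omega+|\nabla^{1,0}f|^2=\rho^2$. Hence
\[
    \frac{d}{ds}\log\rho^2(\gamma_s y)=1-\frac{R_\omega}{\rho^2}(\gamma_s y),
    \qquad
    \log\frac{\rho^2(x)}{r_0}=s-\int_0^s\frac{R_\omega}{\rho^2}(\gamma_q y)\,dq
    \p
\]
Scalar curvature of a complete shrinker is nonnegative (B.-L. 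Chen), so the left inequality in \cref{eq:affine-return-time} is immediate. The identity also shows that \cref{eq:ScalarIntegralEstimate} is equivalent to the right inequality. So everything reduces to proving $s\le\log(\rho^2(x)/r_0)+C$.

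\emph{Step 2: the upper bound via homogeneous functions.} Since $X_0=-\frac12J\xi$ and $\xi h_\alpha=\sqrt{-1}\lambda_\alpha h_\alpha$ with $h_\alpha$ holomorphic, we get $X_0h_\alpha=\frac{\lambda_\alpha}{2}h_\alpha$, up to checking the sign convention of $J$. Integrating gives $|h_\alpha(\gamma_s y)|=e^{\lambda_\alpha s/2}|h_\alpha(y)|$. Combining this with \cref{eq:homogeneous-function-polynomial-pointwise} at $x=\gamma_s y$, where $\rho(x)^2\ge r_0\gg1$, yields
\[
    e^{\lambda_\alpha s/2}|h_\alpha(y)|\le C\rho(x)^{\lambda_\alpha},
    \quad\text{i.e.}\quad
    s\le\log\rho^2(x)+\frac{2}{\lambda_\alpha}\log\frac{C}{|h_\alpha(y)|}
    \p
\]
It therefore suffices to find $c>0$ such that for every $y\in\Sigma$ there is an $\alpha$ with $|h_\alpha(y)|\ge c$. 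Since $\Sigma$ is compact, this holds as soon as $\Sigma\cap\pi^{-1}(0)=\emptyset$.

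\emph{Step 3 (main obstacle): $\Sigma$ misses $\pi^{-1}(0)$.} The central fibre $\pi^{-1}(0)$ is compact, by properness of the Fano fibration over the affine cone $Y$. By Step 2's formula for $|h_\alpha(\gamma_s y)|$, it is invariant under the flow $\gamma_s$. Along any nonstationary trajectory, $\rho^2$ is strictly increasing, since it is a gradient flow of $f/2$. So a trajectory starting in $\pi^{-1}(0)$ stays in that compact set, has bounded $\rho^2$, and converges to the critical set of $f$. By monotonicity of $\rho^2$ along the trajectory, its starting point satisfies $\rho^2\le\max\{\text{critical values}\}<r_0$. Hence $\rho^2<r_0$ on $\pi^{-1}(0)$, and so $\Sigma\cap\pi^{-1}(0)=\emptyset$.

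Taking the minimum over $\alpha$ of the constants from Step 2 then gives $s\le\log(\rho^2(x)/r_0)+C$. By Step 1, this is also \cref{eq:ScalarIntegralEstimate}. The delicate points are the compactness of the central fibre and the exact normalization constants in Step 1; the rest is bookkeeping.
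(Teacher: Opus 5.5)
Your proposal is correct and follows essentially the same route as the paper. The lower bound and the scalar-integral estimate come from $\frac{d}{ds}\log\rho^2(\gamma_s y)=1-R_\omega/\rho^2$ together with $R_\omega\geq 0$. The upper bound comes from $|h_\alpha(\gamma_s y)|=e^{\lambda_\alpha s/2}|h_\alpha(y)|$ combined with the polynomial growth of $h_\alpha$.

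Your Step 3 adds something the paper only asserts: that $\sum_\alpha|h_\alpha|^2>0$ on $\Sigma$. You justify it by noting that the central fibre is compact and flow-invariant, and that $\rho^2$ is monotone along the flow, so $\rho^2$ on the central fibre is bounded by a critical value, which is less than $r_0$.
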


\begin{proof}
    The first inequality follows from the shrinker equation and the non-negativity of the scalar curvature \cite{ChenScalar}:
    \begin{equation}\label{eq:affine-log-radius-derivative}
    \frac d{dq}\log\rho^2(\gamma_q y)
    = 1 - \frac{R_\omega}{\rho^2}(\gamma_q y)
        \p
    \end{equation}
    Since $\sum_\alpha |h_\alpha|^2 > 0$ on $\Sigma$, there exists $c_0 > 0$ such that for every $y$ we have $\max_\alpha|h_\alpha(y)| \geq c_0$.  Given $y$, choose such an index $\alpha$.  Homogeneity and \cref{lem:homogeneous-holomorphic-polynomial-growth} give
    \[
        c_0 e^{\lambda_\alpha s/2}
        \leq |h_\alpha(x)|
        \leq C_\alpha(1 + \rho(x))^{\lambda_\alpha}
        \p
    \]
    With
    $C=2\max_\alpha\lambda_\alpha^{-1}
    \bigl|\log(C_\alpha/c_0)\bigr|$, we obtain
    \[
        s \leq 2\log(1 + \rho(x)) + C
        \leq \log\frac{\rho^2(x)}{r_0} + C'
        \p
    \]
    Integrating \cref{eq:affine-log-radius-derivative} and using the second inequality in \cref{eq:affine-return-time} gives \cref{eq:ScalarIntegralEstimate}.
\end{proof}

\begin{proposition} \label{prop:affine-nondegeneracy}
    There exists $\theta > 0$ such that
    \begin{equation} \label{eq:radial-gap}
        |\nabla^{1, 0} f|^2 = \rho^2 - R_\omega \geq \theta\rho^2
        \quad\text{on } \{\rho^2 \geq r_0\}
        \p
    \end{equation}
\end{proposition}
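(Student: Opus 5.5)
The plan is to read the estimate off the parabolic Schwarz bound \cref{eq:parabolic-affine-schwarz}, evaluated at a point of the fixed hypersurface $\Sigma$ and at a time chosen so that the flow $\gamma$ carries that point to $x$. Fix $x$ with $\rho^2(x)\geq r_0$ and write $x=\gamma_s(y)$ with $y\in\Sigma$ and $s\geq0$, using the diffeomorphism $\Gamma$. Take $t=-e^{-s}\in[-1,0)$, so that $\gamma_{-\log|t|}=\gamma_s$. Then
\[
    \omega_t|_y = e^{-s}(\gamma_s^*\omega)|_y,
    \qquad
    F(y,t)=e^{-s}\rho^2(x)\in[r_0e^{-C},\,r_0]
\]
by \cref{lem:affine-return-time}. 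Thus $F(y,t)$ is uniformly bounded. Applying \cref{eq:parabolic-affine-schwarz} with, say, $\varepsilon=1$ gives a uniform constant $C_1$, independent of $x$, such that $\sum_\alpha|\partial h_\alpha|^2_{\omega_t}(y)\leq C_1$.

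Next, use that $\xi$ is $\gamma$-invariant, since $\xi$ and $X_0$ commute. This gives $(\gamma_s)_*\xi|_y=\xi|_x$, and therefore
\[
    |\xi|^2_{\omega_t}(y)=e^{-s}|\xi|^2_\omega(x)
    \p
\]
Because $\xi=J\nabla f$, the quantity $|\xi|^2_\omega=|\nabla f|^2$ is a fixed dimensional multiple of $|\nabla^{1,0}f|^2$; that constant is absorbed into $\theta$. At $y\in\Sigma$, choose $\alpha$ with $|h_\alpha(y)|\geq c_0$, as in the proof of \cref{lem:affine-return-time}. The homogeneity relation $\xi h_\alpha=\sqrt{-1}\lambda_\alpha h_\alpha$, together with Cauchy--Schwarz for the metric $\omega_t$ at $y$, gives
\[
    \lambda_\alpha c_0\leq |dh_\alpha(\xi)|(y)\leq C\,|\partial h_\alpha|_{\omega_t}(y)\,|\xi|_{\omega_t}(y)
    \leq C\sqrt{C_1}\,e^{-s/2}|\xi|_\omega(x)
    \p
\]
Since $h_\alpha$ is holomorphic, $dh_\alpha(\xi)$ only involves $\partial h_\alpha$ applied to $\xi^{1,0}$.

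Squaring this chain and using $e^{s}\geq\rho^2(x)/r_0$, again from \cref{lem:affine-return-time}, we get
\[
    |\nabla f|^2(x)\geq \frac{\lambda_{\min}^2c_0^2}{C^2C_1 r_0}\,\rho^2(x)
    \p
\]
Here $\lambda_{\min}=\min_\alpha\lambda_\alpha>0$. This gives \cref{eq:radial-gap} with $\theta$ independent of $x$. The identity $|\nabla^{1,0}f|^2=\rho^2-R_\omega$ is just the recalled soliton identity.

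The main obstacle is already overcome by \cref{prop:ambient-lower-bound}: a uniform upper bound on $|\partial h_\alpha|$, measured in the rescaled metric at a point of $\Sigma$, valid uniformly in the rescaling time. The only remaining delicacy is the choice of time. We need $F(y,t)$ to stay bounded, so that the factor $e^{\varepsilon F}$ is harmless, while $e^{s}$ must still be comparable to $\rho^2(x)$. Both requirements are exactly what the two-sided return-time estimate \cref{eq:affine-return-time} supplies, and its upper bound in turn rests on the polynomial growth of \cref{lem:homogeneous-holomorphic-polynomial-growth}.
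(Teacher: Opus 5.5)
Your proof is correct and follows the paper's argument: you evaluate the parabolic Schwarz bound at $y\in\Sigma$ with $t=-e^{-s}$, use $F(y,t)=e^{-s}\rho^2(x)\le r_0$ and the $\gamma_s$-invariance of $\xi$ to get $|\xi|_\omega^2(x)\gtrsim e^{s}\ge\rho^2(x)/r_0$. The only cosmetic difference is that you apply Cauchy--Schwarz to a single coordinate $h_\alpha$ with $|h_\alpha(y)|\ge c_0$, whereas the paper applies it to the summed quantity $c_\Sigma=\min_\Sigma\sum_\alpha\lambda_\alpha^2|h_\alpha|^2$.
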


\begin{proof}
    Positivity of all affine weights $\lambda_\alpha$ gives
    \[
        c_\Sigma := \min_{y \in \Sigma} \sum_\alpha \lambda_\alpha^2|h_\alpha(y)|^2
            = \min_{y \in \Sigma} \left\vert \nabla^{1,0} f \right\vert^2_{\omega_\mathrm{aff}}(y)
            > 0
        \p
    \]
    For arbitrary $y \in \Sigma, s > 0$ we write $x = \gamma_s(y)$ and $t = - e^{-s}$. Then,
    \[
        c_\Sigma
            \leq \tr_{\omega_t} \omega_\mathrm{aff}(y) \, |\nabla^{1, 0}f|_{\omega_t}^2(y)
            = \tr_{\omega_t} \omega_\mathrm{aff}(y) \, e^{-s}|\nabla^{1,0}f|_\omega^2(x)
        \p
    \]
    By \cref{eq:parabolic-affine-schwarz}, we know that $\tr_{\omega_t} \omega_\mathrm{aff}(y)$ is bounded uniformly for $t\in [-1,0)$.
    Thus $|\nabla^{1,0}f|^2=\rho^2(x) - R_\omega(x) \geq C^{-1} e^s$.  Combining this with $\rho^2(x) \leq r_0 e^s$ proves the desired inequality.
\end{proof}

\begin{remark}
We expect Proposition~\ref{prop:affine-nondegeneracy} to have many further applications. In particular, the volume non-collapsing estimate of \cite[Theorem~7.3]{LiZhang2026} applies to all K\"ahler--Ricci shrinkers with maximal volume growth for which $\xi$ generates an $S^1$-action.
\end{remark}

\subsection{Homogeneous sections and the ambient volume form}

\begin{lemma}\label{lem:homogeneous-pluri-anticanonical-polynomial-growth}
    Let $h_\omega$ be the Hermitian metric on $-K_X$ induced by $\omega^n$.  Suppose that $s \in H^0(X, (-K_X)^\ell)$, $\ell \geq 1$, is homogeneous of finite weight $\nu$ for the canonical lift of the $X_0$ flow. Then there exists $C < \infty$ such that
    \begin{equation} \label{eq:homogeneous-section-polynomial-growth}
        |s|_{h_\omega^\ell}^2 \leq C(1 + \rho^2)^{2\nu+\ell n}
        \p
    \end{equation}
\end{lemma}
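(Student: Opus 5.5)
The plan is to integrate $\log|s|^2_{h_\omega^\ell}$ along the flow lines of $X_0=\frac12\nabla f$, reusing the return-time estimate \cref{lem:affine-return-time}. The exponent $2\nu+\ell n$ in \cref{eq:homogeneous-section-polynomial-growth} suggests the following mechanism. The logarithmic derivative of $|s|^2_{h_\omega^\ell}$ along $\gamma_q$ is $2\nu$ plus $\ell$ times the logarithmic rate of change of $\omega^n$ under $\gamma_q$. That rate equals $\operatorname{div}_\omega(2X_0)=\Delta_\omega f=n-R_\omega\leq n$, using the traced shrinker equation and $R_\omega\geq0$ \cite{ChenScalar}. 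Summing these over a flow time $s\leq\log(\rho^2/r_0)+C$ gives the claimed power of $\rho^2$.

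\emph{Step 1 (pointwise identity).} Write $|s|^2_{h_\omega^\ell}=\frac{(\sqrt{-1})^{\ell n^2}\, s\wedge\bar s}{(\omega^n)^{\ell}}$, suitably interpreted: locally $s=\sigma\,(\partial_{z_1}\wedge\cdots\wedge\partial_{z_n})^{\otimes\ell}$ and $|s|^2_{h_\omega^\ell}=|\sigma|^2\det(g_{i\bar j})^{\ell}$. Since $X_0$ is real holomorphic, it has a canonical lift to $-\ell K_X$. Homogeneity of weight $\nu$ under this lift should give
\[
    \mathcal L_{X_0}\bigl(s\otimes\bar s\bigr)=2\nu\,s\otimes\bar s,
    \qquad
    \mathcal L_{X_0}\,\omega^n=\operatorname{div}_\omega(X_0)\,\omega^n=\tfrac12\Delta_\omega f\,\omega^n .
\]
Care is needed about which normalization of the weight is used, namely whether it is the weight under $X_0$ or under $-J\xi$ for $\mathcal L_\xi$. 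With the normalization in the statement I expect
\[
    \frac{d}{dq}\log|s|^2_{h_\omega^\ell}(\gamma_q y)\;\leq\;2\nu+\ell\,\Delta_\omega f(\gamma_q y)\;=\;2\nu+\ell\,(n-R_\omega)\;\leq\;2\nu+\ell n .
\]
Here the first relation is an equality wherever $s\neq0$, and the inequality is trivially preserved at zeros, since $\log|s|^2=-\infty$ there and zeros are permuted by the flow.

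\emph{Step 2 (integration).} On the compact set $\{\rho^2\leq r_0\}$ the bound is trivial. For $x=\gamma_s(y)$ with $y\in\Sigma$, integrating Step 1 from $0$ to $s$ and using compactness of $\Sigma$ gives $|s|^2_{h_\omega^\ell}(x)\leq C e^{(2\nu+\ell n)s}$. The upper bound $s\leq\log(\rho^2(x)/r_0)+C$ from \cref{lem:affine-return-time} then yields $|s|^2_{h_\omega^\ell}(x)\leq C(1+\rho^2(x))^{2\nu+\ell n}$, which is \cref{eq:homogeneous-section-polynomial-growth}. If $2\nu+\ell n$ were negative, one would use the lower bound in \cref{eq:affine-return-time} instead; the sign of $\Delta f$ is not needed beyond $\Delta f\leq n$.

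\emph{Main obstacle.} The only genuinely delicate point is Step 1: getting the sign and the factor $2$ right in the relation between the weight $\nu$ of the canonical lift and the derivative of the pointwise norm. This means checking that the canonical lift of $X_0$ to $-K_X$ acts on $\partial_{z_1}\wedge\cdots\wedge\partial_{z_n}$ so that $\det g$ contributes exactly $+\operatorname{div}X_0$, with the sign that makes $R_\omega\geq0$ work in our favour. I would verify this in local holomorphic coordinates straightened so that $X_0^{1,0}=\partial_{z_1}$. In such coordinates the lift is simply translation, and $\frac{d}{dq}\log\det g=\Delta f$ follows from $\mathcal L_{X_0}\omega=\sqrt{-1}\partial\bar\partial(\cdot)$ applied to the shrinker potential.

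If the sign came out unfavourably, with a term $-\ell\Delta f=\ell(R-n)$ requiring control of $\int R$, I would fall back on the parabolic argument of \cref{prop:ambient-lower-bound}. There, $u=|s|^2_{h_{\omega_t}^\ell}$ satisfies $(\partial_t-\Delta_{\omega_t})\log u\leq0$ in the barrier sense, because $\sqrt{-1}\partial\bar\partial\log|s|^2\geq-\ell\Ric$. One would then apply the maximum principle to $\log(u+1)-\varepsilon F$, using the crude Step 2 bound to guarantee decay at spatial infinity.
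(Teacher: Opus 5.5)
Your argument is correct, but it gets the polynomial bound by a different mechanism than the paper.

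\textbf{Shared starting point.} Both arguments begin from the identity $X_0\log|s|^2_{h_\omega^\ell}=2\nu+\ell(n-R_\omega)$ on $\{s\neq0\}$. So the sign you worried about is indeed favourable, and the parabolic fallback is unnecessary.

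\textbf{Your route.} You discard the $-\ell R_\omega$ term and integrate to get $|s|^2\le Ce^{Ns}$ with $N=2\nu+\ell n$. You then convert flow time into $\log\rho^2$ using the upper bound in \cref{lem:affine-return-time}.

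\textbf{The paper's route.} The paper keeps $-\ell R_\omega$ and shows that $|s|^2_{h_\omega^\ell}f^{-N}$ is nonincreasing along the flow on $\{f>2N\}$. Differentiating $\log f^{-N}$ along $X_0$ produces a positive term $NR_\omega/f$, which reflects that $\rho^2$ may grow slower than $e^s$ where $R_\omega$ is large. This term is absorbed by $-\ell R_\omega$, because $N/f<\ell$ there.

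\textbf{Comparison.} The paper's route is self-contained: it uses only $R_\omega\ge0$ and the shrinker identities $X_0f=\rho^2-R_\omega$ and $\mathcal L_{X_0}\omega^n=(n-R_\omega)\omega^n$. Yours needs the extra input that $\rho^2$ grows at least like $e^s$ along flow lines, up to a bounded factor, i.e.\ $\int_0^s R_\omega/\rho^2\,dq\le C$. In the paper this comes from the affine coordinates and the sharp growth $|h|\le C(1+\rho)^\lambda$ of \cite{LiZhang2026}. Since \cref{lem:affine-return-time} precedes this lemma and does not use it, there is no circularity. In return, your argument is insensitive to the sign of $N$: for $N<0$ you simply use the lower bound in \cref{eq:affine-return-time}.

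\textbf{Minor normalization points in Step 1.}
\begin{itemize}
\item In the paper's normalization, $\mathcal L_{X_0}\omega^n=(n-R_\omega)\omega^n=\Delta_\omega f\,\omega^n$, not $\tfrac12\Delta_\omega f\,\omega^n$. Your final display is nonetheless the correct one.
\item $|s|^2_{h_\omega^\ell}$ is the pairing of $s\otimes\bar s$ with $(\omega^n)^\ell$, not a quotient by it. Your local formula $|\sigma|^2\det(g_{i\bar j})^\ell$ reflects this correctly.
\end{itemize}
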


\begin{proof}
    The metric $h_\omega$ is induced by the volume form $\omega^n$, and
    \[
        \cL_{X_0}\omega^n = (n - R_\omega)\omega^n
        \p
    \]
    Combining its variation with the weight of $s$ gives
    \begin{equation} \label{eq:homogeneous-section-norm-flow}
        X_0 \log |s|_{h_\omega^\ell}^2 = 2\nu + \ell(n - R_\omega)
    \end{equation}
    on $\{s \neq 0\}$.
    Put $N = 2\nu + \ell n$. Then on $\{f >2N\} \cap \{s \neq 0\}$, we have
    \begin{align} \label{eq:homogeneous-section-polynomial-monotonicity}
        X_0 \log\left(|s|_{h_\omega^\ell}^2f^{-N}\right)
            &= 2\nu + \ell(n - R_\omega) - N\frac{f - R_\omega}{f}\\
            &= 2\nu + \ell n - N - R_\omega\left(\ell - \frac Nf\right) \leq 0
        \p
    \end{align}
    This proves \cref{eq:homogeneous-section-polynomial-growth} on $\{f > 2N\}$; increasing $C$ covers the compact sublevel set.
\end{proof}

\begin{proposition}
    \label{prop:ambient-volume-upper}
    There are $C, A < \infty$, depending only on the fixed homogeneous ambient system and the shrinker, such that on $X$,
    \begin{equation}\label{eq:ambient-volume-upper}
        \omega^n \leq C(1 + \rho^2)^A\omega_{\cA}^n
        \p
    \end{equation}

\end{proposition}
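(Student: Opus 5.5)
The idea is to use $\omega_\cA \geq \frac1\ell\Psi^*\omega_{\rm FS}$ and compare volume forms through the anticanonical metrics. Write $\Psi^*h_{\rm FS}$ for the hermitian metric on $-\ell K_X = \Psi^*\cO(1)$ given by
\[
    |s|^2_{\Psi^*h_{\rm FS}} = \frac{|s|^2}{\sum_j |s_j|^2}
    \p
\]
This metric has curvature $\Psi^*\omega_{\rm FS}$.

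\textbf{Step 1: an identity for the volume forms.} Since $h_\omega$ is induced by $\omega^n$, for each local holomorphic frame the ratio of volume forms is a ratio of metrics on $-K_X$. Concretely,
\[
    \frac{\omega^n}{\omega_\cA^n}
    = \frac{\sum_j |s_j|^2_{h_\omega^\ell}}{\sum_j |s_j|^2_{h_{\cA}^\ell}}
    \cdot (\text{normalization})^{1/\ell}
    \c
\]
where $h_\cA$ is the metric on $-K_X$ induced by $\omega_\cA^n$. More cleanly, the function
\[
    \Big(\sum_j |s_j|^2_{h_\omega^\ell}\Big)^{1/\ell}\frac{\omega_\cA^n}{\omega^n}
\]
is frame independent. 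I would therefore bound $\omega^n$ by
\[
    \Big(\sum_j|s_j|^2_{h_\omega^\ell}\Big)^{1/\ell}
\]
times the volume form
\[
    \Big(\sum_j|s_j|^2\Big)^{-1/\ell}\,\big|\text{frame}\big|^{2}
    \c
\]
the latter being the volume form attached to the metric $\frac1\ell\Psi^*h_{\rm FS}$.

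\textbf{Step 2: the numerator.} Apply \cref{lem:homogeneous-pluri-anticanonical-polynomial-growth} to each $s_j$. Each is homogeneous of some weight $\nu_j$, so
\[
    \sum_j |s_j|^2_{h_\omega^\ell} \leq C(1+\rho^2)^{A}
    \p
\]
This gives
\[
    \omega^n \leq C(1+\rho^2)^{A/\ell}\, dV_{\rm FS}
    \c
\]
where $dV_{\rm FS}$ is the smooth positive volume form $(\sum_j|s_j|^2)^{-1/\ell}$ (in a frame) determined by the embedding $\Psi$.

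\textbf{Step 3: compare $dV_{\rm FS}$ with $\omega_\cA^n$.} This is the main obstacle. A priori $dV_{\rm FS}/\omega_\cA^n$ is a smooth positive function, but on noncompact $X$ it need not be bounded. The plan is to control it using the $\xi$-equivariance.

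Under the flow $\gamma_s$, the sections $s_j$ scale with weights $\nu_j$ and the $h_\alpha$ scale with weights $\lambda_\alpha$. By \cref{lem:affine-return-time}, every point with $\rho^2\geq r_0$ has the form $\gamma_s(y)$ with $y\in\Sigma$ compact and $s\le\log\rho^2+C$. On $\Sigma$ the ratio $dV_{\rm FS}/\omega_\cA^n$ is bounded by compactness. The goal is then to show that
\[
    \log\Big(\gamma_s^*\frac{dV_{\rm FS}}{\omega_\cA^n}\Big)
\]
grows at most linearly in $s$, which yields polynomial growth in $\rho^2$.

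\emph{Numerator.} We have
\[
    \gamma_s^* dV_{\rm FS} = dV_{\rm FS}\cdot e^{O(s)}
    \c
\]
because $\gamma_s^*s_j$ is, up to the factor $e^{\nu_j s}$, the section $s_j$ transported by the canonical lift, so the defining sum changes by a factor between $e^{2\min\nu_j s}$ and $e^{2\max\nu_j s}$.

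\emph{Denominator.} For $\gamma_s^*\omega_\cA$, I would bound it below:
\[
    \gamma_s^*\omega_{\rm aff} \geq e^{\min\lambda_\alpha s}\omega_{\rm aff}
    \qquad\text{(really }\sum_\alpha e^{\lambda_\alpha s}i\partial h_\alpha\wedge\bar\partial h_\alpha\text{)},
\]
and $\gamma_s^*\Psi^*\omega_{\rm FS}$ is the pullback of $\omega_{\rm FS}$ under a linear diagonal $\bbC^*$-action on $\bbP^M$. That linear action distorts $\omega_{\rm FS}$ by at most a factor $e^{-Cs}$ from below, since its derivative has norm at most $e^{Cs}$. Hence
\[
    \gamma_s^*\omega_\cA \geq e^{-Cs}\omega_\cA
    \qquad\text{and}\qquad
    \gamma_s^*\omega_\cA^n \geq e^{-nCs}\omega_\cA^n
    \p
\]

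\emph{Conclusion of Step 3.} Combining the two bounds gives
\[
    \frac{dV_{\rm FS}}{\omega_\cA^n}(\gamma_s y) \leq C e^{C's}\leq C(1+\rho^2)^{C'}
    \p
\]
This step needs care: the Jacobian change of $\gamma_s$ itself must cancel, and it does, since both $\gamma_s^*dV_{\rm FS}$ and $\gamma_s^*\omega_\cA^n$ pick up the same Jacobian.

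Combining Steps 2 and 3, and using compactness on $\{\rho^2\le r_0\}$, proves \cref{eq:ambient-volume-upper}.
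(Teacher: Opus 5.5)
Your argument is correct, but it is organized differently from the paper's. The paper works with the ratio $\omega^n/\omega_\cA^n$ directly. From $\cL_{X_0}\omega^n=(n-R_\omega)\omega^n$, $R_\omega\geq 0$, and the flow bound $-C\omega_\cA\leq\cL_{X_0}\omega_\cA\leq C\omega_\cA$ (derived from $\Psi\circ\gamma_s=A_s\circ\Psi$, essentially as in your Step 3), it gets the single inequality $X_0\log(\omega^n/\omega_\cA^n)\leq A$ and integrates it from $\Sigma$ using \cref{lem:affine-return-time}. You instead factor through the exact identity $\omega^n=S^{1/\ell}\,dV_{\rm FS}$ with $S=\sum_j|s_j|^2_{h_\omega^\ell}$. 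The shrinker then enters only through $S$, which you bound by \cref{lem:homogeneous-pluri-anticanonical-polynomial-growth}; its proof contains the same ODE along $X_0$ with $R_\omega\geq 0$. The factor $dV_{\rm FS}/\omega_\cA^n$ is controlled by equivariance of the ambient data and the return-time estimate alone. The paper's route is shorter. Yours is more modular, and it gives a little more: if you also run Step 3 with the reverse bounds $\gamma_s^*dV_{\rm FS}\geq e^{-Cs}dV_{\rm FS}$ and $\gamma_s^*\omega_\cA\leq e^{Cs}\omega_\cA$, you see that $\omega^n/\omega_\cA^n$ and $S^{1/\ell}$ agree up to factors polynomial in $\rho^2$. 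The lower bound of \cref{lem:projective-denominator} then immediately yields a lower bound on $\omega^n/\omega_\cA^n$ as well. Two small points of wording. First, your opening inequality $\omega_\cA\geq\frac1\ell\Psi^*\omega_{\rm FS}$ is never used, and it could not be used on its own, since $\Psi^*\omega_{\rm FS}$ may be degenerate; $dV_{\rm FS}$ is the volume form of the metric $(\Psi^*h_{\rm FS})^{1/\ell}$ on $-K_X$, not $(\Psi^*\omega_{\rm FS})^n$. Second, the lower bound $A_s^*\omega_{\rm FS}\geq e^{-Cs}\omega_{\rm FS}$ requires controlling the derivative of $A_{-s}$ rather than that of $A_s$. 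This follows from $-C\omega_{\rm FS}\leq\cL_V\omega_{\rm FS}\leq C\omega_{\rm FS}$ by the same Gr\"onwall argument.
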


\begin{proof}
    Recall that $\Psi=[s_0:\cdots:s_M]$ is defined by homogeneous sections $s_j\in H^0(X,-\ell K_X)$ with weights $\nu_j$.
    Since
    \[
        \left(\frac{s_i}{s_j}\right) \circ \gamma_s = e^{(\nu_i - \nu_j)s}\frac{s_i}{s_j},
    \]
    we have $\Psi\circ\gamma_s = A_s \circ \Psi$, where
    \[
        A_s[w_0:\cdots:w_M] = [e^{\nu_0 s}w_0:\cdots:e^{\nu_M s}w_M]
        \p
    \]
    Let $V$ be the vector field on $\bbP^M$ generating the 1-parameter subgroup $A_s$.
    Compactness of $\bbP^M$ gives $-C\omega_{\rm FS} \leq \cL_V \omega_{\rm FS} \leq C\omega_{\rm FS}$, while
    \[
        \cL_{X_0}\omega_{\rm aff} = \sqrt{-1}\sum_\alpha\lambda_\alpha \partial h_\alpha\wedge\bar\partial\overline{h_\alpha}
        \p
    \]
    Consequently,
    \begin{equation} \label{eq:ambient-reference-flow-bound}
        -C_{\cA}\omega_{\mathcal A} \leq \cL_{X_0}\omega_{\mathcal A} \leq C_{\cA}\omega_{\mathcal A}
    \end{equation}
    and we obtain
    \[
        X_0 \log\left(\frac{\omega^n}{\omega_{\mathcal{A}}^n}\right)
            = n - R_\omega - \tr_{\omega_{\cA}} (\cL_{X_0}\omega_{\mathcal A})
            \leq A
        \p
    \]
    If $x = \gamma_s(y)$, $y \in \Sigma$, then compactness of $\Sigma$ and \cref{eq:affine-return-time} give
    \[
        \frac{\omega^n}{\omega_{\cA}^n}(x) \leq e^{As} \max_\Sigma\frac{\omega^n}{\omega_{\mathcal A}^n}
                                 \leq C(1 + \rho^2(x))^A
        \p
    \]
    By increasing $C$ we can assure that this equation also holds for $x$ in the compact sublevel set $\{\rho^2 \leq r_0\}$.
\end{proof}

\begin{lemma}
    \label{lem:projective-denominator}
    Let
    $
    S = \sum_{j = 0}^M|s_j|_{h_\omega^\ell}^2.
    $
    Then there are $C, A < \infty$ such that for $x=\gamma_sy$ with $y\in \Sigma$ and $s\geq 0$,
    \begin{equation} \label{eq:projective-denominator-curvature-free-growth}
      C^{-1}(1+\rho(x))^{-A}e^{-\ell\int_0^s R_\omega(\gamma_\tau(y))\,d\tau}
        \leq S(x) \leq C(1 + \rho^2(x))^A
        \p
    \end{equation}
    In particular,
    \begin{equation} \label{eq:projective-log-curvature-free-growth}
        |\log S| \leq C(1 + \rho^2)
        \p
    \end{equation}
\end{lemma}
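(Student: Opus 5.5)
The upper bound is immediate from \cref{lem:homogeneous-pluri-anticanonical-polynomial-growth}. Each $s_j$ is homogeneous of weight $\nu_j$, so $|s_j|^2_{h_\omega^\ell}\leq C(1+\rho^2)^{2\nu_j+\ell n}$. Summing over the finitely many $j$ and taking $A\geq\max_j(2\nu_j+\ell n)$ gives $S\leq C(1+\rho^2)^A$.

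For the lower bound I will integrate the flow identity \cref{eq:homogeneous-section-norm-flow} along $\gamma$. On $\Sigma$ the sections $s_j$ have no common zero, since $\Psi$ is an embedding into $\bbP^M$. By compactness, $S\geq c_\Sigma>0$ on $\Sigma$. Given $y\in\Sigma$, choose $j$ with $|s_j(y)|^2_{h^\ell_\omega}\geq c_\Sigma/(M+1)$. Then $s_j(\gamma_\tau y)\neq0$ for all $\tau$, because the flow acts on the section by $\gamma_\tau^*s_j=e^{\nu_j\tau}s_j$ (up to the canonical lift), so the zero set of $s_j$ is $\gamma$-invariant. Integrating \cref{eq:homogeneous-section-norm-flow} from $0$ to $s$ gives
\[
\log|s_j|^2_{h_\omega^\ell}(x)=\log|s_j|^2_{h_\omega^\ell}(y)+(2\nu_j+\ell n)s-\ell\int_0^sR_\omega(\gamma_\tau y)\,d\tau .
\]
If $2\nu_j+\ell n\geq0$, the middle term can be discarded. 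If it is negative (weights may be negative), I bound it below by $-A s$ and use \cref{eq:affine-return-time}: since $s\leq\log(\rho^2(x)/r_0)+C$, we get $e^{-As}\geq C^{-1}(1+\rho(x))^{-2A}$. This gives
\[
S(x)\geq|s_j|^2(x)\geq C^{-1}(1+\rho(x))^{-A}e^{-\ell\int_0^sR_\omega(\gamma_\tau y)\,d\tau}.
\]
The main care needed here is confirming the sign and normalization of \cref{eq:homogeneous-section-norm-flow} under the canonical lift. The computation itself is exact, so nothing further is needed.

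For \cref{eq:projective-log-curvature-free-growth}, the upper bound gives $\log S\leq C\log(1+\rho^2)\leq C(1+\rho^2)$. For the other side, \cref{eq:ScalarIntegralEstimate} gives $\int_0^s R_\omega(\gamma_\tau y)\,d\tau\leq C\sup_{[0,s]}\rho^2(\gamma_\tau y)$. Since $\rho^2$ is nondecreasing along $\gamma$ by \cref{eq:affine-log-radius-derivative} and $R_\omega\leq\rho^2$, this is at most $C\rho^2(x)$. Alternatively, one can bound directly: $\int_0^sR_\omega\leq\int_0^s\rho^2(\gamma_\tau y)\,d\tau\leq r_0\int_0^se^{\tau}\,d\tau\leq r_0e^s\leq C\rho^2(x)$, using $\rho^2(\gamma_\tau y)\leq r_0e^\tau$ and $e^s\leq C\rho^2(x)/r_0$ from \cref{eq:affine-return-time}. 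Hence
\[
-\log S\leq C+A\log(1+\rho)+\ell C\rho^2\leq C(1+\rho^2).
\]
Finally, on the compact set $\{\rho^2\leq r_0\}$, $S$ is positive and continuous, because the $s_j$ have no common zero on $X$. Increasing $C$ therefore extends the bound to all of $X$.
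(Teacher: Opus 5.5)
Your proof is correct and follows essentially the same route as the paper. Both arguments use a compactness lower bound for $S$ on $\Sigma$ from base-point-freeness, integrate the homogeneity identity along $\gamma$, and then invoke the return-time bounds $s\leq\log(\rho^2(x)/r_0)+C$ and $\rho^2(\gamma_\tau y)\leq r_0e^\tau$ to turn $e^{-Cs}$ and $\int_0^sR_\omega$ into powers of $\rho$. The only differences are cosmetic: you integrate the exact identity for a single section $s_j$ that dominates at $y$, whereas the paper integrates the inequality $X_0\log S\geq -C-\ell R_\omega$ for the whole sum, and you explicitly handle the compact region $\{\rho^2\leq r_0\}$, which the paper leaves implicit.
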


\begin{proof}
    The upper bound follows by summing \cref{lem:homogeneous-pluri-anticanonical-polynomial-growth} over the
    finite system.  Base-point-freeness and compactness give
    \[
        \min_\Sigma S = c_0 > 0
        \p
    \]
    Homogeneity of $s_j$ gives
    \[
        X_0 \log S \geq -C - \ell R_\omega
        \p
    \]
    If $x=\gamma_s(y)$ with $y\in\Sigma$, then $0 \leq R_\omega \leq \rho^2$ and $\rho^2(\gamma_\tau y) \leq r_0e^\tau$.  Integration gives
    \[
        \log S(x)
            \geq \log c_0 - Cs - \ell\int_0^sR_\omega(\gamma_\tau(y)) \d \tau
            \geq -C - C s - \ell\int_0^s r_0 e^\tau \, d \tau
            \geq -C - C \rho^2
        \p
    \]
    Exponentiating proves the lower bound in
    \cref{eq:projective-denominator-curvature-free-growth}.
\end{proof}

\subsection{An ambient exhaustion and weighted integrability}

The affine coordinates now provide a proper exhaustion whose derivatives and
sublevel volumes are controlled in the ambient metric. This supplies the
cutoffs for the weighted reverse-trace argument.

\begin{lemma} \label{lem:ambient-polynomial-exhaustion}
    Put $H_{\mathrm{aff}}=\sum_{\alpha=1}^N|h_\alpha|^2$.
    There are $0 < \nu \leq 1$ and $C,D<\infty$ such that, for
    \[
        G=(1+H_{\mathrm{aff}})^\nu,
    \]
    one has
    \begin{equation}\label{eq:ambient-G-f-comparison}
        C^{-1}(1 + \rho^2)^{\frac1D} \leq G \leq C(1 + \rho^2)^D
        \p
    \end{equation}
    Moreover,
    \begin{equation}\label{eq:ambient-G-derivatives}
        |\partial G|_{\omega_{\mathcal A}}^2\le C(1+G),
            \qquad
        |\Delta_{\omega_{\mathcal A}}G|\le C,
    \end{equation}
    and, for every $R \geq 1$,
    \begin{equation}\label{eq:ambient-G-volume-growth}
        \operatorname{Vol}_{\omega_{\cA}}\{G \leq R\} \leq C(1 + R)^D
        \p
    \end{equation}
\end{lemma}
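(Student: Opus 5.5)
The plan is to prove the comparison \cref{eq:ambient-G-f-comparison} first, using the flow decomposition $x=\gamma_s(y)$, $y\in\Sigma$. Then derive the derivative bounds \cref{eq:ambient-G-derivatives} from the algebraic structure of $H_{\mathrm{aff}}$ with respect to $\omega_{\rm aff}\le\omega_\cA$, and finally obtain the volume bound \cref{eq:ambient-G-volume-growth} from properness of $\pi$ together with the fact that $\omega_\cA^n$ is an algebraic volume form on $\cA(X)\subset\bbC^N\times\bbP^M$.

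\emph{Comparison.} The upper bound $H_{\mathrm{aff}}\le C(1+\rho)^{2\max\lambda_\alpha}$ is immediate from \cref{lem:homogeneous-holomorphic-polynomial-growth}. For the lower bound, take $x=\gamma_s(y)$. As in the proof of \cref{lem:affine-return-time}, choose $\alpha$ with $|h_\alpha(y)|\ge c_0$. Homogeneity gives $|h_\alpha(x)|=e^{\lambda_\alpha s/2}|h_\alpha(y)|\ge c_0e^{\lambda_{\min}s/2}$. By \cref{eq:affine-return-time}, $e^s\ge\rho^2(x)/r_0$, so $H_{\mathrm{aff}}(x)\ge c(1+\rho^2)^{\lambda_{\min}/2}$ outside the compact set $\{\rho^2\le r_0\}$. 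On that compact set both sides are bounded, and $1+H_{\mathrm{aff}}\ge1$. Raising to any power $\nu\in(0,1]$ preserves polynomial comparability, so we get \cref{eq:ambient-G-f-comparison} with $D$ depending on $\nu$ and the weights.

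\emph{Derivatives.} Write $H=H_{\mathrm{aff}}$. Since $\partial H=\sum_\alpha\bar h_\alpha\partial h_\alpha$, we have $|\partial H|^2_{\omega_{\rm aff}}\le H$. Hence $|\partial H|^2_{\omega_\cA}\le H$, because $\omega_\cA\ge\omega_{\rm aff}$ implies $|\cdot|_{\omega_\cA}\le|\cdot|_{\omega_{\rm aff}}$ on $(1,0)$-forms. Thus
\[
|\partial G|^2_{\omega_\cA}=\nu^2(1+H)^{2\nu-2}|\partial H|^2\le\nu^2(1+H)^{2\nu-1}\le C(1+G),
\]
where the last step uses $\nu\le1$. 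Next, $\sqrt{-1}\partial\bar\partial H=\omega_{\rm aff}$, so $0\le\Delta_{\omega_\cA}H=\tr_{\omega_\cA}\omega_{\rm aff}\le n$. Therefore
\[
\Delta_{\omega_\cA}G=\nu(1+H)^{\nu-1}\Delta H+\nu(\nu-1)(1+H)^{\nu-2}|\partial H|^2,
\]
and both terms are bounded when $\nu\le1$: the first by $n\nu$, the second by $\nu(1-\nu)$. Any $\nu\in(0,1]$, for instance $\nu=1/2$, works. The parameter $\nu$ is mainly there to keep $|\Delta G|$ bounded, which $\nu\le1$ already ensures.

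\emph{Volume.} Since $\{G\le R\}=\{H\le R^{1/\nu}-1\}$, it suffices to bound $\operatorname{Vol}_{\omega_\cA}(\cA(X)\cap(B_r\times\bbP^M))$ polynomially in $r$. Here $\cA(X)$ is an $n$-dimensional closed analytic subvariety of $\bbC^N\times\bbP^M$. We expect it to be algebraic (quasi-projective) by the Fano fibration construction of \cite{SunZhangFano}, since $X$ is a relative Proj over the affine variety $Y$. The volume $\int\omega_\cA^n$ then splits via the binomial expansion into the terms $\int\omega_{\rm aff}^k\wedge\omega_{\rm FS}^{n-k}$. Each term can be bounded by Wirtinger/Lelong-type estimates, since a closed algebraic subvariety of $\bbC^N\times\bbP^M$ of bounded multidegree has volume in $B_r\times\bbP^M$ at most $Cr^{2n}$. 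If one wants to avoid algebraicity, the same conclusion can be reached from homogeneity. The map $\gamma_s$ acts on $\cA(X)$ as the linear map $\operatorname{diag}(e^{\lambda_\alpha s/2})\times A_s$, and by \cref{eq:ambient-reference-flow-bound} it distorts $\omega_\cA^n$ by at most $e^{nC_\cA s}$. Hence $\{H\le r^2\}$ is covered by $\gamma_s(\{H\le1\})$ with $s\approx 2\log r/\lambda_{\min}$, whose volume is at most $e^{nC_\cA s}\operatorname{Vol}\{H\le 1\}$, which is polynomial in $r$. The volume of $\{H\le1\}$ is finite because $\pi$ is proper, so this set is compact. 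I would use this second argument since it uses only results already proved.

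I expect the volume growth to be the main obstacle, specifically checking that the dilation covers the sublevel set. Concretely, one must show that $\{H\le e^{\lambda_{\min}s}\}\subset\gamma_s(\{H\le C\})$. This holds because $H\circ\gamma_{-s}\le e^{-\lambda_{\min}s}H$ for $s\ge0$, which follows from the componentwise scaling $|h_\alpha\circ\gamma_{-s}|=e^{-\lambda_\alpha s/2}|h_\alpha|$.
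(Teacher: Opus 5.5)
Your proof is correct. The comparison step and the derivative step are essentially the paper's proof. The comparison uses the flow decomposition $x=\gamma_s(y)$, a generator nonvanishing at $y$, and $\rho^2(x)\le r_0e^s$. The derivative step applies Cauchy--Schwarz to $\partial H_{\mathrm{aff}}=\sum_\alpha\bar h_\alpha\partial h_\alpha$ against $\omega_{\rm aff}\le\omega_{\cA}$, then uses the chain rule with $\nu\le1$.

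Two harmless slips. First, your stated exponent $\lambda_{\min}/2$ in the lower bound is weaker than what you actually derived, namely $H_{\mathrm{aff}}\ge c\,(\rho^2)^{\lambda_{\min}}$. Second, $\omega_{\rm aff}$ can degenerate where $\pi$ is not immersive, for instance along the central fiber. It is therefore cleaner to write $|\partial H_{\mathrm{aff}}|^2_{\omega_{\cA}}\le H_{\mathrm{aff}}\tr_{\omega_{\cA}}\omega_{\rm aff}\le nH_{\mathrm{aff}}$ than to speak of $|\cdot|_{\omega_{\rm aff}}$ on forms.

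For the volume bound you take a genuinely different route. The paper works in the flow-box parametrization $\Gamma:\Sigma\times[0,\infty)\to\{\rho^2\ge r_0\}$. It bounds the Jacobian $\Gamma^*\omega_{\cA}^n\le Ce^{Cs}\,ds\,d\sigma$ and integrates over $s\le C\log(2+R)$, which is where $\{G\le R\}$ lies by the comparison step.

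You instead use the componentwise scaling $H_{\mathrm{aff}}\circ\gamma_{-s}\le e^{-\lambda_{\min}s}H_{\mathrm{aff}}$ for $s\ge0$. This gives the inclusion $\{H_{\mathrm{aff}}\le e^{\lambda_{\min}s}\}\subset\gamma_s(\{H_{\mathrm{aff}}\le1\})$. You then apply the distortion bound $\gamma_s^*\omega_{\cA}^n\le e^{nC_{\cA}s}\omega_{\cA}^n$ to the single compact set $\{H_{\mathrm{aff}}\le1\}$.

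Both arguments rest on the same input, \cref{eq:ambient-reference-flow-bound}. Yours needs no transversal, no product decomposition and no return-time bound, and it produces an explicit exponent $D=nC_{\cA}/(\nu\lambda_{\min})$. The paper's version gives a pointwise density bound on $\Sigma\times[0,\infty)$, in the same flow-box style it uses for the later weighted-integral estimates. Since only the sublevel-volume bound is used downstream (via the dyadic decomposition in \cref{prop:weighted-reverse-ambient-trace}), your version suffices for everything that follows.
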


\begin{proof}
    In fact any $0 < \nu \leq 1$ works. \Cref{lem:homogeneous-holomorphic-polynomial-growth} gives the upper bound on $H_{\mathrm{aff}}$. Conversely, the affine generators have
    no common zero on $\Sigma$. Thus, for some $c_0>0$,
    \[
        \inf_{y \in \Sigma} \max_\alpha|h_\alpha(y)|\geq c_0
        \p
    \]
    If $x=\gamma_s(y)$ and $\lambda_0=\min_\alpha\lambda_\alpha>0$, choosing a nonvanishing generator at $y$ gives
    \[
        H_{\mathrm{aff}}(x) \geq c_0^2e^{\lambda_0 s},
            \qquad
        \rho^2(x) \leq r_0 e^s
        \p
    \]
    Then \cref{eq:ambient-G-f-comparison} follows from these estimates.

    By definition, we have $\sqrt{-1} \partial\bar{\partial} H_\mathrm{aff} = \omega_\mathrm{aff} \leq \omega_\cA$ and the Cauchy-Schwarz inequality shows that
    \[
        |\partial H_\mathrm{aff}|_{\omega_{\cA}}^2 \leq H_\mathrm{aff}\tr_{\omega_{\cA}}\omega_\mathrm{aff} \leq H_\mathrm{aff}
        \p
    \]
    \Cref{eq:ambient-G-derivatives} then follows from the chain rule by a direct computation.
    It remains to prove \cref{eq:ambient-G-volume-growth}. \cref{eq:ambient-reference-flow-bound} integrates to
    \[
        \gamma_s^*\omega_{\cA} \leq e^{Cs}\omega_{\mathcal A}
        \p
    \]
    Orient $\Sigma$ so that $d\sigma = (\iota_{X_0}\omega_{\cA}^n)|_\Sigma$ is positive.  Since $\Gamma(y, s) = \gamma_s(y)$,
    \[
        \Gamma^*\omega_{\cA}^n
        = ds\wedge\gamma_s^*(\iota_{X_0}\omega_{\cA}^n)
        \leq Ce^{Cs}\, ds\, d\sigma
        \p
    \]
    Moreover,  $G \leq R$ implies $s \leq C\log(2 + R)$.  Adding the compact sublevel and integrating gives
    \[
        \operatorname{Vol}_{\omega_{\cA}}\{G \leq R\}
        \leq C + C\int_\Sigma\int_0^{C\log(2 + R)}e^{Cs}\, ds\, d\sigma
        \leq C(1 + R)^D
        \p
    \]
\end{proof}

The following estimate is the key result of this section. It shows that, at least in an integral sense, the shrinker metric \(\omega\) is controlled by the ambient metric \(\omega_{\mathcal A}\) up to a polynomial factor. This estimate will allow us to prove the integrability, with respect to \(\omega\), of holomorphic vector fields commuting with \(\xi\).

\begin{proposition}
    \label{prop:weighted-reverse-ambient-trace}
    There exists $P_0 < \infty$ such that
    \begin{equation}\label{eq:weighted-reverse-ambient-trace}
        \int_X \tr_{\omega_{\cA}}\omega\,
        (1 + \rho^2)^{-P_0}\, \omega^n < \infty
        \p
    \end{equation}
\end{proposition}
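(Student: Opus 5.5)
The plan is to integrate by parts against a potential for $\omega$ relative to $\omega_\cA$. Both forms are curvatures of hermitian metrics on $K_X^{-1}$: $\omega_\cA$ is the curvature of $\phi_\cA$, and the shrinker equation $\omega=\Ric(\omega)+\sqrt{-1}\partial\bar\partial f$ exhibits $\omega$ as the curvature of $h_\omega e^{-f}$ (up to sign conventions). Hence
\[
\omega=\omega_\cA+\sqrt{-1}\partial\bar\partial u
\]
for a smooth global function $u$. Writing $h_\omega$ in terms of the frame-free quantity $S=\sum_j|s_j|^2_{h_\omega^\ell}$, I expect
\[
u=f+\tfrac1\ell\log S - H_{\mathrm{aff}}+\mathrm{const}
\]
(the signs are to be fixed carefully). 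This is smooth because $S>0$ by base-point-freeness. Combining $0\le f\le \rho^2$, \cref{eq:projective-log-curvature-free-growth} and \cref{lem:homogeneous-holomorphic-polynomial-growth} for $H_{\mathrm{aff}}$ gives $|u|\le C(1+\rho^2)^{A'}$. By \cref{eq:ambient-G-f-comparison} this becomes $|u|\le C(1+G)^{D'}$.

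Next I change the measure. By \cref{prop:ambient-volume-upper}, $(1+\rho^2)^{-P_0}\omega^n\le C(1+\rho^2)^{A-P_0}\omega_\cA^n$. By \cref{eq:ambient-G-f-comparison}, $(1+\rho^2)^{-1}\le C(1+G)^{-1/D}$. So for any prescribed $P$ we may choose $P_0$ with
\[
(1+\rho^2)^{-P_0}\omega^n\le C\,w\,\omega_\cA^n,\qquad w:=(1+G)^{-P}.
\]
Since $\tr_{\omega_\cA}\omega\,\omega_\cA^n=n\,\omega\wedge\omega_\cA^{n-1}$ and $\omega>0$, it suffices by monotone convergence to bound
\[
I_R=\int_X \chi_R\, w\,\omega\wedge\omega_\cA^{n-1}
\]
uniformly in $R$. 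Here $\chi_R=\psi(G/R)$ for a fixed cutoff $\psi$ equal to $1$ on $[0,1]$ and $0$ on $[2,\infty)$, and $\chi_R$ is compactly supported because $G$ is proper by \cref{eq:ambient-G-f-comparison}.

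Substituting $\omega=\omega_\cA+\sqrt{-1}\partial\bar\partial u$ and integrating by parts twice (legitimate since $\chi_R$ has compact support and $\omega_\cA$ is closed) gives
\[
I_R=\int\chi_R w\,\omega_\cA^n+\frac1n\int u\,\Delta_{\omega_\cA}(\chi_R w)\,\omega_\cA^n .
\]
Now $\Delta_{\omega_\cA}(\chi_R w)$ expands, by the chain rule, into terms of the form $\eta'(G)\Delta_{\omega_\cA}G$ and $\eta''(G)|\partial G|^2_{\omega_\cA}$ with $\eta(G)=\psi(G/R)(1+G)^{-P}$. The bounds \cref{eq:ambient-G-derivatives} control $\Delta_{\omega_\cA}G$ and $|\partial G|^2_{\omega_\cA}$. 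Because $1\le G/R\le 2$ on the support of $\psi'$ and $\psi''$, every $R$-derivative factor is absorbed, and we get
\[
|\Delta_{\omega_\cA}(\chi_R w)|\le C(1+G)^{-P},
\]
with $C$ independent of $R$. Therefore
\[
I_R\le C\int_X(1+G)^{-P+D'}\,\omega_\cA^n .
\]
This is finite for $P$ large by splitting into dyadic shells $\{2^k\le G\le 2^{k+1}\}$ and using the volume growth \cref{eq:ambient-G-volume-growth}.

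The main obstacle is the first step: identifying $u$ correctly as a \emph{global} smooth function and verifying its polynomial growth. Everything else reduces to \cref{lem:ambient-polynomial-exhaustion}. The growth of $u$ rests on the two-sided control of $\log S$ in \cref{lem:projective-denominator}, since the lower bound on $S$ is where curvature would otherwise enter. The affine part and $f$ are harmless. One must also check that the constant and the frame conventions relating $h_\omega$, $S$ and $\phi_\cA$ produce exactly $\omega-\omega_\cA=\sqrt{-1}\partial\bar\partial u$, rather than an extra closed term.
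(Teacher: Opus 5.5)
Your proposal is correct and follows essentially the same route as the paper: the same global potential relating $\omega$ and $\omega_{\cA}$ (the paper writes $\omega_{\cA}=\omega+\sqrt{-1}\partial\bar\partial u$ with $u=H_{\mathrm{aff}}-\rho^2+\frac1\ell\log S$, so in your normalization the $\log S$ term enters with a minus sign, which is harmless since only $|u|$ is used), the same cutoff $\eta(G/R)(1+G)^{-Q}$ controlled by \cref{lem:ambient-polynomial-exhaustion}, the same dyadic use of the volume growth, and the same measure conversion via \cref{prop:ambient-volume-upper} and \cref{eq:ambient-G-f-comparison}. The only difference is the order of steps: you convert $\omega^n$ into $\omega_{\cA}^n$ before integrating by parts, whereas the paper does so at the end.
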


\begin{proof}
    Let $S$ be as in \cref{lem:projective-denominator}. Locally we compute
    \begin{equation} \label{eq:iddbS}
        i \partial\bar{\partial} \log S = i \ell \partial\bar{\partial} \log \omega^n + i \partial\bar{\partial} \log \sum_{j=0}^M |s_j|^2 = - \ell \Ric(\omega) + \Psi^* \omega_\mathrm{FS}
        \p
    \end{equation}
    Since $\omega = \Ric(\omega) + i \partial\bar{\partial} f$ and $\omega_\cA = \frac1\ell\Psi^* \omega_\mathrm{FS} + i \partial\bar{\partial} H_\mathrm{aff}$ we see that
    \begin{equation}\label{eq:ambient-global-potential}
        u = H_{\mathrm{aff}} - \rho^2 + \frac1\ell\log S
            \qquad\text{satisfies}\qquad
        \omega_{\cA} = \omega + \sqrt{-1}\partial\bar\partial u
        \p
    \end{equation}
    By \cref{lem:projective-denominator} and \cref{eq:ambient-G-f-comparison}, there exists some $U <\infty$ such that
    \begin{equation}\label{eq:ambient-u-G-growth}
        |u| \leq C(1 + G)^U
        \p
    \end{equation}

    The volume estimate \cref{eq:ambient-G-volume-growth} implies, by a dyadic decomposition,
    \begin{equation}\label{eq:ambient-polynomial-weight-integrability}
        \int_X(1 + G)^{-q}\, \omega_{\cA}^n < \infty
        \qquad(q > D)
        \p
    \end{equation}
    Fix $Q>U+D$.  Choose a smooth nonincreasing function $\eta:[0,\infty)\to[0,1]$, equal to one on $[0,1]$ and zero on $[2,\infty)$, and set
    \[
        w_R=\eta(G/R)(1+G)^{-Q},\qquad R\geq1
        \p
    \]
    Properness of $G$ makes $w_R$ compactly supported, and $w_R\nearrow(1+G)^{-Q}$.  The chain and product rules, together with \cref{eq:ambient-G-derivatives}, give
    \begin{equation} \label{eq:ambient-weight-laplacian}
        |\Delta_{\omega_{\cA}}w_R| \leq C_Q(1+G)^{-Q-1}
    \end{equation}
    since on $\{R \leq G \leq 2R\}$, the factors $R^{-1}$ and $R^{-2}$ are controlled by $(1+G)^{-1}$ and $(1+G)^{-2}$; away from this annulus, the same estimate follows directly.

    Since $\omega = \omega_{\cA} - \sqrt{-1} \partial\bar\partial u$, compactly supported integration by parts gives
    \begin{align}
        \int_X w_R \tr_{\omega_{\cA}}\omega\, \omega_{\cA}^n
             & = n\int_X w_R\, \omega_{\cA}^n - n\int_X w_R \sqrt{-1}\partial\bar\partial u \wedge\omega_{\cA}^{n - 1}\\
             & = n\int_X w_R\, \omega_{\cA}^n - \int_X u\, \Delta_{\omega_{\cA}}w_R\, \omega_{\cA}^n \\
             &\leq \int_X\left((1+G)^{-Q}+(1+G)^{U-Q-1}\right) \omega_{\cA}^n
              < \infty
        \p
    \end{align}
    The trace is nonnegative, so monotone convergence yields
    \begin{equation}\label{eq:weighted-reverse-trace-ambient-measure}
        \int_X \tr_{\omega_{\cA}}\omega\, (1+G)^{-Q}\omega_{\cA}^n < \infty
        \p
    \end{equation}

    Finally, \cref{prop:ambient-volume-upper} gives
    \[
        \omega^n \leq C(1+\rho^2)^B \omega_{\cA}^n \leq C (1 + \rho^2)^{B D} \omega_{\cA}^n
    \]
    for some $B > 0$ and $D$ from \cref{eq:ambient-G-f-comparison}.  Thus, for $P_0=B+DQ$,
    \begin{align}
        \int_X \tr_{\omega_{\cA}}\omega\, (1 + \rho^2)^{-P_0}\omega^n
             & \leq C\int_X \tr_{\omega_{\cA}}\omega\, (1 + \rho^2)^{B - P_0}\omega_{\cA}^n\\
             & \leq C\int_X \tr_{\omega_{\cA}}\omega\, (1+G)^{-Q}\omega_{\cA}^n < \infty
        \c
    \end{align}
    proving the claim.
\end{proof}

\section{Uniform subquadratic scalar curvature bound}\label{sec-ambient comparsion II}
In this section, we establish further estimates for the shrinker metric, using the same notation as in \cref{sec:ambient-reference-comparison}. In particular, we give an analytic proof of the main result in \cite{SunZhangFano}. The rest of the paper does not rely on the results of this section.

\subsection{Subquadratic estimate for scalar curvature}

We continue with the notation fixed at the beginning of \cref{sec:ambient-reference-comparison}.  Let
    \begin{equation}\label{eq:entropy}
        \mu = \log\left(\frac{1}{(2\pi)^n n!}\int_X e^{-\rho^2}\, \omega^n\right) \leq 0.
    \end{equation}

   \begin{theorem}\label{thm:scalar}
        For every $x \in X$ with $\rho^2(x) \geq \max\{2n, e^2\}$, we have
        \begin{equation}\label{eq:scalar-bound}
            \frac{R_\omega(x)}{\rho^2(x)} \leq \frac{14n}{\rho(x)\log\rho^2(x)} + 4\sqrt{\frac{-\mu}{\log\rho^2(x)}}.
        \end{equation}
        In particular, $R_\omega = o(\rho^2)$ at infinity.
    \end{theorem}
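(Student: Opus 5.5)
The plan is to get \cref{eq:scalar-bound} from entropy monotonicity along the self-similar flow \cref{eq:shrinkerRF}, measured on the logarithmic time scale. The shape of the bound points this way: $\sqrt{-\mu/\log\rho^2}$ looks like a total entropy drop $-\mu$ averaged over a time interval of logarithmic length $\log\rho^2(x)$, followed by Cauchy--Schwarz.

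\emph{Step 1 (pointed entropy).} Fix $x$ with $\rho^2(x)\ge\max\{2n,e^2\}$ and put $T=\rho^2(x)$. Take the conjugate heat kernel $d\nu_{x,0;t}$ of the Ricci flow $\omega_t$ (after a harmless time shift so that $x$ is at the terminal time), and let $\mathcal N_x(\tau)$ be its pointed Nash entropy. I will use the following facts.
\begin{itemize}
\item[(a)] $\mathcal N_x(\tau)\to0$ as $\tau\to0$.
\item[(b)] $\mathcal N_x(\tau)\ge\mu$ for all $\tau$. This is Perelman's monotonicity combined with the fact that on a shrinker the entropy at every scale is bounded below by $\mu$ from \cref{eq:entropy}. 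I would take this from the heat kernel estimates of \cite{LWII}.
\item[(c)] The standard identity
\[
\frac{d}{d\log\tau}\mathcal N_x = -\int_0^\tau \frac{s}{\tau}\int 2s\Bigl|\Ric+\nabla^2 \phi-\tfrac{1}{2s}g\Bigr|^2 d\nu_s\,ds\le0 .
\]
\end{itemize}
Integrating (c) over $\log\tau\in[0,\log T]$ bounds the space-time integral of $s|\Ric+\nabla^2\phi-g/2s|^2$ by $-\mu$. Hence there is a scale $\tau_*\le T$ at which the log-averaged defect is at most $-\mu/\log T$.

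\emph{Step 2 (from the defect to $R$).} Taking the trace and applying Cauchy--Schwarz, $\int s\,|R+\Delta\phi-\tfrac{n}{s}|\,d\nu$ is controlled by $\sqrt{n\cdot(-\mu/\log T)}$. Next I use self-similarity. By \cref{eq:self-similar-identities}, $|t|R_{\omega_t}$ is the pullback of $R_\omega$ along $\gamma_{-\log|t|}$, and $F$ satisfies $(\partial_t-\Delta)F=-n$. Pairing the kernel against $F$ turns this average into the average of $R_\omega/\rho^2$ along the backward flow line of $X_0$ through $x$. The Gaussian upper and lower bounds of \cite{LWII} localize $\nu$ near that flow line. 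Finally, \cref{eq:affine-log-radius-derivative} converts the averaged ratio $R/\rho^2$ along $\gamma_q x$ back into its value at $x$. Because the ratio appears through $\frac{d}{dq}\log\rho^2$, this only costs an error of order $n/(\rho\log\rho^2)$; the constant $14n$ comes from collecting these errors using $\rho^2\ge 2n$.

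\emph{Main obstacle.} The hard step is turning the averaged, measure-weighted smallness into a \emph{pointwise} statement at $x$ with no curvature assumptions. This needs a lower bound for $\nu_{x;t}$ on a set of definite size around the flow line, so that small average implies small value. First I would use the on-diagonal lower bound and Gaussian concentration from \cite{LWII} to show that $\nu$ gives mass $\ge 1/2$ to a $\sqrt\tau$-ball. Then I would replace a pointwise bound there by the parabolic inequality $(\partial_t-\Delta)R\ge 2R^2/n$ satisfied by $R_{\omega_t}$: its comparison with the kernel lets an integral bound dominate $R(x)$, and this is where the $\rho^{-1}$ term enters. The final claim $R_\omega=o(\rho^2)$ is then immediate from \cref{eq:scalar-bound}, since $\mu$ is finite.
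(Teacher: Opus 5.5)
The gap is at the step you flag as the main obstacle. Everything before it produces only \emph{averaged} information about $R$, and averaged information is not what is at stake. Li--Wang already give $\int_X R_{\omega_{-1-\tau}}\,d\nu_\tau\le n/\tau$, which at scale $\tau\sim1$ is far below $\rho^2(x)$. Also, tracing the entropy tensor controls only $R+\Delta\phi$, not $R$. The whole content of \cref{eq:scalar-bound} is the passage to the value at $x$, and none of your three mechanisms achieves it.

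\emph{Supersolution comparison.} The inequality $(\partial_t-\Delta)R\ge\frac2nR^2\ge0$ makes $R$ a supersolution, and heat-kernel duality then gives $R(x,-1)\ge\int R\,d\nu_\tau$. So the pointwise value bounds the average from above, which is the opposite of what you need. Bounding $R(x)$ above by an average would require a subsolution or mean-value inequality. That is unavailable here, since $(\partial_t-\Delta)R=2|\Ric|^2$ is uncontrolled pointwise.

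\emph{Mass concentration.} Knowing that $\nu_\tau$ puts mass in a $\sqrt\tau$-ball does not help: a small integral says nothing about the value at a single point without regularity.

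\emph{Flow-line integration.} Integrating \cref{eq:affine-log-radius-derivative} controls only $\int_0^q(R_\omega/\rho^2)(\gamma_{-q'}x)\,dq'$. The derivative of $R_\omega/\rho^2$ along $X_0$ involves $\Ric(\nabla f,\nabla f)$, which has no sign or a priori bound. So the endpoint value cannot be recovered, and the claimed $O(n/(\rho\log\rho^2))$ error is unfounded.

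The missing idea is to let $R_\omega(x)$ enter through a \emph{base-point gradient estimate} combined with the Killing symmetry. Since $\xi$ is Killing for every $\omega_t$, the heat kernel is $\xi$-invariant, so $\xi_x\log H=\xi_y b$. Li--Wang's Fisher-information bound \cref{eq:fisher-input} with $w=\xi(x)$ then gives
\[
    \tau\int_X(\xi b)^2\,d\nu_\tau\le|\xi(x)|^2=2\bigl(\rho^2(x)-R_\omega(x)\bigr).
\]
This is an exact pointwise quantity at $x$, obtained with no curvature hypothesis. Integrating by parts against $d\nu_\tau$ and using the Bochner formula for Killing fields (\cref{lem:weighted-killing}) rewrites the left side as
$\int|\xi|^2\,d\nu_\tau+\tau\int E_\tau(\xi,\xi)\,d\nu_\tau-\tau\int|\nabla\xi|^2\,d\nu_\tau$. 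The three terms are handled as follows.
\begin{itemize}
\item The first term is at least $2\rho^2(x)-2n(2+\tau^{-1})$, by the first moment $\int F\,d\nu_\tau=\rho^2(x)+n\tau$ and the averaged scalar bound.
\item The $E_\tau$-term is where your entropy input actually belongs. It is controlled by Cauchy--Schwarz against $\int|\xi|^4\,d\nu_\tau\le16\rho^4(x)$ together with \cref{eq:heat-energy}.
\item The $|\nabla\xi|^2$-term is controlled by the $L^2$ Ricci bound.
\end{itemize}
Integrating over $\tau\in[\rho(x)^{-1},1]$ against $d\tau/\tau$ gives \cref{eq:scalar-bound}. This interval has log-length $\tfrac12\log\rho^2$ and lies below $\tau=1$, not in your range $[1,\rho^2]$. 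The constant $14n$ comes from the error $7n/(a\rho^2)$ at $a=\rho^{-1}$.
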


        The proof below is purely analytic and does not use \cite{SunZhangFano}, therefore it gives an analytic proof of the compactness of $\{\nabla f = 0\}$ established there. Moreover, combining \cref{eq:scalar-bound} with the uniform entropy lower bound in \cite{Zhang2026}, and the shrinker identity $R_\omega+|\nabla^{1,0}f|^2=\rho^2$,
        it gives the proof of \cref{thm-uniform bound on critical values}.

\

In addition to \cref{eq:self-similar-identities}, we further have
    \begin{equation}\label{eq:potential-gradient}
      \xi = J\nabla^{\omega_t} F,\quad  |\partial F|_{\omega_t}^2 = F - |t|^2 R_{\omega_t}, \quad \sqrt{-1}\partial\bar\partial F = \omega_t - |t|\Ric(\omega_t).
    \end{equation}
By \cref{eq:potential-gradient}, at time $-1 - \tau$ we have
    \begin{align}
        |\xi|^2 & = 2\bigl(F - (1 + \tau)^2 R_{\omega_{-1 - \tau}}\bigr) \leq 2F, \label{eq:killing-norm}\\
        |\nabla\xi|^2 & = 2n - 4(1 + \tau)R_{\omega_{-1 - \tau}} + 2(1 + \tau)^2|\Ric|^2 \\
        & \leq 2n + 8|\Ric|^2, \qquad 0 < \tau \leq 1. \label{eq:killing-gradient}
    \end{align}

Fix $x \in X$ with $\rho^2(x) \geq \max\{2n, e^2\}$.
Let $H$ be the minimal heat kernel of $\partial_t - \Delta_{\omega_t}$, with respect to the volume form $\omega_t^n/n!$. For $t < -1$, set $\tau = -1 - t > 0$ and define $b$ by
\begin{equation}\label{eq:heat-measure}
    H(x, -1; y, -1 - \tau) = (2\pi\tau)^{-n} e^{-b(y, \tau)}
    \p
\end{equation}
Define  a measure
\begin{equation}
    d\nu_\tau = H(x, -1; \cdot, -1 - \tau)\frac{\omega_{-1 - \tau}^n}{n!}
    \p
\end{equation}
This is a probability measure by \cite[(3.3)]{LWII}.
Let
\begin{equation}\label{eq:entropy-tensor}
    E_\tau = \Ric(g_{-1 - \tau}) + \nabla^2 b - \frac{1}{\tau}g_{-1 - \tau}.
\end{equation}
In our complex normalization, the heat-kernel estimates of Li--Wang \cite{LWII} give
    \begin{align}
        \tau\int_X (\partial_w^{(x)}\log H)^2\, d\nu_\tau & \leq |w|_\omega^2, \qquad w \in T_x X, \label{eq:fisher-input}\\
        \int_X h^2\, d\nu_\tau & \leq  \left(\int_X h\, d\nu_\tau\right)^2 +2\tau\int_X |\partial h|^2\, d\nu_\tau, \label{eq:heat-poincare}\\
        \int_X R_{\omega_{-1 - \tau}}\, d\nu_\tau  \leq \frac{n}{\tau},&\quad
        \int_a^1 \int_X |\Ric|^2\, d\nu_\tau\, d\tau  \leq \frac{n}{a}, \label{eq:scalar integral bound}\\
        \int_a^1 \tau \int_X |E_\tau|^2\, d\nu_\tau\, d\tau & \leq -2\mu, \qquad 0 < a < 1. \label{eq:heat-energy}
    \end{align}
Here \cref{eq:fisher-input} follows from \cite[Theorem~3.16, (3.36)]{LWII}, and \cref{eq:heat-poincare} is \cite[Theorem~3.17]{LWII} with exponent $2$. The two estimates in \cref{eq:scalar integral bound} follow respectively from \cite[Corollary~3.20, (3.54)]{LWII} and \cite[(A.4), in the proof of Lemma~A.1]{LWII}. Finally, \cref{eq:heat-energy} follows from  \cite[(A.5)]{LWII}.

\begin{lemma}\label{lem:fisher}
    For every $0 < \tau \leq 1$,
    \begin{equation}\label{eq:killing-fisher}
        \frac{\tau}{2\rho^2(x)}\int_X (\xi b)^2\, d\nu_\tau \leq 1 - \frac{R_\omega(x)}{\rho^2(x)}.
    \end{equation}
\end{lemma}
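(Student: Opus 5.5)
The proof will use the symmetry of the flow under $\xi$ to turn the $y$-derivative $\xi b$ into an $x$-derivative of $\log H$, and then apply the Fisher-information bound \cref{eq:fisher-input} with $w=\xi(x)$.

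\textbf{Step 1: $\xi$ is Killing for every $\omega_t$.} Let $\phi_r=\exp(r\xi)$ denote the flow of $\xi$. Since $\xi$ is real holomorphic, $[X_0,\xi]=-\tfrac12[J\xi,\xi]=0$, so $\phi_r$ commutes with every $\gamma_s$. As $\xi$ is Killing for $\omega$, we get
\[
\phi_r^*\omega_t=|t|\phi_r^*\gamma_{-\log|t|}^*\omega=|t|\gamma_{-\log|t|}^*\phi_r^*\omega=\omega_t
\]
for all $t$. Thus each $\phi_r$ is an isometry of the whole Kähler--Ricci flow $(\omega_t)$, and it preserves the volume forms $\omega_t^n/n!$.

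\textbf{Step 2: invariance of the heat kernel.} The minimal heat kernel is canonically attached to the flow; for instance, it is the increasing limit of Dirichlet heat kernels on an exhaustion. Transporting that exhaustion by $\phi_r$ gives another exhaustion with the same limit, so
\[
H(\phi_r x,-1;\phi_r y,-1-\tau)=H(x,-1;y,-1-\tau).
\]
Differentiating at $r=0$ gives $\xi^{(x)}\log H+\xi^{(y)}\log H=0$. By \cref{eq:heat-measure}, $b=-\log H-n\log(2\pi\tau)$, so
\[
\xi b=-\xi^{(y)}\log H=\xi^{(x)}\log H=\partial^{(x)}_{w}\log H,\qquad w=\xi(x)\in T_xX .
\]
Here I am assuming that the derivative $\partial_w^{(x)}$ in \cref{eq:fisher-input} means the real directional derivative, so this identification is legitimate.

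\textbf{Step 3: conclusion.} Apply \cref{eq:fisher-input} with $w=\xi(x)$. This gives
\[
\tau\int_X(\xi b)^2\,d\nu_\tau\le|\xi(x)|_\omega^2 .
\]
Since $\omega=\omega_{-1}$, \cref{eq:killing-norm} at $\tau=0$ (equivalently $|\xi|^2=|\nabla f|^2=2|\nabla^{1,0}f|^2$) gives
\[
|\xi(x)|^2_\omega=2\bigl(\rho^2(x)-R_\omega(x)\bigr).
\]
Dividing by $2\rho^2(x)$ yields \cref{eq:killing-fisher}.

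\textbf{Main obstacle.} The main technical point is Step 2: the invariance of the \emph{minimal} heat kernel under the isometries $\phi_r$ on a noncompact manifold without curvature bounds, together with the justification for differentiating it in $r$. I would handle the invariance via the exhaustion characterization and uniqueness of the minimal kernel. The differentiation is justified by smoothness of $H$ in both spatial variables.
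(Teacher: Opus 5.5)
Your proposal is correct and follows the paper's proof: you use the $\xi$-invariance of $H$ to turn $\xi b$ into $\partial^{(x)}_{\xi(x)}\log H$, then apply \cref{eq:fisher-input} with $w=\xi(x)$ and $|\xi(x)|^2_\omega=2(\rho^2(x)-R_\omega(x))$. Your Steps 1--2 only make explicit what the paper takes for granted: that $\xi$ is Killing for every $\omega_t$ because it commutes with $\gamma_s$, and that the minimal heat kernel is invariant under isometries of the flow.
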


\begin{proof}
 Since  $\xi$ is a Killing vector field, we have
    \[
        H(\exp(s\xi)x, -1; \exp(s\xi)y, -1 - \tau) = H(x, -1; y, -1 - \tau).
    \]
    Differentiation gives $\xi_x \log H = -\xi_y \log H = \xi_y b$, where the subscripts indicate the variable on which $\xi$ acts. Applying \cref{eq:fisher-input} with $w = \xi(x)$ and dividing by $2\rho^2(x)$ gives the desired estimate.
\end{proof}

 \begin{lemma}\label{lem:moments}
    For $0 < \tau \leq 1$,
    \begin{align}
        \int_X F\, d\nu_\tau & = \rho^2(x) + n\tau, \label{eq:potential-first-moment}\\
        \int_X F^2\, d\nu_\tau & \leq \bigl(\rho^2(x) + n\tau\bigr)^2 + 2\tau\bigl(\rho^2(x) + n\tau\bigr). \label{eq:potential-second-moment}
    \end{align}
    Consequently,
    \begin{align}
        \int_X |\xi|^4\, d\nu_\tau & \leq 16\rho^4(x), \label{eq:killing-fourth-moment}\\
        1 - \frac{1}{2\rho^2(x)}\int_X |\xi|^2\, d\nu_\tau & \leq \frac{n}{\rho^2(x)}(2 + \tau^{-1}). \label{eq:killing-moments}
    \end{align}
\end{lemma}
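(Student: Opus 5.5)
The plan is to derive the first moment from the heat equation for $F$, the second moment from the Poincaré inequality \cref{eq:heat-poincare}, and the two consequences from the pointwise identities \cref{eq:killing-norm} together with the scalar curvature bound \cref{eq:scalar integral bound}.

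\textbf{First moment.} Since $H(x,-1;\cdot,-1-\tau)$ is the conjugate heat kernel in the $y$-variable, $\frac{d}{d\tau}\int_X F\,d\nu_\tau = -\int_X(\partial_t-\Delta_{\omega_t})F\,d\nu_\tau$. By \cref{eq:self-similar-identities} this equals $n$. At $\tau\to0$ we have $\nu_\tau\to\delta_x$, and $F(x,-1)=\rho^2(x)$. Integrating gives \cref{eq:potential-first-moment}.

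The integration by parts needs justification. I would use that $F\le C(1+\rho^2)$ on the relevant time strip, since $\rho^2\circ\gamma_s\le e^s\rho^2$, together with the Gaussian upper bounds for $H$ from \cite{LWII}. With cutoffs, the boundary terms then vanish. I expect this justification to be the main obstacle, though it is standard given \cite{LWII}.

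\textbf{Second moment.} Apply \cref{eq:heat-poincare} with $h=F(\cdot,-1-\tau)$. The gradient term is controlled by \cref{eq:potential-gradient}: $|\partial F|^2\le F$. So
\[
\int_X F^2\,d\nu_\tau\le\Bigl(\int_X F\,d\nu_\tau\Bigr)^2+2\tau\int_X F\,d\nu_\tau,
\]
and inserting the first moment gives \cref{eq:potential-second-moment}. Here I use $F\in L^2(d\nu_\tau)$, which again follows from the Gaussian bounds, or by first applying the inequality to truncations $\min(F,k)$ and letting $k\to\infty$.

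\textbf{Fourth moment of $\xi$.} From \cref{eq:killing-norm} we have $|\xi|^4\le4F^2$. Using $\tau\le1$ and $n\le\rho^2(x)/2$, the second moment gives
\[
\int_X F^2\,d\nu_\tau\le(\tfrac32\rho^2)^2+3\rho^2\le4\rho^4,
\]
because $\rho^2\ge e^2$ gives $3\rho^2\le\tfrac74\rho^4$. This yields $\int_X|\xi|^4\,d\nu_\tau\le16\rho^4(x)$, which is \cref{eq:killing-fourth-moment}.

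\textbf{Lower bound on $\int|\xi|^2$.} By \cref{eq:killing-norm},
\[
\int_X|\xi|^2\,d\nu_\tau=2\int_X F\,d\nu_\tau-2(1+\tau)^2\int_X R\,d\nu_\tau.
\]
Using \cref{eq:potential-first-moment} for the first term and $\int_X R\,d\nu_\tau\le n/\tau$ from \cref{eq:scalar integral bound} for the second, we get
\[
\int_X|\xi|^2\,d\nu_\tau\ge2\rho^2+2n\tau-2(1+\tau)^2n/\tau.
\]
Since $(1+\tau)^2/\tau=\tau^{-1}+2+\tau$, this simplifies to
\[
1-\frac{1}{2\rho^2}\int_X|\xi|^2\,d\nu_\tau\le\frac{n}{\rho^2}(\tau^{-1}+2),
\]
which is \cref{eq:killing-moments}.
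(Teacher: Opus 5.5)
Your proposal is correct and follows the paper's proof step for step. It uses heat duality with $(\partial_t-\Delta_{\omega_t})F=-n$ for the first moment, the Poincar\'e inequality with $|\partial F|^2\le F$ for the second, and then $|\xi|^2\le 2F$ and $\int_X R\,d\nu_\tau\le n/\tau$ for the two consequences.

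The only real difference is the justification. The paper applies heat duality and Poincar\'e to concave truncations $\sigma_L(F)$, which are constant outside a compact set, so it needs only that $\nu_\tau$ is a probability measure plus monotone and dominated convergence, not Gaussian heat-kernel bounds. One small slip: at $t=-1-\tau$ the flow parameter $-\log|t|$ is negative. There the bound you want is $\rho^2\circ\gamma_s\le\rho^2$ for $s\le 0$, which gives $F\le 2\rho^2$; the inequality $\rho^2\circ\gamma_s\le e^s\rho^2$ you quote reverses for $s\le 0$.
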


\begin{proof}
    Choose a smooth nondecreasing concave function $\sigma$ on $[0, \infty)$ which equals the identity on $[0, 1]$ and is constant on $[2, \infty)$. Put $\sigma_L(s) = L\sigma(s/L)$. Heat duality and \cref{eq:self-similar-identities} give
    \begin{equation}\label{eq:truncated-moment}
        \int_X \sigma_L(F)\, d\nu_\tau - \sigma_L(\rho^2(x)) 
     = \int_0^\tau \frac{d}{d \tau} \int_X \sigma_L(F) \d \nu_\tau \d \tau
            = \int_0^\tau\int_X \left[n\sigma_L'(F) + \sigma_L''(F)|\partial F|^2\right]d\nu_s \, ds.
    \end{equation}
    For fixed $L$, subtracting the constant value at infinity makes this a compactly supported calculation.   The integrand on the right is uniformly bounded by \cref{eq:potential-gradient}, and converges pointwise to $n$. Dominated convergence on the right and monotone convergence on the left prove \cref{eq:potential-first-moment}.

    Apply \cref{eq:heat-poincare} to $\sigma_L(F)$. Since $|\partial\sigma_L(F)|^2 \leq F$, monotone convergence gives \cref{eq:potential-second-moment}. Thus \cref{eq:killing-norm} implies
    \[
        \frac{1}{4\rho^4(x)}\int_X |\xi|^4\, d\nu_\tau \leq \left(1 + \frac{n\tau}{\rho^2(x)}\right)^2 + \frac{2\tau}{\rho^2(x)}\left(1 + \frac{n\tau}{\rho^2(x)}\right) \leq 4.
    \]
    Finally,
    \[
        \frac{1}{2\rho^2(x)}\int_X |\xi|^2\, d\nu_\tau = 1 + \frac{n\tau}{\rho^2(x)} - \frac{(1 + \tau)^2}{\rho^2(x)}\int_X R_{\omega_{-1 - \tau}}\, d\nu_\tau.
    \]
    The scalar-curvature integral estimate \cref{eq:scalar integral bound} gives \cref{eq:killing-moments}.
\end{proof}
\begin{lemma}\label{lem:weighted-killing}
    For every $\tau \in (0, 1]$,
    \begin{equation}\label{eq:weighted-killing}
        \int_X \bigl((\xi b)^2 + |\nabla\xi|^2\bigr)\, d\nu_\tau = \frac{1}{\tau}\int_X |\xi|^2\, d\nu_\tau + \int_X E_\tau(\xi, \xi)\, d\nu_\tau.
    \end{equation}
\end{lemma}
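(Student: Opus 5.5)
The identity is a weighted Bochner formula for the Killing field $\xi$ with respect to the heat measure $d\nu_\tau=(2\pi\tau)^{-n}e^{-b}\,\omega^n_{-1-\tau}/n!$. The plan is to derive it first formally, then justify the integrations by parts with cutoffs. Throughout, all metric quantities are taken at time $-1-\tau$, where $\xi$ is still a real holomorphic Killing field, since $\omega_t$ is a pullback of $\omega$ by flows commuting with $\xi$. We use two facts about a Killing field:
\begin{itemize}
\item[(a)] $\nabla\xi$ is skew, so $\langle\nabla_V\xi,\xi\rangle=-\langle\nabla_\xi\xi,V\rangle$ for every vector $V$, and $\div\xi=0$;
\item[(b)] $\Delta\xi=-\Ric(\xi)$.
\end{itemize}

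\textbf{Formal computation.} Integrating by parts against $e^{-b}$ gives
\[
\int_X|\nabla\xi|^2\,d\nu_\tau=-\int_X\langle\Delta\xi,\xi\rangle\,d\nu_\tau+\int_X\langle\nabla_{\nabla b}\xi,\xi\rangle\,d\nu_\tau .
\]
By (b), the first term equals $\int_X\Ric(\xi,\xi)\,d\nu_\tau$. By (a), the second term equals $-\int_X\langle\nabla_\xi\xi,\nabla b\rangle\,d\nu_\tau$. Next we expand
\[
\langle\nabla_\xi\xi,\nabla b\rangle=\xi(\xi b)-\nabla^2b(\xi,\xi).
\]
Since $\div\xi=0$, we have $\int_X\xi(\varphi)\,e^{-b}=\int_X\varphi\,(\xi b)\,e^{-b}$, so that
\[
\int_X\xi(\xi b)\,d\nu_\tau=\int_X(\xi b)^2\,d\nu_\tau .
\]
Collecting terms,
\[
\int_X|\nabla\xi|^2\,d\nu_\tau=\int_X\bigl(\Ric+\nabla^2 b\bigr)(\xi,\xi)\,d\nu_\tau-\int_X(\xi b)^2\,d\nu_\tau .
\]
Substituting $\Ric+\nabla^2b=E_\tau+\tau^{-1}g$ from \cref{eq:entropy-tensor} yields \cref{eq:weighted-killing}.

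\textbf{Justification.} I would insert a cutoff $\chi_L=\eta(F/L)$ in each integration by parts. Its key advantage is that $\xi F=0$ by \cref{eq:potential-gradient}, so $\xi\chi_L=0$. Consequently the $\div\xi$ step produces no error term, and the only error comes from the first integration by parts. That error is
\[
\int_X\langle\nabla_{\nabla\chi_L}\xi,\xi\rangle\,d\nu_\tau ,
\]
which by Cauchy--Schwarz is bounded by
\[
L^{-1}\Bigl(\int_X|\partial F|^2|\xi|^2\,d\nu_\tau\Bigr)^{1/2}\Bigl(\int_X|\nabla\xi|^2\,d\nu_\tau\Bigr)^{1/2}.
\]
The first factor is finite by $|\partial F|^2\le F$, $|\xi|^2\le 2F$ and the second-moment bound \cref{eq:potential-second-moment}. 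For the second factor we use \cref{eq:killing-gradient}, namely $|\nabla\xi|^2\le 2n+8|\Ric|^2$.

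\textbf{Main obstacle.} The hard step is integrability of the individual terms, namely $|\Ric|^2$, $(\xi b)^2$ and $|E_\tau|^2|\xi|^2$, against $d\nu_\tau$ at a \emph{fixed} $\tau$. The bounds \cref{eq:scalar integral bound} and \cref{eq:heat-energy} control $|\Ric|^2$ and $|E_\tau|^2$ only after integrating in $\tau$. I would first try to obtain finiteness at each fixed $\tau$ from the Gaussian heat kernel and gradient bounds of \cite{LWII}, which should be available for smooth shrinkers at positive $\tau$. Failing that, I would prove the identity for almost every $\tau$, with the integrals understood in the limiting sense. The term $(\xi b)^2$ is finite by \cref{lem:fisher}. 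For $E_\tau(\xi,\xi)$, Cauchy--Schwarz together with \cref{eq:killing-fourth-moment} reduces finiteness to $\int_X|E_\tau|^2\,d\nu_\tau<\infty$. The $\tau$-integrated identity is all that is subsequently needed.
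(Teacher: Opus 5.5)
Your proposal is correct and is essentially the paper's argument. The paper integrates the scalar Bochner identity $\Delta|\xi|^2=|\nabla\xi|^2-\Ric(\xi,\xi)$ by parts together with $\nabla_\xi\xi=-\frac12\nabla|\xi|^2$; this is the same computation as your use of $\Delta\xi=-\Ric(\xi)$ and the skew-symmetry of $\nabla\xi$. It also uses the same cutoff $\eta(F/L)$ with $\xi\,\eta(F/L)=0$, so only one boundary term appears. Like your fallback, the paper justifies the limit $L\to\infty$ only for almost every $\tau$, namely those with $\int_X|\Ric|^2\,d\nu_\tau$ and $\int_X|E_\tau|^2\,d\nu_\tau$ finite, which is all that the $\tau$-integration in the proof of \cref{thm:scalar} requires.
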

\begin{proof}
  At a fixed time, the Killing identities give
    \begin{align}
        \operatorname{div}\xi & = 0, \label{eq:killing-divergence}\\
        \nabla_\xi\xi & = -\tfrac12\nabla|\xi|^2, \label{eq:killing-acceleration}\\
        \Delta_{\omega_{-1 - \tau}}|\xi|^2 & = |\nabla\xi|^2 - \Ric(g_{-1 - \tau})(\xi, \xi). \label{eq:killing-bochner}
    \end{align}
    Choose a smooth cutoff $\eta_L = \eta(F/L)$, equal to one where $F \leq L$ and zero where $F \geq 2L$. Since $\xi F = 0$, we have $\xi\eta_L = 0$. Apply the divergence theorem to the compactly supported vector field $e^{-b}\eta_L^2(\xi b)\xi$. Using $\operatorname{div}\xi = 0$, $\xi\eta_L = 0$, and \cref{eq:heat-measure}, we obtain
    \[
        0 = \int_X \eta_L^2\bigl(\xi(\xi b) - (\xi b)^2\bigr)\, d\nu_\tau.
    \]
   Since $\xi(\xi b) = \nabla^2 b(\xi, \xi) + db(\nabla_\xi\xi)$, \cref{eq:killing-acceleration} yields
    \[
        \int_X \eta_L^2(\xi b)^2\, d\nu_\tau = \int_X \eta_L^2\left(\nabla^2 b(\xi, \xi) - \operatorname{Re}\langle\partial b, \partial|\xi|^2\rangle\right)d\nu_\tau.
    \]
    A second integration by parts, using the density $e^{-b}$, gives
    \[
        \int_X \eta_L^2\Delta_{\omega_{-1 - \tau}}|\xi|^2\, d\nu_\tau = \operatorname{Re}\int_X \left(\eta_L^2\langle\partial b, \partial|\xi|^2\rangle - \langle\partial(\eta_L^2), \partial|\xi|^2\rangle\right)d\nu_\tau.
    \]
    Combining these identities with \cref{eq:killing-bochner,eq:entropy-tensor}, we obtain
    \begin{align}
        \int_X \eta_L^2\bigl((\xi b)^2 + |\nabla\xi|^2\bigr)\, d\nu_\tau
        & = \int_X \eta_L^2\left(\frac{|\xi|^2}{\tau} + E_\tau(\xi, \xi)\right)d\nu_\tau \\
        &\quad - \operatorname{Re}\int_X \langle\partial(\eta_L^2), \partial|\xi|^2\rangle\, d\nu_\tau. \label{eq:killing-cutoff}
    \end{align}
    The preceding cutoff identities hold for every $\tau \in (0, 1]$. By \cref{eq:scalar integral bound,eq:heat-energy} and Fubini's theorem, for almost every $\tau\in (0,1]$ both $\int_X |\Ric|^2\, d\nu_\tau$ and $\int_X |E_\tau|^2\, d\nu_\tau$ are finite. Fix such a $\tau$. Then $\int_X |\nabla\xi|^2\, d\nu_\tau < \infty$ by \cref{eq:killing-gradient}, and $\int_X |E_\tau||\xi|^2\, d\nu_\tau < \infty$ by \cref{lem:moments}. On the cutoff annulus $\{L \leq F \leq 2L\}$,
    \[
        |\partial\eta_L|^2|\xi|^2 \leq \frac{CF^2}{L^2}\mathbf 1_{\{L \leq F \leq 2L\}} \leq C\mathbf 1_{\{F \geq L\}}.
    \]
    The absolute value of the boundary term in \cref{eq:killing-cutoff} is bounded by
    \[
        C\left(\int_X |\nabla\xi|^2\, d\nu_\tau\right)^{1/2}\left(\int_X |\partial\eta_L|^2|\xi|^2\, d\nu_\tau\right)^{1/2} \longrightarrow 0.
    \]
    All other terms on the right are integrable by the preceding bounds. Passing to the limit proves \cref{eq:weighted-killing}.
\end{proof}

\begin{proof}[Proof of \cref{thm:scalar}]
    By \cref{lem:fisher} and \cref{lem:weighted-killing}, for  every $\tau \in (0, 1]$,
    \begin{align}
        \frac{R_\omega(x)}{\rho^2(x)} & \leq 1 - \frac{1}{2\rho^2(x)}\int_X |\xi|^2\, d\nu_\tau + \frac{\tau}{2\rho^2(x)}\int_X |\nabla\xi|^2\, d\nu_\tau \\
        &\quad - \frac{\tau}{2\rho^2(x)}\int_X E_\tau(\xi, \xi)\, d\nu_\tau. \label{eq:scalar-defect}
    \end{align}
    Fix $0 < a < 1$. Integrating the first two terms on the right against $d\tau/\tau$ and using \cref{eq:killing-moments,eq:killing-gradient,eq:scalar integral bound} gives
    \begin{align}
        \int_a^1\left(1 - \frac{1}{2\rho^2(x)}\int_X |\xi|^2\, d\nu_\tau\right)\frac{d\tau}{\tau} & \leq \frac{n}{\rho^2(x)}\bigl(2\log(1/a) + a^{-1} - 1\bigr), \\
        \frac{1}{2\rho^2(x)}\int_a^1\int_X |\nabla\xi|^2\, d\nu_\tau\, d\tau & \leq \frac{n(1 - a)}{\rho^2(x)} + \frac{4n}{a\rho^2(x)}.
    \end{align}
    For the last term, first Cauchy--Schwarz and then \cref{eq:heat-energy,eq:killing-fourth-moment} give
    \begin{align}
        \frac{1}{2\rho^2(x)}\left|\int_a^1\int_X E_\tau(\xi, \xi)\, d\nu_\tau\, d\tau\right|
        & \leq \frac{1}{2\rho^2(x)}\left(\int_a^1\tau\int_X |E_\tau|^2\, d\nu_\tau\, d\tau\right)^{1/2} \\
        &\quad\times\left(\int_a^1\tau^{-1}\int_X |\xi|^4\, d\nu_\tau\, d\tau\right)^{1/2} \\
        & \leq 2\sqrt{-2\mu\log(1/a)}.
    \end{align}
    Since $\log(1/a) \leq a^{-1}$, integration of \cref{eq:scalar-defect} yields
    \begin{equation}\label{eq:integrated-scalar}
        \frac{R_\omega(x)}{\rho^2(x)}\log(1/a) \leq \frac{7n}{a\rho^2(x)} + 2\sqrt{-2\mu\log(1/a)}.
    \end{equation}
    Taking $a = \rho(x)^{-1}$ proves \cref{eq:scalar-bound}.
\end{proof}

\subsection{Rough two sided bound of the shrinker metric}

In this section, we compare the shrinker metrics with ambient K\"ahler metrics without imposing any curvature assumptions.

Let $ S = \sum_{j = 0}^M|s_j|_{h_\omega^\ell}^2 $ as in \cref{lem:projective-denominator}, which is a smooth positive function on $X$.

\begin{proposition}
    \label{prop:projective-trace-exponential}
    There is $C < \infty$ such that
    \begin{equation}\label{eq:projective-trace-exponential}
        \frac1\ell\tr_\omega\Psi^*\omega_{\rm FS}
        \leq (1+\rho)^C S^{-C}
        \p
    \end{equation}
\end{proposition}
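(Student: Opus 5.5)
The plan is to run a parabolic Chern--Lu argument along the self-similar Kähler--Ricci flow $\omega_t$ of \cref{eq:shrinkerRF}, mirroring the proof of \cref{prop:ambient-lower-bound}. The divergent curvature term will be absorbed by $\log S$, which \cref{eq:iddbS} relates to $\Psi^*\omega_{\rm FS}$. Finally, the resulting bound at time $t$ is transported back to $\omega$ via the flow $\gamma_s$.

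\textbf{Step 1 (Chern--Lu along the flow).} Put $u=\tr_{\omega_t}\Psi^*\omega_{\rm FS}$. Since $\partial_t\omega_t=-\Ric(\omega_t)$, the Ricci term in the Chern--Lu inequality cancels. The bisectional curvature of $\omega_{\rm FS}$ is bounded by $2$, so I expect
\[
(\partial_t-\Delta_{\omega_t})\log(u+1)\le C_0\,u .
\]

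\textbf{Step 2 (the compensating potential).} Let $S_t=\sum_j|s_j|^2_{h_{\omega_t}^\ell}$. By \cref{eq:iddbS} we have $\sqrt{-1}\partial\bar\partial\log S_t=-\ell\Ric(\omega_t)+\Psi^*\omega_{\rm FS}$, so $\Delta_{\omega_t}\log S_t=-\ell R_{\omega_t}+u$. On the other hand, $\partial_t\log\omega_t^n=-R_{\omega_t}$. The explicit rescaling in \cref{eq:shrinkerRF}, together with the homogeneity weights $\nu_j$, contributes only bounded terms, of the same kind as in \cref{eq:homogeneous-section-norm-flow}. Hence the $R$ terms cancel and
\[
(\partial_t-\Delta_{\omega_t})\log S_t\ge -u-C .
\]
Set
\[
Q=\log(u+1)+(C_0+1)\log S_t-\varepsilon F-C't .
\]
Then $(\partial_t-\Delta_{\omega_t})Q\le -u+C-n\varepsilon... \le 0$ wherever $u$ is large, after choosing $C'$ appropriately. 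So $Q$ cannot attain a large interior maximum.

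\textbf{Step 3 (maximum principle and transport).} On a strip $X\times[-1,t_0]$ I need $Q\to-\infty$ at spatial infinity; this is the main obstacle. The upper bound on $S$ from \cref{lem:projective-denominator} is polynomial, so $\log S_t$ is harmless. However, $u$ has no a priori polynomial bound, unlike the affine case, where \cref{lem:homogeneous-holomorphic-polynomial-growth} was available.

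My first attempt would be to bound $u\,S$ directly. Using the Chern connection of $h_\omega^\ell$, one has $\Psi^*\omega_{\rm FS}\le S^{-1}\sum_j|D s_j|^2$. I would then prove polynomial growth of $|Ds_j|^2_{h_\omega^\ell,\omega}$ by the same Bochner/maximum-principle device used for $|\partial h|$ in \cref{lem:homogeneous-holomorphic-polynomial-growth}, applied along the flow to the homogeneous sections. Since the curvature of $h_\omega$ is $\Ric$, the relevant Bochner identity again has its Ricci terms cancelled by the flow.

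If this fails, I would localize instead. The maximum principle would be applied on the compact sublevel sets $\{F\le L\}$ with a cutoff built from $F$, whose Laplacian is controlled by \cref{eq:self-similar-identities}. The strong negative term $-u$ in Step 2 then dominates the cutoff errors, in the spirit of a Cheng--Yau localization.

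Once $Q\le C$ is established uniformly, the bound translates back to the shrinker as follows. At $x=\gamma_s(y)$ with $y\in\Sigma$, let $t=-e^{-s}$. The scaling of $u$ and $S_t$ along $\gamma_s$ is explicit and polynomial in $e^{s}$, and $e^s\le C\rho^2(x)$ by \cref{lem:affine-return-time}. Combined with the compactness of $\{\rho^2\le r_0\}$, this yields $u\le(1+\rho)^C S^{-C}$, which is \cref{eq:projective-trace-exponential}.
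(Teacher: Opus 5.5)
Your Steps 1 and 2 are sound. With $S_t$ built from $h_{\omega_t}^\ell$ one even has exactly $(\partial_t-\Delta_{\omega_t})\log S_t=-u$. The gap is Step 3, which is not a technical side issue: it is the whole content of the proposition, and neither of your remedies closes it.

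The maximum principle on $X\times[-1,t_0]$ bounds $Q$ by $\sup_X Q(\cdot,-1)$. At $t=-1$ your $u$ is the unknown $\tr_\omega\Psi^*\omega_{\rm FS}$ itself. By self-similarity, $u(\cdot,t)$ is just a rescaled pullback of this same function, up to factors $e^{\pm Cs}$ from the linear action on the projective factor. So requiring $Q\to-\infty$ at spatial infinity amounts to an a priori bound of the form $\tr_\omega\Psi^*\omega_{\rm FS}\lesssim e^{\varepsilon\rho^2}S^{-C}$, which you do not have.

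In \cref{prop:ambient-lower-bound} the parabolic argument only upgrades an input supplied from outside, namely \cref{lem:homogeneous-holomorphic-polynomial-growth}. That lemma is cited from \cite{LiZhang2026} and is not proved there by a maximum principle. There is no such input for $\Psi$.

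Your first remedy does not provide one. For a holomorphic section $s$ of $-\ell K_X$ with the metrics $h_{\omega_t}^\ell$, a direct computation along the flow gives
\[
    (\partial_t-\Delta_{\omega_t})|Ds|^2=-|\nabla Ds|^2-\ell^2|\Ric|^2|s|^2+2\ell\,\Ric(Ds,\overline{Ds}).
\]
So only the $\nabla R$ terms cancel. The surviving Ricci term, after Cauchy--Schwarz, leaves a superlinear term $|Ds|^4/|s|^2$, and no maximum principle follows.

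Your second remedy (localization) is the right instinct, but it omits the mechanism that makes it work, and as stated it still runs into the initial slice $t=-1$. What is needed is that the barrier quantity be controlled by $u$ itself. Use the polynomial upper bound on $S$ from \cref{lem:projective-denominator}, the $e^{-\varepsilon F}$ factor, and a harmless $|t|$-normalization. Then $e^{Q}\le C(u+1)$, hence $(\partial_t-\Delta_{\omega_t})Q\le -c\,e^{Q}+C$. This Keller--Osserman (Riccati) structure, not merely the sign of $-u$, is what lets a space--time cutoff vanishing at $t=-1$ absorb all error terms without any a priori growth assumption.

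Since the flow is self-similar it carries no extra information, and the paper runs exactly this argument elliptically on $(X,\omega)$:
\begin{itemize}
\item Set $U=\frac1\ell\tr_\omega\Psi^*\omega_{\rm FS}$ and $\chi=-\frac1\ell\log S$.
\item Chern--Lu gives $\Delta_f\log(U+1)\ge -C_0U-C_0$.
\item Contracting \cref{eq:iddbS} gives $\Delta_f\chi=n-U+O(1)$.
\item With $a>C_0+1$ and $B\gg a$, the function $H=(U+1)e^{-a\chi}(1+\rho^2)^{-B}$ satisfies $H\le C(U+1)$ and $\Delta_f H\ge cH^2-CH$.
\item The maximum principle for $\eta(\rho^2/R)H$ then gives $H\le C$ uniformly in $R$, which is \cref{eq:projective-trace-exponential}.
\end{itemize}
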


\begin{proof}
    Set
    \[
        U = \frac1\ell\tr_\omega \Psi^*\omega_{\rm FS}
            \qquad
        \chi = -\frac1\ell\log S
        \p
    \]
    By the Chern--Lu inequality and the shrinker equation, there is $C_0 < \infty$, depending only on the fixed projective system, such that
    \begin{equation}
        \Delta_f \log(U + 1) \geq -C_0 U - C_0,
        \p
    \end{equation}
    Furthermore contracting \cref{eq:iddbS} and the homogeneity of $s_j$ gives
    \[
        \Delta_f \chi = n - U + O(1)
    \]
    Hence picking $a > C_0 + 1$ and $B \gg a$, we can define
    \[
        H = (U + 1)e^{-a\chi}(1+\rho^2)^{-B}
        \p
    \]
    which satisfies $H \leq C(U + 1)$ by \cref{lem:projective-denominator}. Since $\Delta_f\log(1 + \rho^2)$ is bounded, we have
    \begin{equation}\label{eq:projective-barrier-inequality}
        \Delta_f \log H \geq (a - C_0)U - C \geq U - C
        \p
    \end{equation}
    The bound $H \leq C(U + 1)$ implies $U \geq cH - 1$.  Consequently,
    \begin{equation}
        \Delta_f H = H\bigl(\Delta_f \log H + |\partial\log H|_\omega^2\bigr)
            \geq H \left( U - C \right)
            \geq cH^2 - CH
        \p
    \end{equation}

    To justify the maximum principle on noncompact $X$, choose a nonincreasing cutoff $\eta$ supported in $[0, 2]$, equal to one on $[0, 1]$, and satisfying $(\eta')^2 \leq C\eta$ and $|\eta''| \leq C$.  For $R \geq 2n + 1$, let $\eta_R = \eta(\rho^2/R)$.  Using $\Delta_f \rho^2 = n - \rho^2$ and $|\partial\rho^2|_\omega^2 = \rho^2 - R_\omega \leq \rho^2$, we see that
    \[
        \frac{|\partial\eta_R|_\omega^2}{\eta_R} \leq \frac CR,
            \qquad
        \Delta_f \eta_R \geq -\frac CR
            \quad\text{on }\{\eta_R > 0\}
        \p
    \]
    At a maximum of $\eta_R H$, \cref{eq:projective-barrier-inequality} and $\partial H = -H\partial\eta_R/\eta_R$ give
    \[
        0 \geq c\eta_R H^2 - C\eta_R H - \frac CRH
        - 2H\frac{|\partial\eta_R|_\omega^2}{\eta_R}
        \p
    \]
    After division by $H$ and use of the cutoff estimate, this gives
    \[
        c\eta_R H \leq C\eta_R + \frac CR
        \p
    \]
    Thus $\sup_X \eta_R H \leq C$, uniformly in $R$, and exhaustion gives $H \leq C$.  Therefore
    \[
        U + 1 \leq C (1+\rho^2)^B e^{a\chi }
        \leq (1+\rho^2)^B S^{-C},
    \]
    which proves \cref{eq:projective-trace-exponential}.
\end{proof}

\Cref{prop:ambient-lower-bound} and
\cref{eq:projective-trace-exponential} give the lower metric bound in
\cref{eq:ambient-metric-comparison}.  Together with \cref{prop:ambient-volume-upper}, pointwise linear algebra gives the reverse
bound.

\begin{corollary}
    \label{cor:two-sided-ambient-metric-comparison}
    There is $C < \infty$ such that
    \begin{equation}\label{eq:ambient-metric-comparison}
         (1+\rho^2)^{-C} S^{C}\omega_{\cA}
        \leq \omega \leq 
        (1+\rho^2)^C S^{-C}\omega_{\cA}
        \p
    \end{equation}
\end{corollary}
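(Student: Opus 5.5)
The corollary follows by pointwise linear algebra from three earlier estimates: an upper bound on $\tr_\omega\omega_\cA$, an upper bound on $\omega^n/\omega_\cA^n$, and a lower bound on $S$. We first prove the lower metric bound. Since $\omega_\cA = \omega_{\rm aff} + \frac1\ell\Psi^*\omega_{\rm FS}$, we have
\[
    \tr_\omega\omega_\cA = \tr_\omega\omega_{\rm aff} + \frac1\ell\tr_\omega\Psi^*\omega_{\rm FS}\p
\]
The first term is bounded by $C(1+\rho^2)^A$ by \cref{eq:ambient-trace-lower-bound} in \cref{prop:ambient-lower-bound}. The second is bounded by $(1+\rho)^C S^{-C}$ by \cref{prop:projective-trace-exponential}. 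The upper bound $S \leq C(1+\rho^2)^A$ from \cref{lem:projective-denominator} gives $1 \leq C(1+\rho^2)^{A'}S^{-1}$. Hence both terms are dominated by $(1+\rho^2)^{C}S^{-C}$ after enlarging $C$. Because $\omega_\cA \leq (\tr_\omega\omega_\cA)\,\omega$, this yields
\[
    \omega_\cA \leq (1+\rho^2)^{C}S^{-C}\omega\c
\]
which is equivalent to the left inequality of \cref{eq:ambient-metric-comparison}.

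For the upper bound we diagonalize $\omega_\cA$ with respect to $\omega$ at a point, with eigenvalues $\mu_1,\dots,\mu_n>0$. The previous step gives $\mu_i \leq \Lambda := (1+\rho^2)^{C}S^{-C}$ for every $i$. \Cref{prop:ambient-volume-upper} gives
\[
    \prod_i\mu_i = \frac{\omega_\cA^n}{\omega^n} \geq C^{-1}(1+\rho^2)^{-A}\p
\]
Consequently,
\[
    \mu_{\min} \geq \frac{\prod_i \mu_i}{\Lambda^{n-1}} \geq C^{-1}(1+\rho^2)^{-A-(n-1)C}S^{(n-1)C}\p
\]
The eigenvalues of $\omega$ relative to $\omega_\cA$ are the $\mu_i^{-1}$, so they are at most $C(1+\rho^2)^{A+(n-1)C}S^{-(n-1)C}$. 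This gives
\[
    \omega \leq (1+\rho^2)^{C'}S^{-C'}\omega_\cA\p
\]

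Finally, we choose one constant $C$ large enough for both inequalities. The powers of $S$ and $(1+\rho^2)$ can be increased freely: again we use $S \leq C(1+\rho^2)^A$, so raising the exponent of $S^{-1}$ costs only a polynomial factor in $\rho$. The multiplicative constants are absorbed the same way, since $1+\rho^2\geq1$ and $S^{-1}\gtrsim(1+\rho^2)^{-A}$.

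There is no genuine obstacle here, since all the analytic content lies in \cref{prop:ambient-lower-bound,prop:projective-trace-exponential,prop:ambient-volume-upper}. The only point requiring care is the bookkeeping of exponents so that a single constant $C$ works on both sides.
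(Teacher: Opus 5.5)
Your two analytic steps are the paper's proof. \cref{prop:ambient-lower-bound} and \cref{prop:projective-trace-exponential} bound $\tr_\omega\omega_\cA$ and hence give $\omega_\cA\le\Lambda\,\omega$. \cref{prop:ambient-volume-upper} together with $\mu_{\min}\ge\prod_i\mu_i/\Lambda^{n-1}$ then gives the reverse inequality.

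What does not work is the final normalization, which you treat as routine. Raising the exponent of $S^{-1}$ by $k$ costs a factor $(1+\rho^2)^{A_Sk}$, where $A_S$ is the exponent in $S\le C(1+\rho^2)^{A_S}$ from \cref{lem:projective-denominator}, and $A_S$ can exceed $1$. So this trade cannot equalize the exponents once the power of $1+\rho^2$ exceeds the power of $S^{-1}$. That is the situation both for the affine term $C(1+\rho^2)^{A}$, which carries no power of $S^{-1}$, and for your final coefficient $(1+\rho^2)^{A+(n-1)C}S^{-(n-1)C}$. Likewise, $S^{-1}\gtrsim(1+\rho^2)^{-A_S}$ does not let you absorb a constant $K>1$ into $\bigl((1+\rho^2)/S\bigr)^{C}$; that would need $(1+\rho^2)/S$ bounded below by a number larger than $1$.

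In fact no bookkeeping can give \cref{eq:ambient-metric-comparison} as displayed. Its coefficients are $q^{-C}$ and $q^{C}$ with $q=(1+\rho^2)/S$. Since $\omega_\cA>0$, the two inequalities together force $q\ge1$, i.e.\ $S\le1+\rho^2$, at every point (for $C>0$).
\begin{itemize}
\item This fails after replacing every $s_j$ by $\Lambda s_j$ with $\Lambda\gg1$. That changes neither $\Psi$, $\omega_\cA$ nor $\rho$, but multiplies $S$ by $\Lambda^2$.
\item Even with a multiplicative constant in front, equal exponents force $S\lesssim1+\rho^2$. This fails once the system contains a section whose $h_\omega^\ell$-norm grows faster than $\rho^2$. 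For the Gaussian soliton on $\bbC^n$ with $s_0=(\partial_{z_1}\wedge\cdots\wedge\partial_{z_n})^{\otimes\ell}$ and the sections $z^\beta s_0$, $|\beta|=2$, one has $S\asymp(1+\rho^2)^2$.
\end{itemize}

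What your argument, and the paper's two-line proof, actually establish is the comparison with independent exponents,
\[
    C^{-1}(1+\rho^2)^{-B}S^{a}\,\omega_\cA\le\omega\le C(1+\rho^2)^{B'}S^{-a'}\,\omega_\cA,
\]
and that is the form you should state and prove. The same caveat applies to the input \cref{prop:projective-trace-exponential}. Its proof really gives $\frac1\ell\tr_\omega\Psi^*\omega_{\rm FS}+1\le C(1+\rho^2)^{B}S^{-a/\ell}$ with $B\gg a$.
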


\section{Noncompact Matsushima theorem}
\label{sec:weighted-matsushima}

The ambient comparison gives the weighted integrability needed for the
noncompact Matsushima theorem. The resulting reductive structure then yields
uniqueness of the soliton field.

Throughout this section, write
\[
    d\mu_f = e^{-f}\omega^n
    \p
\]
For a vector field $Y$, set
\[
    \div_f Y
    = \div_{\omega^n}Y - Yf
    \p
\]
Thus $\div_f$ is the divergence with respect to $d\mu_f$, and compactly supported weighted integration by parts reads
\begin{equation}\label{eq:section-three-weighted-divergence}
    \int_X \psi\, \div_f Y\, d\mu_f
    = -\int_X Y(\psi)\, d\mu_f
    \p
\end{equation}
Fix a nonincreasing $\chi\in C_c^\infty([0,\infty))$, equal to one on
$[0,1]$ and zero on $[2,\infty)$, and set
$\chi_R=\chi(\rho^2/R)$.  Then
\begin{equation}\label{eq:section-three-cutoff-gradient}
    |\nabla\chi_R|_\omega \leq CR^{-1/2}
    \quad\text{on }\{R \leq \rho^2 \leq 2R\}
    \p
\end{equation}

\subsection{Weighted integrability of the holomorphic centralizer}

Let $\xi = J\nabla f$ be the soliton vector field.  Denote by $\mathfrak h_\xi$ the Lie algebra of holomorphic vector fields commuting with $\xi$.  For
$Y\in\mathfrak h_\xi$, let
\begin{equation}\label{eq:explicit-soliton-potential}
    \theta_Y
   : = Y^{1,0}f - \div_{\omega^n}Y^{1,0}=-\div_f Y^{1,0}
    \p
\end{equation}
Holomorphicity gives
\[
    \iota_{Y^{1,0}} \Ric(\omega) = -\sqrt{-1}\, \bar\partial (\div_{\omega^n}Y^{1,0}),
        \qquad
    \iota_{Y^{1,0}} \sqrt{-1}\partial\bar\partial f = \sqrt{-1}\, \bar\partial(Y^{1,0}f)
    \p
\]
Therefore the shrinker equation implies that $\theta_Y$ is a Hamiltonian for $Y^{1,0}$, which can be expressed in any of the following equivalent equations:
\begin{align} \label{eq:HamilY}
    \iota_{Y^{1,0}} \omega &= \sqrt{-1} \bar\partial \theta_Y \\
     \nabla^{1, 0} \theta_Y &= Y^{1,0} \\
            \iota_Y \omega &= \sqrt{-1}\bigl(\bar\partial\theta_Y - \partial\overline{\theta_Y}\bigr)
    \p
\end{align}
Moreover, $\xi(f)=0$, $\cL_\xi\omega^n=0$, and
$[\xi,Y^{1,0}]=0$, imply
\[
    \xi(\div_{\omega^n}Y^{1,0})
    = \div_{\omega^n}[\xi, Y^{1,0}] = 0,
\]
and hence $\xi(\theta_Y) = 0$.

\begin{lemma}
    \label{lem:centralizer-weighted-integrability}
    For every $Y \in \mathfrak h_\xi$,
    \[
        Y \in L^2(\omega, \d \mu_f),
        \qquad
        \theta_Y \in W^{1, 2}( \d \mu_f)
        \p
    \]
\end{lemma}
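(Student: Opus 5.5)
The plan is to bound $|Y|_\omega^2$ pointwise by $\tr_{\omega_{\cA}}\omega$ times a polynomial in $\rho$, and then use \cref{prop:weighted-reverse-ambient-trace}. For any tangent vector we have the elementary linear-algebra inequality
\[
    |Y|^2_\omega \leq \tr_{\omega_{\cA}}\omega\,|Y|^2_{\omega_{\cA}} .
\]
It therefore suffices to show $|Y|^2_{\omega_{\cA}}\leq C(1+\rho^2)^A$. Once this holds,
\[
    \int_X|Y|^2_\omega\,e^{-f}\omega^n \leq C\int_X \tr_{\omega_{\cA}}\omega\,(1+\rho^2)^{-P_0}\,\bigl[(1+\rho^2)^{A+P_0}e^{-f}\bigr]\,\omega^n<\infty,
\]
because the bracket is bounded and \cref{eq:weighted-reverse-ambient-trace} controls the rest.

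To bound $|Y|^2_{\omega_{\cA}}$, split it according to \cref{eq:fixed-ambient-reference-form} into the affine part and the Fubini--Study part.

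\textbf{Affine part.} This part equals $\sum_\alpha|Y^{1,0}h_\alpha|^2$. Since $[\xi,Y]=0$, we have $\xi(Y^{1,0}h_\alpha)=Y^{1,0}(\xi h_\alpha)=\sqrt{-1}\lambda_\alpha Y^{1,0}h_\alpha$. So $Y^{1,0}h_\alpha$ is a global holomorphic function of weight $\lambda_\alpha\ge0$, and \cref{lem:homogeneous-holomorphic-polynomial-growth} gives $|Y^{1,0}h_\alpha|\le C(1+\rho)^{\lambda_\alpha}$.

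\textbf{Projective part.} Lift $Y$ to $-\ell K_X$ canonically via the Lie derivative. This lift commutes with the canonical lift of $\xi$, so $\cL_Y s_j$ is homogeneous of weight $\nu_j$. By \cref{rmk:normalLowWt}, it is then a linear combination of those $s_i$ with $\nu_i=\nu_j$, say $\cL_Y s_j=\sum_i a_{ij}s_i$. Hence $\Psi_*Y$ is the restriction to $\Psi(X)$ of the linear vector field on $\bbP^M$ generated by the matrix $(a_{ij})$. A linear vector field has bounded Fubini--Study norm by compactness of $\bbP^M$, so the projective part of $|Y|^2_{\omega_{\cA}}$ is bounded. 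This proves $Y\in L^2(\omega,\d\mu_f)$.

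\textbf{From $Y$ to $\theta_Y$.} By \cref{eq:HamilY}, $\nabla^{1,0}\theta_Y=Y^{1,0}$, so $|\partial\theta_Y|_\omega=|Y^{1,0}|_\omega$ and $\partial\theta_Y\in L^2(\d\mu_f)$. Likewise $\nabla^{1,0}\overline{\theta_Y}=\overline{\nabla^{0,1}\theta_Y}$, and this vanishes because $\bar\partial\theta_Y$ is $\omega$-dual to $Y^{1,0}$, which is holomorphic; so $|\nabla\theta_Y|\in L^2$. It remains to show $\theta_Y\in L^2(\d\mu_f)$.

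For this I would use the weighted Poincaré inequality on $(X,\omega,\d\mu_f)$. This measure is a complete weighted manifold with Bakry--\'Emery tensor $\Ric+\nabla^2 f=g$, which is positive. It gives $\mathrm{Var}_{\mu_f}(v)\le C\int|\nabla v|^2\d\mu_f$ for bounded locally Lipschitz $v$ with square-integrable gradient; in the normalization of $\Delta_f$ this only changes constants, and the finitely many normalization factors of $\mu_f$ are harmless.

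Apply this to the truncations $v_k=T_k(\Re\theta_Y)$, where $T_k(s)=\max(-k,\min(s,k))$, and similarly to the imaginary part. Since $|\nabla v_k|\le|\nabla\theta_Y|$, the variances are uniformly bounded. Let $m_k$ be the mean of $v_k$. Fix a compact set $K$ of positive measure on which $\theta_Y$ is bounded. For large $k$ we have $v_k=\Re\theta_Y$ on $K$, and the variance bound then forces $|m_k|$ to stay bounded. Hence $\|v_k\|_{L^2(\d\mu_f)}$ is uniformly bounded, and Fatou's lemma gives $\theta_Y\in L^2(\d\mu_f)$. This yields $\theta_Y\in W^{1,2}(\d\mu_f)$.

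The step I expect to be the main obstacle is the projective part of $|Y|_{\omega_{\cA}}$: one has to check that the canonical lift of $Y$ really commutes with that of $\xi$ and preserves the span of the $s_j$ as arranged in \cref{rmk:normalLowWt}. Once that holds, everything else is direct from the ambient estimates of \cref{sec:ambient-reference-comparison}.
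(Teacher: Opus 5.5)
Your proof that $Y\in L^2(\omega,\d\mu_f)$ is the paper's own argument, and it is fine. You use the weight-preserving action of $Y$ on the $h_\alpha$ and, via \cref{rmk:normalLowWt}, on the $s_j$, then $|Y|^2_\omega\le\tr_{\omega_{\cA}}\omega\,|Y|^2_{\omega_{\cA}}$ and \cref{prop:weighted-reverse-ambient-trace}. The commutation of canonical lifts you worry about is automatic, because Lie derivatives respect brackets.

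The gap is in the passage to $\theta_Y$. The relation $\nabla^{1,0}\theta_Y=Y^{1,0}$ controls only the $\bar\partial$-component, $|\bar\partial\theta_Y|_\omega=|Y^{1,0}|_\omega$ (not $|\partial\theta_Y|_\omega$ as you wrote). Your claim that $\nabla^{1,0}\overline{\theta_Y}$ vanishes is false. Holomorphicity of $\nabla^{1,0}\theta_Y$ means $\bar\partial\nabla^{1,0}\theta_Y=0$, which is a condition on the $(0,2)$-part of the Hessian of $\theta_Y$ and says nothing about $\partial\theta_Y$. Already for $Y=\xi$, the potential $\theta_\xi$ is a nonzero purely imaginary multiple of $f$, so $\nabla^{1,0}\overline{\theta_\xi}$ is a nonzero multiple of $\nabla^{1,0}f$. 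Since $|\partial\theta_Y|_\omega$ is not controlled pointwise by $|Y|_\omega$, you have not established $|\nabla\theta_Y|\in L^2(\d\mu_f)$. For the same reason you have not established the hypothesis $|\nabla v_k|\le|\nabla\theta_Y|\in L^2(\d\mu_f)$ of your Poincar\'e step. The truncation argument itself would be fine once that input is available.

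What is missing is an integral argument, which the paper carries out in two steps.

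First, $\theta_Y=-\div_f Y^{1,0}$ and $Y^{1,0}(\overline{\theta_Y})=|Y^{1,0}|^2_\omega$. Testing against $\chi_R^2\overline{\theta_Y}$, applying Cauchy--Schwarz and absorbing gives $\int_X|\theta_Y|^2\,\d\mu_f\le C\int_X|Y|^2_\omega\,\d\mu_f$. This uses only $Y\in L^2$ and makes the Poincar\'e inequality unnecessary.

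Second, and this is the ingredient your proposal never uses, $\xi(\theta_Y)=0$ gives $\frac12(\nabla f)\theta_Y=(\nabla^{1,0}f)\theta_Y=Y^{1,0}f$. Hence $\theta_Y$ satisfies the real equation $\Delta_f\theta_Y=-\theta_Y$. Testing it against $\chi_R^2\overline{\theta_Y}$ yields $\int_X|\nabla\theta_Y|^2_\omega\,\d\mu_f\le C\int_X|\theta_Y|^2\,\d\mu_f$.

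The $\xi$-invariance is exactly what converts control of the $\bar\partial$-energy into control of the full gradient. For a general compactly supported $u$, the $\d\mu_f$-integrals of $|\partial u|^2_\omega$ and $|\bar\partial u|^2_\omega$ differ by a multiple of $\int_X\overline{u}\,\xi(u)\,\d\mu_f$, and this term vanishes only because $\xi(\theta_Y)=0$.
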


\begin{proof}
    Since $[Y, \xi] = 0$, the derivation induced by $Y$ preserves the eigenspaces of $\xi$ in $\cO(X)$.  In particular, each $Y h_\alpha$ is a finite linear combination of homogeneous functions of the same weight.  Thus \cref{lem:homogeneous-holomorphic-polynomial-growth} gives
    \begin{equation} \label{eq:YaffBound}
        |Yh_\alpha| \leq C(1+\rho^2)^{\lambda_\alpha}
        \p
    \end{equation}
    Since $\cL_Y s_j$ has the same weight as $s_j$, \zcref{rmk:normalLowWt} shows that
    \[
        \cL_Y s_j = \sum_i a_{ij} s_i
    \]
    with $a_{ij} \in \bbC$ and thus $|d \Psi(Y)|_{\omega_\mathrm{FS}} \leq C$. Together with \cref{eq:YaffBound} we obtain
    \[
        |Y|_{\omega_{\cA}} \leq C(1+\rho^2)^A
    \]
    for some $A > 0$.
    Pointwise linear algebra gives
    \[
        |Y|_\omega^2 \leq \tr_{\omega_{\cA}}\omega\, |Y|_{\omega_{\cA}}^2
        \p
    \]
    Since $(1+\rho^2)^{2A}e^{-f} \leq C_Ae^{-f/2}$ outside a compact set, \cref{prop:weighted-reverse-ambient-trace} gives
    \begin{align}
        \int_X|Y|_\omega^2 \d \mu_f &\leq C\int_X \tr_{\omega_{\cA}}\omega\, (1+\rho^2)^{2A}e^{-f}\omega^n\\
                          &\leq C_A \int_X \tr_{\omega_{\cA}}\omega\, e^{-f/2}\omega^n < \infty
        \c
    \end{align}
    showing that $Y \in L^2(\omega, \d \mu_f)$.

    Applying \cref{eq:explicit-soliton-potential} and \cref{eq:section-three-weighted-divergence} with $\psi = \chi_R^2\overline{\theta_Y}$ we obtain
    \[
        \int_X \chi_R^2|\theta_Y|^2 \d \mu_f = \int_X Y^{1, 0}(\chi_R^2\overline{\theta_Y}) \d \mu_f
    \]
    Using $Y^{1, 0}(\overline{\theta_Y}) = |Y^{1, 0}|_\omega^2 = \frac12 |Y|_\omega^2$ then gives
    \begin{align}
        \int_X \chi_R^2|\theta_Y|^2 \d \mu_f
             &\leq \frac12\int_X \chi_R^2|Y|_\omega^2 \d \mu_f + 2\int_X \chi_R|\theta_Y|\, |Y^{1, 0}\chi_R| \d \mu_f\\
             &\leq \frac12\int_X \chi_R^2|\theta_Y|^2 \d \mu_f + C\int_X(\chi_R^2 + |\nabla\chi_R|_\omega^2)|Y|_\omega^2 \d \mu_f
         \p
    \end{align}
    Since $|\nabla\chi_R|_\omega^2 \leq C/R$ on $\{R \leq \rho^2 \leq 2R\}$, the second integrand is bounded by $2 C |Y|_\omega^2$. Absorbing the first integral on the RHS into the LHS and letting $R \to \infty$ we obtain
    \begin{equation}\label{eq:weighted-potential-L2}
        \int_X|\theta_Y|^2 \d \mu_f
        \leq C\int_X|Y|_\omega^2 \d \mu_f < \infty
        \p
    \end{equation}
    Crucially, the $\xi$-invariance of $\theta_Y$ gives $\frac12 (\nabla f) \theta_Y = (\nabla^{1,0} f) \theta_Y$ and thus $\frac12 (\nabla f) \theta_Y = Y^{1,0} f$ by \cref{eq:HamilY}.
    Therefore by definition of $\div_f$, we obtain 
    \[
        \Delta_f \theta_Y = \Delta_\omega \theta_Y - \frac12 (\nabla f) \theta_Y = \div_f Y^{1,0} = -\theta_Y
        \p
    \]
    Testing this equation against $\chi_R^2\overline{\theta_Y}$ gives 
    \begin{align}
        \int_X \chi_R^2 |\theta_Y|^2 \d \mu_f &= - \int_X \chi_R^2 \overline{\theta_Y} \Delta_f \theta_Y \d \mu_f \\
                                    &= \frac12 \int_X \langle \nabla (\chi_R^2 \theta_Y), \nabla \theta_Y \rangle_\omega \d \mu_f \\
                                    &= \frac12 \int_X \chi_R^2 |\nabla \theta_Y|_\omega^2 \d \mu_f + \int_X \chi_R \overline{\theta_Y} \langle \nabla \chi_R, \nabla \theta_Y \rangle_\omega \d \mu_f
        \p
    \end{align}
    After using Cauchy-Schwarz and absorbing terms we obtain
    \[
        \int_X \chi_R^2|\nabla\theta_Y|_\omega^2 \d \mu_f \leq C\int_X(\chi_R^2 + |\nabla\chi_R|_\omega^2) |\theta_Y|^2 \d \mu_f
        \p
    \]
    Letting $R \to \infty$ proves $\nabla\theta_Y \in L^2( \d \mu_f)$, and hence $\theta_Y \in W^{1, 2}( \d \mu_f)$.
\end{proof}

\subsection{The Matsushima theorem}
Let $\Aut_0(X,\xi)$ denote the identity component of the group of automorphisms of $X$ that commute with $\xi$.
Let $\mathfrak k_\xi$ be the real Lie algebra of real holomorphic Killing
fields of $(X,\omega,J)$ that commute with $\xi$.

\begin{proposition}[$=$\cref{thm:introduction-subcritical-matsushima}]
    \label{prop:weighted-matsushima}
    We have
    \begin{equation}\label{eq--cartan decomposition}
         \mathfrak h_\xi = \mathfrak k_\xi\oplus J\mathfrak k_\xi\simeq\mathfrak k_\xi^{\bbC}
        \p
    \end{equation}
    In particular, any compact torus in $\Aut_0(X,\xi)$ is conjugate, by an element of $\Aut_0(X,\xi)$, to a compact torus acting by holomorphic isometries.
\end{proposition}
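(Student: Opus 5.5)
The plan is the classical weighted Matsushima argument (Tian--Zhu), with \cref{lem:centralizer-weighted-integrability} supplying the integrability that replaces compactness. We identify $\mathfrak h_\xi$ with a space of complex eigenfunctions of $\Delta_f$, split it into real and imaginary parts, and match the two parts with $\mathfrak k_\xi$ and $J\mathfrak k_\xi$.

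\emph{The map to potentials.} By \cref{eq:HamilY} and the proof of \cref{lem:centralizer-weighted-integrability}, every $Y\in\mathfrak h_\xi$ has a complex potential $\theta_Y\in W^{1,2}(\d\mu_f)$ with $\nabla^{1,0}\theta_Y=Y^{1,0}$, $\xi\theta_Y=0$ and $\Delta_f\theta_Y=-\theta_Y$. The map $Y\mapsto\theta_Y$ is injective. Let $E$ be the space of $\xi$-invariant complex solutions of $\Delta_f\theta=-\theta$ in $W^{1,2}(\d\mu_f)$ whose gradient $\nabla^{1,0}\theta$ is holomorphic. Conversely, every $\theta\in E$ gives $Y^{1,0}=\nabla^{1,0}\theta$, which is holomorphic and commutes with $\xi$ because $\theta$ is $\xi$-invariant and $\xi$ is Killing and holomorphic. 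So $\mathfrak h_\xi\cong E$.

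\emph{Real and imaginary parts.} Since $\Delta_f$ is a real operator, $\Delta_f\Re\theta=-\Re\theta$ and $\Delta_f\Im\theta=-\Im\theta$. The key step is to show that $\nabla^{1,0}$ of each part is still holomorphic. For a real $\xi$-invariant $u\in W^{1,2}(\d\mu_f)$ with $\Delta_f u=-u$, we prove the weighted Bochner-type identity
\[
\int_X|\bar\partial\nabla^{1,0}u|^2\d\mu_f=\int_X\bigl(|\Delta_f u|^2-|\nabla^{1,0}u|^2\bigr)\d\mu_f .
\]
It comes from the commutation formula $\bar\partial\Delta_f u=\Delta_f^{\bar\partial}\bar\partial u-\iota_{\nabla^{0,1}u}(\Ric+\sqrt{-1}\partial\bar\partial f)=\Delta_f^{\bar\partial}\bar\partial u-\bar\partial u$. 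It is the weighted form of the Lichnerowicz estimate that the first nonzero eigenvalue of $-\Delta_f$ is at least $1$, with equality exactly when $\nabla^{1,0}u$ is holomorphic. The right side vanishes by the eigen-equation and integration by parts, so $\nabla^{1,0}u$ is holomorphic. For the gradient term this uses $\int|\nabla u|^2\d\mu_f=2\int u^2\d\mu_f$, which holds because $u\in W^{1,2}(\d\mu_f)$.

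\emph{Main obstacle.} The difficulty is making this integration by parts rigorous on the noncompact $X$, where we have no curvature bounds. I would use the cutoffs $\chi_R$ with \cref{eq:section-three-cutoff-gradient}. The boundary terms involve $\nabla\chi_R\cdot\nabla^2u\cdot\nabla u$. To handle them, first show $\nabla^2 u\in L^2(\d\mu_f)$ by a localized version of the same identity: absorb $\int\chi_R^2|\bar\partial\nabla^{1,0}u|^2\d\mu_f$ using Cauchy--Schwarz, and control the $\partial\nabla^{1,0}$ part similarly via the commutation $\partial\Delta_f u=\Delta_f\partial u$ on $(1,0)$ forms, which carries no curvature term. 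Then let $R\to\infty$; only the $W^{1,2}$ bounds from \cref{lem:centralizer-weighted-integrability} are needed.

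\emph{Identification with $\mathfrak k_\xi\oplus J\mathfrak k_\xi$.} For real $u\in E$, the field $\nabla u$ is real holomorphic, so $J\nabla u$ is a real holomorphic vector field that is $\omega$-Hamiltonian. A Hamiltonian holomorphic field preserves both $\omega$ and $J$, hence is Killing, and it commutes with $\xi$. Thus $J\nabla u\in\mathfrak k_\xi$, while $\nabla u=-J(J\nabla u)\in J\mathfrak k_\xi$. Writing $\theta_Y=u+\sqrt{-1}v$ decomposes $Y$ as an element of $J\mathfrak k_\xi$ plus an element of $\mathfrak k_\xi$ (up to the fixed normalization of $\nabla^{1,0}$), which gives $\mathfrak h_\xi=\mathfrak k_\xi+J\mathfrak k_\xi$.

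For directness, suppose $K\in\mathfrak k_\xi\cap J\mathfrak k_\xi$. Then $K$ and $JK$ are both Killing, so the potential of $K$ is both real and purely imaginary up to constants. Constants are excluded because they are not eigenfunctions with eigenvalue $-1$, so $K=0$. Since $\mathfrak h_\xi$ is a complex Lie algebra, $\mathfrak h_\xi\simeq\mathfrak k_\xi^{\bbC}$.

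\emph{Torus statement.} The Killing fields commuting with $\xi$ generate a closed, hence compact, Lie group $K_\xi\subset\Isom(X,\omega)$. I expect this closedness uses properness of $\rho^2$ and $\xi$-invariance. Then $\Aut_0(X,\xi)$ is the complexification of $K_\xi^0$, which is reductive, and maximal compact subgroups of a reductive group are conjugate.
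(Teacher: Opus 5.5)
Your route is the paper's. You take potentials from \cref{lem:centralizer-weighted-integrability}, make the Tian--Zhu Bochner identity rigorous with the cutoffs $\chi_R$, obtain Killing fields from the real and imaginary parts of $\theta_Y$, and finish with compactness of $K_\xi$ and Cartan--Iwasawa--Malcev. Your two-stage cutoff scheme is heavier than necessary. The paper tests the pointwise identity $\bar\partial_f^*\bar\partial V=0$, with $V=\nabla^{1,0}\overline{\theta_Y}$, against $\chi_R^2 V$. The only error term is then $\int_X\chi_R|\bar\partial\chi_R|\,|\bar\partial V|\,|V|\,d\mu_f$, which is absorbed using only $\nabla\theta_Y\in L^2(d\mu_f)$. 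Your \emph{localized identity with absorption} for $\bar\partial\nabla^{1,0}u$ is essentially this and already finishes the step. No $L^2$ bound on the $(1,1)$-Hessian and no passage to the global identity are needed.

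The step that does not go through as written is directness. You claim that a Killing field $K$ has a potential that is purely imaginary up to constants. That is the compact-case argument: $K$ Killing means $\Re\theta_K$ is pluriharmonic, hence constant. On the noncompact $X$, pluriharmonic functions need not be constant. Your step never uses $[K,\xi]=0$, and without it the conclusion is false. On the Gaussian shrinker on $\bbC^n$, both $\partial_{x_1}$ and $J\partial_{x_1}=\partial_{y_1}$ are holomorphic Killing fields, so $\mathfrak k\cap J\mathfrak k\neq0$ for the full algebra.

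The missing argument is short. Since $\div K=0$, \cref{eq:explicit-soliton-potential} gives $\Re\theta_K=\tfrac12 Kf$. For $K\in\mathfrak k_\xi$ one has $\nabla(Kf)=\cL_K\nabla f=-\cL_K(J\xi)=-J[K,\xi]=0$, so $Kf$ is constant; it vanishes at a minimum point of $f$. Hence $\theta_K$ is purely imaginary for every $K\in\mathfrak k_\xi$. If also $JK\in\mathfrak k_\xi$, then $\theta_{JK}=\sqrt{-1}\,\theta_K$ is purely imaginary too, so $\theta_K$ is also real. This forces $\theta_K=0$, i.e.\ $K=0$. (The paper delegates this to \cite[page~314]{CDS}.)

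There is also a smaller slip in the torus part: \emph{closed, hence compact} is not a valid inference. $K_\xi$ is compact because its elements preserve the compact set $\{\xi=0\}$ (equivalently, they preserve $f$). You also need two further facts before Cartan--Iwasawa--Malcev applies. First, $\dim\mathfrak h_\xi<\infty$, which follows from the decomposition and makes $\Aut_0(X,\xi)$ a Lie group. Second, the Cartan decomposition, which identifies $K_\xi$ as a maximal compact subgroup.
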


\begin{proof}
    By \cref{lem:centralizer-weighted-integrability}, every $Y\in\mathfrak h_\xi$ has a potential $\theta_Y\in W^{1,2}(d\mu_f)$ satisfying
    \[
        Y^{1,0} = \nabla^{1, 0}\theta_Y,
        \qquad
        \bar\partial \nabla^{1, 0}\theta_Y = 0
        \p
    \]
    It also satisfies
    \[
        \xi(\theta_Y) = 0,
        \qquad
        \Delta_f \theta_Y + \theta_Y = 0
        \p
    \]

    The soliton Matsushima calculation
    \cite[Section~2, especially equation~(2.7)]{TianZhuUniqueness} applies
    once its integrations by parts are justified by cutoff, as in
    \cite[Claim~5.6]{CDS}. Let $\bar\partial_f^*$ denote the formal adjoint of $\bar\partial$ on $T^{1, 0}X$ with respect to $\omega$ and $d\mu_f$. Since $\xi(\overline{\theta_Y}) = 0$ and $\Delta_f$ is real, the K\"ahler commutation formula and the shrinker equation give
    \[
        \bar\partial_f^*\bar\partial\nabla^{1, 0}\overline{\theta_Y}
        = -\nabla^{1, 0}(\Delta_f\overline{\theta_Y} + \overline{\theta_Y}) = 0
        \p
    \]
    Multiplying by $\chi_R^2\nabla^{1,0}\overline{\theta_Y}$, integrating against $d\mu_f$, and applying the Cauchy--Schwarz inequality yield
    \begin{align}
        \int_X \chi_R^2|\bar\partial\nabla^{1, 0}\overline{\theta_Y}|_\omega^2d\mu_f
        & \leq C\int_X |\bar\partial\chi_R|_\omega^2|\nabla^{1, 0}\overline{\theta_Y}|_\omega^2d\mu_f\\
        & \leq \frac{C}{R}\int_{\{R \leq \rho^2 \leq 2R\}}|\nabla\theta_Y|_\omega^2d\mu_f
        \p
    \end{align}
    Using \cref{lem:centralizer-weighted-integrability} and  letting $R \to \infty$, we obtain that $\nabla^{1, 0}\overline{\theta_Y}$ is holomorphic.

    Let $a = \Re \theta_Y$ and $b = \Im \theta_Y$. Then
    \[
        Y = \nabla a + J \nabla b
        \p
    \]
    Since both $\nabla^{1, 0}\theta_Y$ and $\nabla^{1, 0}\overline{\theta_Y}$ are holomorphic, the fields \( \nabla^{1, 0}a, \nabla^{1, 0}b \) are holomorphic.  For a real function $a$, holomorphicity of $\nabla^{1,0}a$ makes $J\nabla a$ real holomorphic, and since it is always symplectic, we show that it is Killing.  The same applies to $b$.  Since $\xi(\theta_Y) = 0$, we have
    \[
        J\nabla a,  J\nabla b\in \mathfrak k_\xi
        \p
    \]
    Since $Y \in \fh_\xi$ was arbitrary, we have proven
    \[
        \mathfrak h_\xi= \mathfrak k_\xi+J\mathfrak k_\xi
        \p
    \]
    The argument in \cite[page~314]{CDS}, implies that
    $\mathfrak k_\xi \cap J\mathfrak  k_\xi= 0.$ Therefore we get the desired direct sum.

    In particular, $\mathfrak h_\xi$ is finite-dimensional. It then follows from
    \cite[Chapter~I, Theorem~3.1]{kobayashi2012transformation} that
    $\Aut_0(X,\xi)$ is a Lie group. Let $K_\xi$ be the identity component of the group of holomorphic
    isometries of $(X,\omega,J)$ that commute with $\xi$. Then
    $\Lie(K_\xi)=\mathfrak k_\xi$. Furthermore $K_\xi$ is compact since it preserves the compact set $\{\xi = 0\}$. Note that the decomposition in \cref{eq--cartan decomposition} is a
    Cartan decomposition, and hence $K_\xi$ is a maximal compact subgroup of
    $\Aut_0(X,\xi)$ \cite{helgason1979differential}; see also \cite[Corollary~5.11]{CDS} for an argument
    specialized to this setting. The desired conjugacy then follows from the
    Cartan--Iwasawa--Malcev theorem \cite{iwasawa1949} and the conjugacy theorem for maximal
    tori in compact connected Lie groups.
\end{proof}

\begin{remark}
    As first observed in \cite{CDSIII}; see also \cite[Corollary~4.9]{EAC}, the uniqueness of K\"ahler--Ricci shrinkers for a fixed soliton vector field, established in \cref{thm:fixed-soliton-field-metric-uniqueness}, implies Matsushima's theorem. This provides an alternative proof of \cref{thm:introduction-subcritical-matsushima}. Note that the alternative proof also establishes that $\Isom(X, \omega, \xi) \inj \Aut(X, \xi)$ is bijective on connected components.
\end{remark}

\subsection{Uniqueness of the soliton vector field}

We now combine the noncompact Matsushima theorem with the weighted-volume functional $\bbW$ of \cite{SunZhangFano} to prove the uniqueness of the soliton vector field up to a biholomorphism. Note that our proof only uses the Matsushima theorem, but none of the more analytic results in this or the previous section.

\begin{proposition} \label{prop:soliton-critical-after-matsushima}
    Let $(X,\omega,f)$ be as above, and let $\bbT$ be a compact torus in $\Aut(X,\xi)$ that contains $\xi$. Then $\xi$ is a critical point of $\bbW$ on the Reeb cone of $\bbT$.
\end{proposition}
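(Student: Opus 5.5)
Our plan is to compute the first variation of the weighted volume $\bbW$ at $\xi$ along the Reeb cone of $\bbT$, and to show that it vanishes by an integration by parts against the soliton measure. Recall that in \cite{SunZhangFano}, for $\zeta$ in the Reeb cone of $\bbT$,
\[
    \bbW(\zeta) = \int_X e^{-\theta_\zeta}\,\omega_\zeta^n,
\]
computed with any $\bbT$-invariant Kähler form $\omega_\zeta$ in the appropriate class and a normalized Hamiltonian $\theta_\zeta$ of $\zeta$. Equivalently, $\bbW$ can be computed algebraically as an equivariant index or Duistermaat--Heckman type integral, and it is independent of the choice of data. We will evaluate it using the shrinker data. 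Since $\bbT$ is a compact torus commuting with $\xi$, we first average $\omega$ over $\bbT$. In fact, by \cref{prop:weighted-matsushima} we may arrange that $\bbT$ acts isometrically, after conjugating by an element of $\Aut_0(X,\xi)$; this conjugation changes neither $\bbW$ nor the criticality statement. So we assume $\bbT\subset K_\xi$.

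For $Y\in\Lie(\bbT)$, the field $Y$ is Killing and commutes with $\xi$. Its Hamiltonian $\theta_{JY}$ from \cref{eq:explicit-soliton-potential} is then real up to normalization; call it $\theta_Y$. It is normalized by the soliton condition $\Delta_f\theta_Y+\theta_Y=0$, which is exactly the normalization used in \cite{SunZhangFano} to define Hamiltonians canonically with respect to $-K_X$. The Hamiltonian of $\xi$ itself is $f$, suitably shifted. For $\zeta_s=\xi+sY$ in the Reeb cone, we take $\omega_{\zeta_s}=\omega$ and Hamiltonian $f+s\theta_Y$. We must check this is an admissible choice in the definition of $\bbW$, namely that the Hamiltonian of $\zeta_s$ for the same form is canonically normalized. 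This follows from linearity of the normalization $\theta\mapsto\Delta_{\omega}\theta-\tfrac12\langle\nabla f,\nabla\theta\rangle$-type condition coming from $\Ric(\omega)=\omega-\sqrt{-1}\partial\bar\partial f$. Then
\[
    \frac{d}{ds}\Big|_{s=0}\bbW(\zeta_s) = -\int_X \theta_Y\, e^{-f}\omega^n .
\]
Using $\theta_Y=-\Delta_f\theta_Y$ and weighted integration by parts \cref{eq:section-three-weighted-divergence} with cutoffs $\chi_R$, this equals $\lim_R\tfrac12\int\langle\nabla\chi_R,\nabla\theta_Y\rangle\,d\mu_f=0$. The limit vanishes because $\theta_Y\in W^{1,2}(d\mu_f)$ by \cref{lem:centralizer-weighted-integrability}, together with the gradient bound \cref{eq:section-three-cutoff-gradient}.

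The main obstacle is the first step: justifying that $\bbW$, defined in \cite{SunZhangFano} algebraically or via some reference metric, coincides with the shrinker-weighted integral for all $\zeta_s$ near $\xi$, and that it may be differentiated under the integral. For differentiability we will use that $e^{-f-s\theta_Y}$ is dominated by $e^{-f/2}$ for small $|s|$. This holds because $\theta_Y$ grows at most linearly in $\rho^2$: on a compact torus orbit, $|Y|\lesssim\rho$ follows from Killing-field growth, and $\theta_Y$ is bounded by $C\rho^2$ after integrating along $\gamma_s$. Independence of the choice of invariant Kähler form and normalized potential is a standard Duistermaat--Heckman/Stokes argument in \cite{SunZhangFano}, and we will invoke it directly. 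If needed, we can also invoke the localization formula there, which expresses $\bbW$ as a rational function of $\zeta$ smooth on the Reeb cone.
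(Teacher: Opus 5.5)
Your plan is essentially the paper's proof. You conjugate $\bbT$ into the isometries via \cref{prop:weighted-matsushima}, use the Sun--Zhang first-variation formula $D\bbW|_\xi(\eta)=-c_n\int_X u_\eta\,d\mu_f$ with the canonical Hamiltonian (which agrees up to sign with the $\theta$ normalized by $\Delta_f\theta+\theta=0$), and show this integral vanishes using \cref{lem:centralizer-weighted-integrability}.

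The only real difference is the last step. The paper integrates $\cL_{J\eta}d\mu_f=2u_\eta\,d\mu_f$ along the complete flow of $J\eta$, which needs only $u_\eta\in L^1(d\mu_f)$. Your cutoff argument uses the full $W^{1,2}(d\mu_f)$ bound but avoids the flow. Both just say that a weighted divergence integrates to zero.

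One caveat concerns your domination step. The principle of ``Killing-field growth'' does not give $|Y|\lesssim\rho$ without curvature control, since Killing fields restrict to Jacobi fields along geodesics and can grow exponentially. The bound you actually need is $|\theta_Y|\le C(1+\rho^2)$. It follows from the moment-map argument in the paper's proof: $u_\eta=\langle\mu,\eta\rangle$ and $|x|\le C(\langle\xi,x\rangle+C)$ on the moment image. Alternatively, you can bypass it by citing Sun--Zhang's variation formula directly, as the paper does.
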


\begin{proof}
    Since $\bbT$ is compact and contains $\xi$ in its Lie algebra,  \cref{prop:weighted-matsushima} gives an element $g\in \Aut(X,\xi)$  such that $g\bbT g^{-1}$ acts by isometries of $(X,\omega)$.  Because $g$ fixes $\xi$ and $\bbW$ is conjugation-equivariant, we may assume that $\omega$ is $\bbT$-invariant in what follows.

    Let $\eta \in \Lie(\bbT)$. Using \cite[Lemma~3.3,Lemma~5.7]{SunZhangFano}, it is direct to show 
    \begin{equation}\label{eq:killing-weighted-volume-first-variation}
        D\bbW|_\xi(\eta) = -c_n \int_X u_\eta\, d\mu_f,
    \end{equation}
    where $c_n > 0$ is dimensional and the canonical Hamiltonian $u_\eta$ is normalized by
    \begin{equation}\label{eq:killing-canonical-hamiltonian}
        \cL_{J\eta}d\mu_f = 2u_\eta d\mu_f
        \p
    \end{equation}
    Note that $u_\eta \in L^1(\d \mu_f)$ by \cref{lem:centralizer-weighted-integrability}. Note that for vector fields in a compact torus of isometries there is an easier way to see this: We have $u_\eta = \langle \mu, \eta \rangle$, and on the image of the moment map we have $| x | \leq C (\langle \xi, x \rangle + C)$, so $|u_\eta| \leq C (f + C)$.

    Let $\Phi^\eta_t$ denote the flow of $J \eta$ on $X$. Now we use that pullbacks preserve the integrals of volume forms, the Fubini-Tonelli theorem and the fundamental theorem of calculus to see
    \[
        \int_X u_\eta \d \mu_f = \int_0^1 \left( \int_X (\Phi^\eta_s)^* (u_\eta \d \mu_f) \right) \d s = \int_X  \int_0^1 (\Phi^\eta_s)^* (u_\eta \d \mu_f) \d s  = \frac12 \int_X  (\Phi^\eta_1)^* \d \mu_f - \d \mu_f = 0
        \p
    \]
\end{proof}

\begin{theorem} \label{thm:introduction-unique-soliton-vector-field}
    Let $(\omega_i, f_i, \xi_i = J\nabla^{\omega_i}f_i)$, $i = 0, 1$, be
    complete noncompact K\"ahler--Ricci shrinkers on $X$. Then there exists a
    biholomorphism $\psi \in \Aut(X)$ such that
    \[
        \psi_* \xi_0 = \xi_1
        \p
    \]
    If $\xi_0, \xi_1 \in \Lie(\bbT_{\max})$ for the same maximal compact torus $\bbT_{\max}$, then $\xi_0= \xi_1$.
\end{theorem}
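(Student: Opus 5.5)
The argument reduces to a single maximal torus and then uses the weighted-volume functional $\bbW$ of \cite{SunZhangFano}.

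\textbf{Step 1: conjugating into one torus.} For each $i$, the closure of the flow of $\xi_i$ in the isometry group of $(X,\omega_i)$ is a compact torus $\bbT_i\subset\Aut(X)$ with $\xi_i\in\Lie(\bbT_i)$. It is compact because it preserves the compact zero set $\{\xi_i=0\}$, as in the proof of \cref{prop:weighted-matsushima}.

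Choose a maximal compact torus $\bbT_{\max}$. I would argue that any two compact tori in $\Aut(X)$ containing a soliton field can be conjugated into $\bbT_{\max}$. The tori commuting with $\xi_0$ live in $\Aut_0(X,\xi_0)$, which by \cref{prop:weighted-matsushima} is a Lie group with maximal compact subgroup $K_{\xi_0}$. Any compact torus is contained in a maximal one, and by Cartan--Iwasawa--Malcev all maximal compact tori in this group are conjugate.

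To compare $\bbT_0$ and $\bbT_1$, I would use the algebraic structure. Both tori act on the finite-type graded data of the polarized Fano fibration: the homogeneous ring of $\xi$-finite holomorphic functions and the pluri-anticanonical sections. There I would invoke the conjugacy of maximal tori in the reductive (or linear algebraic) automorphism group of the Fano fibration, as in \cite{SunZhangFano}. After applying a biholomorphism $\psi$, we may therefore assume $\xi_0,\xi_1\in\Lie(\bbT_{\max})$. This reduces the first claim to the second.

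\textbf{Step 2: both fields are critical points of one function.} Now suppose $\xi_0,\xi_1\in\Lie(\bbT_{\max})$. By \cref{prop:soliton-critical-after-matsushima}, applied to each shrinker with $\bbT=\bbT_{\max}$, each $\xi_i$ is a critical point of $\bbW$ on the Reeb cone of $\bbT_{\max}$.

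The key point is that $\bbW$ depends only on the complex structure and the torus action, not on the metric. It is defined algebraically via weighted volume, or equivalently through index characters of the graded ring. Hence both $\xi_0$ and $\xi_1$ are critical points of the same function.

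\textbf{Step 3: strict convexity gives uniqueness.} By \cite{SunZhangFano}, $\bbW$ is strictly convex on the Reeb cone and proper on it. Therefore it has at most one critical point, so $\xi_0=\xi_1$.

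\textbf{Main obstacle.} The hardest step is Step 1, placing both soliton fields in a common maximal torus via a biholomorphism. \Cref{prop:weighted-matsushima} only controls tori commuting with a fixed $\xi$. I would first try to embed both $\bbT_i$ into the automorphism group $G$ of the affine cone data $(Y,\pi)$ together with the finite projective system. This group is linear algebraic because it acts faithfully on finitely many finite-dimensional weight spaces. Maximal tori of a linear algebraic group are conjugate, so conjugating inside $G$ and lifting back to $\Aut(X)$ via the embedding $\cA$ would finish.

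A subtle point is that the embedding systems for $\xi_0$ and $\xi_1$ differ. To handle this, I would use the weight decomposition of $H^0(X,-\ell K_X)$ with respect to $\bbT_0\bbT_1$-finite vectors, which requires $\bbT_1$-finiteness of $\xi_0$-homogeneous sections.
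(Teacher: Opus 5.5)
Your Steps 2 and 3 match the paper's argument. Once both fields lie in one torus, \cref{prop:soliton-critical-after-matsushima} makes each a critical point of $\bbW$ on the Reeb cone. Strict convexity of $\bbW$ (\cite[Lemma~5.7]{SunZhangFano}) then forces the two fields to coincide.

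The gap is in Step 1. That is the only place where the two shrinkers have to be compared, and the mechanism you propose there does not work. An automorphism of $X$ preserves the $\xi_0$-weight spaces of $\cO(Y)$ or of $H^0(X,-\ell K_X)$ only if it commutes with $\xi_0$. Your group $G$ can be read in two ways, and neither gives what you need.
\begin{itemize}
\item If $G$ means all automorphisms of the fibration, it is not linear algebraic, or even finite-dimensional. For the Gaussian shrinker on $\bbC^n$, $n\geq 2$, it contains all triangular polynomial automorphisms.
\item If $G$ means automorphisms acting linearly on your chosen embedding system, it is linear algebraic. But those coordinates were chosen $\xi_0$-homogeneous, so there is no reason for $\bbT_1$ to lie in $G$.
\end{itemize}
The only finite-dimensional reductive group you actually have is the centralizer $\Aut_0(X,\xi_0)$ from \cref{prop:weighted-matsushima}, and $\bbT_1$ need not lie in it either. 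So neither Cartan--Iwasawa--Malcev nor conjugacy of maximal tori in a linear algebraic group applies. The point you defer, namely $\bbT_1$-finiteness of $\xi_0$-homogeneous sections so that both tori act on a common finite-dimensional space, is exactly the missing content. Nothing in your sketch establishes it.

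What is needed is a conjugacy theorem for maximal compact tori (containing Reeb fields) in $\Aut(X)$ of a polarized Fano fibration. This does not follow formally from Lie theory, since $\Aut(X)$ is not a Lie group. The paper does not prove it; it cites it as \cite[Proposition~5.17]{EAC} (not from \cite{SunZhangFano}).

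With that result replacing your Step 1, the rest of your proposal goes through as written. Concretely, choose maximal compact tori $\bbT_i$ with $\xi_i\in\Lie(\bbT_i)$. These exist because any torus containing $\xi_i$ lies in the finite-dimensional group $\Aut(X,\xi_i)$. Then conjugate $\bbT_0$ onto $\bbT_1$ by some $\psi\in\Aut(X)$, and apply your Steps 2 and 3 to $\psi_*\xi_0$ and $\xi_1$.
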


\begin{proof}
    Choose maximal compact tori $\bbT_i$ containing $\xi_i$. By the conjugacy theorem for maximal tori of a polarized Fano fibration \cite[Proposition~5.17]{EAC}, an automorphism $\psi$ conjugates $\bbT_0$ to $\bbT_1$. Thus $\psi_*\xi_0$ and $\xi_1$ lie in $\Lie(\bbT_1)$, and we are reduced to showing that two soliton vector fields in the same maximal compact torus must agree.

    \Cref{prop:soliton-critical-after-matsushima} shows that both vectors are critical points of the weighted volume functional $\bbW_X$. By \cite[Lemma~5.7]{SunZhangFano}, $\bbW_X$ is strictly convex on the Reeb cone inside $\bbT$. Hence
    \[
        \psi_* \xi_0 = \xi_1
        \p
    \]
\end{proof}

\section{Pluripotential theory on polarized Fano fibrations}%
\label{sec:pluripotential_theory_on_polarized_fano_fibrations}

Let $\pi : X \to Y$ be a smooth polarized Fano fibration equipped with a torus $\bbT$ and a Reeb field $\xi \in \Lie \bbT$, and fix a homogeneous ambient system as in \cref{sec:ambient-reference-comparison}. No shrinker is assumed to exist in this section. We develop the pluripotential theory of Hermitian metrics on $-K_X$ which are quadratic with respect to $-J\xi$ using only this polarized Fano-fibration data.

\subsection{Set-up and background metrics}

Set $X_0 = -\frac{1}{2}J\xi$. The action $\bbT \acts X$ has a canonical lift to $K_X^{-1}$ and moreover extends to an action of the complexified torus $\bbT_\bbC \cong (\bbC^*)^k$. In particular,
\begin{equation}\label{eq:complex-flow}
    \gamma_{s + \sqrt{-1}\sigma}
    := \exp(sX_0)\circ\exp(\sigma JX_0)
    \p
\end{equation}
is defined for all $s, \sigma \in \bbR$, giving us a $\bbC$-action on $X$ and also $K_X^{-1}$.

For every smooth $\bbT$-invariant Hermitian metric $\psi$ on $K_X^{-1}$, define its Hamiltonian by
\begin{equation}\label{eq:general-hamiltonian}
    h^\psi := -e^\psi\cL_{X_0}(e^{-\psi})
    \p
\end{equation}
Thus $\frac{d}{d s}\gamma_s^*\psi = \gamma_s^*h^\psi$ and $\iota_\xi\sqrt{-1}\partial\bar\partial\psi = -d h^\psi$.

Let $E = \pi^{-1}(0)$ be the compact central fiber.
As in \cite[\S 4.2]{CE} we can choose a $\bbT$-invariant PSh function $r^2$ on $\bbC^N$, which is smooth on $\bbC^N \setminus \{0\}$ and satisfies $(-J \xi) r^2 = 2 r^2$, so that
\[
    \omega_C = \frac{\sqrt{-1}}{2}\partial\bar\partial r^2
\]
is a cone metric on $(\bbC^N, \xi)$
We use the same notation for its pullback to $X \setminus E$ and require
\begin{equation}\label{eq:cone-potential-homogeneity}
    X_0 r^2 = r^2
    \p
\end{equation}
We use the same notation for its continuous extension by zero over $E$. It need not be smooth along $E$, and no argument below requires this. In particular,
\begin{equation}\label{eq:cone-potential-flow}
    r^2(\gamma_s x) = e^s r^2(x)
    \p
\end{equation}
The compact hypersurface
\begin{equation}\label{eq:transversal-choice}
    \Sigma_0 := \{r = 1\}
\end{equation}
is transverse to $X_0$, and every point in $\{r \geq 1\}$ has a unique expression $x = \gamma_s y$ with $y \in \Sigma_0$ and $s \geq 0$.

Following \cite[Lemma~4.5 and Definition~4.6]{CE}, choose a smooth $\bbT$-invariant strictly psh function $h_{\mathrm{AC}}$ on $\bbC^N$ such that
\begin{equation} \label{eq:hAC}
    h_{\mathrm{AC}} = \frac{r^2}{2}
\end{equation}
outside a compact set. Recall that one obtains $h_{\mathrm{AC}}$ by choosing a suitable smooth convex increasing function $V$ and a cutoff $\zeta$ supported near the origin, and setting
\[
    h_{\mathrm{AC}} = \frac{V(r^2)}{2} + \varepsilon \zeta(r)|z|^2,
\]
where the choices are made so that $\omega_{\mathrm{AC}} := \sqrt{-1}\partial\bar\partial h_{\mathrm{AC}}$ is K\"ahler and agrees with $\omega_C$ outside a compact set. On $\bbC^N \times \bbP^M$, set
\[
    \omega_{\mathrm{amb}} = \omega_{\mathrm{AC}} + \frac{1}{\ell}\omega_{\mathrm{FS}}
    \p
\]
Its restriction to $X$ is the curvature of the smooth $\bbT$-invariant Hermitian metric $\psi_{\mathrm{amb}}$ on $K_X^{-1}$ defined in additive notation by
\begin{equation}\label{eq:defpsiamb}
    \psi_{\mathrm{amb}} = h_{\mathrm{AC}} + \frac{1}{\ell}h_{\mathrm{FS}}
    \p
\end{equation}
Both the ambient metric and its restriction to $X$ are complete. Moreover,
\begin{equation}\label{eq:section-four-ambient-flow-bound}
    -C \omega_{\mathrm{amb}}
    \leq \cL_{X_0} \omega_{\mathrm{amb}}
    \leq C \omega_{\mathrm{amb}}
    \p
\end{equation}
Indeed, the cone summand is homogeneous outside a compact set, while the estimate for the Fubini--Study summand follows from the finite-dimensional $\bbT_\bbC$-action; the estimate on the compact remainder follows from smoothness.

\subsection{Quadratic growth classes}

\begin{definition}[Metrics with quadratic growth]
    \label{def:quadratic-spaces}
    Let $\cH$ be the space of smooth Hermitian metrics $\psi$ on $K_X^{-1}$ such that $\sqrt{-1}\partial\bar\partial\psi > 0$, and let $\overline{\mathcal H}$ be the space of Hermitian metrics with locally bounded psh weights.  For $0 < m \leq M$, let $\overline{\mathcal H}_2(m, M)$ consist of those $\psi \in \overline{\mathcal H}$ for which
    \begin{equation}\label{eq:H2-bounds}
        m e^s - m^{-1}
        \leq (\gamma_s^*\psi - \psi)|_{\Sigma_0}
        \leq M e^s
        \qquad(s \geq 1)
        \p
    \end{equation}
    Set
    \[
        \cH_2(m, M)
        = \cH\cap\overline{\mathcal H}_2(m, M),
        \qquad
        \cH_2 = \bigcup_{m, M}\mathcal H_2(m, M),
        \qquad
        \overline{\cH}_2
        = \bigcup_{m, M}\overline{\cH}_2(m, M)
        \p
    \]
    A superscript $\bbT$ denotes the $\bbT$-invariant part of any of these spaces.  We call a family in $\overline{\cH}_2(m, M)$ \demph{bounded} if its local weights are uniformly bounded on compact subsets of $X$.
\end{definition}

The following result follows directly from the definition and will be used frequently throughout the paper without further reference.

\begin{lemma}
Let \(\mathcal B\subset\overline{\mathcal H}^{\mathbb T}_2\) be a bounded family.
\begin{enumerate}[label=\textnormal{(\roman*)},leftmargin=*]
\item \(\overline{\mathcal H}^{\mathbb T}_2\) is convex and closed under \(\max\) and under
adding constants.
\item Let \(\mathcal V\) be a set of \(\mathbb T\)-invariant psh metrics
such that each \(v\in\mathcal V\) satisfies \(v^- \leq v \leq v^+\) for some
\(v^-,v^+\in\mathcal B\).  Then \(\mathcal B\cup\mathcal V\) is a bounded
family whose constants depend only on \(\mathcal B\).
\item For every \(a>0\), the set
\(\{\gamma_{-a}^*\phi:\ \phi\in\mathcal B\}\cup\mathcal B\) is a bounded
family whose constants depend only on \(a\) and \(\mathcal B\).
\end{enumerate}
\end{lemma}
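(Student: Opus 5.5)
Each item can be checked directly against \cref{def:quadratic-spaces}. The one structural input is that $\gamma_s^*$ acts linearly on metrics and commutes with $\max$ and with adding constants. Indeed, $\gamma_s$ lifts to $K_X^{-1}$, so $\gamma_s^*(\max(\phi,\psi)) = \max(\gamma_s^*\phi,\gamma_s^*\psi)$, and $\gamma_s^*(\phi + c) = \gamma_s^*\phi + c$. For item (i), take $\phi\in\overline{\mathcal H}_2(m_1,M_1)$ and $\psi\in\overline{\mathcal H}_2(m_2,M_2)$, and set $m=\min(m_1,m_2)$, $M=\max(M_1,M_2)$.
\begin{itemize}
\item For $t\in[0,1]$, the convex combination $t\phi+(1-t)\psi$ is psh with locally bounded weights. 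The difference $\gamma_s^*(t\phi+(1-t)\psi) - (t\phi+(1-t)\psi)$ is the convex combination of the two differences, so \cref{eq:H2-bounds} holds with constants $(m,M)$.
\item Adding a constant leaves $\gamma_s^*\psi-\psi$ unchanged.
\item For $\max$, I would use the elementary inequalities
\[
\min(a_1-b_1,a_2-b_2)\le \max(a_1,a_2)-\max(b_1,b_2)\le \max(a_1-b_1,a_2-b_2),
\]
with $a_i=\gamma_s^*\phi_i$ and $b_i=\phi_i$ on $\Sigma_0$. These give the bounds with constants $(m,M)$.
\end{itemize}
Psh-ness and $\mathbb T$-invariance are preserved by all three operations, as is local boundedness of the weights.

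For item (ii), let $v\in\mathcal V$ with $v^-\le v\le v^+$, where $v^\pm\in\mathcal B$. Write $m_{\mathcal B},M_{\mathcal B}$ for common constants of $\mathcal B$ and $K_{\mathcal B}$ for a bound on $|\phi|$ over $\Sigma_0$, uniform in $\phi\in\mathcal B$; the latter exists because $\Sigma_0$ is compact and $\mathcal B$ is bounded. The sandwich gives two-sided local bounds on $v$ on every compact set. It also gives
\[
\gamma_s^*v^- - v^+ \le \gamma_s^*v - v \le \gamma_s^*v^+ - v^-
\]
on $\Sigma_0$. Now $\gamma_s^*v^+ - v^- = (\gamma_s^*v^+ - v^+) + (v^+ - v^-)$, which is at most $M_{\mathcal B}e^s + 2K_{\mathcal B}$. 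The lower side is treated symmetrically. The additive constant $2K_{\mathcal B}$ is absorbed by slightly shrinking $m$ and enlarging $M$, using $e^s\ge e$ for $s\ge1$.

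Item (iii) is the step that needs care: I expect the main obstacle to be controlling $\gamma_{-a}^*\phi$ near $\Sigma_0$, since $\gamma_{-a}$ moves $\Sigma_0$ into $\{r<1\}$. Local boundedness holds because $\gamma_{-a}$ is a diffeomorphism, so compact sets pull back to compact sets. The family $\{\gamma_t^*\phi: t\in[-a,0],\ \phi\in\mathcal B\}$ is then uniformly bounded on any fixed compact set; here the lift of $\gamma_t$ to $K_X^{-1}$ is smooth, so local trivializations contribute only bounded changes. For the growth condition I would write
\[
\gamma_s^*\gamma_{-a}^*\phi - \gamma_{-a}^*\phi = (\gamma_{s-a}^*\phi - \phi) - (\gamma_{-a}^*\phi - \phi).
\]
The second term is bounded on $\Sigma_0$ by the compactness argument above.
\begin{itemize}
\item When $s-a\ge1$, the first term satisfies \cref{eq:H2-bounds} at time $s-a$. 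Since $e^{s-a}=e^{-a}e^s$, this gives constants $(me^{-a},M)$ up to additive terms.
\item When $s-a\in[1-a,1)$, the first term is bounded on $\Sigma_0$ uniformly over $\mathcal B$, by the same local-boundedness argument applied to $\gamma_t^*\phi$ for $t$ in a compact interval.
\end{itemize}
The additive constants are then absorbed, using $s\ge1$ once more.
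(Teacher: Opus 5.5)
Your proof is correct and is the same direct check against the definition of $\overline{\mathcal H}_2(m,M)$ that the paper intends; the paper states the lemma without proof, as following directly from the definition. The one point needing care is the short-time range $s\in[1,1+a)$ in (iii), where the condition for $\gamma_{-a}^*\phi$ is not inherited from that of $\phi$. You handle it correctly: you bound $\gamma_t^*\phi-\phi$ uniformly on the compact set $\bigcup_{t\in[-a,1]}\gamma_t\Sigma_0$ and absorb the resulting constants into $m^{-1}$ and $M$.
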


The ambient metric belongs to this class, as do the canonical metrics of any shrinkers which may exist on the fibration.

\begin{lemma}\label{lem:ambient-metric-quadratic-growth}
    The metric $\psi_{\mathrm{amb}}$ defined in \cref{eq:defpsiamb} belongs to $\cH_2^\bbT$.
\end{lemma}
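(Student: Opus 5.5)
The proof checks the three requirements for membership in $\cH_2^\bbT$ in turn: smoothness with positive curvature, $\bbT$-invariance, and the two-sided growth bound \cref{eq:H2-bounds} on $\Sigma_0$.

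\textbf{Smoothness, positivity and invariance.} By construction $\psi_{\mathrm{amb}} = h_{\mathrm{AC}} + \frac1\ell h_{\mathrm{FS}}$ is a smooth Hermitian metric on $K_X^{-1}$. Its curvature is the restriction of the Kähler form $\omega_{\mathrm{AC}} + \frac1\ell\omega_{\mathrm{FS}}$ to the embedded submanifold $X$, so $\sqrt{-1}\partial\bar\partial\psi_{\mathrm{amb}}>0$ and $\psi_{\mathrm{amb}}\in\cH$. For invariance, $h_{\mathrm{AC}}$ is $\bbT$-invariant by choice. For $h_{\mathrm{FS}}$, I would average the Fubini--Study metric over the compact torus acting linearly on $\bbC^{M+1}$; this changes nothing up to a bounded smooth term. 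Alternatively one can choose the $s_j$ to be $\bbT$-weight vectors, in which case $\sum|s_j|^2$ is already invariant.

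\textbf{Growth estimate.} Write $\gamma_s^*\psi_{\mathrm{amb}}-\psi_{\mathrm{amb}}$ as the sum of an affine part and a projective part, and treat them separately.

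For the affine part, take $y\in\Sigma_0$ and $s\ge1$. Since $h_{\mathrm{AC}}=r^2/2$ outside a compact set, \cref{eq:cone-potential-flow} gives $h_{\mathrm{AC}}(\gamma_s y)=e^s/2$ once $s$ is large. On $\Sigma_0$ the function $h_{\mathrm{AC}}$ is bounded. Hence
\[
h_{\mathrm{AC}}(\gamma_s y)-h_{\mathrm{AC}}(y)=\tfrac12 e^s+O(1)
\]
uniformly on $\Sigma_0$. The finitely many small values of $s$ are absorbed into the constant by compactness.

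For the projective part, the difference $\gamma_s^*h_{\mathrm{FS}}-h_{\mathrm{FS}}$ is a well-defined function, since both terms are metrics on the same bundle and the $\bbT_\bbC$-action lifts canonically. In a frame one computes
\[
\gamma_s^*h_{\mathrm{FS}}-h_{\mathrm{FS}} = \log\frac{\sum_j e^{2\nu_j s}|s_j|^2}{\sum_j|s_j|^2}
\]
up to sign conventions, with the $\nu_j$ the weights from the proof of \cref{prop:ambient-volume-upper}. This is bounded in absolute value by $2\max_j|\nu_j|\,s$. Equivalently, the Hamiltonian $h^{h_{\mathrm{FS}}}$ is bounded because $\bbP^M$ is compact and the generating vector field $V$ there is smooth, and then $\frac{d}{ds}\gamma_s^*\psi=\gamma_s^*h^\psi$ integrates to a bound of size $Cs$.

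Adding the two parts,
\[
\tfrac12 e^s - C - Cs \le (\gamma_s^*\psi_{\mathrm{amb}}-\psi_{\mathrm{amb}})|_{\Sigma_0} \le \tfrac12 e^s + C + Cs .
\]
Since $Cs=o(e^s)$, we can choose $m<\tfrac12<M$ (for instance $M$ large and $m$ small) so that
\[
me^s-m^{-1}\le \tfrac12 e^s - C - Cs
\qquad\text{and}\qquad
\tfrac12 e^s + C + Cs\le Me^s
\quad\text{for all } s\ge1 .
\]
This gives \cref{eq:H2-bounds}.

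\textbf{Main obstacle.} The only delicate point is the bookkeeping in the projective part: verifying that $\gamma_s^*h_{\mathrm{FS}}-h_{\mathrm{FS}}$ is independent of the frame and grows only linearly in $s$, which uses the weight identity $\Psi\circ\gamma_s=A_s\circ\Psi$. Everything else is a direct consequence of the exact homogeneity of $r^2$.
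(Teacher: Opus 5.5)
Your proposal is correct and follows the paper's proof: you split $\gamma_s^*\psi_{\mathrm{amb}}-\psi_{\mathrm{amb}}$ into the cone part, which is $\frac12 e^s$ up to a bounded error by exact homogeneity of $r^2$, and the Fubini--Study part, which is $O(s)$ uniformly on $\Sigma_0$, and then adjust $m,M$. Your explicit frame formula for the projective term and your checks of positivity and $\bbT$-invariance fill in details that the paper leaves implicit.
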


\begin{proof}
    Since $h_{\mathrm{AC}} = r^2/2$ outside a compact set, \cref{eq:cone-potential-flow} and compactness of $\Sigma_0$ give
    \[
        (\gamma_s^*\psi_{\mathrm{amb}} - \psi_{\mathrm{amb}})|_{\Sigma_0}
        = \frac12 e^s + O(s)
        \p
    \]
    Here the $O(s)$ term comes from the Fubini--Study factor and is uniform on $\Sigma_0$. After adjusting the constants, this gives the bounds in \cref{eq:H2-bounds}; invariance under $\bbT$ is part of the construction.
\end{proof}

\begin{lemma}
    \label{lem:psh-weight-growth}
    Let $(\omega, f)$ be a shrinker with Reeb vector field $\xi$. Then the associated Hermitian metric $\phi$, defined by $e^{-\phi}=e^{-f}\omega^n$, belongs to $\cH_2^\bbT$.
    More precisely, there are
    $0<m \leq M<\infty$ such that
    \begin{equation}\label{eq:psh-weight-growth}
        m e^s - m^{-1}
        \leq (\gamma_s^*\phi - \phi)|_{\Sigma_0}
        \leq M e^s,
        \qquad s \geq 1
        \p
    \end{equation}
\end{lemma}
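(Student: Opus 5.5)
The plan is to compute the Hamiltonian of $\phi$ explicitly and then integrate along the flow $\gamma_s$. With $e^{-\phi}=e^{-f}\omega^n$, the definition \cref{eq:general-hamiltonian} gives
\[
    h^\phi = -e^{\phi}\cL_{X_0}(e^{-f}\omega^n) = X_0 f - \tr_\omega\cL_{X_0}\omega
    \p
\]
The identities $X_0 f = \tfrac12|\nabla f|^2 = |\nabla^{1,0}f|^2$ and $\cL_{X_0}\omega^n=(n-R_\omega)\omega^n$ (already used in \cref{lem:homogeneous-pluri-anticanonical-polynomial-growth}) then give
\[
    h^\phi = |\nabla^{1,0}f|^2 - n + R_\omega = \rho^2 - n = f
    \p
\]
This is consistent with $\iota_\xi\sqrt{-1}\partial\bar\partial\phi=\iota_\xi\omega=-df$, up to the normalization of $X_0$, which I would double check. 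Since $\frac{d}{ds}\gamma_s^*\phi=\gamma_s^*h^\phi$, we obtain
\[
    (\gamma_s^*\phi-\phi)(y)=\int_0^s f(\gamma_q y)\,dq = \int_0^s\bigl(\rho^2(\gamma_q y)-n\bigr)\,dq
    \p
\]
Smoothness, positivity of $\sqrt{-1}\partial\bar\partial\phi=\omega$, and $\bbT$-invariance are immediate once $\omega$ is taken $\bbT$-invariant, which \cref{prop:weighted-matsushima} allows after conjugation. So the whole task is a two-sided exponential bound on $\rho^2(\gamma_q y)$ for $y\in\Sigma_0$.

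By \cref{eq:affine-log-radius-derivative}, $\frac{d}{dq}\log\rho^2(\gamma_q y)=1-R_\omega/\rho^2\in[\theta,1]$ once $\rho^2\ge r_0$; here $R_\omega\ge0$ gives the upper bound and \cref{prop:affine-nondegeneracy} gives the lower one. The upper bound $\rho^2(\gamma_q y)\le \rho^2(y)e^q$ holds wherever $\rho^2\geq n$, and $\rho^2$ is bounded on the compact set $\Sigma_0$. This gives $\int_0^s\rho^2\le Ce^s$ and hence the bound $Me^s$; one must handle $\{\rho^2\le r_0\}$ separately, which is harmless since $\rho^2$ is bounded there. For the lower bound, the naive estimate only gives growth $e^{\theta q}$, which is too weak. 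I would instead use \cref{eq:ScalarIntegralEstimate}: integrating $\frac{d}{dq}\log\rho^2 = 1-R_\omega/\rho^2$ with $\int_0^\infty R_\omega/\rho^2\,dq\le C$ along trajectories starting on $\Sigma$ gives $\rho^2(\gamma_q y')\ge c\,e^q$ for $y'\in\Sigma$.

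To transfer this to $\Sigma_0=\{r=1\}$, write each $y\in\Sigma_0$ as $\gamma_{s_0}y'$ with $y'\in\Sigma$ and $s_0$ in a bounded range; this works because both $\Sigma$ and $\Sigma_0$ are compact transversals to the proper flow, using the diffeomorphism $\Gamma$ and its analogue for $r$. Then $\rho^2(\gamma_q y)\ge c e^{q}$ for $q\ge0$, so $\int_0^s(\rho^2-n)\,dq\ge ce^s - C - ns$. This yields $me^s-m^{-1}$ for small $m$, after absorbing $ns$ using $s\ge1$.

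The main obstacle is the sharp lower bound. It relies on the bounded total scalar-curvature integral \cref{eq:ScalarIntegralEstimate} along flow lines, which in turn came from the affine return-time estimate \cref{lem:affine-return-time}, rather than on the pointwise gap $\theta$.
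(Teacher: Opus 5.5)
Your proposal is correct and follows the paper's proof. You identify $h^\phi=f$, so that $(\gamma_s^*\phi-\phi)(y)=\int_0^s f(\gamma_q y)\,dq$, and then get $ce^q\le\rho^2(\gamma_q y)\le Ce^q$ on $\Sigma_0$ from \cref{lem:affine-return-time} (your use of \cref{eq:ScalarIntegralEstimate} is equivalent to its second inequality) plus a bounded time shift between the compact transversals $\Sigma$ and $\Sigma_0$; your comment on $\bbT$-invariance goes beyond the paper, which leaves it implicit (it holds for any torus acting by isometries of $\omega$, whereas conjugating via \cref{prop:weighted-matsushima} replaces $\phi$ by a pullback rather than proving the claim for $\phi$ itself).
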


\begin{proof}
    Set $\rho^2 = f + n$. Since $r^2(\gamma_s y) = e^s$ for $y \in \Sigma_0$, \cref{lem:affine-return-time} and compactness of the two transversals show that
    \[
        c e^s \leq \rho^2(\gamma_s y) \leq C e^s,
        \qquad s \geq 0, \quad y \in \Sigma_0
        \p
    \]
    Thus the shrinker radius and the cone radius are asymptotic in the sense that
    \[
        C^{-1} r^2 \leq \rho^2 \leq C r^2
    \]
    outside a compact set. The trace of the soliton equation gives
    \begin{equation}\label{eq:volume-flow}
        \cL_{X_0}\omega^n = (n - R_\omega)\omega^n
        \p
    \end{equation}
    Together with $X_0 f = \rho^2 - R_\omega$, this yields
    \begin{equation}\label{eq:density-evolution}
        \cL_{X_0}(e^{-f}\omega^n)
        = -f e^{-f}\omega^n
        \p
    \end{equation}
    Hence
    \begin{equation}\label{eq:weight-evolution}
        \frac{d}{d s}\gamma_s^*\phi = \gamma_s^*f
        \p
    \end{equation}
    In particular,
    \begin{equation}\label{eq:weight-integral}
        (\gamma_s^*\phi - \phi)(y)
        = \int_0^s f(\gamma_q y)\,dq
    \end{equation}
    is an identity of globally defined functions. Since $f = \rho^2 - n$, integration in \cref{eq:weight-integral} proves \cref{eq:psh-weight-growth}, after adjusting the constants.
\end{proof}

\begin{lemma}[Uniform asymptotics along the flow]
    \label{lem:flow-quadratic-control}
    Let $\cK$ be a bounded family in $\overline\cH_2^{\bbT}(m,M)$.
    There are $a, b > 0$, uniform for every ordered pair $\psi_-, \psi_+ \in \cK$, such that
    \begin{equation}\label{eq:flow-quadratic-endpoint-shifts}
        \gamma_{-a}^*\psi_- - b \leq \psi_+ \leq 
         \gamma_{a}^* \psi_-+b
        \p
    \end{equation}
   Moreover there  exists $C, c > 0$, uniform for $\psi \in \cK$, such that for $s\geq 0$
    \begin{equation}\label{eq:flow-quadratic-control}
        c (e^s-1) r^2-Cs \leq \gamma_s^*\psi - \psi \leq C (e^s-1)r^2+Cs
        \p
    \end{equation}
\end{lemma}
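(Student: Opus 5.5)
The plan is to reduce everything to the defining bounds \cref{eq:H2-bounds} on $\Sigma_0$, and to transport them to arbitrary points through the flow $\gamma_s$ and its cocycle property. Write $\Delta_s\psi := \gamma_s^*\psi - \psi$. For two $\bbT$-invariant metrics this is a globally defined function: the canonical lift makes $\gamma_s^*\psi$ a metric on the same bundle $K_X^{-1}$. It satisfies
\[
    \Delta_{s+q}\psi = \gamma_q^*(\Delta_s\psi) + \Delta_q\psi.
\]
Two inputs come from boundedness of $\cK$. First, $\sup_{\{r\le 1\}}|\psi - \psi_{\mathrm{amb}}| \le C$ for all $\psi\in\cK$: the weights are uniformly bounded on the compact set $\{r\leq1\}$, which contains $E$. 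Second, the bound \cref{eq:H2-bounds} only holds for $s\ge1$. To cover $s\in[0,1]$ we use that $\gamma_{[0,1]}(\Sigma_0)$ is compact, so $|\Delta_s\psi|\le C$ on $\Sigma_0$ for $s\in[0,1]$, again uniformly in $\psi\in\cK$. Together these give, on $\Sigma_0$ and for all $s\ge0$,
\[
    m e^s - C \le \Delta_s\psi \le M e^s + C .
\]

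The next step is \cref{eq:flow-quadratic-control}. First take $x=\gamma_q y$ with $y\in\Sigma_0$ and $q\ge0$, so that $r^2(x)=e^q$. The cocycle identity gives
\[
    \Delta_s\psi(x) = \Delta_{s+q}\psi(y) - \Delta_q\psi(y).
\]
Inserting the bounds on $\Sigma_0$ yields an upper estimate $Me^{s+q} - me^q + C$. This is not of the form $C(e^s-1)e^q$ when $s$ is small, because $M\neq m$. So one cannot subtract the crude two-sided bounds; one needs derivative control instead. The derivative is
\[
    \frac{d}{ds}\Delta_s\psi = \gamma_s^* h^\psi ,
\]
so the statement is equivalent to $c\,r^2 - C \le h^\psi \le C r^2 + C$, uniformly in $\cK$, after integrating $\int_0^s e^{q}r^2\,dq = (e^s-1)r^2$. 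For smooth $\psi$ the bound on $h^\psi$ would follow by differentiating. For merely locally bounded psh weights I would use convexity instead. The function $s\mapsto\gamma_s^*\psi(y)$ is convex in $s$: $\psi\circ\gamma_{s+\sqrt{-1}\sigma}$ is psh in the complex variable and independent of $\sigma$ by $\bbT$-invariance. Hence difference quotients are monotone. Comparing the slope on $[q,q+1]$ with the slope on $[q-1,q]$, and using the $\Sigma_0$ bounds at times $q-1,q,q+1$, brackets the one-sided derivative at $q$ between $c e^q - C$ and $Ce^q + C$. The crude bounds fail when $M > m e$, and this is the main obstacle. To overcome it I would compare slopes over the windows $[q-k,q]$ and $[q,q+k]$ with $k$ large and fixed, depending only on $m,M$. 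The upper bound is then about $M(e^k-1)e^q/k$, and the lower bound is about $m(1-e^{-k})e^q/k$, minus bounded terms. Both have the correct order $e^q = r^2$. Integrating these slopes in $s$ gives \cref{eq:flow-quadratic-control} on $\{r\ge e\}$. On the compact region $\{r\le e\}$, the same convexity argument with base points in the compact set controls $\Delta_s\psi$ by $Ce^s$. This is absorbed into the $\pm Cs$ terms together with the constant for $s\le1$; for $s\ge1$ it is dominated by $(e^s-1)r^2$ up to constants where $r$ is bounded below. Near $E$, where $r\to0$, we use $\Delta_s\psi = \int_0^s\gamma_q^*h^\psi\,dq$, with $h^\psi$ bounded near $E$ in the sense of convex slopes and bounded weights. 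This requires $\gamma_q$ to keep a neighborhood of $E$ inside a compact set for bounded $q$, which it does.

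Finally, \cref{eq:flow-quadratic-endpoint-shifts} follows from \cref{eq:flow-quadratic-control} together with the uniform closeness $|\psi_+-\psi_-|\le C$ on $\{r\le1\}$. At $x=\gamma_q y$ we write
\[
    \psi_+(x) - \psi_-(x) = \Delta_q\psi_+(y) - \Delta_q\psi_-(y) + O(1),
\]
and this lies between $-(M-m)e^q - C$ and $(M-m)e^q + C$. On the other hand, \cref{eq:flow-quadratic-control} gives
\[
    \gamma_a^*\psi_- - \psi_- \ge c(e^a-1)e^q - Ca .
\]
Choosing $a$ with $c(e^a-1)\ge M-m + C$ and then $b$ large gives $\psi_+\le\gamma_a^*\psi_-+b$. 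For the other inequality we apply the same argument to $\gamma_{-a}^*\psi_-$, using the third item of the preceding lemma to keep the family bounded.
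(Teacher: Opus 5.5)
\textbf{Comparison with the paper.} Your proof of \cref{eq:flow-quadratic-control} is the paper's. Both use convexity of $s\mapsto\gamma_s^*\psi(x)$ (plurisubharmonicity plus $\bbT$-invariance) and a backward difference quotient over a window long enough that $Me^{-k}<m$. The paper takes the window $a$ with $Me^{-a}<m/2$, and a forward window of length $1$, which already suffices for the upper bound. This gives $cr^2-C\le h^\psi_\pm\le C(1+r^2)$, and one integrates along the flow using $r^2\circ\gamma_q=e^qr^2$.

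The real difference is the order of the two statements. The paper proves \cref{eq:flow-quadratic-endpoint-shifts} first and directly: at $\gamma_s y$, $s\ge a+1$, the cocycle identity and \cref{eq:H2-bounds} give $(\gamma_{-a}^*\psi_--\psi_+)(\gamma_sy)\le(\psi_--\psi_+)(y)+m^{-1}-\tfrac12me^s$. So no convexity is needed there, and that $a$ is then reused as the window length. You instead deduce the upper shift from \cref{eq:flow-quadratic-control}, choosing $a$ with $c(e^a-1)\ge M-m$. This is correct and conceptually clear: a shift by $\gamma_a$ gains at least about $c(e^a-1)r^2$, which beats the at most $(M-m)r^2$ discrepancy between two members of $\cK$. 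The cost is that the easy statement now depends on the harder one, and the resulting $a$ is worse.

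\textbf{Points to repair.}

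\emph{Window estimates.} They should read $(m-Me^{-k})e^q/k-C$ and $(Me^k-m)e^q/k+C$ for $q\ge k$, not $m(1-e^{-k})e^q/k$ and $M(e^k-1)e^q/k$. It is the term $Me^{-k}$ that forces $k>\log(M/m)$. The end region is then $\{r^2\ge e^k\}$, not $\{r\ge e\}$.

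\emph{Compact region and points near $E$.} Your treatment here does not work as written. For $r(x)\to0$ and large $s$, a bound $|\gamma_s^*\psi-\psi|\le Ce^s$ is not dominated by $C(e^s-1)r^2+Cs$. Moreover, the trajectory $\gamma_q x$, $0\le q\le s$, does not stay in a compact set. The fix is the reduction you already stated. On the compact set $\{r^2\le e^k\}$, the one-sided slopes are sandwiched between $(\psi-\gamma_{-k}^*\psi)/k$ and $\gamma_1^*\psi-\psi$, which are uniformly bounded there. Hence $cr^2-C\le h^\psi_\pm\le Cr^2+C$ holds on all of $X$, and integrating along any trajectory treats every $x$ at once.

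\emph{Lower shift inequality.} It is simply the upper one for the swapped pair $(\psi_+,\psi_-)$, pulled back by $\gamma_{-a}$; the paper also interchanges the roles. Item (iii) of the preceding lemma is not needed. If you instead meant to combine \cref{eq:flow-quadratic-control} for $\psi_-$ at $\gamma_{-a}x$ with the bracket on $\psi_+-\psi_-$, that would require $c(1-e^{-a})\ge M-m$, which fails in general.
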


\begin{proof}
    We first prove \cref{eq:flow-quadratic-endpoint-shifts}. Choose $a \geq 1$ so large that $e^{-a}M < m/2$. For $s \geq a + 1$ and $y \in \Sigma_0$, \cref{eq:H2-bounds} gives
    \[
        \begin{aligned}
            (\gamma_{-a}^*\psi_- - \psi_+)(\gamma_s y)
                &= (\psi_- - \psi_+)(y) + (\gamma_{s - a}^*\psi_- - \psi_-)(y) - (\gamma_s^*\psi_+ - \psi_+)(y)\\
                & \leq (\psi_- - \psi_+)(y) + m^{-1} - \frac12 m e^s.
        \end{aligned}
    \]
    The first term is uniformly bounded on $\Sigma_0$, and the ordered
    difference is uniformly bounded above on the compact complement of the
    end because $\cK$ is bounded. Thus
    $\gamma_{-a}^*\psi_- - b \leq \psi_+$, with $b$ independent of the ordered pair. Interchanging $\psi_-$ and $\psi_+$ proves \cref{eq:flow-quadratic-endpoint-shifts}.

  We prove \cref{eq:flow-quadratic-control} by bounding the rate of change of the weight and then integrating along the flow. Fix $\psi \in \cK$ and $x \in X$. We know that $t \mapsto (\gamma_t^*\psi)(x)$ is finite and convex by plurisubharmonicity and $\bbT$-invariance and therefore we have for any $a>0$
  \begin{equation}\label{eq-convexity conttol}
\frac{(\gamma_t^*\psi - \gamma_{t - a}^*\psi)(x)}{a}
        \leq (\gamma_t^*\psi)'_-(x) \leq (\gamma_t^*\psi)'_+(x)
        \leq (\gamma_{t + 1}^*\psi - \gamma_t^*\psi)(x).   
  \end{equation}
Using the group law, we have 
\begin{equation}\label{eq-estimate without t}
    \frac{(\gamma_t^*\psi - \gamma_{t - a}^*\psi)(x)}{a}=\gamma_t^*\left(\frac{\psi-\gamma_{-a}^*\psi}{a}\right)(x),\quad (\gamma_{t + 1}^*\psi - \gamma_t^*\psi)(x)=\gamma_t^*(\gamma_1^*\psi-\psi)(x)
    \p
\end{equation}
We then show that there are constants $c, C > 0$, uniform over $\cK$, such that
\begin{equation}\label{eq-control without t}
    c r^2 - C \leq \frac{\psi - \gamma_{-a}^*\psi}{a},
        \qquad
        \gamma_1^*\psi - \psi \leq C(1 + r^2)
        \qquad\text{on }X
    \p
\end{equation}
   We first establish them on the end, write $x = \gamma_t y$ with $y \in \Sigma_0$ and $t \geq a + 1$, where $a \geq 1$ is fixed as before satisfying $e^{-a}M < m/2$. The group law and \cref{eq:H2-bounds} give
    \begin{equation}\label{eq--effective control}
          \begin{aligned}
            (\psi - \gamma_{-a}^*\psi)(x)
                &= (\gamma_t^*\psi - \gamma_{t - a}^*\psi)(y)
                \geq (m - M e^{-a})r^2(x) - m^{-1},\\
            (\gamma_1^*\psi - \psi)(x)
                &= (\gamma_{t + 1}^*\psi - \gamma_t^*\psi)(y)
                \leq e M r^2(x) + m^{-1}.
        \end{aligned} 
    \end{equation}
    The first coefficient is positive because $M e^{-a} < m/2$. In the compact region $\{r^2 \leq e^{a + 1}\}$, both differences are uniformly bounded: the boundedness of $\cK$ applies to a fixed compact set containing $x$, $\gamma_{-a}x$, and $\gamma_1 x$. Enlarging $C$ therefore proves the two bounds on all of $X$.

   Combining \cref{eq-convexity conttol,eq-control without t,eq--effective control} and using $r^2(\gamma_t x) = e^t r^2(x)$, we obtain
    \[
        c e^t r^2(x) - C
        \leq (\gamma_t^*\psi)'(x)
        \leq C e^t r^2(x) + C
        \quad\text{for almost every }t \geq 0
        \p
    \]
    Integrating over $[0, s]$ for any $s \geq 0$ then gives \cref{eq:flow-quadratic-control}.
\end{proof}

\begin{lemma} \label{thm:H2moments}
    For every $N$, the polynomial moment
    \[
        \int_X (1 + r^2)^N e^{-\psi}
    \]
    is uniformly bounded for $\psi$ in a bounded family in $\overline{\cH}_2^{\bbT}(m, M)$.
\end{lemma}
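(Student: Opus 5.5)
We compare $e^{-\psi}$ with a fixed reference measure and then integrate in the product coordinates $\Gamma:\Sigma_0\times[0,\infty)\to\{r\geq1\}$, $\Gamma(y,s)=\gamma_s y$, furnished by the flow of $X_0$. The reference is the smooth metric $\psi_{\mathrm{amb}}$, which lies in $\cH_2^\bbT$ by \cref{lem:ambient-metric-quadratic-growth}. Given a bounded family $\cK\subset\overline{\cH}_2^\bbT(m,M)$, the family $\cK\cup\{\psi_{\mathrm{amb}}\}$ is still bounded; after replacing $m$ by $\min(m,m_{\mathrm{amb}})$ and $M$ by $\max(M,M_{\mathrm{amb}})$ it lies in a single class $\overline{\cH}_2^\bbT(m',M')$. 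The measure $e^{-\psi}$ is a smooth volume form times $e^{-(\psi-\psi_{\mathrm{amb}})}$, with the difference a globally defined function. On the compact region $\{r\leq1\}$ this difference is uniformly bounded below by boundedness of the family, because the local psh weights are uniformly bounded. So that region contributes a uniform constant, and we only need to treat the end.

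On the end, write $x=\gamma_s y$ with $y\in\Sigma_0$ and $s\geq0$. From \cref{eq:H2-bounds} we have
\[
(\psi-\psi_{\mathrm{amb}})(\gamma_s y)
=(\psi-\psi_{\mathrm{amb}})(y)+(\gamma_s^*\psi-\psi)(y)-(\gamma_s^*\psi_{\mathrm{amb}}-\psi_{\mathrm{amb}})(y)
\geq -C+m e^s-M_{\mathrm{amb}}e^s .
\]
This lower bound is useless unless $m>M_{\mathrm{amb}}$, so we must choose the reference more carefully. Since $\psi_{\mathrm{amb}}-\psi_{\mathrm{amb}}\circ\gamma_{-a}$ is controlled by \cref{lem:flow-quadratic-control}, the natural fix is to use the shifted reference $\gamma_{-a}^*\psi_{\mathrm{amb}}$ with $a$ large. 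Concretely, $\gamma_{-a}^*\psi_{\mathrm{amb}}$ grows like $M_{\mathrm{amb}}e^{-a}e^s$ along the flow. Choosing $e^{-a}M_{\mathrm{amb}}<m/2$ gives
\[
\psi-\gamma_{-a}^*\psi_{\mathrm{amb}}\geq \tfrac m2 e^s-C=\tfrac m2 r^2-C
\]
on the end, uniformly over $\cK$. The estimate \cref{eq:flow-quadratic-endpoint-shifts} is exactly of this type and can be invoked directly for the pair $(\psi_{\mathrm{amb}},\psi)$. The reference measure $\nu_a:=e^{-\gamma_{-a}^*\psi_{\mathrm{amb}}}=\gamma_{-a}^*e^{-\psi_{\mathrm{amb}}}$ is a fixed smooth positive volume form on $X$.

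It remains to show
\[
\int_{\{r\geq1\}}(1+r^2)^N e^{-\frac m2 r^2}\,\nu_a<\infty .
\]
This is a fixed integral, so uniformity is automatic. Pulling back by $\Gamma$, we need at most exponential growth in $s$ of $\Gamma^*\nu_a$ relative to $ds\,d\sigma$. Write $\nu_a=g\,\omega_{\mathrm{amb}}^n$. Then \cref{eq:section-four-ambient-flow-bound} gives $\gamma_s^*\omega_{\mathrm{amb}}^n\leq e^{Cs}\omega_{\mathrm{amb}}^n$, and hence, as in the proof of \cref{lem:ambient-polynomial-exhaustion}, $\Gamma^*\omega_{\mathrm{amb}}^n\leq Ce^{Cs}\,ds\,d\sigma$.

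The main obstacle is the density $g=e^{-\gamma_{-a}^*\psi_{\mathrm{amb}}}/\omega_{\mathrm{amb}}^n$. Its logarithm satisfies $X_0\log g=-\gamma_{-a}^*h^{\psi_{\mathrm{amb}}}-\tr_{\omega_{\mathrm{amb}}}\cL_{X_0}\omega_{\mathrm{amb}}$. The trace term is bounded. The Hamiltonian term $-h$ is nonpositive on the end up to constants, since the Hamiltonian there is $\approx r^2/2$ by homogeneity. Hence $\log g\leq C+Cs$ along the flow, and so $g\leq Ce^{Cs}$. The integral is then bounded by
\[
C\int_\Sigma\int_0^\infty e^{Ns}e^{Cs}e^{-\frac m2 e^s}\,ds\,d\sigma<\infty,
\]
because the double exponential decay beats any exponential growth. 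Adding the compact region finishes the proof.
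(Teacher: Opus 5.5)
Your proof is correct, but it takes a longer route than the paper. The paper never introduces a reference measure. Because $\gamma_s$ acts on $K_X^{-1}$ by the canonical lift, the volume form $e^{-\psi}$ pulls back exactly:
\[
\Gamma^*(e^{-\psi}) = e^{-(\gamma_s^*\psi-\psi)(y)}\,ds\wedge(\iota_{X_0}e^{-\psi})|_{\Sigma_0}(y).
\]
So the only inputs needed are a uniform bound on the transversal density $(\iota_{X_0}e^{-\psi})|_{\Sigma_0}$, which follows from boundedness of the family on the compact $\Sigma_0$, and the lower bound in \cref{eq:H2-bounds}. Together these immediately give $\Gamma^*(e^{-\psi})\leq C\exp(-me^s)\,ds\,d\sigma_0$.

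You use this identity only infinitesimally, to compute $X_0\log g$. As a result you need the detour through $\psi_{\rm amb}$, the shift $\gamma_{-a}$, a lower bound for $h^{\psi_{\rm amb}}$, and \cref{eq:section-four-ambient-flow-bound}. Your ``main obstacle'' disappears if you apply the exact pullback formula to $\nu_a$ itself. Once you do that, you may as well apply it to $e^{-\psi}$ directly, which is the paper's proof.

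What your route buys is a slightly stronger statement. It gives a pointwise domination $e^{-\psi}\leq Ce^{-\frac m2 r^2}\nu_a$ of the whole family by one fixed smooth measure, which is convenient for dominated-convergence arguments.

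One small inaccuracy: \cref{eq:flow-quadratic-endpoint-shifts} as stated only gives $\psi-\gamma_{-a}^*\psi_{\rm amb}\geq -b$. The quadratic gain $\frac m2 r^2$ comes from the computation inside its proof, which you reproduce yourself. That computation, not the stated inequality, is what should be cited.
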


\begin{proof}
    Let $\Gamma(s, y) = \gamma_s y$ and fix a smooth positive density $d\sigma_0$ on $\Sigma_0$. Since the family is bounded on the compact set $\Sigma_0$, the densities $(\iota_{X_0}e^{-\psi})|_{\Sigma_0}$ are uniformly bounded above by $C d\sigma_0$. Moreover,
    \[
        \Gamma^*(e^{-\psi}) = e^{- (\gamma_s^*\psi - \psi)(y)} \d s \wedge (\iota_{X_0}e^{-\psi})|_{\Sigma_0}(y)
        \p
    \]
    The lower bound in \cref{eq:H2-bounds} therefore gives the uniform estimate
    \begin{equation}\label{eq:common-gaussian-density}
        \Gamma^*(e^{-\psi}) \leq C\exp(-m e^s)\,ds\,d\sigma_0,
        \qquad s \geq 1
        \p
    \end{equation}
    Since $r^2(\gamma_s y) = e^s$, it follows that
    \[
        \int_{\{r^2 \geq e\}}(1 + r^2)^N e^{-\psi}
        \leq C\int_1^\infty (1 + e^s)^N\exp(-m e^s)\,ds < \infty
        \p
    \]
    Boundedness of the family on the compact complement gives a uniform bound for the remaining integral.
\end{proof}

\subsection{Construction of the geodesic}

Write $\tau = t + \sqrt{-1}\sigma$ and let
$\cS=\{0<t<1\}\subset\bbC$.

\begin{lemma}[subgeodesic segments]
    \label{lem:psh-subgeodesic}
    Let $\psi_-,\psi_+$ be continuous $\bbT$-invariant locally bounded psh
    metrics in one quadratic-growth class, and choose $a,B$ from
    \cref{eq:flow-quadratic-endpoint-shifts}.  On a neighborhood of
    $\overline\cS\times X$, set
    \begin{equation}\label{eq:psh-subgeodesic}
        \underline\Psi_\tau
        =\max\left\{
            \gamma_{-a\tau}^*\psi_- -Bt,
            \gamma_{-a(1-\tau)}^*\psi_+ -B(1-t)
        \right\}
        \p
    \end{equation}
    Then $\underline\Psi$ is a continuous, locally bounded,
    $\bbT$-invariant psh metric on $\operatorname{pr}_X^*K_X^{-1}$.  It is
    independent of $\sigma = \Im \tau$ and has boundary values
    $\underline\Psi_0=\psi_-$ and $\underline\Psi_1=\psi_+$.
\end{lemma}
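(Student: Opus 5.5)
The plan is to check each property of $\underline\Psi$ directly from its definition. The one real input is that the complexified flow acts holomorphically on the total space of $K_X^{-1}$. Recall from \cref{eq:complex-flow} that $\gamma_{s+\sqrt{-1}\sigma}=\exp(sX_0)\circ\exp(\sigma JX_0)$ comes from the $\bbT_\bbC$-action, which lifts canonically to $K_X^{-1}$. Hence the map
\[
    \Gamma_c:\bbC\times X\to X,\qquad (\tau,x)\mapsto\gamma_{c\tau}x ,
\]
is holomorphic for every real constant $c$. It is covered by a holomorphic bundle map $\operatorname{pr}_X^*K_X^{-1}\to K_X^{-1}$ that is linear on fibers. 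For a metric $\psi$ on $K_X^{-1}$ we interpret $\gamma_{c\tau}^*\psi$ as the pullback under this bundle map. This pullback is a Hermitian metric on $\operatorname{pr}_X^*K_X^{-1}$ over $\bbC\times X$.

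First I show that each branch in \cref{eq:psh-subgeodesic} is psh. In a local holomorphic frame, the weight of $\gamma_{-a\tau}^*\psi_-$ is the weight of $\psi_-$ composed with the holomorphic map $\Gamma_{-a}$, plus $\log|g|^2$ for a nonvanishing holomorphic transition function $g$. So it is psh, continuous and locally bounded, because $\psi_-$ has these properties. The term $-Bt=-B\,\Re\tau$ is pluriharmonic on $\bbC\times X$. Therefore $\gamma_{-a\tau}^*\psi_--Bt$ is a continuous, locally bounded psh metric. The same holds for the second branch, with $\Gamma_{-a}$ precomposed with $\tau\mapsto\tau-1$ and $-B(1-t)$ pluriharmonic. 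The maximum of two continuous psh weights of the same bundle is continuous and psh, and it is locally bounded. Everything is defined for all $\tau\in\bbC$, so in particular on a neighborhood of $\overline\cS\times X$.

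Next come the symmetries. The flow $\exp(\sigma JX_0)$ is the flow of $\frac12\xi$, and $\xi\in\Lie\bbT$ acts on $K_X^{-1}$ through the canonical lift. So the imaginary part of $\tau$ enters only through the $\bbT$-action, and $\bbT$-invariance of $\psi_\pm$ gives
\[
    \gamma_{-a(t+\sqrt{-1}\sigma)}^*\psi_-=\gamma_{-at}^*\psi_- .
\]
The same holds for $\psi_+$. Hence $\underline\Psi$ is independent of $\sigma$. Because $\bbT$ is abelian, its action commutes with every $\gamma_\tau$. This gives $\bbT$-invariance of both branches, and hence of the maximum.

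Finally, the boundary values come from \cref{lem:flow-quadratic-control}, applied to the bounded family $\{\psi_-,\psi_+\}$. Since that lemma holds for every ordered pair, \cref{eq:flow-quadratic-endpoint-shifts} yields both
\[
    \gamma_{-a}^*\psi_+-B\le\psi_-
    \qquad\text{and}\qquad
    \gamma_{-a}^*\psi_--B\le\psi_+ .
\]
At $t=0$ the first branch is $\psi_-$, and the second branch is $\gamma_{-a}^*\psi_+-B\le\psi_-$, so $\underline\Psi_0=\psi_-$. At $t=1$ the second branch is $\psi_+$, and the first is $\gamma_{-a}^*\psi_--B\le\psi_+$, so $\underline\Psi_1=\psi_+$.

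The only step needing care is the holomorphic dependence of the lifted bundle action on $\tau$, which is what makes the pullback psh jointly in $(\tau,x)$. This follows because $\bbT_\bbC$ acts holomorphically on $K_X^{-1}$ and $\tau\mapsto\gamma_{-a\tau}$ is a holomorphic one-parameter subgroup of $\bbT_\bbC$.
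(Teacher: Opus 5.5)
Your proof is correct and follows the paper's argument essentially verbatim. The lifts of $(\tau,x)\mapsto\gamma_{-a\tau}x$ and $(\tau,x)\mapsto\gamma_{-a(1-\tau)}x$ to $K_X^{-1}$ are holomorphic and $\bbT$-equivariant, so both branches are continuous psh metrics on $\operatorname{pr}_X^*K_X^{-1}$; $\bbT$-invariance removes the $\Im\tau$-dependence; and the two ordered instances of \cref{eq:flow-quadratic-endpoint-shifts} give the boundary values. One harmless slip: the second branch is $\Gamma_{-a}$ precomposed with $\tau\mapsto 1-\tau$ (equivalently $\Gamma_{a}$ precomposed with $\tau\mapsto\tau-1$), not $\Gamma_{-a}$ precomposed with $\tau\mapsto\tau-1$, and this does not affect the psh argument.
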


\begin{proof}
    The maps $(\tau,x)\mapsto\gamma_{-a\tau}x$ and $(\tau,x)\mapsto\gamma_{-a(1-\tau)}x$ are holomorphic and $\bbT$-equivariant. Their canonical lifts to $K_X^{-1}$ identify the pulled-back anticanonical bundles with $\operatorname{pr}_X^*K_X^{-1}$, so both branches in \cref{eq:psh-subgeodesic} are continuous locally bounded psh metrics on the same line bundle. Hence their maximum is also a psh metric on that line bundle. Furthermore, since $\psi_-, \psi_+$ are $\bbT$-invariant, both branches are independent of $\Im \tau$ and $\bbT$-invariant. Thus the same is true of $\underline\Psi$.

    The inequalities \cref{eq:flow-quadratic-endpoint-shifts} precisely say that $t = 0$ (resp $t = 1$) the first (resp. second) argument of $\max$ achieves the maximum, which shows that $\underline\Psi$ has the claimed boundary values.
\end{proof}

\begin{defn}
    A \demph{geodesic} connecting metrics $\psi_0, \psi_1 \in \overline{\cH}_2^{\bbT}$ is a locally bounded psh metric $\Phi \in \PSH(\cS \times X; \pr_2^*K_X^{-1})^{\bbT}$, constant in the $\sqrt{-1}\bbR$-direction, such that there exist $m, M$ for which $\Phi_t \in \overline{\cH}_2(m, M)^{\bbT}$ for all $t$, and which solves the boundary value problem
    \[
        \begin{cases}
            \left(\sqrt{-1}\partial_{\tau, X}\bar\partial_{\tau, X}\Phi\right)^{n + 1} = 0, \\
            \lim_{t \to i}\Phi_t = \psi_i, \quad i = 0, 1.
        \end{cases}
    \]
    The limit is in the pointwise sense. A \demph{subgeodesic} has the same definition, except we only require $(\sqrt{-1}\partial_{\tau, X}\bar\partial_{\tau, X}\Phi)^{n + 1} \geq 0$.
\end{defn}

\begin{proposition}
    \label{prop:general-geodesic}
    Let $\psi_0,\psi_1$ be continuous, $\bbT$-invariant psh metrics in one quadratic-growth class. Then there is a $\bbT$-invariant locally bounded geodesic $\Phi_t$, $0 \leq t \leq 1$ whose slices $\{\Phi_t\} \subseteq \overline{\cH}_2$ are a bounded family, and converge locally uniformly to $\psi_i$ as $t \to i = 0, 1$.
    Furthermore $\Phi$ is Lipschitz in $t$ with quadratic growth, i.e.\ $\exists L > 0$ such that
    \begin{equation}\label{eq:general-geodesic-bound}
        |\Phi_t - \Phi_u| \leq L(1 + r^2)|t - u|
        \p
    \end{equation}
\end{proposition}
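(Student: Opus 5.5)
We will construct $\Phi$ by a Perron--Bremermann envelope, using the subgeodesic of \cref{lem:psh-subgeodesic} as a lower barrier and a matching upper barrier built from the same flow shifts. Let $\mathcal F$ be the set of $\bbT$-invariant psh metrics $V$ on $\pr_X^*K_X^{-1}$ over $\cS\times X$, independent of $\sigma$, with $\limsup_{t\to0}V_t\leq\psi_0$, $\limsup_{t\to1}V_t\leq\psi_1$ and $V_t\leq \overline\Psi_t$. Here $\overline\Psi$ is an upper barrier still to be fixed. We set $\Phi=(\sup\mathcal F)^*$. By \cref{lem:psh-subgeodesic}, $\underline\Psi\in\mathcal F$, so $\Phi\geq\underline\Psi$.

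For the upper barrier, convexity of $t\mapsto V_t(x)$ for $\sigma$-independent psh $V$ gives $V_t\leq(1-t)\psi_0+t\psi_1$ pointwise. This is the natural upper barrier $\overline\Psi_t$, and it lies in $\overline\cH_2$ by convexity of the class. Combined with $\underline\Psi$, whose slices are bounded in $\overline\cH_2$ by parts (i) and (iii) of the unnamed lemma, part (ii) of that lemma shows that every slice $\Phi_t$ is squeezed between two members of one bounded family. Hence $\{\Phi_t\}$ is bounded in some $\overline\cH_2(m,M)^\bbT$.

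Boundary convergence should follow by comparing $\underline\Psi\le\Phi\le\overline\Psi$. Both barriers are continuous up to $t=0,1$, where they equal $\psi_0,\psi_1$, so $\Phi_t\to\psi_i$ locally uniformly.

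\textbf{Lipschitz bound.} We use the standard barrier trick adapted to quadratic growth. By \cref{lem:flow-quadratic-control}, the branches of $\underline\Psi$ satisfy $|\underline\Psi_t-\psi_0|\leq L(1+r^2)t$ near $t=0$ and similarly near $t=1$, since $\gamma_{-at}^*\psi-\psi$ is controlled by $C(1-e^{-at})r^2+Cat$. The same estimate holds for $\overline\Psi$. Convexity in $t$ of $\Phi_t(x)$ then bounds the one-sided derivatives at every $t$ between those at the endpoints. Those endpoint derivatives are trapped by the barriers, which gives \cref{eq:general-geodesic-bound}.

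\textbf{Maximality.} We must show $(\sqrt{-1}\partial\bar\partial\Phi)^{n+1}=0$. On any relatively compact ball $B\Subset\cS\times X$, solve the local Dirichlet problem of Bedford--Taylor with boundary data $\Phi$, taking the $\sigma$-average and $\bbT$-average, which preserves maximality since the data are invariant. The result $\tilde\Phi$ is glued with $\Phi$ outside $B$. We must check that it stays in $\mathcal F$; the upper barrier condition holds because maximal functions lie below the psh-majorant $\overline\Psi$ by the comparison principle on $B$. Maximality of $\Phi$ on $B$ then follows. Local boundedness comes from the barriers.

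\textbf{Main obstacle.} The expected difficulty is making the envelope well defined and psh on a noncompact $X$. We need $\sup\mathcal F$ to be locally bounded above, which the upper barrier ensures. We also need the upper semicontinuous regularization not to destroy the boundary values or the quadratic growth bounds. Both should be handled by the two-sided barriers being continuous and lying in a single bounded class. The upper barrier constraint must be checked to be automatic, so that $\mathcal F$ is the genuine Perron class; this is the convexity argument above. No integrability or Monge--Ampère comparison at infinity is needed, because the barriers pin $\Phi$ at infinity.
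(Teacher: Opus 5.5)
Your proposal is correct and follows the paper's proof essentially step by step. The shared steps are: the Perron envelope over $\sigma$-independent competitors with $\underline\Psi$ from \cref{lem:psh-subgeodesic} as lower barrier, the chord bound from $t$-convexity as the automatic upper barrier, squeezing for the bounded-family property and the locally uniform boundary convergence, and $t$-convexity combined with the barrier estimates of \cref{lem:flow-quadratic-control} for the quadratic Lipschitz bound.

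The maximality step needs a small repair, in two places. Since $\sigma$ ranges over $\bbR$, you cannot average over it; instead take the upper semicontinuous regularization of the supremum of all $\Im\tau$-translates of the glued function, as the paper does. The upper-barrier condition for that competitor then comes from the chord bound, not from a comparison principle: $(1-t)\psi_0+t\psi_1$ is in general not even jointly psh, so the comparison principle does not apply to it.
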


\begin{proof}
    Choose $m,M$ such that $\psi_0, \psi_1 \in \overline{\cH}_2(m, M)$ and apply \cref{lem:flow-quadratic-control} to the bounded two-element
    family $\{\psi_0,\psi_1\}$.  Let $a,B$ be its endpoint-shift constants and
    let $\underline\Psi$ be the subgeodesic from \cref{lem:psh-subgeodesic}.

    Let $\mathcal C$ be the class of locally bounded psh metrics $\Psi$ on
    $\operatorname{pr}_X^*K_X^{-1}\to\cS\times X$ which are independent of
    $\sigma = \Im \tau$ and satisfy
    \[
        \begin{aligned}
            \limsup_{(\tau,x')\to(0,x)}\Psi(\tau,x')& \leq \psi_0(x),\\
            \limsup_{(\tau,x')\to(1,x)}\Psi(\tau,x')& \leq \psi_1(x).
        \end{aligned}
    \]
    We now apply Perron's method with $\cC$ as our set of competitors, i.e.\ the geodesic is the upper semicontinuous regularization
\begin{equation}\label{eq:perron-envelope}
        \Phi = \left(\sup_{\Psi\in\mathcal C}\Psi\right)^*
        \p
    \end{equation}
    In order for $\Phi$ to be well-defined we need that $\cC$ is nonempty, which we know since $\underline{\Psi} \in \cC$ and that $\cC$ is locally uniformly bounded above, which we show now:
    for fixed $x \in X$, picking a trivialization of the fiber $K_X^{-1}|_x$, the restriction of a competitor $\Psi \in \cC$ to $\cS \times \{x\}$ is a convex function of $t$.  Hence we have the "chord bound"
    \begin{equation}\label{eq:chord-bound}
        \Psi_t(x) \leq (1-t)\psi_0(x)+t\psi_1(x)
        \p
    \end{equation}
    Thus the competitors are locally uniformly bounded above, and the upper-semicontinuous regularization is a locally bounded psh metric by the Choquet lemma, bounded below by $\underline\Psi$.
    Furthermore, since $\psi_0$ and $\psi_1$ are $\bbT$-invariant, the set $\cC$ is preserved by $\bbT$, and hence $\Phi$ is $\bbT$-invariant.

    The usual local maximality argument shows that $\Phi$ satisfies the homogeneous Monge--Ampère equation: On any coordinate ball $B$, $\Phi$ is the maximal locally bounded PSh function with boundary values bounded above by $\Phi|_{\partial B}$. Indeed, if some other PSh function $U$ was larger than $\Phi$, we can glue $\Phi$ and $\max \{\Phi, U - \epsilon\}$ to create a locally bounded PSh metric on $\operatorname{pr}_X^* K_X^{-1}$ which is not bounded above by $\Phi$. Then taking the upper-semicontinuous regularization of all translates in $\Im \tau$-direction of that metric produces a competitor in $\cC$ which exceeds $\Phi$, contradicting the maximality. Finally, by the Bedford--Taylor characterization of solutions to the MA-Dirichlet problem \cite[Cor. 5.15]{GuedjZeriahi}, local maximality implies that
    \begin{equation}\label{eq:flow-quadratic-hcma}
        \left(\sqrt{-1}\partial_{\tau,X}
        \bar\partial_{\tau,X}\Phi\right)^{n+1}=0
        \p
    \end{equation}
    See \cite[Thm.~2.9]{EAC} for details.

    We next verify that $\{\Phi_t\}$ is a bounded family.
    First note that the chord bound \cref{eq:chord-bound} and $\Phi \geq \underline{\Psi}$ show that $\Phi_t$ is uniformly locally bounded. Thus we only need to prove the uniform asymptotic bound \cref{eq:psh-weight-growth} for large $s$.

    For $s\geq a+1$ and $y\in\Sigma_0$, use the first subgeodesic branch at $\gamma_s y$ and the chord bound at $y$ to obtain
    \[
        (\gamma_s^*\Phi_t-\Phi_t)(y)
            \geq \gamma_s^* \underline{\Psi}_t - C
            \geq \gamma_s^* \gamma_{-at}^* \psi_0 - C
            \geq m e^{s-at}-C\geq m e^{-a}e^s-C
        \p
    \]
    Similarly, using the chord bound at $\gamma_sy$ and the bound $(\gamma_s^* \psi_{0, 1} - \psi_{0, 1})|_{\Sigma_0} \leq M e^s$ we obtain
    \[
        (\gamma_s^*\Phi_t-\Phi_t)(y) \leq M e^s+C
        \p
    \]

    It remains to prove the quadratic Lipschitz estimate \cref{eq:general-geodesic-bound}.
    Since $\Phi_t$ is convex in $t$-direction, this amounts to proving the quadratic bound
    \[
        \left|\frac{d^\pm}{d t}\Phi_t\right| \leq L(1 + r^2)
    \]
    for the left and right derivative. Note that
    \begin{align}
        \frac{d^\pm}{d t} \Phi_t
            & \leq \frac{\psi_1 - \Phi_t}{1 - t} \leq \frac{\psi_1 - \underline{\Psi}_t}{1 - t}
                \tag{by $t$-convexity and $\underline{\Psi} \leq \Phi$}\\
            & \leq \frac{\psi_1 - \gamma_{-a(1-t)}^* \psi_1 + B(1-t)}{1-t}
                \tag{right branch for $\underline{\Psi}$} \\
            & \leq B + \frac{\gamma_{a h}^* \psi_1 - \psi_1}{h} \quad \text{for any $h > 0$,}
                \tag{since $s \mapsto \gamma_{as}^* \psi_1$ is convex}\\
            & \leq B + C e^a(1 + r^2)
                \tag{pick $h = 1$}
    \end{align}
    which proves the claimed upper bound on $d^+ \Phi_t / dt$. Swapping $\psi_1$ and $\psi_0$ then proves the analogous lower bound for $d^- \Phi_t / dt$

    Finally, squeezing $\Phi_t$ above by \cref{eq:chord-bound} and below by $\underline{\Psi}_t$ we see that $\Phi_t \to \psi_i$ locally uniformly as $t \to i = 0, 1$ because both the upper and lower bound converge locally uniformly to $\psi_i$.
\end{proof}

\subsection{Weighted Monge--Amp\`ere measure and energy}

All metrics considered in this section lie
in a bounded family of some \(\overline{\cH}_2(m,M)^{\bbT}\).
\Cref{lem:global-regularization} and the uniform tail estimates allow the
standard partial-Legendre and weighted-energy machinery of
\cite[Section~3]{EAC} to be used in this class.

Let \(Z_\alpha\) be the connected components of \(\{X_0=0\}\).
We say that the \emph{critical value} of $Z_\alpha$ is
\[
    \lambda_{Z_\alpha} = h^\psi|_{Z_\alpha},
\]
which only depends on $\xi$ but not on the choice of metric $\psi$.
\[
    \lambda_0 = \min_\alpha\lambda_{Z_\alpha},
        \qquad
    Z_\alpha^- = \{x:\lim_{t\to\infty}\gamma_{-t}x\in Z_\alpha\},
        \qquad
    A_\lambda = \bigcup_{\lambda_{Z_\alpha}\geq\lambda}Z_\alpha^- \p
\]

Just as in \cite[Prop.~3.11]{EAC} the convexity of a smooth $\psi \in \cH_2^\bbT$ along the flow of $X_0$ shows
\begin{lemma} \label{thm:HamiltonianBounds}
    There exists $C = C(m, M)$ such that on $r^2 \geq 1$
    \[
        C^{-1} r^2 - C \leq h^\psi \leq C r^2
    \]
    for every $\psi \in \cH_2(m, M)$
\end{lemma}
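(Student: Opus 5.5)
The plan is to write $h^\psi(x)$ as the derivative at $s=0$ of the convex function $s\mapsto(\gamma_s^*\psi)(x)$, and then sandwich that derivative between difference quotients that \cref{lem:flow-quadratic-control} controls. For smooth $\bbT$-invariant $\psi$ we have $\frac{d}{ds}\gamma_s^*\psi=\gamma_s^*h^\psi$. Plurisubharmonicity together with $\bbT$-invariance makes $s\mapsto(\gamma_s^*\psi)(x)$ convex; here the $\sqrt{-1}\sigma$-direction of the complex flow \cref{eq:complex-flow} acts trivially on the weight. Convexity then gives, for any $a>0$,
\[
    \frac{(\psi-\gamma_{-a}^*\psi)(x)}{a}\leq h^\psi(x)\leq(\gamma_1^*\psi-\psi)(x)
    \p
\]
This is exactly \cref{eq-convexity conttol} at $t=0$.

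For the upper bound, fix $x$ with $r^2(x)\geq1$. The right-hand side is bounded by \cref{eq-control without t}, or directly by \cref{eq:flow-quadratic-control} with $s=1$, which gives $h^\psi\leq C(e-1)r^2+C$. On $\{r^2\geq1\}$ we absorb the additive constant into $Cr^2$.

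For the lower bound, fix $a\geq1$ with $e^{-a}M<m/2$, so that $a$ depends only on $m,M$. The first inequality of \cref{eq-control without t} then gives $h^\psi\geq cr^2-C$. Concretely, write $x=\gamma_ty$ with $y\in\Sigma_0$ and $t\geq a+1$. Then \cref{eq--effective control} shows
\[
    (\psi-\gamma_{-a}^*\psi)(x)\geq(m-Me^{-a})r^2(x)-m^{-1}\geq\frac m2r^2(x)-m^{-1}
    \p
\]
Dividing by $a$ yields the bound on the end $\{r^2\geq e^{a+1}\}$ with constants depending only on $m,M$.

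The main obstacle is the uniformity of the constants over all of $\cH_2(m,M)$. This class is not assumed to be a bounded family, while \cref{lem:flow-quadratic-control} uses boundedness on compact sets. There are two remedies.

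First, both bounds in \cref{eq--effective control} come solely from \cref{eq:H2-bounds}, since they only compare values along the same flow line through $y\in\Sigma_0$. So on the end $\{r^2\geq e^{a+1}\}$ the constants are already uniform in $(m,M)$ without boundedness.

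Second, the compact shell $\{1\leq r^2\leq e^{a+1}\}$ remains. Any point there is $x=\gamma_ty$ with $y\in\Sigma_0$ and $t\in[0,a+1]$. By monotonicity of the convex derivative, $h^\psi(x)=\frac{d}{ds}\big|_{s=t}\gamma_s^*\psi(y)$ lies between the derivatives at $s=0$ and $s=a+1$ at $y$. Both of these are again controlled by difference quotients along the flow line of $y$: from $s=a+1$ to $s=a+2$ for the upper bound, and from $s=-a$ to $s=0$ for the lower bound. These are estimated by \cref{eq:H2-bounds} alone. For the lower bound at the bottom, convexity also lets me compare with the slope on $[a+1,2a+1]$, which is at least $\frac{m}{2}e^{a+1}-C$ by the same computation.

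With the bound on the end and the bound on the shell, the constant $C$ depends only on $m,M$. Where some reference value on $\Sigma_0$ is needed, I would normalize by a constant shift of $\psi$; this is harmless because $h^\psi$ is invariant under adding constants.
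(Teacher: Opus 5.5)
Your overall route is the one the paper intends. It sandwiches $h^\psi(x)=g_y'(t)$, where $g_y(s)=(\gamma_s^*\psi-\psi)(y)$ and $x=\gamma_ty$, between chord slopes of this convex function. Your upper bound, and your lower bound on the end $\{r^2\geq e^{a+1}\}$, are correct and uniform over $\cH_2(m,M)$, because they only use chords inside $[1,\infty)$.

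The gap is the lower bound on the shell, and it is genuine near $\Sigma_0$. A lower bound for $g_y'(t)$ needs a chord to the \emph{left} of $t$. For $t\in[0,1)$ every such chord lies in $(-\infty,1)$, where \cref{eq:H2-bounds} gives nothing beyond $g_y(0)=0$. None of your three devices fixes this:
\begin{itemize}
\item The chord over $[-a,0]$ involves $\gamma_{-a}y\in\{r^2=e^{-a}\}$, which \cref{eq:H2-bounds} does not constrain.
\item The slope over $[a+1,2a+1]$ bounds $g_y'$ from \emph{above} on $[0,a+1]$, not from below.
\item Adding a constant to $\psi$ does not change $\gamma_{-a}^*\psi-\psi$.
\end{itemize}
In fact no argument using only convexity, $g_y(0)=0$ and \cref{eq:H2-bounds} can succeed. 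Take any convex $g_1$ with $g_1(0)<0$ satisfying \cref{eq:H2-bounds}, and any $K\geq m^{-1}$. Then $\max\{-Ks,g_1\}$ is convex, vanishes at $0$, coincides with $g_1$ on $[1,\infty)$, and has right derivative $-K$ at $0$.

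The missing input comes from the whole backward half of the flow line. By the same convexity, $s\mapsto h^\psi(\gamma_sx)$ is nondecreasing on all of $\bbR$. It is strictly increasing off $\{X_0=0\}$, since $X_0h^\psi$ is a positive multiple of $|X_0|^2_{\omega_\psi}$.

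As $s\to-\infty$ we have $r^2(\gamma_sx)=e^sr^2(x)\to0$, so the backward orbit stays in a compact sublevel set of $r^2$. Since $h^\psi$ is a strict Lyapunov function, the $\alpha$-limit set lies in $\{X_0=0\}=\bigcup_\alpha Z_\alpha$. There $h^\psi$ takes the $\psi$-independent values $\lambda_{Z_\alpha}$.

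Hence $h^\psi\geq\lambda_0$ on all of $X$ for every $\psi$, which is exactly the uniform constant lower bound needed on $\{1\leq r^2\leq e^{a+1}\}$. Together with your end estimate $h^\psi\geq\frac{m}{2a}r^2-\frac{1}{am}$, this gives $C^{-1}r^2-C\leq h^\psi$ on $\{r^2\geq1\}$. The constant $C$ depends only on $m$, $M$ and the fixed datum $\lambda_0$ of $(X,\xi)$.
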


The arguments in \cite[Section 3.1]{EAC}, which generalize \cite{BermanWittNystrom} to the non-compact setting, apply verbatim in our general setting.
We recall the relevant definitions and their properties here.
For $\lambda > \lambda_0$, define the \emph{partial Legendre transform} of a metric $\psi \in \overline{\cH}_2^\bbT$
\[
    P_\lambda \psi = \inf_{t\geq0}\{\gamma_{-t}^*\psi+\lambda t\}
    \p
\]

With the same proof as \cite[Lem.~3.14]{EAC} one shows
\begin{lemma}
    $P_\lambda \psi$ is psh and is locally bounded on $X\setminus A_\lambda$.
\end{lemma}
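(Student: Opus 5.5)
Psh-ness comes from Kiselman's minimum principle; local boundedness comes from comparing $\gamma_{-t}^*\psi$ with a smooth reference metric along the flow. Consider the function on $\{\Re\tau>0\}\times X$ given by
\[
    \Theta(\tau,x)=\gamma_{-\tau}^*\psi(x)+\lambda\Re\tau ,
\]
where $\gamma_{-\tau}$ is the complex flow \cref{eq:complex-flow}, lifted canonically to $K_X^{-1}$. As in \cref{lem:psh-subgeodesic}, $(\tau,x)\mapsto\gamma_{-\tau}x$ is holomorphic and $\bbT$-equivariant, so $\gamma_{-\tau}^*\psi$ is a psh metric on $\pr_X^*K_X^{-1}$. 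Adding the pluriharmonic term $\lambda\Re\tau$ preserves this. By $\bbT$-invariance of $\psi$, $\Theta$ is independent of $\Im\tau$.

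Kiselman's minimum principle then gives that $x\mapsto\inf_{t\geq0}\Theta(t,x)$ is psh or identically $-\infty$ on each connected component. It applies to $\Im\tau$-independent psh functions on a product with a tube domain. The constraint $t\geq0$ (a closed half-line rather than an open tube) is handled in two ways. One is to write the infimum over $t\ge0$ as the decreasing limit as $\epsilon\to0$ of infima over $t>-\epsilon$. The other is to note that, by convexity of $t\mapsto\Theta(t,x)$, the infimum over $[0,\infty)$ equals the infimum over $(0,\infty)$ by continuity at $t=0$ from the right. So $P_\lambda\psi$ is psh wherever it is locally bounded below. It is bounded above by $\psi$ (take $t=0$), hence locally bounded above everywhere.

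The main step is the local lower bound on $X\setminus A_\lambda$. First reduce to a smooth reference: by \cref{lem:flow-quadratic-control}, $\psi\geq\gamma_{-a}^*\psi_{\rm amb}-b$, so $\gamma_{-t}^*\psi\ge\gamma_{-t-a}^*\psi_{\rm amb}-b$. It therefore suffices to bound $\inf_{t\ge0}\{\gamma_{-t}^*\psi_{\rm amb}+\lambda t\}$ from below locally on $X\setminus A_\lambda$, at the cost of shifting $t$ by $a$ and $\lambda a$.

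For smooth $\psi_{\rm amb}$ we have
\[
    \frac{d}{dt}\gamma_{-t}^*\psi_{\rm amb}=-\gamma_{-t}^*h^{\psi_{\rm amb}} .
\]
Fix a compact $K\subset X\setminus A_\lambda$. For $x\in K$, the backward orbit $\gamma_{-t}x$ converges to some $Z_\alpha$ with $\lambda_{Z_\alpha}<\lambda$; backward orbits stay in a compact set because $r^2$ decreases along them. Since $h$ is constant equal to $\lambda_{Z_\alpha}$ on $Z_\alpha$, we get $\lambda-h(\gamma_{-t}x)\geq\delta>0$ for $t\geq T$. Hence $\gamma_{-t}^*\psi_{\rm amb}+\lambda t$ is eventually increasing, and the infimum is attained on a bounded $t$-range, where it is bounded below.

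The obstacle is uniformity in $x\in K$. The time $T$ depends on $x$, and the limiting component may jump. I would handle this as in \cite[Lem.~3.14]{EAC} with a convexity argument rather than pointwise convergence. Since $t\mapsto\gamma_{-t}^*\psi_{\rm amb}(x)$ is convex, its slope $-h(\gamma_{-t}x)$ is nondecreasing in $t$, so $h(\gamma_{-t}x)$ is nonincreasing. Its limit is $\lambda_{Z_\alpha}$, which is at most the largest critical value below $\lambda$; call that value $\lambda'<\lambda$, using that there are finitely many critical values in a compact set.

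Thus $\Theta(t,x)\geq\Theta(t_x,x)$ for $t\ge t_x$, where $t_x$ is a time with $h(\gamma_{-t_x}x)\le(\lambda+\lambda')/2$. The openness of this condition, together with compactness of $K$ and continuity of the flow, gives a uniform bound $t_x\le T_K$ on a neighbourhood of each point. The infimum is then taken over $[0,T_K]$, on which it is bounded below uniformly on $K$. The remaining delicate point is whether such a $t_x$ exists uniformly. It does: $A_\lambda$ is closed, so points of $K$ have backward limits in the lower components, and the sets $\{x: h(\gamma_{-t}x)<(\lambda+\lambda')/2\}$ are open and exhaust $K$ as $t$ increases.
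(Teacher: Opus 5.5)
Your proposal is correct and is essentially the argument the paper imports from \cite[Lem.~3.14]{EAC}. Kiselman's minimum principle, applied to the $\Im\tau$-independent psh metric $\gamma_{-\tau}^*\psi+\lambda\Re\tau$, gives plurisubharmonicity, and the local lower bound on $X\setminus A_\lambda$ comes from comparison with a smooth reference metric, monotonicity of $h$ along backward orbits, and compactness yielding a uniform cut-off time $T_K$. One harmless slip: the infima over $t>-\epsilon$ \emph{increase} to the infimum over $t\geq0$; this is still fine because that infimum is usc, and your second argument via convexity of $t\mapsto\Theta(t,x)$ avoids the issue anyway.
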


Thus on $X \setminus A_\lambda$ we can define the Monge--Ampère measure $\MA(P_\lambda \psi)$ using the Bedford--Taylor definition. We extend this to a measure on $X$ by pushforward along the inclusion $X \setminus A_\lambda \subseteq X$. From now on $\MA(P_\lambda \psi)$ will refer to the measure on $X$.
The next lemma shows that $\MA(P_\lambda \psi)$ has locally finite (in fact, finite) mass so our definition of $\MA(P_\lambda \psi)$ agrees with the non-pluripolar Monge--Ampère measure of \cite{BEGZ}.
\begin{lemma}
    For $\psi \in \overline{\cH}_2^\bbT$ and $\lambda > \lambda_0$ the following hold
    \begin{enumerate}
        \item $\MA(P_\lambda \psi)$ has finite total mass, independent of $\psi$.
        \item $\psi \mapsto \MA(P_\lambda \psi)$ is weakly continuous along decreasing $\psi$.
        \item On $\{r^2 > C(1 + \max\{\lambda, 0\})\}$ we have
            \[
                P_\lambda\psi(\gamma_s x) = P_\lambda\psi(x) + \lambda s \quad \forall s \geq 0
            \]
            Consequently,
            \begin{equation}\label{eq:legendre-ma-support}
                \operatorname{supp}\operatorname{MA}(P_\lambda\psi) \subset \{r^2 \leq C(1 + \max\{\lambda, 0\})\};
            \end{equation}
        \item $\nu \geq \lambda$ implies
            $\operatorname{MA}(P_\nu\psi)\geq\operatorname{MA}(P_\lambda\psi)$.
        \item For smooth $\psi\in \mathcal{H}_2^{\mathbb T}$,
            \[
                \{P_\lambda \psi = \psi\}=\{h^\psi \leq \lambda\},
                    \qquad
                \operatorname{MA}(P_\lambda\psi) = \mathbf1_{\{h^\psi<\lambda\}} (\sqrt{-1}\partial\bar\partial\psi)^n
                \p
            \]
    \end{enumerate}
\end{lemma}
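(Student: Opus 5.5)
The proof follows \cite[Section~3.1]{EAC}, adapted to the quadratic-growth class; the new inputs are \cref{lem:flow-quadratic-control,thm:HamiltonianBounds}. I would prove (iii) first, since the support statement is what makes the masses in (i) and the convergence in (ii) work. Write $P_\lambda\psi(x)=\inf_{t\ge0}g_x(t)$ with $g_x(t)=\gamma_{-t}^*\psi(x)+\lambda t$; this is convex in $t$ by $\bbT$-invariance and plurisubharmonicity. By \cref{lem:flow-quadratic-control} applied with flow time $-t$ at the point $\gamma_{-t}x$, the one-sided derivatives of $s\mapsto\gamma_s^*\psi(x)$ at $s=-t$ are at least $c\,e^{-t}r^2(x)-C$. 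Hence $g_x'(t)\le \lambda - c\,e^{-t}r^2(x)+C$.

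So $g_x$ is nonincreasing as long as $e^{-t}r^2(x)\ge C(1+\max\{\lambda,0\})$. The infimum is therefore attained at some $t\ge t_x$, where $t_x$ is the first time $\gamma_{-t}x$ enters the compact set $K_\lambda=\{r^2\le C(1+\max\{\lambda,0\})\}$. Replacing $x$ by $\gamma_s x$ with $x\notin K_\lambda$ only shifts the parameter, since $g_{\gamma_s x}(t+s)=g_x(t)+\lambda s$. Because the minimiser lies in $[t_x,\infty)$, it is insensitive to the extra portion of the orbit, which gives $P_\lambda\psi(\gamma_s x)=P_\lambda\psi(x)+\lambda s$. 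Then $P_\lambda\psi$ is locally the pullback of a function on a quotient transverse to $X_0$, up to the linear term $\lambda s$ (pluriharmonic in the $\bbC$-orbit direction, since the $\sigma$-direction is $\bbT$-invariant). Its Monge--Amp\`ere measure therefore vanishes on $X\setminus K_\lambda$, which is \cref{eq:legendre-ma-support}. A sublety here is the reduction to locally one fewer complex variable; it needs the standard fact that a psh function which is linear along the holomorphic $\bbC$-orbits has vanishing top Monge--Amp\`ere. I would take this from \cite[Lem.~3.15]{EAC}.

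For (i), I would use (iii) together with an invariance argument. The same argument as in \cite{BermanWittNystrom,EAC} shows that the total mass of $\MA(P_\lambda\psi)$ equals a Duistermaat--Heckman-type quantity: the weighted volume of $\{h<\lambda\}$, computed through the equivariant cohomology of the fixed components $Z_\alpha$ with $\lambda_{Z_\alpha}<\lambda$. I would argue as follows. For two metrics $\psi,\psi'$ in a bounded family, join them by the linear path $\psi_u=(1-u)\psi+u\psi'$; \cref{lem:flow-quadratic-control} keeps all the $P_\lambda\psi_u$ inside a common compact support $K_\lambda$ (after enlarging $C$). The difference of masses is $\int d(\ldots)$ of a form involving $\dot\psi_u$ times mixed products $dd^cP_\lambda\psi_u$. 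This vanishes by Stokes applied on $X\setminus A_\lambda$ over a smoothly bounded neighbourhood of $K_\lambda$. Near $A_\lambda$, the relevant currents are controlled because $P_\lambda\psi_u=\psi_u$-type comparison fails only in a pluripolar set, and non-pluripolar products put no mass there \cite{BEGZ}. Independence then reduces to smooth $\psi$, for which (v) gives the explicit formula. \textbf{This Stokes/independence step is the main obstacle}; I would try first to quote \cite[Prop.~3.16]{EAC} verbatim, checking only that their use of conical asymptotics enters solely through the support bound (iii).

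For (ii), if $\psi_j\searrow\psi$, then $P_\lambda\psi_j\searrow P_\lambda\psi$ pointwise, as an infimum of decreasing families. Bedford--Taylor continuity on $X\setminus A_\lambda$ gives local weak convergence. The uniform compact support from (iii) and the fixed total mass from (i) prevent loss of mass, both at infinity and into $A_\lambda$, so the convergence is weak on all of $X$.

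For (iv), I would use $P_\nu\psi\ge P_\lambda\psi$ for $\nu\ge\lambda$, where the inequality is an equality on $\{P_\lambda\psi=\psi\}$, together with the comparison principle for non-pluripolar measures on contact sets as in \cite[Lem.~3.17]{EAC}.

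For (v), take $\psi$ smooth. Since $g_x'(0^+)=\lambda-h^\psi(x)$ and $g_x$ is convex, the infimum is attained at $t=0$ iff $h^\psi(x)\le\lambda$. On the open set $\{h^\psi<\lambda\}$ this gives $P_\lambda\psi=\psi$, hence $\MA(P_\lambda\psi)=(\sqrt{-1}\partial\bar\partial\psi)^n$ there. On $\{h^\psi>\lambda\}$ the minimiser is an interior critical point $t>0$, so $P_\lambda\psi$ is $\lambda$-linear along the flow and, as in (iii), has zero Monge--Amp\`ere mass. Finally, $\{h^\psi=\lambda\}$ is null for $(\sqrt{-1}\partial\bar\partial\psi)^n$ for all but countably many $\lambda$. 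To reach every $\lambda$, I would use monotonicity (iv) together with the fixed mass from (i) and continuity in $\lambda$.
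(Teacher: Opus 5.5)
Your proposal takes essentially the paper's route: the paper gives no separate argument here and simply asserts that \cite[Section~3.1]{EAC} carries over verbatim, and your derivation of (iii) from the slope bound in \cref{lem:flow-quadratic-control}, together with your sketches of (ii), (iv) and (v), is exactly the verification behind that assertion. For (i), rely on the citation rather than your improvised Stokes sketch, which does not work as written: the $u$-derivative of $P_\lambda\psi_u$ is not $\dot\psi_u$; the set $\{P_\lambda\psi_u<\psi_u\}$ contains all of $\{r\gg1\}$ and so is far from pluripolar; and reducing to smooth $\psi$ through (ii) is circular, since your (ii) uses (i). What actually carries over is that $P_\lambda\gamma_a^*=\gamma_a^*P_\lambda$, so \cref{eq:flow-quadratic-endpoint-shifts} gives $|P_\lambda\psi-P_\lambda\psi'|\leq|\lambda|a+b$; hence all transforms in a bounded family share one singularity type along $A_\lambda$, and by (iii) they are $\lambda$-linear along the flow on the end.
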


The definition of the $\xi$-weighted Monge--Ampère measure in \cite[\S 4.2]{EAC} only requires one small change: Since we are not assuming that the polarized Fano fibration $X$ admits a shrinker, we cannot use a shrinker as a background metric as was done there. Instead we will use the ambient metric $\psi_\mathrm{amb}$.
The necessary estimate is
\begin{lemma}[Tail estimate for ambient weighted volume] \label{thm:GaussianMoments}
  For any $\tau>0$,  there exists an $\epsilon > 0$ such that
    \[
        \int_{X \setminus \{r \leq R\}} e^{-\tau h^{\psi_\mathrm{amb}}} \omega_\mathrm{amb}^n = O\left(e^{-\epsilon R^2}\right)
    \]
    as $R \to \infty$.
    In particular, all polynomial moments of the measure $e^{-h^{\psi_\mathrm{amb}}} \omega_\mathrm{amb}^n$ are finite.
    \begin{equation}\label{eq:gaussian-integrability}
        \int_X (1 + r^2)^N e^{-h^{\psi_{\mathrm{amb}}}}\omega_{\mathrm{amb}}^n < \infty
        \p
    \end{equation}
\end{lemma}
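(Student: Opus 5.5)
We plan to work in the flow coordinates $\Gamma(s,y)=\gamma_s y$, with $y\in\Sigma_0$ and $s\geq 0$, on the end $\{r\geq 1\}$. The two ingredients are a lower bound on the Hamiltonian $h^{\psi_{\mathrm{amb}}}$ and an at most exponential-in-$s$ bound on the Jacobian of $\omega_{\mathrm{amb}}^n$; the Gaussian factor will then dominate.

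First we bound $h^{\psi_{\mathrm{amb}}}$ from below. Since $\psi_{\mathrm{amb}}$ is smooth and lies in $\cH_2^\bbT$ by \cref{lem:ambient-metric-quadratic-growth}, \cref{thm:HamiltonianBounds} gives $h^{\psi_{\mathrm{amb}}}\geq C^{-1}r^2-C$ on $\{r^2\geq 1\}$. One can also see this directly. Outside a compact set $h_{\mathrm{AC}}=r^2/2$, and $X_0 r^2=r^2$, so the cone part contributes exactly $r^2/2$ to the Hamiltonian. The Fubini--Study part $\frac1\ell h_{\mathrm{FS}}$ has Hamiltonian given by a weighted average of the weights $\nu_j$, which is bounded in absolute value by $\max_j|\nu_j|$. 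Either way, $e^{-\tau h^{\psi_{\mathrm{amb}}}}\leq Ce^{-\tau r^2/C}$ on the end.

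Next we bound the volume form in flow coordinates. Exactly as in the proof of \cref{lem:ambient-polynomial-exhaustion}, integrating \cref{eq:section-four-ambient-flow-bound} gives $\gamma_s^*\omega_{\mathrm{amb}}\leq e^{Cs}\omega_{\mathrm{amb}}$. Hence
\[
    \Gamma^*\omega_{\mathrm{amb}}^n = ds\wedge\gamma_s^*(\iota_{X_0}\omega_{\mathrm{amb}}^n)\leq Ce^{Cs}\,ds\,d\sigma_0
\]
for a fixed smooth density $d\sigma_0$ on $\Sigma_0$. To justify this, write $\gamma_s^*(\iota_{X_0}\omega_{\mathrm{amb}}^n)=\iota_{X_0}(\gamma_s^*\omega_{\mathrm{amb}})^n$, using that $\gamma_s$ preserves $X_0$, and note that $|X_0|$ is bounded on $\Sigma_0$ by compactness.

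Finally we combine the two bounds. Because $r^2(\gamma_s y)=e^s$, the region $\{r>R\}$ corresponds to $s>2\log R$. This gives
\[
    \int_{\{r>R\}}e^{-\tau h^{\psi_{\mathrm{amb}}}}\omega_{\mathrm{amb}}^n\leq C\int_{2\log R}^\infty e^{Cs-\tau e^s/C}\,ds .
\]
Substituting $u=e^s$, this is at most $C\int_{R^2}^\infty u^{C-1}e^{-\tau u/C}\,du=O(e^{-\epsilon R^2})$ for any $\epsilon<\tau/C$. The moment bound \cref{eq:gaussian-integrability} then follows by taking $\tau=1$ and summing over dyadic shells $\{R\leq r\leq 2R\}$, where the polynomial weight $(1+r^2)^N$ is absorbed by $e^{-\epsilon R^2/2}$. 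The compact part $\{r\leq R_0\}$ has finite mass since everything is continuous there.

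The main obstacle is the lower bound on $h^{\psi_{\mathrm{amb}}}$, specifically the uniformity of the Fubini--Study contribution along the whole end. We handle it through the explicit finite-dimensional $\bbT_\bbC$-action on $\bbP^M$: the Hamiltonian of the linear flow $A_s$ with respect to $h_{\mathrm{FS}}$ is a convex combination of the weights $\nu_j$, so it is globally bounded. Everything else is a routine Gaussian tail computation.
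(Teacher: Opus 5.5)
Your proposal is correct and is essentially the paper's proof: you pass to the flow coordinates $\Gamma(s,y)=\gamma_s y$ over $\Sigma_0\times[2\log R,\infty)$, bound the Jacobian of $\omega_{\mathrm{amb}}^n$ by $e^{Cs}$ from the two-sided bound on $\mathcal{L}_{X_0}\omega_{\mathrm{amb}}$, bound $h^{\psi_{\mathrm{amb}}}\geq C^{-1}e^{s}-C$ via the Hamiltonian lemma, and then integrate $e^{Cs-\tau e^{s}/C}$. The differences are cosmetic: you sum over dyadic shells for the moments where the paper uses the layer-cake formula, and you add an optional direct check of the Hamiltonian bound (the cone part contributes $r^2/2$, and the Fubini--Study part is bounded in terms of the weights $\nu_j$ up to normalization constants).
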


\begin{proof}
    As in \cref{thm:H2moments} we can embed $X \setminus \{r \leq R\}$ into a product $\Sigma_0 \times [2\log R-C, \infty)$ via $\Gamma(s, y) = \gamma_s y$.
    Equation \cref{eq:section-four-ambient-flow-bound} gives $\gamma_s^*\omega_{\mathrm{amb}}^n \leq e^{C s}\omega_{\mathrm{amb}}^n$ and by \cref{thm:HamiltonianBounds} we have $h^\psi \geq -C + C^{-1} e^s$. Thus the integrand in \cref{eq:gaussian-integrability}, expressed on $\Sigma_0 \times [\log R, \infty)$, is bounded by
    \[
        C\exp\bigl((N + C)s - C^{-1} e^s\bigr) \sigma \wedge \d s
    \]
    where $\sigma$ is some measure on $\Sigma_0$. We obtain
    \begin{align}
        \int_{\{r \geq R\}} e^{-h^{\psi_\mathrm{amb}}} \omega_\mathrm{amb}^n
            & \leq  C \int_{\Sigma_0} \sigma \int_{2\log R-C}^\infty \exp( C s - C^{-1} e^s) \d s \\
            &= O\left(e^{-C^{-1} R^2}\right)
        \p
    \end{align}
    The bound on the moments can be obtained through
    \[
        \begin{aligned}
            \int_X (1 + r^2)^N e^{-h^{\psi_{\mathrm{amb}}}}\omega_{\mathrm{amb}}^n
                & = \int_X e^{-h^{\psi_{\mathrm{amb}}}}\omega_{\mathrm{amb}}^n \\
                &\quad + 2N\int_0^\infty R(1 + R^2)^{N - 1}\left(\int_{\{r \geq R\}} e^{-h^{\psi_{\mathrm{amb}}}}\omega_{\mathrm{amb}}^n\right)\,\d R.
        \end{aligned}
    \]
\end{proof}

Define $\operatorname{MA}_{\mathbf1_{[\lambda_0,\lambda]}}(\psi) := \operatorname{MA}(P_\lambda\psi)$, and extend this to $\operatorname{MA}_g(\psi)$ for nonnegative step functions $g$ by linearity in $g$.
Then by (1) the mass of $\MA_g(\psi)$ is independent of $\psi$, so we can use the smooth metric $\psi_{\mathrm{amb}}$ to compute it using (5):
\[
    \int_X \operatorname{MA}_g(\psi) = \int_X g(h^{\psi_{\mathrm{amb}}})\omega_{\mathrm{amb}}^n,
    \p
\]

\begin{lem}\label{lem:total mass}
    For every sequence of step functions $g_k: [\lambda_0, \infty) \to \bbR_{>0}$ increasing to $e^{-x}$ the limit
    \begin{equation}
        \MA_\xi \coloneq \lim_{k \to \infty} \MA_{g_k}(\psi)
    \end{equation}
    exists and is independent of the sequence $g_k$. Furthermore $\MA_\xi(\psi) = e^{-h^\psi} (i \partial\bar{\partial} \psi)^n$ for smooth $\psi \in \cH_2^\bbT$ and
    \[
     \int_X \MA_\xi (\psi) < \infty
    \]
    is independent of $\psi \in \overline{\cH}_2^\bbT$.
\end{lem}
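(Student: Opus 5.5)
The plan is to build $\MA_\xi(\psi)$ as a monotone limit of positive measures, using the structure of the partial Legendre measures $\MA(P_\lambda\psi)$. For a nonnegative step function $g=\sum_j c_j\mathbf 1_{[\lambda_0,\lambda_j]}$ with $c_j\geq 0$, set $\MA_g(\psi)=\sum_j c_j\MA(P_{\lambda_j}\psi)$. This is a positive measure. I will check that it depends only on $g$ and not on how $g$ is written. Any nonnegative nonincreasing step function on $[\lambda_0,\infty)$ has such a representation with $c_j\geq0$. The key observation is property (4): $\lambda\mapsto\MA(P_\lambda\psi)$ is nondecreasing. Hence $\MA(P_\nu\psi)-\MA(P_\lambda\psi)\geq0$ for $\nu\geq\lambda$, and we can define $\MA_{\mathbf 1_{(\lambda,\nu]}}(\psi)$ as this positive difference. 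Every nonnegative step function is a nonnegative combination of indicators of half-open intervals, so $g\mapsto\MA_g(\psi)$ is linear and monotone on nonnegative step functions: $g\leq g'$ implies $\MA_g\leq\MA_{g'}$.

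Given $g_k\nearrow e^{-x}$, the measures $\MA_{g_k}(\psi)$ therefore form an increasing sequence of positive measures. Their total masses equal $\int_X g_k(h^{\psi_\mathrm{amb}})\omega_\mathrm{amb}^n$, which is independent of $\psi$ by (1) and (5). By monotone convergence these masses increase to
\[
\int_X e^{-h^{\psi_\mathrm{amb}}}\omega_\mathrm{amb}^n,
\]
which is finite by \cref{thm:GaussianMoments}. Thus the setwise monotone limit $\MA_\xi(\psi)$ exists as a finite measure, and its total mass is this $\psi$-independent constant. To see that the limit does not depend on the sequence, take two sequences $g_k$ and $g'_k$. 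Then $\min(g_k,g'_j)$ is again a step function, and $\min(g_k,g'_j)\nearrow g_k$ as $j\to\infty$ (pointwise, except possibly at finitely many jump points). Monotonicity then gives $\MA_{g_k}\leq\lim_j\MA_{g'_j}$, and the reverse inequality holds by symmetry. The jump points are a small issue. I would handle it by noting that the measures are continuous in $\lambda$, i.e.\ $\MA(P_\lambda\psi)$ gives no mass jump at fixed $\lambda$ except a set of measure zero; alternatively one can restrict to a cofinal family such as dyadic approximations from below and compare masses. The main obstacle I expect is exactly this well-definedness: left versus right continuity in $\lambda$ of $\MA(P_\lambda\psi)$, together with the fact that $g_k\nearrow e^{-x}$ need not be ordered against another sequence.

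For smooth $\psi\in\cH_2^\bbT$, property (5) gives $\MA_g(\psi)=\mathbf 1_{\{h^\psi\in\cdot\}}$-type identities. Precisely, $\MA(P_\lambda\psi)=\mathbf 1_{\{h^\psi<\lambda\}}(i\partial\bar\partial\psi)^n$, so $\MA_g(\psi)=\tilde g(h^\psi)(i\partial\bar\partial\psi)^n$, where $\tilde g$ agrees with $g$ up to endpoint conventions. Since $g_k\nearrow e^{-x}$ and $e^{-x}$ is continuous, $\tilde g_k\to e^{-x}$ pointwise with monotone bounds. Monotone (or dominated) convergence then yields $\MA_\xi(\psi)=e^{-h^\psi}(i\partial\bar\partial\psi)^n$. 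Here $h^\psi\geq\lambda_0$ on the support, since $h^\psi$ attains its minimum on the critical set by convexity along the flow and by \cref{thm:HamiltonianBounds}. This resolves the endpoint ambiguity: the set of $x$ with $h^\psi(x)$ equal to a given jump point has zero $(i\partial\bar\partial\psi)^n$-measure, except for finitely many values, and those can be avoided by perturbing the sequence using the independence just shown.
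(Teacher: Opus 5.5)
Your route is the one the paper takes. The paper defers to \cite[Prop.~3.12]{EAC}, with $\psi_{\mathrm{amb}}$ as smooth reference and \cref{thm:GaussianMoments} for finiteness, exactly as you do: positivity of $\MA_g$ from property (4), total masses computed on $\psi_{\mathrm{amb}}$ via (1) and (5), and a monotone setwise limit of measures. Existence and finiteness of the limit are fine. However, independence of the sequence, which you single out as the crux, is left open, and neither remedy you offer closes it.

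Pointwise convergence is not the issue. In fact $\min(g_k,g'_j)\nearrow g_k$ holds at every point, since $g'_j\to e^{-x}\geq g_k$. What you need is continuity of $g\mapsto\MA_g(\psi)$ along such monotone limits, which amounts to continuity of $F(\lambda)=\MA(P_\lambda\psi)$ in $\lambda$. Monotonicity alone gives at most countably many jumps, and a given $g_k$ may jump exactly at one of them. For instance, if $F$ failed to be right-continuous at $\lambda$, then $u_j=\mathbf 1_{[\lambda_0,\lambda]}+\mathbf 1_{(\lambda+1/j,\lambda+1]}$ increases pointwise to $\mathbf 1_{[\lambda_0,\lambda+1]}$, while $\MA_{u_j}\to F(\lambda+1)-\bigl(F(\lambda^+)-F(\lambda)\bigr)\neq F(\lambda+1)$. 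Perturbing the sequence, or restricting to dyadic approximations, presupposes the independence you are trying to prove.

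The missing observation uses only ingredients you already have. For $\lambda<\nu$, $F(\nu)-F(\lambda)\geq 0$ by (4). By (1) and (5), its total mass is $\int_X\mathbf 1_{\{\lambda\leq h^{\psi_{\mathrm{amb}}}<\nu\}}\,\omega_{\mathrm{amb}}^n$. Since $dh^{\psi_{\mathrm{amb}}}=-\iota_\xi\omega_{\mathrm{amb}}$ vanishes only on the proper analytic set $\{\xi=0\}$, every level set of $h^{\psi_{\mathrm{amb}}}$ is null. So this mass tends to $0$ as $\nu-\lambda\to 0$, and $F$ is continuous in total variation.

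Two consequences follow. First, the values of a step function at finitely many points become irrelevant, so arbitrary step functions are covered, not only nonnegative combinations of the $\mathbf 1_{[\lambda_0,\lambda]}$ and $\mathbf 1_{(a,b]}$. Second, $\MA_{g_k}-\MA_{\min(g_k,g'_j)}$ is a positive measure of mass $\int_X\bigl(g_k-\min(g_k,g'_j)\bigr)(h^{\psi_{\mathrm{amb}}})\,\omega_{\mathrm{amb}}^n$, which tends to $0$ by dominated convergence (domination by $e^{-h^{\psi_{\mathrm{amb}}}}$). Hence $\MA_{g_k}\leq\lim_j\MA_{g'_j}$, and symmetry, or equality of the finite total masses, gives independence.

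The same argument with $h^\psi$ in place of $h^{\psi_{\mathrm{amb}}}$ (using $dh^\psi=-\iota_\xi\sqrt{-1}\partial\bar\partial\psi$) shows that, for smooth $\psi$, every level set of $h^\psi$ is $(\sqrt{-1}\partial\bar\partial\psi)^n$-null, not merely all but finitely many. So no perturbation is needed in the smooth case either.
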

\begin{proof}
    This is proven just like \cite[Prop.~3.12]{EAC}, using $\psi_\mathrm{amb}$ as the smooth reference metric. The relevant estimate for the smooth reference metric is $\int_X \exp(-h^{\psi_\mathrm{amb}}) \MA(\psi_\mathrm{amb}) < \infty$, which was established in \cref{thm:GaussianMoments}.
\end{proof}

By \cite[Proposition 5.9]{SunZhangFano}, we know that the total mass coincides with the \textit{weighted volume} defined in \cite{SunZhangFano}:
\begin{equation}
    \mathbb W_{X}(\xi)=\frac{1}{(2\pi)^n n!}\int_X \MA_\xi (\psi)
    \p
\end{equation}
\begin{lem}[Tail estimate for weighted MA] \label{thm:MAxitail}
    For $\psi \in \overline{\cH}_2(m, M)^\bbT$ we have
    \[
        \int_{X \setminus \{r \leq R\}} (1 + r^2)^N \MA_\xi(\psi) \leq C e^{-\epsilon R^2}
    \]
    for some $C = C(m, M, N), \epsilon = \epsilon(m, M, N)$.
\end{lem}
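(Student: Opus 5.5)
The plan is to reduce the tail estimate for $\MA_\xi(\psi)$ to the corresponding estimate for smooth metrics, and in the end to the ambient tail bound \cref{thm:GaussianMoments}. The key tool is the layer-cake structure of $\MA_\xi$ through partial Legendre transforms. Write $\MA_\xi(\psi)=\lim_k\MA_{g_k}(\psi)$ as in \cref{lem:total mass}. By linearity in $g$ this is a superposition of the measures $\MA(P_\lambda\psi)$, and these are monotone in $\lambda$. So, formally,
\[
    \MA_\xi(\psi)=\int_{\lambda_0}^\infty e^{-\lambda}\,\d\MA(P_\lambda\psi)\,,
\]
understood as the increasing limit of the step-function versions. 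The point is that $\MA_\xi(\psi)$ gives to the "shell" $\MA(P_{\lambda'}\psi)-\MA(P_\lambda\psi)$, $\lambda'>\lambda$, a weight at most $e^{-\lambda}$.

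The key geometric input is the support statement \cref{eq:legendre-ma-support}. The measure $\MA(P_\lambda\psi)$ lives in $\{r^2\le C(1+\max\{\lambda,0\})\}$, where $C$ depends only on $(m,M)$ because \cref{lem:flow-quadratic-control} is uniform. Hence on $\{r\geq R\}$ only the shells with $\lambda\gtrsim C^{-1}R^2-1$ contribute. This gives, for step functions, the bound
\[
    \int_{\{r\ge R\}}(1+r^2)^N\MA_{g}(\psi)\le \sum_{\text{shells }[\lambda_j,\lambda_{j+1}],\ \lambda_{j+1}\ge C^{-1}R^2-1} g(\lambda_j)\,\bigl(1+C(1+\lambda_{j+1})\bigr)^N\,\mathrm{mass}_j .
\]
Here $\mathrm{mass}_j=\int\MA(P_{\lambda_{j+1}}\psi)-\int\MA(P_{\lambda_j}\psi)$, which by part (1) of the lemma is independent of $\psi$. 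Using (5) for $\psi_{\mathrm{amb}}$, it equals $\int_{\{\lambda_j\le h^{\psi_\mathrm{amb}}<\lambda_{j+1}\}}\omega_\mathrm{amb}^n$.

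Passing to the limit along $g_k\nearrow e^{-x}$, with the mesh refining, the right-hand side becomes
\[
    C\int_{\{h^{\psi_{\mathrm{amb}}}\ge C^{-1}R^2-C\}}(1+h^{\psi_\mathrm{amb}})^N e^{-h^{\psi_\mathrm{amb}}}\,\omega_\mathrm{amb}^n .
\]
By \cref{thm:HamiltonianBounds} applied to $\psi_\mathrm{amb}$, the condition $h^{\psi_\mathrm{amb}}\ge C^{-1}R^2-C$ forces $r\ge c R$ outside a compact set. So this integral is bounded by the tail in \cref{thm:GaussianMoments} with polynomial weight. Concretely, split $e^{-h}=e^{-h/2}e^{-h/2}$, bound $(1+h)^Ne^{-h/2}$ by a constant, and apply \cref{thm:GaussianMoments} with $\tau=1/2$. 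This gives $O(e^{-\epsilon R^2})$. All constants depend only on $(m,M,N)$ through the support constant in \cref{eq:legendre-ma-support}.

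The main obstacle is the uniformity of the support constant in \cref{eq:legendre-ma-support} over $\overline{\cH}_2(m,M)^\bbT$. I would verify it directly. By \cref{eq:flow-quadratic-control} and convexity, at $x$ with $r^2(x)>C(1+\lambda)$ the infimum defining $P_\lambda\psi(\gamma_sx)$ is attained at $t\ge s$, because the slope $(\gamma_{-t}^*\psi)'$ there exceeds $\lambda$. This gives the homogeneity $P_\lambda\psi(\gamma_s x)=P_\lambda\psi(x)+\lambda s$, so $P_\lambda\psi$ is maximal there, with constants depending only on $(m,M)$.

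A secondary technical point is justifying the interchange of the limit in $k$ with integration against the unbounded weight $(1+r^2)^N\mathbf 1_{\{r\ge R\}}$. This follows by monotone convergence, since each $\MA_{g_k}$ increases in $k$.
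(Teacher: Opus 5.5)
Your proposal is correct and follows essentially the same route as the paper, which defers to \cite[Eq.~3.12, 3.14]{EAC}. The support bound \cref{eq:legendre-ma-support} restricts the part of $\MA_\xi(\psi)$ on $\{r\geq R\}$ to Legendre shells with $\lambda\gtrsim R^2$, whose masses are independent of $\psi$ and are computed with $\psi_{\mathrm{amb}}$, and the ambient Gaussian tail \cref{thm:GaussianMoments} then plays the role of the AC volume estimate used in \cite{EAC}. Your uniformity remark is also sound: the slope lower bound behind the support constant only uses the end estimate in the proof of \cref{lem:flow-quadratic-control}, which depends on $(m,M)$ alone.
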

\begin{proof}
    This is analogous to \cite[Eq.~3.12, 3.14]{EAC}, using \cref{thm:GaussianMoments} instead of the estimate for an AC metric used there.
\end{proof}

As a consequence of the tail estimate, we also obtain the continuity conclusion of \cite[Thm.~3.22]{EAC}

\begin{thm}
    If $\psi_j, \phi_j, \theta_j \in \overline{\cH}_2^\bbT$ are monotone sequences converging pointwise to limits $\psi_\infty, \phi_\infty, \theta_\infty \in \overline{\cH}_2^\bbT$ then
    \[
        (\psi_j - \phi_j) \MA_\xi(\theta_j) \weakto (\psi_\infty - \phi_\infty) \MA_\xi(\theta_\infty)
    \]
    and their integrals also converge.
\end{thm}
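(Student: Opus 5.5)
We would follow the strategy of \cite[Thm.~3.22]{EAC}: first localize to a compact set using the tail estimate, then apply Bedford--Taylor monotone convergence on that compact set. Fix a bounded family containing all the sequences together with their limits. This is possible because monotone sequences are squeezed between their first term and their limit, so part (ii) of the boundedness lemma applies. Fix $m,M$ so that everything lies in $\overline{\cH}_2(m,M)^\bbT$.

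The first step is a uniform quadratic control of differences. Apply \cref{lem:flow-quadratic-control} to the ordered pairs $(\psi_j,\phi_j)$; together with the convexity argument used for \cref{eq-control without t}, it gives
\[
|\psi_j-\phi_j|\le C(1+r^2),
\]
with $C$ independent of $j$, and the same bound for the limits. Combined with \cref{thm:MAxitail} for $N=1$, this yields
\[
\int_{\{r>R\}}|\psi_j-\phi_j|\,\MA_\xi(\theta_j)\le Ce^{-\epsilon R^2}
\]
uniformly in $j$. Hence it suffices to prove convergence after multiplying by a cutoff $\chi$ supported in $\{r\le 2R\}$, and then let $R\to\infty$.

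The second step is the compact part, handled by the step-function approximation. Fix a step function $g\le e^{-x}$ and write $\MA_g(\theta_j)$ as a finite combination of $\MA(P_\lambda\theta_j)$. We also need to control the error $\MA_\xi-\MA_g$ uniformly in $j$. The total masses of both measures are independent of $\theta$ by \cref{lem:total mass}, and the difference is a positive measure, so its mass is $\int(e^{-x}-g)(h^{\psi_{\mathrm{amb}}})\,\omega_{\mathrm{amb}}^n$. This can be made small, and on $\{r\le 2R\}$ the integrand $|\psi_j-\phi_j|$ is uniformly bounded. It therefore remains to show that
\[
\chi(\psi_j-\phi_j)\,\MA(P_\lambda\theta_j)\weakto\chi(\psi_\infty-\phi_\infty)\,\MA(P_\lambda\theta_\infty).
\]
The Legendre transform preserves monotonicity: $P_\lambda\theta_j$ is monotone and converges to $P_\lambda\theta_\infty$. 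This is pointwise for decreasing sequences; for increasing sequences it holds off a pluripolar set after usc regularization, and we would check that the regularization equals $P_\lambda\theta_\infty$ using continuity of $t\mapsto\gamma_{-t}^*\theta$ and the support bound \cref{eq:legendre-ma-support}. On the support, which lies in the compact set $\{r^2\le C(1+\lambda)\}$ and avoids $A_\lambda$ in the relevant region, the functions are locally bounded. There the Bedford--Taylor/Xing convergence theorem for monotone sequences of locally bounded psh functions, with quasi-continuous bounded multipliers $\psi_j-\phi_j$ that converge monotonically (\cite[Thm.~3.22]{EAC}, \cite{GuedjZeriahi}), gives the weak convergence. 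Convergence of the integrals then follows from the localization together with the uniform tails.

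We expect the main obstacle to be the behaviour near $A_\lambda$. The functions $P_\lambda\theta_j$ are only locally bounded on $X\setminus A_\lambda$, so the Bedford--Taylor theorem applies only away from it. We would handle this as in \cite{EAC}, by using that $\MA(P_\lambda\theta)$ puts no mass near $A_\lambda$, uniformly in $j$. This follows from the mass identity in (1) and monotonicity (4) in $\lambda$: comparing $P_\lambda$ and $P_{\lambda'}$ for $\lambda'$ slightly below $\lambda$ bounds the mass of a neighborhood of $A_\lambda$ by a mass difference that is independent of $\theta$ and tends to zero as $\lambda'\to\lambda$.
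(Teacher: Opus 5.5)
Your reduction follows the paper's route. The paper proves this statement only by pointing to \cite[Thm.~3.22]{EAC}, with the new tail estimate for $\MA_\xi$ replacing the AC one. Your Steps 1--2 are a correct unpacking of that: the uniform bound $|\psi_j-\phi_j|\le C(1+r^2)$, the tail cutoff, the step-function approximation with $\theta$-independent error mass, and the reduction to $(\psi_j-\phi_j)\MA(P_\lambda\theta_j)$ with uniformly compact support.

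The gap is in the step you single out as the main obstacle. Comparing with $\lambda'<\lambda$ does not give non-concentration near $A_\lambda$. For a neighbourhood $U$ of $A_\lambda$, monotonicity in $\lambda$ only yields
\[
\MA(P_\lambda\theta_j)(U)\le\MA(P_{\lambda'}\theta_j)(U)+\int_{\{\lambda'\le h^{\psi_{\mathrm{amb}}}<\lambda\}}\omega_{\mathrm{amb}}^n .
\]
Since $A_{\lambda'}\supseteq A_\lambda$ and $P_{\lambda'}\theta_j\le P_\lambda\theta_j$, the metric $P_{\lambda'}\theta_j$ is just as unbounded near $A_\lambda$ as $P_\lambda\theta_j$. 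So the first term on the right is exactly as uncontrolled as the quantity you started with, and the argument is circular. For smooth $\theta$ one has $h^\theta\ge\lambda-\varepsilon$ near $A_\lambda$, so $\MA(P_{\lambda'}\theta)$ does vanish there. However, the size of that neighbourhood depends on the modulus of continuity of $h^\theta$, so it is not uniform in $j$, and for non-smooth $\theta_j$ there is no Hamiltonian at all.

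What you actually need is non-concentration only along the convergent sequence, and it comes from mass conservation. This is what the ``capacity convergence theorem'' cited in the paper encodes.

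Put $\mu_j=\MA(P_\lambda\theta_j)$ for $j\le\infty$. On compact subsets of $X\setminus A_\lambda$ (note $A_\lambda$ is closed) the metrics $P_\lambda\theta_j$ are uniformly bounded, being squeezed between $P_\lambda\theta_1$ and $P_\lambda\theta_\infty$. Bedford--Taylor therefore gives convergence of $\int\chi f(\psi_j-\phi_j)\,d\mu_j$ and of $\int\chi\,d\mu_j$, for continuous $f$ and continuous $\chi$ compactly supported in $X\setminus A_\lambda$.

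Since $\mu_\infty$ is pushed forward from $X\setminus A_\lambda$, we have $\mu_\infty(A_\lambda)=0$. By inner regularity you can then pick such a $\chi$ with $0\le\chi\le1$ and $\int(1-\chi)\,d\mu_\infty<\varepsilon$. Because $\mu_j(X)=\mu_\infty(X)$ by part (1) of the lemma,
\[
\int(1-\chi)\,d\mu_j=\mu_\infty(X)-\int\chi\,d\mu_j\longrightarrow\int(1-\chi)\,d\mu_\infty<\varepsilon .
\]
As $|\psi_j-\phi_j|$ is uniformly bounded on the common compact support, the $(1-\chi)$-part contributes $O(\varepsilon)$ for large $j$. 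With this argument in place of your last paragraph, the proof closes.
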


\begin{definition}
    For $\psi_0, \psi_1 \in \overline{\mathcal H}_2^{\mathbb T}$, define
    \begin{equation}\label{eq:weighted-energy}
        E_\xi(\psi_1,\psi_0) =\int_0^1 \int_X (\psi_1-\psi_0) \operatorname{MA}_\xi((1-t)\psi_0+t\psi_1) \d t
        \p
    \end{equation}
    This is finite by \cref{thm:MAxitail}.  Write
    \(
        E_{\xi}(\psi) = E_{\xi}(\psi, \psi_{\mathrm{amb}}),
    \)
    and define
    \begin{equation}\label{eq:modified-ding}
        \cD_{\xi}(\psi) = \mathcal{L}(\psi) - \frac{1}{(2\pi)^n n! \mathbb W_{X}(\xi)}E_{\xi}(\psi),\qquad \mathcal{L}(\psi) = -\log\int_X e^{-\psi}
        \p
    \end{equation}
\end{definition}

The proof of \cite[Proposition~3.27(ii)]{EAC} applies verbatim with the cone potential \(r^2\) introduced above.
Then using the above estimate, we can argue as in \cite[Section 3.3]{EAC} to obtain the following properties for weighted energy.
\begin{proposition}\label{prop:weighted-energy}
    Let $E_\xi$ be the weighted energy defined in \cref{eq:weighted-energy}. It satisfies the following.
    \begin{enumerate}
        \item For every smooth path $\psi_t$, $0 \leq t \leq 1$, in a bounded family of some $\cH_2^{\bbT}(m, M)$, with $|\dot \psi_t| \leq C(1+r^2)$,
            \begin{equation}\label{eq:weighted-energy-path}
                E_\xi(\psi_1,\psi_0) = \int_0^1\!\int_X\dot\psi_t\operatorname{MA}_\xi(\psi_t)\,dt
                \p
            \end{equation}
        \item The energy is continuous in both arguments along monotone sequences.
        \item $t \mapsto E_\xi(\psi, t \phi + (1-t)\psi)$ is concave, for $\phi, \psi\in \overline{\cH}_2^{\bbT}$.
        \item $E_\xi(\Phi_t,\Phi_0)$ is convex along every subgeodesic $\Phi_t\in \overline{\cH}_2^{\bbT}(m,M)$ satisfying $|\partial^\pm_t\Phi_t| \leq C(1 + r^2)$.
        \item For $\psi, \phi, \theta$ in $\overline{\cH}_2^{\bbT}$,
            \begin{equation}\label{eq:weighted-energy-cocycle}
                E_\xi(\psi, \theta) = E_\xi(\psi, \phi) + E_\xi(\phi, \theta)
                \p
            \end{equation}
    \end{enumerate}
\end{proposition}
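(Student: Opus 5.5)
We will follow \cite[Section~3.3]{EAC}, using the tail estimate \cref{thm:MAxitail} and the continuity theorem for $(\psi_j-\phi_j)\MA_\xi(\theta_j)$ in place of the asymptotically conical estimates used there. The strategy is to prove everything first for smooth metrics in a bounded family of $\cH_2^\bbT(m,M)$, and then pass to $\overline{\cH}_2^\bbT$ by monotone approximation.

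For smooth metrics, the central identity is that the one-form $\delta\psi\mapsto\int_X\delta\psi\,\MA_\xi(\psi)$ is closed on the space of smooth metrics in $\cH_2^\bbT$.
\begin{enumerate}
\item First we differentiate $\MA_\xi(\psi_t)=e^{-h^{\psi_t}}(\sqrt{-1}\partial\bar\partial\psi_t)^n$ along a smooth path. Using $h^{\psi}=-X_0\psi$ (up to sign convention) and $\iota_\xi\sqrt{-1}\partial\bar\partial\psi=-dh^\psi$, we obtain
\[
\tfrac{d}{dt}\MA_\xi(\psi_t)=\bigl(\Delta_{\psi_t}\dot\psi_t-X_0\dot\psi_t\bigr)\MA_\xi(\psi_t),
\]
which is the weighted Laplacian of $\dot\psi_t$.
\item Next we show that the second variation $\int\dot\phi\,\Delta_{\psi,\xi}\dot\psi\,\MA_\xi$ is symmetric. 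This requires weighted integration by parts with cutoffs $\eta(r^2/R)$. The boundary terms vanish by \cref{thm:MAxitail}, since $|\dot\psi|\le C(1+r^2)$ and the derivatives of $\psi$ along the cutoff grow at most polynomially in $r$.
\end{enumerate}
Closedness of the one-form gives the path independence in (1) and the cocycle property (5) for smooth metrics: integrate along the straight segment, compare with the given path via the homotopy $(s,t)\mapsto$ convex combination, and apply Stokes on the square.

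For (3), we compute the second derivative along the segment $\psi_t=t\phi+(1-t)\psi$:
\[
\tfrac{d^2}{dt^2}E_\xi=-\int|\bar\partial(\phi-\psi)|^2_{\psi_t}\MA_\xi(\psi_t)\le 0,
\]
which gives concavity.

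For (4), along a smooth subgeodesic we obtain
\[
\tfrac{d^2}{dt^2}E_\xi=\int\bigl(\ddot\Phi-|\bar\partial\dot\Phi|^2\bigr)\MA_\xi\ge0,
\]
because the integrand is the ratio $(\sqrt{-1}\partial\bar\partial\Phi)^{n+1}/(\sqrt{-1}\partial\bar\partial\Phi_t)^n$, weighted by $e^{-h^{\Phi_t}}$. For a general subgeodesic we write $E_\xi(\Phi_t,\Phi_0)$ as the pushforward to the $t$-variable of the weighted measure $\MA_\xi(\Phi)$ on $\cS\times X$, computed in the sense of \cite[Section~3]{EAC} via $P_\lambda$ applied fibrewise. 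The hypothesis $|\partial_t^\pm\Phi|\le C(1+r^2)$ is exactly what makes this representation converge, again by \cref{thm:MAxitail}.

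Finally we pass to the singular case. Every $\psi\in\overline{\cH}_2^\bbT(m,M)$ is a decreasing limit of smooth metrics in a bounded family $\cH_2^\bbT(m',M')$, by \cref{lem:global-regularization}. Along decreasing sequences, the continuity theorem for $(\psi_j-\phi_j)\MA_\xi(\theta_j)$, applied to the integrand of \cref{eq:weighted-energy} together with dominated convergence in $t$, gives (2). The identities (5) and the convexity and concavity statements (3) and (4) are closed under such limits, so they follow from the smooth case.

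The main obstacle is (4) for nonsmooth subgeodesics. Regularization must preserve subgeodesy, so we would regularize $\Phi$ on $\cS\times X$ by a decreasing sequence of smooth $\sigma$-independent psh metrics obtained by the global regularization applied on the product, while keeping the quadratic $t$-Lipschitz bound uniform. If that is unavailable, we would instead approximate via $P_\lambda$ truncations, which are compactly supported in measure by \cref{eq:legendre-ma-support}.
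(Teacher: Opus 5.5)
Your architecture (prove everything for smooth metrics, then pass to $\overline{\cH}_2^{\bbT}$ by decreasing approximation via \cref{lem:global-regularization} and the monotone continuity (2)) is the paper's. The regularization you single out as the main obstacle for (4) is exactly what \cref{lem:global-regularization} provides: it is stated for jointly psh bounded families, yields smooth jointly psh approximants, and preserves the quadratic $t$-Lipschitz bound.

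\textbf{The gap is in your smooth step.} Closedness of $\delta\psi\mapsto\int_X\delta\psi\,\MA_\xi(\psi)$ on all smooth metrics of $\cH_2^{\bbT}$ requires two things. First, (a) differentiating $s\mapsto\int_X\dot\phi\,\MA_\xi(\psi_s)$ under the integral sign, i.e.\ integrability at infinity of $\dot\phi\,\sqrt{-1}\partial\bar\partial\dot\psi\wedge\omega_\psi^{n-1}e^{-h^\psi}$. Second, (b) vanishing of boundary terms like $\int_X\dot\phi\,\langle\partial\eta_R,\bar\partial\dot\psi\rangle_{\omega_\psi}\MA_\xi(\psi)$. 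These involve $|\partial\dot\psi|_{\omega_\psi}$, $|\partial r^2|_{\omega_\psi}$ and the complex Hessian of $\dot\psi$ measured against $\omega_\psi$.

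Every hypothesis you have is of order zero:
\begin{itemize}
\item membership in $\cH_2^{\bbT}(m,M)$ only constrains $(\gamma_s^*\psi-\psi)|_{\Sigma_0}$;
\item boundedness of a family is a local $C^0$ condition;
\item $|\dot\psi_t|\le C(1+r^2)$ is a $C^0$ bound.
\end{itemize}
Nothing forces the derivatives of $\psi$ or $\dot\psi$ entering these terms to be polynomially bounded. For instance, $X_0X_0\psi$, the second derivative of the convex function $s\mapsto\gamma_s^*\psi$, can have arbitrarily large spikes, since only the growth of that function is constrained. Meanwhile \cref{thm:MAxitail} only bounds $\int_{\{r>R\}}(1+r^2)^N\MA_\xi$, so it cannot by itself dominate these terms. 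As written, closedness on all of $\cH_2^{\bbT}$ is not established, and with it neither are (1), (5), or your smooth versions of (3) and (4).

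\textbf{How the paper avoids this.} For (3), (4), (5) it sidesteps the issue. By \cref{lem:global-regularization}(2) the approximants coincide with $\chi-C_\nu$ on $\{r\ge\nu\}$ for one fixed $\chi$. So it suffices to treat smooth metrics that agree outside a compact set, respectively smooth subgeodesics that are constant in $t$ outside a compact set. Then all differences and $t$-derivatives are compactly supported, your second-variation formulas and the Stokes argument become elementary, and the general case follows from (2).

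Statement (1) is asserted for arbitrary smooth paths, so it still needs its own argument; the paper imports it from \cite[Prop.~3.26]{EAC}. Alternatively, once (3) and (5) are available, you can argue as in the Remark following the proposition. Concavity along segments gives $\int_X(\psi_{t+h}-\psi_t)\MA_\xi(\psi_{t+h})\le E_\xi(\psi_{t+h},\psi_t)\le\int_X(\psi_{t+h}-\psi_t)\MA_\xi(\psi_t)$. Combined with the cocycle identity, the bound $|\dot\psi_t|\le C(1+r^2)$, the tail estimate and weak continuity of $t\mapsto\MA_\xi(\psi_t)$, this yields \cref{eq:weighted-energy-path}.

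Finally, a point about the ordering in (3). Your computation there proves concavity of $t\mapsto E_\xi(t\phi+(1-t)\psi,\psi)$, which is the form used in that Remark. Since $E_\xi$ is antisymmetric in its two arguments, the function with the arguments in the order literally written in (3) is its negative.
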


\begin{proof}
    Parts (1)--(4) were proven in \cite{EAC} and the proofs there also work in our setting.
    For the reader's convenience we point to the specific proofs:
    \begin{enumerate}
        \item The proof of \cite[Prop.~3.26]{EAC} applies.

        \item This follows from \zcref{lem:global-regularization}, the tail estimate \zcref{thm:MAxitail} and the capacity convergence theorem as in \cite[Thm.~3.22]{EAC}.
        \item For smooth metrics which agree outside of a compact set, this follows from (1). In general, we use \cref{lem:global-regularization} and (2).
        \item Just as in \cite[Prop.~3.27(i)]{EAC} this follows by taking smooth approximations to $\Phi_t^\nu \searrow \Phi_t$ using \cref{lem:global-regularization} and differentiating \cref{eq:weighted-energy-path} twice. Differentiation under the integral sign is easy to justify because for every $\nu$, $\Phi^\nu_t$ is constant in $t$ outside a compact set.
        \item
            This follows essentially from (1).  Using the regularization with upper barrier \cref{lem:global-regularization} and the continuity of $E_\xi$ along monotone sequences, it is enough to prove the case, where $\psi,\phi,\theta\in\cH_2^\bbT$ and agree outside a compact set.
            Observe that by (1) we have $\partial_u E_\xi(\eta_u, \phi) = \partial_u E_\xi(\eta_u, \theta)$ for any smooth family $\eta_u \in \cH_2^\bbT$.
            Now put
            \[
                \eta_{s,t} = \theta + s(\phi - \theta) + t (\psi - \theta)
                \quad (s,t)\in D:=\{s,t\geq0:\ s+t \leq 1\}
                \p
            \]
            Then
            \[
                E_\xi(\phi, \theta) - E_\xi(\psi, \theta) = \int_0^1 \!\d u \, \partial_u E_\xi(\eta_{u, (1-u)}, \theta)
                                      = \int_0^1 \!\d u \, \partial_u E_\xi(\eta_{u, (1-u)}, \psi) = E_\xi(\phi, \psi)
                  \p
            \]
    \end{enumerate}
\end{proof}

\begin{remark}
      There is another way to prove \cref{eq:weighted-energy-path} for $C^1$ geodesics $\Phi_t$.
            First consider any two $\phi_0, \phi_1 \in \overline\cH_2^\bbT$ and put $\phi_t = t \phi_1 + (1-t) \phi_0$.
            By concavity in (3) we have
            \[
                E_\xi(\phi_1, \phi_0) \leq \left. \frac{d}{d t} \right|_0 E_\xi(\phi_t, \phi_0) = \int_X (\phi_1 - \phi_0) \MA_\xi(\phi_0)
                \p
            \]
            The equality is easily seen from the definition of $E_\xi$. Thus
            \[
                E_\xi(\Phi_{t + h}, \Phi_t) \leq \int_X (\Phi_{t + h} - \Phi_t) \MA_\xi(\Phi_t)
            \]
            and taking the limit gives $\frac{d^+}{d t} E_\xi(\Phi_t) \leq \int_X \dot{\Phi}^+_t \MA_\xi(\Phi_t)$. At $t \in (0, 1)$ we also have the reverse inequality $\frac{d^-}{d t} E_\xi(\Phi_t) \geq \int_X \dot{\Phi}^-_t \MA_\xi(\Phi_t)$, so $C^1$-regularity of $\Phi_t$ and the fundamental theorem of calculus prove the claim.
\end{remark}

\begin{proposition}
    \label{prop:energy-endpoint-variation}
    Let $\psi_0, \psi_1 \in \cH_2^\bbT$, and let $\Phi_t$, $0 \leq t \leq 1$, be the geodesic joining them supplied by \cref{prop:general-geodesic}. Then
    \[
        E(t) := E_\xi(\Phi_t, \psi_0)
    \]
    is affine on \([0, 1]\), and its constant slope satisfies
    \begin{equation} \label{eq:energy-endpoint-inequalities}
        \int_X \partial_t^+ \Phi_0 \operatorname{MA}_\xi(\psi_0)
            \geq \partial^+_t E(0) = \partial^-_t E(1)
            \geq \int_X \partial_t^- \Phi_1 \operatorname{MA}_\xi(\psi_1)
        \p
    \end{equation}
\end{proposition}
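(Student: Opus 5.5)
We prove affinity of $E(t)=E_\xi(\Phi_t,\psi_0)$ by showing that $E$ is both convex and concave. The endpoint inequalities then follow from first-order comparisons at $t=0$ and $t=1$.

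\emph{Convexity.} By \cref{prop:general-geodesic}, the slices $\Phi_t$ form a bounded family in some $\overline{\cH}_2^{\bbT}(m,M)$. The estimate \cref{eq:general-geodesic-bound} gives $|\partial^\pm_t\Phi_t|\leq L(1+r^2)$. Since $\Phi$ is a geodesic, it is in particular a subgeodesic, so \cref{prop:weighted-energy}(4) shows that $t\mapsto E_\xi(\Phi_t,\Phi_0)=E_\xi(\Phi_t,\psi_0)$ is convex.

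\emph{Concavity.} This is the step I expect to be the main obstacle, because it is where the vanishing $(\sqrt{-1}\partial\bar\partial\Phi)^{n+1}=0$ must enter. The plan is to follow \cite[Prop.~3.27]{EAC}. First, approximate $\Phi$ by decreasing smooth subgeodesics $\Phi^\nu\searrow\Phi$, using \cref{lem:global-regularization}, chosen to be constant in $t$ outside compact sets. Second, differentiate \cref{eq:weighted-energy-path} twice. The second derivative $E_\nu''(t)$ is then expressed as the pushforward of the weighted measure $\MA_\xi$ of $\Phi^\nu$ on $\cS\times X$, which is the weighted analogue of $(\sqrt{-1}\partial\bar\partial\Phi^\nu)^{n+1}$, computed through the partial Legendre transforms $P_\lambda$.

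Third, pass to the limit $\nu\to\infty$. The continuity theorem for $\MA_\xi$ along monotone sequences, combined with the tail bound \cref{thm:MAxitail} (controlling the quadratic weights uniformly), gives $E''=\pi_*\MA_\xi(\Phi)$ in the sense of distributions. This vanishes because $\Phi$ is maximal on each set $X\setminus A_\lambda$: the slice-wise Legendre transforms $P_\lambda\Phi_t$ again solve the homogeneous complex Monge--Ampère equation, as in \cite{BermanWittNystrom,EAC}. Concavity follows, so $E$ is affine.

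If this route is too delicate, the fallback is an alternative argument for concavity. For fixed $t$, \cref{prop:weighted-energy}(3) and the cocycle identity \cref{eq:weighted-energy-cocycle} give
\[
E_\xi(\Phi_{t+h},\Phi_t)\leq\int_X(\Phi_{t+h}-\Phi_t)\MA_\xi(\Phi_t).
\]
One then needs a reverse inequality, expressing maximality, for the chord between $\Phi_t$ and $\Phi_{t+h}$. This amounts to comparing the geodesic with the linear path, and still uses that $\Phi$ is maximal, so it does not avoid the main difficulty.

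\emph{Endpoint inequalities.} Let $k$ be the constant slope. Since $E$ is affine, $k=\partial_t^+E(0)=\partial_t^-E(1)$. At $t=0$, the cocycle identity and \cref{prop:weighted-energy}(3) give
\[
E(h)=E_\xi(\Phi_h,\psi_0)\leq\int_X(\Phi_h-\psi_0)\MA_\xi(\psi_0).
\]
Dividing by $h$ and letting $h\searrow0$, we obtain $k\leq\int_X\partial_t^+\Phi_0\MA_\xi(\psi_0)$. Dominated convergence applies here: by $t$-convexity the quotients $(\Phi_h-\psi_0)/h$ decrease to $\partial_t^+\Phi_0$, they are bounded by $L(1+r^2)$, and this bound is integrable by \cref{thm:MAxitail}.

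Symmetrically, at $t=1$ we write
\[
E(1)-E(1-h)=E_\xi(\psi_1,\Phi_{1-h})=-E_\xi(\Phi_{1-h},\psi_1)\geq\int_X(\psi_1-\Phi_{1-h})\MA_\xi(\psi_1),
\]
where the last inequality uses \cref{prop:weighted-energy}(3) again. Dividing by $h$ and letting $h\searrow0$ gives $k\geq\int_X\partial_t^-\Phi_1\MA_\xi(\psi_1)$, which is \cref{eq:energy-endpoint-inequalities}.
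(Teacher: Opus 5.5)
Your proposal is correct and follows essentially the same route as the paper. The paper proves affinity by rerunning the approximation argument of \cite[Prop.~3.27(ii)]{EAC}: smooth regularizations from \cref{lem:global-regularization}, the second variation of $E_\xi$ written as a weighted Monge--Amp\`ere measure on $\cS\times X$, and \cref{thm:MAxitail} to control the tails when choosing $\nu\geq\nu(R)$. It then derives the endpoint inequalities exactly as you do, from concavity of $E_\xi$ along linear segments and the cocycle identity, as in \cite[Lem.~4.7]{EAC}.
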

\begin{proof}
    The same proof as \cite[Prop.~3.27(ii)]{EAC} shows that $E(t)$ is affine. The auxiliary metric $\psi \in (\cH_2^*)^\bbT$ in \textit{loc.cit.} needs to be replaced with $\psi_\mathrm{amb}$ and we use the tail estimate \Cref{thm:MAxitail} in place of \cite[(3.14)]{EAC}.
    Note that $\dot{\Phi}^\nu \equiv 0$ outside of a compact set does not avoid the somewhat complicated tail estimates, since the proof requires picking $\nu \geq \nu(R)$. 

    The inequalities for the slope at the endpoints follow as in \cite[Lem.~4.7]{EAC} from the established concavity/convexity of $E_\xi$.
\end{proof}

\subsection{Berndtsson convexity}
We recall Berndtsson's convexity result \cite{Berndtsson2015} in the form stated in \cite[Section~4.2]{EAC}. Let $u$ be the canonically normalized tautological nowhere-vanishing section of $\Omega^n(X, K_X^{-1})$.

For a smooth metric $\chi$ whose curvature $\omega_\chi$ is complete, norms on forms are taken with respect to $\omega_\chi$, while subscripts indicate the weight. We set $\partial^\phi = e^\phi\partial e^{-\phi}$ and, as usual, write $\mathcal A^{p,q}(X;K_X^{-1})$ for the smooth $K_X^{-1}$-valued $(p,q)$-forms.

\begin{lemma}[{\cite[Proposition~4.3]{EAC}}]
    \label{lem:complete-primitive-solution}
    Let $\chi \in \cH_2^{\bbT}$ be smooth with $\omega_\chi$ complete. Suppose that $\phi \in \cH_2^{\bbT}(m,M)$ satisfies, for some $\kappa > 0$,
    \[
        \phi \leq \chi,
        \qquad
        \sqrt{-1}\partial\bar\partial\phi \geq \kappa\omega_\chi
    \]
    If $f \in C^\infty(X;\bbR)^{\bbT}$ satisfies
    $|f| \leq L(1 + r^2)$, then
    \[
        \partial^\phi v=\pi_\perp^\phi(fu)\coloneq \left(
        f-\frac{\int_X f e^{-\phi}}{\int_X e^{-\phi}}
    \right)u
    \]
    has a unique solution
    $v \in L^2_{\omega_\chi, \phi}\mathcal A^{n-1,0}(X;K_X^{-1})^{\bbT}$ such that
    $\bar\partial v$ is $\omega_\chi$-primitive. This solution satisfies
    \begin{equation}\label{eq:unifL2}
        \|v\|_{\omega_\chi,\phi}<C(m,M,L)
    \end{equation}
\end{lemma}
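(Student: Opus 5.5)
The plan is to solve the $\partial^\phi$-equation by Hörmander-type $L^2$ duality in the Berndtsson form. The right-hand side $g = \pi_\perp^\phi(fu)$ is an $(n,0)$-form, orthogonal to the holomorphic sections in the $\phi$-weighted $L^2$ space. First I check that $g \in L^2_{\omega_\chi,\phi}$ with a bound depending only on $(m,M,L)$. Since $|u|^2_{\omega_\chi,\phi}\,\omega_\chi^n = e^{-\phi}$ up to a dimensional constant, we have $\|g\|^2 \le C\int_X (1+r^2)^2 L^2 e^{-\phi}$. By \cref{thm:H2moments}, applied to the bounded family consisting of the single class $\cH_2^\bbT(m,M)$, this is controlled together with the normalizing ratio $\int f e^{-\phi}/\int e^{-\phi}$. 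For this I need a lower bound on $\int e^{-\phi}$; I would get it from $\phi\le\chi$, which gives $\int e^{-\phi}\ge\int e^{-\chi}>0$. All constants are then uniform in $\phi$.

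Next, conjugation by the Hodge star reduces the problem to a $\bar\partial$-problem. For $K_X^{-1}$-valued $(n-1,0)$ forms, the adjoint of $\partial^\phi$ is related to $\bar\partial$ acting on $(n,1)$-forms by the standard identity $\partial^\phi = -\,\star\bar\partial^{*}\star$ on primitive forms. More concretely, following \cite[Proposition~4.3]{EAC}, I would look for $v = \bar\partial^{*}_\phi \alpha$ with $\alpha$ an $(n,1)$-form. The defining requirement is that $\bar\partial v$ be primitive. I then use the Bochner--Kodaira--Nakano identity on $(n,1)$-forms with the weight $\phi$. Since $\sqrt{-1}\partial\bar\partial\phi\ge\kappa\omega_\chi$ and $\omega_\chi$ is complete, Andreotti--Vesentini density of compactly supported forms applies, and it gives the a priori estimate
\[
\|\bar\partial\alpha\|^2+\|\bar\partial^*_\phi\alpha\|^2\ge\kappa\|\alpha\|^2 .
\]
The Riesz representation theorem on the closure of the range then produces a solution. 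Orthogonality of $g$ to the kernel, that is, to holomorphic $n$-forms of finite norm, is exactly what the projection $\pi_\perp^\phi$ provides. Here the constants are the only $L^2$ holomorphic functions, because $\int_X e^{-\phi}\,|h|^2<\infty$ forces $h$ to be constant by the $\bbT_\bbC$-weight argument together with the quadratic growth of $\phi$. This yields $\|v\|\le\kappa^{-1/2}\|g\|$ up to a constant.

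Finally, I establish $\bbT$-invariance and uniqueness. Averaging the solution over the compact torus $\bbT$ preserves the equation and the primitivity condition, since $\phi$, $\chi$ and $f$ are invariant. For uniqueness, take the difference $w$ of two solutions. It satisfies $\partial^\phi w=0$ with $\bar\partial w$ primitive, and the same Bochner identity forces $\bar\partial w = 0$ and hence $w=0$ in $L^2$.

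The main obstacle is making the bound \cref{eq:unifL2} independent of $\kappa$, which the statement demands. For this I would follow \cite{EAC} and use the $\bbT_\bbC$-structure. Decompose in $\sigma$ along the flow of $J X_0$: on $\bbT$-invariant data, the $\partial^\phi$-equation along $X_0$ reduces to a one-dimensional weighted problem on each orbit. Its spectral gap comes from the strict convexity $m e^s$ of $\gamma_s^*\phi$ in \eqref{eq:H2-bounds}, rather than from $\kappa$. If that fails, I would accept a constant depending on $\kappa$ as well, since in the applications $\kappa$ is fixed along the family.
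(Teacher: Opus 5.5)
Your qualitative argument is the standard one: Lefschetz reduction to $(n,1)$-forms, Bochner--Kodaira--Nakano with the lower bound $\kappa$, Riesz representation, and uniqueness from the absence of $L^2$ harmonic $(n,1)$-forms. It gives the primitive solution with $\|v\|_{\omega_\chi,\phi}\leq\kappa^{-1/2}\|g\|_{\omega_\chi,\phi}$, where $g=\pi_\perp^\phi(fu)$. The content of \cref{eq:unifL2}, however, is that the bound is independent of $\kappa$, and here there is a genuine gap.

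Your fallback is not available. In \cref{prop:complete-berndtsson} the lemma is applied to the regularizations $\Phi^j_t$ of \cref{lem:global-regularization}. Their curvature is only bounded below by some $\kappa_j\omega_\chi$, with $\kappa_j\to0$ in general, since the slices of a weak geodesic are merely psh. A $j$-uniform bound on $v^j_t$ is exactly what is needed to extract a weak limit with $\bar\partial v_t=0$ in \cite[\S4.4]{EAC}.

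The orbitwise reduction does not work either. $\bbT$-invariance does not turn the $n$-dimensional system $\partial^\phi v=g$ into ODEs along $X_0$-orbits. Moreover, \cref{eq:H2-bounds} only controls the growth of $\gamma_s^*\phi$ along the complex orbits of $\xi$ near infinity, whereas $\sqrt{-1}\partial\bar\partial\phi$ may degenerate in any direction and on compact sets.

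The missing idea, which is what the quantities listed in the paper's proof are for, is to twist the weight. For small $\epsilon>0$ put $\psi_\epsilon=(1-\epsilon)\phi+\epsilon\chi$. Then $\sqrt{-1}\partial\bar\partial\psi_\epsilon\geq\epsilon\,\omega_\chi$ irrespective of $\kappa$, and $\psi_\epsilon-\phi=\epsilon(\chi-\phi)\geq0$. Write $v=e^{-\epsilon(\chi-\phi)}\tilde v$, so that $e^{-\phi}v=e^{-\psi_\epsilon}\tilde v$. One gets
\[
    \partial^\phi v=g\iff\partial^{\psi_\epsilon}\tilde v=e^{\epsilon(\chi-\phi)}g,
    \qquad
    \langle e^{\epsilon(\chi-\phi)}g,h\rangle_{\psi_\epsilon}=\langle g,h\rangle_{\phi},
\]
so orthogonality to holomorphic forms transfers. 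H\"ormander's estimate for $\psi_\epsilon$ then gives a solution with
\[
    \|v\|_{\omega_\chi,\phi}^2\leq\|\tilde v\|_{\omega_\chi,\psi_\epsilon}^2
    \leq\epsilon^{-1}\int_X|g|^2e^{-\phi}e^{\epsilon(\chi-\phi)} .
\]
The right-hand side is uniformly bounded once $\epsilon<m/C$. Indeed, at $x=\gamma_s y$ with $y\in\Sigma_0$ one has $\chi-\phi\leq Ce^s$, while $e^{-\phi}$ has the Gaussian bound $\exp(-me^s)$ coming from \cref{eq:H2-bounds}.

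This twisted solution need not have primitive $\bar\partial v$. Here $\kappa>0$ enters, but only qualitatively: it rules out $L^2$ harmonic $(n,1)$-forms for the weight $\phi$. Hence the primitive solution is the minimal-norm solution of $\partial^\phi v=g$, and its norm is bounded by the one above.

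Three smaller slips:
\begin{itemize}
\item For an $(n,1)$-form $\alpha$, $\bar\partial^*_\phi\alpha$ has bidegree $(n,0)$, not $(n-1,0)$. The right substitution is $\alpha=\omega_\chi\wedge v$. By the K\"ahler identities, $\partial^\phi v$ equals $\bar\partial^*_\phi(\omega_\chi\wedge v)$ up to a unimodular constant, and primitivity of $\bar\partial v$ is equivalent to $\bar\partial\alpha=0$.
\item Constants are not the only $L^2(e^{-\phi})$ holomorphic functions; the $h_\alpha$ lie in $L^2$ because of the Gaussian decay. Constants are merely the only $\bbT$-invariant ones, which suffices because $g$ is $\bbT$-invariant.
\item $\cH_2^\bbT(m,M)$ is not itself a bounded family. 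Uniformity in $\phi$ therefore also needs local lower bounds on $\phi$; these hold for the $\Phi^j_t$.
\end{itemize}
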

\begin{proof}
    This is proven in \cite[Proposition~4.3]{EAC}. The arguments used there to show that the quantities
    \[
        \left\lVert e^{\epsilon (\chi - \phi)} u\right\rVert_{(1-\epsilon) \phi + \epsilon \chi},\quad \left\lVert e^{\epsilon (\chi - \phi)} f u\right\rVert_{(1-\epsilon) \phi + \epsilon \chi},\quad \left\lvert \int_X e^{-\phi} f u \right\rvert
    \]
    are bounded for small $\epsilon$ still apply here. However in the style of exposition that we have used in this paper, we would rather argue that, using the notation from the proof of \cref{thm:H2moments}
    \begin{align}
        \Gamma^* \left( e^{-\phi} e^{\epsilon (\chi - \phi)} \right) \leq C \exp(\epsilon C e^s) \exp(-m e^s) \d s \wedge \d \sigma_0 < \infty \\
        \Gamma^* \left( e^{-\phi} e^{\epsilon (\chi - \phi)} f^2 \right) \leq C \exp(\epsilon C e^s) e^{2s} \exp(-m e^s) \d s \wedge \d \sigma_0 < \infty
    \end{align}
    for $\epsilon < \frac{m}{C}$.
\end{proof}

Note that on $\bbT$-invariant forms $\Omega^n(X, K_X^{-1})$ the operator $\pi_\perp^\phi$ is the $L^2(e^{-\phi})$-orthogonal projection onto the orthogonal complement of holomorphic forms.

\begin{proposition}
    \label{prop:complete-berndtsson}
    Let $\Phi_t$, $0 \leq t \leq 1$, be a $\bbT$-invariant geodesic whose slices form a bounded family in $\overline{\cH}_2^{\bbT}(m,M)$ and which satisfies
    $|\partial_t^\pm \Phi_t| \leq C(1 + r^2)$.
    Set
    \[
        \mathcal L(t) = -\log\int_X e^{-\Phi_t}
        \p
    \]
    Then $\mathcal L$ is convex and continuous on $[0,1]$.
    If, in addition, $\mathcal L$ is affine, then there is a $\bbT$-invariant holomorphic vector field $V$, whose flow we denote by $F_t: X \to X$, such that
    \[
        F_t^*\Phi_t = \Phi_0 + \mathcal L(t) - \mathcal L(0)
        \p
    \]
\end{proposition}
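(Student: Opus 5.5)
We follow Berndtsson's argument in the complete noncompact form of \cite[Section~4.2]{EAC}, using \cref{lem:complete-primitive-solution} as the $\bar\partial$-input and the tail estimates of \cref{thm:H2moments} for the integrability.

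\textbf{Continuity and convexity.} First we show $\mathcal L$ is continuous on $[0,1]$. The slices form a bounded family in $\overline{\cH}_2^{\bbT}(m,M)$, so \cref{thm:H2moments} gives a uniform integrable majorant for $e^{-\Phi_t}$; for instance \cref{eq:common-gaussian-density} gives $C\exp(-me^s)\,ds\,d\sigma_0$ on the end. The Lipschitz bound $|\partial_t^\pm\Phi_t|\le C(1+r^2)$ gives local uniform continuity of $\Phi_t$ in $t$, including locally uniform convergence to the endpoints. Dominated convergence then yields continuity of $\int_X e^{-\Phi_t}$ on the closed interval.

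For convexity we approximate. Using \cref{lem:global-regularization}, we take smooth $\bbT$-invariant metrics $\Phi^\nu$ on $\overline\cS\times X$ that decrease to $\Phi$ and are strictly psh in the $X$-directions. We then add a small multiple of $\psi_{\mathrm{amb}}$ and a term $\varepsilon(t^2-t)$, so the slices satisfy $\sqrt{-1}\partial\bar\partial\Phi^\nu_t\ge\kappa\,\omega_\chi$ and $\Phi^\nu_t\le\chi$ for a fixed complete reference $\chi\in\cH_2^{\bbT}$, for example a large multiple-shift of $\psi_{\mathrm{amb}}$. For each slice we solve $\partial^{\Phi_t}v_t=\pi_\perp^{\Phi_t}(\dot\Phi_t u)$ by \cref{lem:complete-primitive-solution}, with uniform bound \cref{eq:unifL2}. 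Berndtsson's curvature computation for the direct image bundle of $K_X^{-1}\otimes K_X$ (rank one here, spanned by $u$) then expresses $\frac{d^2}{dt^2}\mathcal L^\nu$ as a nonnegative expression. The expression is
\[
\frac{d^2}{dt^2}\mathcal L^\nu(t)\,\int_X e^{-\Phi^\nu_t}=\int_X c(\Phi^\nu)\,e^{-\Phi^\nu_t}\,\omega^n+\|\bar\partial v_t\|^2_{\omega_\chi,\Phi^\nu_t}\ \ge 0,
\]
where $c(\Phi)=\ddot\Phi-|\bar\partial\dot\Phi|^2_{\sqrt{-1}\partial\bar\partial\Phi}\ge0$ is the geodesic curvature.

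The integration by parts in this computation is justified by cutoffs $\chi_R(r^2)$. The error terms involve $v_t$, $\dot\Phi$ and $|\partial\chi_R|$ against $e^{-\Phi}$. They vanish as $R\to\infty$ because $v_t\in L^2$, $|\dot\Phi|\le C(1+r^2)$, and polynomial moments are bounded by \cref{thm:H2moments}, using completeness of $\omega_\chi$. Convexity of $\mathcal L^\nu$ then passes to $\mathcal L$ by monotone convergence, since $\Phi^\nu\searrow\Phi$ implies $\int e^{-\Phi^\nu_t}\nearrow\int e^{-\Phi_t}$.

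\textbf{Affine case.} If $\mathcal L$ is affine, the curvature integrals of the approximants tend to zero. We extract weak $L^2_{\omega_\chi}$ limits $v_t$ of $v^\nu_t$, using the uniform bound \cref{eq:unifL2}. In the limit $\bar\partial v_t=0$, and $v_t$ is a $K_X^{-1}$-valued holomorphic $(n-1,0)$-form. Contracting with $u$ identifies it with a holomorphic vector field $V_t=-v_t\lrcorner u^{-1}$ satisfying $\iota_{V_t}\sqrt{-1}\partial\bar\partial\Phi_t=\bar\partial\dot\Phi_t$ in the weak sense. Following \cite[Section~4.2]{EAC}, we then show $V_t$ is independent of $t$. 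The argument differentiates the relation $\partial^{\Phi_t}v_t=\pi_\perp(\dot\Phi_t u)$ along the geodesic and uses the vanishing of the second variation, which forces $\partial_t V_t=0$ weakly. The vector field $V:=V_t$ is $\bbT$-invariant because all data are.

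Integrating the flow $F_t$ of $\Re V$ then gives $\frac{d}{dt}F_t^*\Phi_t=$ const. This step needs completeness of $V$; we get it since $V$ lies in the finite-dimensional space of holomorphic fields commuting with $\bbT$, which integrates to the complexified automorphism group as in \cite[Proposition~5.17]{EAC}. The constant is $\mathcal L'(t)$, because the normalization in $\pi_\perp$ subtracts the $e^{-\Phi_t}$-average of $\dot\Phi_t$. This yields $F_t^*\Phi_t=\Phi_0+\mathcal L(t)-\mathcal L(0)$.

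\textbf{Main obstacle.} The hardest step is the affine case: justifying the limits when $\Phi$ is only locally bounded and Lipschitz in $t$. Two things must be checked. First, the weak limit $v_t$ must satisfy the Hamiltonian relation with $\Phi_t$; this relies on the $t$-Lipschitz bound and the locally uniform convergence of the regularizations. Second, the cutoff errors must be uniformly small in $\nu$. We expect this second point to follow from the uniform Gaussian tail estimate of \cref{thm:H2moments}, since it is uniform over the bounded family.
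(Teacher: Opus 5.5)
Your continuity and convexity arguments are essentially the paper's: regularize via \cref{lem:global-regularization}, solve slicewise with \cref{lem:complete-primitive-solution}, apply Berndtsson's formula, and pass to the limit by monotone convergence. Your extra modifications are unnecessary, and adding a multiple of $\psi_{\mathrm{amb}}$ to a metric on $K_X^{-1}$ is not well-posed, since it changes the line bundle. \Cref{lem:global-regularization} already makes $\sqrt{-1}\partial\bar\partial\Phi^\nu_t$ Kähler and equal to $\omega_\chi$ off a compact set, so some $\kappa_\nu>0$ works. The constant in \cref{eq:unifL2} does not depend on $\kappa$. Since $\dot\Phi^\nu_t$ is compactly supported, your cutoff and tail justification of the integrations by parts can also be dropped.

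The genuine gap is in the affine case. You obtain weak limits $v_t=\iota_{V_t}u$ and then assert that differentiating $\partial^{\Phi_t}v_t=\pi_\perp(\dot\Phi_t u)$ along the geodesic forces $\partial_tV_t=0$. For a geodesic that is only locally bounded, this step has no content as written:
\begin{itemize}
\item the $v_t$ have no a priori regularity in $t$, the subsequences may depend on $t$, and $\bar\partial v_t=0$ holds only for a.e.\ $t$;
\item $\dot\Phi_t$ exists only as one-sided derivatives;
\item the ``locally uniform convergence of the regularizations'' you plan to use is unavailable, since the Perron geodesic need not be continuous and Dini's theorem does not apply.
\end{itemize}
What makes the argument of \cite[\S4.4]{EAC} and \cite[\S4.1]{Berndtsson2015} work is finite-dimensionality. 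The $V_t$ are $\bbT$-invariant holomorphic fields, so they lie in $\mathfrak h_\xi=\aut(X,\xi)$, which is finite-dimensional. Then \cref{eq:unifL2} controls $V_t$ in every norm, and $t\mapsto V_t$ is a bounded measurable curve in a finite-dimensional Lie algebra. Its time-dependent flow $F_t$ exists and gives $\sqrt{-1}\partial\bar\partial\Phi_t=(F_t)_*\sqrt{-1}\partial\bar\partial\Phi_0$. The paper then passes to potentials by \cite[Lem.~6.5]{CE}, which is where $\mathcal L(t)-\mathcal L(0)$ enters, and proves constancy of $V_t$ last, following Berndtsson.

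You invoke finite-dimensionality only at the very end, for completeness of $V$, and you simply assert it. In this section no shrinker is assumed, so the Matsushima theorem of \cref{sec:weighted-matsushima} is unavailable. On a noncompact $X$ the space of all holomorphic vector fields is infinite-dimensional. The paper supplies this input as \cref{lem:analytic-algebraic-centralizer}. Relative GAGA for $\pi_*T_X$ shows that every $\xi$-invariant analytic field is a combination of finitely many algebraic generators with homogeneous coefficients. The weight spaces of $\mathcal O(Y^{\an})$ are finite-dimensional because $\xi$ has positive weights. Without proving this and using it to control the $t$-dependence of the weak limits, your affine-case argument does not close.
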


\begin{proof}
    Let $\Phi^j \searrow \Phi$ be the smooth approximation of $\Phi$ constructed in \cref{lem:global-regularization} and $\chi \geq \Phi^j$ the common upper bound also constructed there. For $t\in [j^{-1},1-j^{-1}]$, set
    \[
        \mathcal{L}_j(t) = -\log \int_X e^{-\Phi_t^j},
            \qquad
        m_j(t) = \exp({\mathcal{L}_j(t)}) \int_X\dot\Phi_t^j e^{-\Phi_t^j}
        \p
    \]
    Since $\omega_{\Phi_t^j}$ is Kähler and agrees with $\omega_\chi$ outside a compact set, it is bounded below by a positive multiple of $\omega_\chi$ for each $j$ and $t$.
    Applying \cref{lem:complete-primitive-solution} with $\phi = \Phi_t^j$ and $f = \dot\Phi_t^j$ gives $v_t^j$ satisfying
    \[
        \partial^{\Phi_t^j}v_t^j = (\dot\Phi_t^j-m_j(t))u,
            \qquad
        \bar\partial v_t^j\wedge\omega_\chi=0
        \p
    \]
    For $\tau = t + \sqrt{-1}\sigma$, put
    \[
        \widehat u_j=u-\frac12d\tau\wedge v_t^j
        \p
    \]
    Then, by the calculation in \cite{Berndtsson2015} or \cite[Thm.~4.5]{EAC}, together with the identity $\partial_\tau \Phi^j=\dot{\Phi}^j/2$, we obtain
    \begin{equation}\label{eq:berndtsson-formula}
        \exp({\mathcal{L}_j(t)})\mathcal{L}_j''(t)
            = 4\sqrt{-1}\, \iota_{\partial/\partial\tau} \iota_{\partial/\partial\bar\tau} \int_X \big(\sqrt{-1}\partial\bar\partial\big)_{\tau,X}\Phi^j \wedge i^{n^2} \widehat u_j\wedge\overline{\widehat u_j}\,e^{-\Phi^j}
              + \|\bar\partial v_t^j\|_{\omega_\chi,\Phi_t^j}^2
    \end{equation}
    In fact, the computation in \cite[Appendix C]{EAC} can be simplified with our approximation theorem because $\dot{\Phi}_t^j$ is compactly supported, and $i \partial\bar{\partial} \Phi^j_t \equiv \omega_\chi$ outside of a compact set. This makes it easy to see that all integrals appearing in the computation are finite.

    \emph{Convexity.}
    Joint plurisubharmonicity of $\Phi^j$ shows that the first term on the RHS of \cref{eq:berndtsson-formula} is positive, so $\mathcal{L}_j$ is convex on $J_j$. Since $\Phi^j$ decrease to $\Phi$, monotone convergence shows that $\cL_j \to \cL$ pointwise. Thus $\cL$ is convex on $(0,1)$. The time-derivative bound shows $\mathcal{L}$ is continuous on $[0,1]$ and hence convex on $[0,1]$.

    \emph{Geodesics with affine $\cL$ are generated by a vector field.}
    The arguments in \cite[\S 4.4]{EAC} work without modification. They require $\fh_\xi \coloneq \aut(X, \xi)$ to be finite-dimensional, which we establish in \cref{lem:analytic-algebraic-centralizer}.
    Thus we obtain $i \partial\bar{\partial} \Phi_t = (F_t)_* i \partial\bar{\partial} \Phi_0$ where $F_t$ is the flow of a \emph{time-dependent} vector field $V_t$.
    In \cite[Lem.~6.5]{CE} it was shown that this actually implies $\Phi_t - \cL(t) = (F_t)_* \Phi_0 - \cL(0)$.
    Finally, Berndtsson shows in \cite[\S 4.1]{Berndtsson2015} that $V_t$ is constant in $t$.
\end{proof}

\begin{lemma}\label{lem:analytic-algebraic-centralizer}
Let $(\pi: X\to Y,\xi)$ be a smooth polarized Fano fibration. Then
\begin{equation}\label{eq-analytic centralizer equals algebraic}
    \fh_\xi=
\{V\in H^0(X^{\an},T_{X^{\an}}):[V,\xi]=0\}
=
\{V\in H^0(X,T_X):[V,\xi]=0\}
    \p
\end{equation}
 is finite-dimensional.
\end{lemma}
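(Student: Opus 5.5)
The plan is to reduce the question to the finite-dimensional, algebraic picture supplied by the homogeneous ambient system $\cA=(h_1,\dots,h_N,\Psi)$. Only algebraic $\xi$-weight arguments are used; no metric is assumed.

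\emph{Step 1: weight spaces.} Let $V$ be a holomorphic vector field on $X^{\an}$ with $[V,\xi]=0$. Then $V$ acts as a derivation on $\cO(X^{\an})$ and, through its canonical lift, on $H^0(X^{\an},-\ell K_X)$. This action commutes with $\cL_\xi$, so it preserves the $\xi$-weight spaces. These weight spaces are finite-dimensional, consist of algebraic elements, and by homogeneity agree with the algebraic ones: a homogeneous holomorphic function on $Y$ of fixed weight is a polynomial, because $\bbT_\bbC$ acts on $Y\subset\bbC^N$ with positive weights. The analogous statement for pluri-anticanonical sections follows by pushing forward to $Y$ and using that $\pi_*(-\ell K_X)$ is coherent with finite-dimensional weight pieces. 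Consequently:
\begin{itemize}
\item each $Vh_\alpha$ is a finite combination of weight-$\lambda_\alpha$ polynomials in the $h_\beta$;
\item $\cL_V s_j=\sum_i a_{ij}s_i$, using \zcref{rmk:normalLowWt}.
\end{itemize}

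\emph{Step 2: $V$ is determined by finitely many numbers.} Since $\cA$ is an embedding, $d\cA$ is injective, so $V$ is determined by $d\cA(V)=(Vh_\alpha,\ d\Psi(V))$. The first component lies in a finite-dimensional space, namely $\bigoplus_\alpha$ of the weight-$\lambda_\alpha$ polynomials. The second is determined by the matrix $(a_{ij})\in\fg\fl_{M+1}$ modulo scalars. This gives an injective linear map from $\fh_\xi$ into a finite-dimensional vector space, hence finite-dimensionality. The same map also shows that $d\cA(V)$ is the restriction of an algebraic vector field on $\bbC^N\times\bbP^M$ tangent to $\cA(X)$.

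\emph{Step 3: algebraicity.} The ambient field obtained in Step 2 is algebraic, and $X$ is algebraic, being a closed subvariety of $\bbC^N\times\bbP^M$ via $\cA$. Since the ambient field is tangent to $X$, it restricts to an algebraic vector field on $X$. Thus analytic fields commuting with $\xi$ are algebraic, which gives the second equality in \cref{eq-analytic centralizer equals algebraic}. The reverse inclusion is trivial.

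\emph{Main obstacle.} The hardest step is the GAGA-type assertion in Step 1: that every analytic holomorphic function or section of a fixed $\xi$-weight is algebraic. This requires care because $X$ is noncompact. I would first prove it for functions. Expanding a weight-$\lambda$ holomorphic function on $Y$ as a convergent power series in $\bbC^N$ (using normality of $Y$ and Hartogs/extension), only finitely many monomials can have weight $\lambda$, since all $\lambda_\alpha>0$. For sections, I would reduce to this case by $\pi$-relative ampleness: compare with $\pi_*\cO(-\ell K_X)$, and apply relative GAGA for the proper map $\pi$, which is legitimate because $\pi$ is projective over the affine cone.

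Finally, one remark on the first equality. The first set in \cref{eq-analytic centralizer equals algebraic} consists of real holomorphic fields, while the other two consist of $(1,0)$-fields. These are identified via $V\mapsto V^{1,0}$.
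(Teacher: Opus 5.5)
Your proof is correct, but it takes a different route from the paper.

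\textbf{The paper's route.} The paper argues intrinsically with the coherent sheaf $\pi_*T_X$. Relative GAGA identifies its analytic and algebraic sections over $Y$. One chooses finitely many homogeneous generators $e_j$ of $H^0(Y,\pi_*T_X)$ and writes an analytic $V$ near the vertex as $\sum_j f_je_j$. After averaging, the $f_j$ are homogeneous, so they extend by homogeneity. The conclusion then follows because each weight space of $\cO(Y^{\an})$ consists of polynomials of bounded degree.

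\textbf{Your route.} You argue extrinsically through $\cA$. You let $V$ act on the affine coordinates $h_\alpha$ and on the pluri-anticanonical system $s_j$. Injectivity of $d\cA$ then gives an embedding of $\fh_\xi$ into a finite-dimensional space. Algebraicity follows by restricting the resulting ambient algebraic field, which is tangent to $\cA(X)$.

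\textbf{Comparison.} This does not avoid the paper's key step. You need the analytic weight-$\nu_j$ space of $H^0(X^{\an},-\ell K_X)$ to be finite-dimensional and algebraic, both for finite-dimensionality and to invoke \zcref{rmk:normalLowWt}. Establishing that requires exactly the paper's relative-GAGA and homogeneity argument, applied to $\pi_*\cO(-\ell K_X)$ instead of $\pi_*T_X$. Descending holomorphic functions from $X$ to $Y$ via $\pi^{\an}_*\cO_{X^{\an}}=\cO_{Y^{\an}}$ uses the same argument. So your proof is longer. In exchange it gives an explicit normal form: every $V\in\fh_\xi$ is the restriction of an ambient algebraic field that is polynomial on $\bbC^N$ and linear on $\bbP^M$. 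This is the same structure the paper exploits in \cref{lem:centralizer-weighted-integrability}.

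\textbf{A small correction.} Near the vertex, extending a weight-$\lambda$ function to a power series on $\bbC^N$ does not come from normality and Hartogs. It comes from the local definition of holomorphic functions on an analytic subset, or from Cartan's Theorem B. You must also project that extension onto its weight-$\lambda$ part before claiming that only finitely many monomials occur. This can be done by averaging over the torus, using $\bbT$-invariance of $Y$.
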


\begin{proof}
Let $R:=\mathcal O(Y)$, $\mathcal F:=\pi_*T_X.$
Since $\pi$ is projective and $\mathcal F$ is coherent, relative GAGA gives
\[
    \mathcal F^{\an}\simeq \pi_*^{\an}T_{X^{\an}}
    \p
\]
As $Y$ is affine, $M:=H^0(Y,\mathcal F)$ is a finitely generated $R$-module. Choose homogeneous $T$-equivariant generators $e_1,\ldots,e_N$ of $M$, giving a surjection $\bigoplus_{j=1}^N\mathcal O_Y e_j\longrightarrow\mathcal F$.

Let $V\in H^0(X^{\an},T_{X^{\an}})$ satisfy $[V,\xi]=0$. Since the algebraic sections $e_j$ are also generators of $\cF^\an$, we have in a $\bbT$-invariant neighborhood $U$ of the vertex $o_Y \in Y$
\[
    V|_U = \sum_{j=1}^N f_j e_j|_U,
        \qquad
    f_j \in \cO(U^\an)
    \p
\]
Since $V$ is $\xi$-invariant, averaging the coefficient tuple $(f_1,\ldots,f_N)$, we may assume $f_j$ are homogeneous under the $\xi$-action. Then we can extend the $f_j$ to all of $Y^\an$ by homogeneity and by homogeneity we also see that $V = \sum_j f_j e_j$ on all of $Y^\an$.

Thus the claim follows from the standard fact that the weight spaces of $\cO(Y^\an)$ are finite-dimensional.
One easy proof of this fact goes as follows: Picking an equivariant algebraic embedding $j: Y \inj \bbC^m$, we can write $f \in \cO(Y^\an)$ locally around $j(o_Y) = 0$ as the restriction of a holomorphic function $g$. By averaging, we may assume that $g$ has the same weight as $f$. Then considering the Taylor expansion of $g$, we see that since $\xi$ acts on every coordinate of $\bbC^m$ with a \emph{positive} weight, $g$ must be a polynomial, and its degree is bounded by a constant times the weight of $f$.
\end{proof}

Combining \Cref{prop:energy-endpoint-variation} with \Cref{prop:complete-berndtsson}, we obtain the convexity of the Ding functional.
\begin{proposition}\label{prop:ding convexity}
     The Ding functional
     \[
         \cD_{\xi} := -\frac{1}{(2\pi)^n n! \bbW_X(\xi)} E_\xi + \cL
     \]
     is convex along a geodesic $\Phi_t$ in $\overline{\cH}_2^{\mathbb T}$ with $|\partial_t^\pm \Phi_t| \leq C(1 + r^2)$.
\end{proposition}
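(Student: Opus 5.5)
Ding convexity along a geodesic will come from splitting $\cD_\xi$ into its two summands: the energy term will be affine and $\cL$ will be convex. I will fix a $\bbT$-invariant geodesic $\Phi_t$, $0\le t\le 1$, in $\overline{\cH}_2^{\bbT}$ whose slices form a bounded family in some $\overline{\cH}_2^{\bbT}(m,M)$ and which satisfies $|\partial_t^\pm\Phi_t|\le C(1+r^2)$. Then I will treat the two summands of $\cD_\xi$ separately and add them, using that $\bbW_X(\xi)>0$ is a fixed constant, so the prefactor $-\frac{1}{(2\pi)^n n!\bbW_X(\xi)}$ does not change the argument.

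\emph{The $\cL$ term.} Here I will invoke \cref{prop:complete-berndtsson} directly. Its hypotheses (bounded family, $\bbT$-invariance, quadratic bound on $\partial_t^\pm\Phi_t$) are exactly the standing assumptions, so it gives that $\cL(t)=-\log\int_X e^{-\Phi_t}$ is convex and continuous on $[0,1]$.

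\emph{The energy term.} I will show that $t\mapsto E_\xi(\Phi_t)=E_\xi(\Phi_t,\psi_{\mathrm{amb}})$ is affine. By the cocycle identity \cref{eq:weighted-energy-cocycle},
\[
E_\xi(\Phi_t,\psi_{\mathrm{amb}})=E_\xi(\Phi_t,\Phi_0)+E_\xi(\Phi_0,\psi_{\mathrm{amb}}),
\]
and the second summand is independent of $t$, so it suffices to treat $E(t):=E_\xi(\Phi_t,\Phi_0)$. For endpoints in $\cH_2^\bbT$ and the geodesic of \cref{prop:general-geodesic}, \cref{prop:energy-endpoint-variation} gives affineness directly. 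For a general geodesic in the stated class I argue by two inequalities:
\begin{itemize}
\item Convexity of $E$ follows from \cref{prop:weighted-energy}(4), since a geodesic is in particular a subgeodesic with the required derivative bound.
\item Concavity is obtained by repeating the argument of \cite[Prop.~3.27(ii)]{EAC} as invoked in \cref{prop:energy-endpoint-variation}: approximate with \cref{lem:global-regularization}, use that the Monge--Ampère measure of a geodesic vanishes in the $(\tau,X)$ directions, and control the tails with \cref{thm:MAxitail}.
\end{itemize}
I only need that $E_\xi(\Phi_t)$ is affine, hence that $-E_\xi(\Phi_t)$ is affine, and in particular convex.

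Adding the two pieces, $\cD_\xi(\Phi_t)$ is the sum of a convex function and an affine function, hence convex on $[0,1]$. The main obstacle is the concavity half of the energy affineness for a geodesic not produced by \cref{prop:general-geodesic}. If the general case proves delicate, the plan is to note that any geodesic in this class with endpoints $\Phi_0,\Phi_1$ agrees with the Perron envelope, because it is a competitor and satisfies the maximality property. The result of \cref{prop:energy-endpoint-variation} then applies verbatim, with \cref{thm:MAxitail} supplying the uniform tail control that justifies passing to the limit in the regularization.
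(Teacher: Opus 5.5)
Your proposal is correct and follows essentially the paper's route: the paper gets Ding convexity by combining the affineness of $E_\xi$ along the geodesic (\cref{prop:energy-endpoint-variation}, i.e.\ the argument of \cite[Prop.~3.27(ii)]{EAC} with the tail estimates of this paper) with the convexity of $\cL$ from \cref{prop:complete-berndtsson}; your cocycle shift of the base point and your concavity argument for geodesics not produced by \cref{prop:general-geodesic} add care along the same line. The one weak point is your fallback: being a competitor only gives $\Phi$ bounded above by the Perron envelope, and the reverse inequality on the noncompact $\cS\times X$ needs a comparison principle that controls behavior at infinity, not just local maximality --- but your main argument does not depend on this.
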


\section{Uniqueness of K\"ahler--Ricci shrinkers}\label{sec:uniqueness}

Suppose now that $(\omega_i, f_i)$, $i = 0, 1$, are K\"ahler--Ricci shrinkers on $X$ with the same soliton field $\xi$, and let $\phi_i$ be their canonical Hermitian metrics defined by
\[
    e^{-\phi_i} = e^{-f_i}\omega_i^n
    \p
\]
By \cref{lem:psh-weight-growth}, after weakening the constants, $\phi_0$ and $\phi_1$ lie in some $\cH_2(m, M)^\bbT$. Let $\Phi_t$, $0 \leq t \leq 1$, be the geodesic supplied by \cref{prop:general-geodesic}. At either endpoint, \cref{eq:general-hamiltonian} gives $h^{\phi_i} = f_i$.
In particular,
\begin{equation} \label{eq:shrinkerMAxi}
    \MA_\xi(\phi_i) = e^{-f_i} \omega_i^n = e^{-\phi_i}
    \p
\end{equation}

\begin{proposition}
    \label{prop:ding-affinity} 
    Along the geodesic $\Phi_t$ connecting two shrinker metrics $e^{-\phi_i}$, the functionals $D_\xi(\Phi_t), E_\xi(\Phi_t), \cL(\Phi_t)$ are constant.
\end{proposition}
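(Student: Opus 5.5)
We combine convexity of $\cD_\xi$ along the geodesic with the fact that both endpoints are critical points of $\cD_\xi$. The geodesic $\Phi_t$ from \cref{prop:general-geodesic} has slices forming a bounded family in some $\overline\cH_2^\bbT(m,M)$ and satisfies the quadratic Lipschitz bound \cref{eq:general-geodesic-bound}. Hence \cref{prop:energy-endpoint-variation}, \cref{prop:complete-berndtsson} and \cref{prop:ding convexity} all apply. In particular $E(t)=E_\xi(\Phi_t,\phi_0)$ is affine, $\cL(t)$ is convex and continuous on $[0,1]$, and so $\cD_\xi(\Phi_t)$ is convex. By the cocycle property \cref{eq:weighted-energy-cocycle}, $E_\xi(\Phi_t)$ differs from $E(t)$ by a constant, so it suffices to work with $E(t)$.

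The key step is to compute the one-sided endpoint slopes and show
\[
    \partial_t^+\cD_\xi(\Phi_t)|_{t=0}\geq 0\geq \partial_t^-\cD_\xi(\Phi_t)|_{t=1}.
\]
Set $V=\int_X e^{-\phi_0}=\int_X e^{-\phi_1}$. By \cref{eq:shrinkerMAxi} this common value is the total mass of $\MA_\xi$, namely $(2\pi)^n n!\,\bbW_X(\xi)$, which is independent of the metric by \cref{lem:total mass}.

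For $\cL$, convexity of $t\mapsto\Phi_t(x)$ gives $(\Phi_t-\Phi_0)/t\searrow\partial_t^+\Phi_0$. The quadratic bound and the Gaussian moments of \cref{thm:H2moments} allow dominated convergence, which yields
\[
    \partial_t^+\cL(0)=V^{-1}\int_X\partial_t^+\Phi_0\,e^{-\phi_0}.
\]
Similarly, $\partial_t^-\cL(1)=V^{-1}\int_X\partial_t^-\Phi_1\,e^{-\phi_1}$.

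For $E$, the inequalities \cref{eq:energy-endpoint-inequalities} combined with \cref{eq:shrinkerMAxi} give
\[
    \partial^+E(0)\leq\int_X\partial_t^+\Phi_0\,e^{-\phi_0},
    \qquad
    \partial^-E(1)\geq\int_X\partial_t^-\Phi_1\,e^{-\phi_1}.
\]
Since $\cD_\xi=\cL-V^{-1}E_\xi$, it follows that $\partial^+\cD_\xi(0)\geq 0$ and $\partial^-\cD_\xi(1)\leq 0$.

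A convex function on $[0,1]$ with nonnegative right slope at $0$ and nonpositive left slope at $1$ is constant, so $\cD_\xi(\Phi_t)$ is constant. Then $\cL(t)=\cD_\xi(\Phi_t)+V^{-1}E_\xi(\Phi_t)$ is the sum of a constant and an affine function, hence affine. Moreover all the inequalities above become equalities. Since $E$ is affine with slope $s:=\partial^+E(0)$, equality gives
\[
    s=\int_X\partial_t^+\Phi_0\,e^{-\phi_0}=V\,\partial_t^+\cL(0).
\]
It remains to show the common slope vanishes. I expect this to be the main subtle point.

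To get this, I would use \cref{prop:complete-berndtsson}. Since $\cL$ is affine, there is a holomorphic $\bbT$-invariant field $V'$ with flow $F_t$ such that $F_t^*\Phi_t=\Phi_0+\cL(t)-\cL(0)$. Integrating $e^{-\cdot}$ and using that $F_t$ preserves integrals of volume forms gives $e^{-\cL(t)}=e^{-\cL(0)}e^{-(\cL(t)-\cL(0))}$, which is an identity and not yet informative. Instead I would evaluate at $t=1$, where $\Phi_1=\phi_1$. We have $\int e^{-\phi_1}=\int e^{-\phi_0}$: both equal the $\MA_\xi$ mass by \cref{eq:shrinkerMAxi} and \cref{lem:total mass}. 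This gives $\cL(1)=\cL(0)$, so the affine function $\cL$ is constant. Then $s=V\,\partial_t^+\cL(0)=0$, so $E$ is constant as well. Finally $E_\xi(\Phi_t)=E_\xi(\Phi_t,\phi_0)+E_\xi(\phi_0,\psi_{\mathrm{amb}})$ is constant, and all three functionals are constant. The delicate justification is the differentiation of $\cL$ at the endpoints; it rests on the locally uniform convergence in \cref{prop:general-geodesic} and the Lipschitz bound \cref{eq:general-geodesic-bound}.
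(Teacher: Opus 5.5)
Your proposal is correct and follows essentially the same route as the paper. You compute the one-sided endpoint slopes of $\cL$ by dominated convergence. You combine them with the endpoint inequalities for $E_\xi$ and the identity $\MA_\xi(\phi_i)=e^{-\phi_i}$ to get $\partial^+\cD_\xi(0)\ge 0\ge\partial^-\cD_\xi(1)$. Convexity then makes $\cD_\xi$ constant, so $\cL$ is affine, and $\cL(0)=\cL(1)$ forces $\cL$, and hence $E_\xi$, to be constant. The only difference is your detour through Berndtsson's rigidity statement, which is unnecessary, as you noticed yourself: the equality $\int_X e^{-\phi_0}=\int_X e^{-\phi_1}$ alone finishes the argument.
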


\begin{proof}  
    The quadratic Lipschitz bound \cref{eq:general-geodesic-bound} and the integrability of $(1 + r^2)$ against $\xi$-weighted Monge--Ampère measures in \cref{thm:H2moments} allow us to compute the one-sided derivatives of $\cL$, by differentiating under the integral sign:
    \[
        \mathcal{L}'_+(0)=\exp(\mathcal L(\phi_0))\int_X\dot\Phi_0^+e^{-\phi_0},
        \qquad
        \mathcal{L}'_-(1)=\exp(\mathcal L(\phi_1))\int_X\dot\Phi_1^-e^{-\phi_1}
        \p
    \]
    At either shrinker endpoint, we have by \cref{eq:shrinkerMAxi}
    \begin{equation}\label{eq:endpoint-ma-equation}
        \int_X e^{-\phi_i}=(2\pi)^n n!\,\mathbb W_X(\xi),
    \end{equation}
    so in particular $\mathcal{L}(\phi_0)=\mathcal{L}(\phi_1)$. Thus combining \cref{eq:endpoint-ma-equation} with \cref{prop:energy-endpoint-variation} we see that the one-sided endpoint derivatives of the Ding functional satisfy.
    \[
        0 \leq (D_\xi)'_+(0) \leq (D_\xi)'_-(1) \leq 0 \p
    \]
    Hence $D_\xi(\Phi_t)$ is constant since $D_{\xi}$ is convex along a geodesic by \Cref{prop:ding convexity}.
    Since the weighted energy is affine, $\mathcal{L}(\Phi_t)$ is affine, and $\mathcal L(\phi_0)=\mathcal L(\phi_1)$ makes it constant. That forces $E_\xi$ to be constant by the definition of the Ding functional.
\end{proof}

\begin{theorem}[Metric uniqueness for a fixed soliton field]
    \label{thm:fixed-soliton-field-metric-uniqueness}
    Let $(\omega_i, f_i)$, $i = 0, 1$, be K\"ahler--Ricci shrinkers satisfying $J\nabla^{\omega_i}f_i = \xi$.  Then there is an  $\Psi\in \Aut_0(X,\xi)$, such that
    \[
        \Psi^*\omega_1 = \omega_0
        \p
    \]
\end{theorem}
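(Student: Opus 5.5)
We will run the standard Berndtsson argument along the geodesic joining the two canonical metrics. First we reduce to a common torus. By \cref{prop:weighted-matsushima}, applied to each shrinker, we may pick a maximal compact torus $\bbT\subset\Aut_0(X,\xi)$ containing $\xi$. After conjugating $\omega_1$ by an element of $\Aut_0(X,\xi)$, both $\omega_0$ and $\omega_1$ are $\bbT$-invariant; this uses the conjugacy of maximal tori in $K_{\xi,0}$ and in $K_{\xi,1}$, both maximal compact in $\Aut_0(X,\xi)$. Then $\phi_0,\phi_1\in\cH_2^\bbT$ by \cref{lem:psh-weight-growth}, and \cref{prop:general-geodesic} yields a $\bbT$-invariant geodesic $\Phi_t$. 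Its slices form a bounded family and it satisfies the quadratic Lipschitz bound \cref{eq:general-geodesic-bound}.

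Next we extract the holomorphic vector field. By \cref{prop:ding-affinity}, $\cL(\Phi_t)$ is constant, hence affine. So \cref{prop:complete-berndtsson} applies and gives a $\bbT$-invariant holomorphic vector field $V$ with flow $F_t$ such that $F_t^*\Phi_t=\Phi_0$, since $\cL(t)-\cL(0)=0$. At $t=1$ this gives $F_1^*\phi_1=\phi_0$ as Hermitian metrics on $K_X^{-1}$, with $F_1$ acting through its canonical lift. Taking curvature, $F_1^*(\sqrt{-1}\partial\bar\partial\phi_1)=\sqrt{-1}\partial\bar\partial\phi_0$. For the shrinker metrics, $\sqrt{-1}\partial\bar\partial\phi_i=\Ric(\omega_i)+\sqrt{-1}\partial\bar\partial f_i=\omega_i$, so $F_1^*\omega_1=\omega_0$.

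It remains to check that $\Psi:=F_1$ lies in $\Aut_0(X,\xi)$. $V$ is $\bbT$-invariant and $\xi\in\Lie\bbT$, so $[V,\xi]=0$. Hence $V\in\fh_\xi$ and $F_1$ lies in the identity component of the centralizer of $\xi$. Since $F_1$ commutes with $\xi$, we also get $(F_1)_*\xi=\xi$, which is consistent with $F_1^*\phi_1=\phi_0$ forcing $F_1^*f_1=f_0$ via $h^{\phi_i}=f_i$.

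The main obstacle is the step where we invoke \cref{prop:complete-berndtsson}. Its hypotheses require a geodesic in a bounded family with $|\partial_t^\pm\Phi_t|\le C(1+r^2)$, and both are supplied by \cref{prop:general-geodesic}. The delicate point hidden in the proposition is that the conclusion holds at the endpoints rather than only in the interior. We handle this by letting $t\to0,1$ and using the locally uniform convergence $\Phi_t\to\psi_i$ together with the continuity of the flow. This gives $F_1^*\phi_1=\phi_0$ pointwise, and since both sides are smooth the identity holds at the level of curvature forms.
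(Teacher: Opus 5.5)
Your proposal is correct and follows essentially the same route as the paper: you take the geodesic from \cref{prop:general-geodesic}, use \cref{prop:ding-affinity} to get that $\cL$ is constant, apply the affine case of \cref{prop:complete-berndtsson}, and set $\Psi=F_1$, using $\sqrt{-1}\partial\bar\partial\phi_i=\omega_i$ to pass from the weights to the metrics. Your preliminary Matsushima conjugation to a common maximal torus is harmless but not needed, because the closure of the flow of $\xi$ is already a torus acting isometrically for both $\omega_0$ and $\omega_1$, and that is all the $\bbT$-invariant constructions require.
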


\begin{proof}
    Let $\Phi_t$ be the geodesic supplied by \cref{prop:general-geodesic} between the two K\"ahler--Ricci shrinker potentials $e^{-\phi_i}=e^{-f_i}\omega_i^n$. By \cref{prop:ding-affinity}, $\mathcal L(\Phi_t)$ is constant, so the affine case of \cref{prop:complete-berndtsson} gives a $\bbT$-equivariant biholomorphism $F_1$ in the identity component satisfying
    \[
        F_1^*\phi_1=\phi_0,
        \qquad F_1^*\omega_1=\omega_0
        \p
    \]
    Taking $\Psi=F_1\in\Aut_0(X,\xi)$ proves the theorem.
\end{proof}

\begin{proof}[Proof of \Cref{thm:introduction-unique-krs}]
    This follows from \Cref{thm:introduction-unique-soliton-vector-field} and Theorem \ref{thm:fixed-soliton-field-metric-uniqueness}.
\end{proof}

\section{K-polystability}%
\label{sec:universal-k-polystability}
K-polystability of asymptotically conical (AC) shrinkers was established in \cite{CE}, following the strategy of Berman \cite{BermanKPolystability} and Collins--Sz\'ekelyhidi \cite{CollinsSzekelyhidiSasakiEinstein}. The proof in \cite{CE} builds on the AC geodesics and pluripotential theory developed in \cite{EAC}, but none of the arguments in \cite{CE} use the AC assumption in a fundamental way. Thus we can easily generalize the proof of polystability by using the geodesics from \cref{prop:general-geodesic}.

\subsection{Stability and basic set-up}
\label{subsec:kpolystability-conventions}

We write $\bbT$ for a compact torus containing the soliton field and
$\bbT^{\bbC}$ for its complexification.  Equivariance for $\bbT$ extends
to $\bbT^{\bbC}$ by holomorphic continuation along complexified
one-parameter subgroups.

We refer to \cite[Section 5]{SunZhangFano} and \cite[Section 2.1]{CE} for the notion of special test configurations of a polarized Fano fibration and K-polystability.  Here we only recall that for a special test configuration
$(\mathcal X\to\mathcal Y,\bbT,\eta)$  with central fiber  $\mathcal X_0$,  if $\eta$ is the generator covering the standard action on the
base, its Futaki invariant is
\begin{equation}\label{eq:kpolystability-futaki-convention}
 \operatorname{Fut}_\xi(\eta)
 =-\left.\frac{d}{ds}\right|_{s=0}
   \bbW_{\mathcal X_0}(\xi+s\eta)
    \p
\end{equation}
K-polystability means nonnegativity for every such configuration, with
equality only for an equivariant product $X\times\bbC$ whose generator is
\[
 \eta=\eta_0+\tau\partial_\tau,
 \qquad [\eta_0,\bbT]=0,
\]
where $\eta_0$ is a holomorphic vector field on $X$.

The general embedding theorem \cite{SunZhangFano,CE} gives integers
$\ell, N, M > 0$, a commuting action of
$\bbT^{\mathbb C}\times\bbC^*$, and equivariant embeddings
\begin{equation}\label{eq:kpolystability-ambient-embedding}
 \mathcal X\hookrightarrow
 \bbP^N\times\bbC^M\times\bbC,
 \qquad
 \mathcal Y\hookrightarrow\bbC^M\times\bbC,
 \qquad
 \mathcal O_{\bbP^N\times\bbC^M}(1)|_{\mathcal X}
 \simeq-\ell K_{\mathcal X/\bbC}
    \p
\end{equation}
The linearization is the canonical determinant linearization.  If
$S^1_\eta$ is the unit circle of the degenerating action, put
\[
 \bbK:=\bbT\times S^1_\eta,
\]
choose all ambient Hermitian and smoothing data $\bbK$-invariant, and let
\[
 \lambda_t:X\longrightarrow X_{e^{-t}}
\]
be the degenerating action at $e^{-t}$. As in \cite[Definition~4.6]{CE}, by restricting to \(\mathcal X\) the sum of a K\"ahler cone metric on the Euclidean factor, with Reeb vector field \(\xi\), and a Fubini–Study metric on the projective factor, we obtain a reference ambient metric. If we denote its additive \(\mathcal O(1)\)-weight on \(\mathcal X\) by \(\ell\psi_{\rm amb}\), then, under the identification
\(
\mathcal O(1)|_{\mathcal X}\simeq -\ell K_{\mathcal X/\bbC},
\)
the weight \(\psi_{\rm amb}\) is the induced anticanonical metric.  In the
canonical trivialization over $\bbC^*$, define
\begin{equation}\label{eq:kpolystability-ambient-flow-weight}
 \psi_t=\lambda_t^*\bigl(\psi_{\rm amb}|_{X_{e^{-t}}}\bigr)
    \p
\end{equation}It is direct to show that $\psi_t\in \cH_2^{\mathbb T}$ and therefore, by \cref{lem:flow-quadratic-control}, after changing
the ambient metric, we may assume
\begin{equation}\label{eq:kpolystability-initial-order}
 \psi_0 \leq \phi
    \p
\end{equation}

The positive $\xi$-weights on the affine factor in
\cref{eq:kpolystability-ambient-embedding} admit a proper  $\bbK$-invariant homogeneous
cone radius $r$ by the weighted-radius construction of
\cite[Section~4.2, formula~(4.1)]{CE} satisfying
\begin{equation}\label{eq:kpolystability-radius-homogeneity}
 r^2\circ\gamma_s=e^sr^2, \quad r^2\circ \lambda_t=e^{-t}r^2
    \p
\end{equation}
 We use
the same symbol for the restriction of $r$ to $X$.

\subsection{Geodesic ray, slope at infinity and K-polystability}

Let $\tau$ denote the test-configuration coordinate and put
\[
 \bbD=\{|\tau|<1\},\qquad
 \mathcal X_{\bbD}=\mathcal X\times_{\bbC}\bbD
    \p
\]
On the punctured disk we use the logarithmic time
\( t=-\log|\tau|.
\)
Thus ray time $t=0$ corresponds to the boundary fiber $X_1$, whereas the
central fiber $\mathcal X_0$ corresponds to $t=+\infty$.

The argument in \cite[Section 5 and Section 6]{CE},
with the normalization in \cref{eq:modified-ding}, applies in our general
setting and yields the following results.

\begin{proposition} \label{prop:kpolystability-geodesic-ray}
    There is a $\bbK$-invariant locally bounded psh metric $\Phi$ on
    $-K_{\mathcal X}|_{\mathcal X_{\bbD}}$ in the sense that $\ell\Phi$
    is a metric on $-\ell K_{\mathcal X}$, satisfying the following.
    \begin{enumerate}
        \item $\Phi$ solves the boundary value problem
            \begin{equation}\label{eq:kpolystability-ray-hcma}
                (\sqrt{-1}\partial\bar\partial\Phi)^{n+1}=0,
                \quad \Phi_0=\phi
                \p
            \end{equation}
        \item Under $\mathcal X^*\simeq X\times\bbC^*$, its slices $\Phi_t = \lambda_t^* \Phi|_{X_{e^{-t}}}$
            form a bounded family in one quadratic-growth class on every finite time interval.
        \item We have
        \begin{equation}\label{eq-uniform upper bound}
            \psi_\mathrm{amb}|_{\cX} \leq \Phi \leq \psi_\mathrm{amb}|_{\cX} + C(1 + r^2)
            \p
        \end{equation}
        Consequently $\psi_t \leq \Phi_t \leq \psi_t + C(1 + \lambda_t^* r^2)$.
        \item We have a locally uniform Lipschitz bound
            \begin{equation}\label{eq:kpolystability-ray-time-lipschitz}
                |\Phi_t - \Phi_s| \leq C_T(1+r^2)|t-s|,
                    \qquad 0 \leq s,t \leq T
                \p
            \end{equation}
    \end{enumerate}
\end{proposition}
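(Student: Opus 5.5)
We build $\Phi$ as a Perron envelope on $\mathcal X_{\bbD}$, following \cite[Section~5]{CE} and the finite-interval construction of \cref{prop:general-geodesic}. Let $\mathcal C$ be the class of $\bbK$-invariant locally bounded psh metrics $\Psi$ on $-K_{\mathcal X}|_{\mathcal X_{\bbD}}$ such that $\Psi\leq\psi_{\rm amb}|_{\cX}+C_\Psi(1+r^2)$ for some constant $C_\Psi$, and $\limsup\Psi\leq\phi$ at the boundary fiber $X_1$ (that is, at $t=0$). Set $\Phi=(\sup_{\mathcal C}\Psi)^*$. The lower barrier is $\psi_{\rm amb}|_{\cX}$ itself. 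It is psh, since it is the restriction of an ambient psh weight, and it lies in $\mathcal C$ by \cref{eq:kpolystability-initial-order}. This gives the left inequality of \cref{eq-uniform upper bound}.

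For the upper bound, fix $x$ and consider the function $t\mapsto\Psi_t(x)-\psi_t(x)$ in the trivialization. It is not convex by itself, but along $\bbK$-orbits the Lipschitz control of $\psi_t$ is explicit. By \cref{eq:kpolystability-radius-homogeneity} and the flow bound \cref{eq:section-four-ambient-flow-bound} applied to the $\bbC^*$-action, we have $|\dot\psi_t|\leq C(1+\lambda_t^*r^2)$. I would compare each competitor with the maximal extension of $\phi$ along the degeneration, $\Psi^+=\psi_{\rm amb}|_{\cX}+C(1+r^2)$. Here $C$ is chosen so that $\Psi^+\geq\phi$ on $X_1$, which is possible by \cref{lem:flow-quadratic-control} since $\phi$ and $\psi_0$ are in one growth class.

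To make the comparison, I would use the maximum principle on $\mathcal X_{\bbD}$ applied to $\Psi-\Psi^+$. On each fiber this function is quasi-psh, and the growth assumption on $\Psi$ lets us handle the noncompact direction. We perturb by $-\epsilon r^2$ and use properness of $r$ on $\mathcal X_{\bbD}$, which holds because the central fiber's weights are positive. This yields the right inequality of \cref{eq-uniform upper bound}, and hence $\psi_t\leq\Phi_t\leq\psi_t+C(1+\lambda_t^*r^2)$.

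Next come the homogeneous Monge--Amp\`ere equation and the boundary value. The equation \cref{eq:kpolystability-ray-hcma} follows by local maximality exactly as in \cref{prop:general-geodesic}, using \cite[Cor.~5.15]{GuedjZeriahi}. For the boundary value $\Phi_0=\phi$, it suffices to produce, for $t\in[0,1]$, a barrier of the form $\max\{\psi_{\rm amb}|_\cX,\ \text{subgeodesic from } \phi\}$. We take the $\lambda_t$-transport of \cref{lem:psh-subgeodesic} applied to $\phi$ and $\psi_1$, and glue it to $\psi_{\rm amb}$ for $t\geq1$. This is possible because at $t=1$ the branch is dominated by $\psi_{\rm amb}+B$. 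Convexity in $t$ of $\Phi_t$ along orbits, combined with this lower barrier and the upper bound from the previous step, then squeezes $\Phi_t\to\phi$ locally uniformly.

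Property (2) follows from (3) together with the observation that $\psi_t$, $t\in[0,T]$, is a bounded family in one class. The latter holds since $\lambda_t$ commutes with $\gamma_s$ and rescales $r^2$ by $e^{-t}$, so \cref{eq:H2-bounds} holds with constants depending on $T$.

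For the Lipschitz bound (4), I would repeat the convexity argument of \cref{prop:general-geodesic} on $[0,T+1]$. The one-sided $t$-derivatives of $\Phi_t$ are bounded above by chord slopes $(\Phi_{T+1}-\Phi_t)/(T+1-t)$, which are controlled by (3). They are bounded below by slopes from $t=0$, using the lower barrier and the quadratic endpoint control \cref{eq:flow-quadratic-control}. Both bounds are $C_T(1+r^2)$.

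The main obstacle I expect is the upper bound in (3). The maximum principle on the noncompact total space requires the a priori growth condition built into $\mathcal C$. One must also check that the Perron envelope does not lose it, that is, that $C_\Psi$ can be chosen uniformly. I would try to get this via the fiberwise comparison $\Psi_t\leq\sup_{X_1}$-shifted $\lambda_t$-transport of $\phi$, using the equivariance $r^2\circ\lambda_t=e^{-t}r^2$, as in \cite[Lemma~5.3]{CE}.
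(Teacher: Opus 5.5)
There is a genuine gap, and it is the step you yourself flag: the upper bound in (3). The comparison you propose cannot work. $\Psi^+=\psi_{\rm amb}|_{\cX}+C(1+r^2)$ is itself plurisubharmonic, so it is a subsolution, not a supersolution. Hence $\Psi-\Psi^+$ is a difference of two psh functions, its Levi form has no sign in any direction, and it obeys no maximum principle on $\mathcal X_{\bbD}$. Fiberwise quasi-plurisubharmonicity gives nothing either, since quasi-psh functions may have interior maxima. The perturbation by $-\epsilon r^2$ forces decay at infinity only when $C_\Psi<C+\epsilon$, which is circular. Your competitors only satisfy $\Psi\le\psi_{\rm amb}+C_\Psi(1+r^2)$ with $C_\Psi$ depending on $\Psi$. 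So you have no competitor-independent upper bound at all. Consequently none of the following is established: that the envelope is well defined and locally bounded (in particular across $\mathcal X_0$, which is part of the statement), the boundary value, (2), or (4).

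What does control competitors is one-variable convexity along the degeneration, not a comparison with $\Psi^+$.
\begin{itemize}
\item By $S^1_\eta$-invariance, $t\mapsto\Psi_t(x)$ is convex in the canonical trivialization.
\item Each competitor is locally bounded near $\mathcal X_0$, and the orbit $\lambda_t x$ stays in a compact part of $\mathcal X_{\bbD}$ as $t\to\infty$. So $(\Psi-\psi_{\rm amb})(\lambda_t x)$ stays bounded, and the asymptotic slope of $\Psi_t(x)$ is at most that of $\psi_t(x)$. That slope is bounded by the weights of the linearization, independently of $\Psi$.
\item Hence $\Psi_t\le\phi+C_0t$ for every competitor. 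This gives $\Phi\le\psi_{\rm amb}|_{\cX}+C_T(1+r^2)$ on $0\le t\le T$, with no growth constant built into the class. This is the $T$-dependent bound the paper takes from \cite[Lem.~4.18]{CE}.
\end{itemize}

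That bound suffices for your arguments for the boundary value (chord on $[0,T]$), for (2), and for (4). It does not give a constant independent of $t$: $\phi+C_0t$ exceeds $\psi_t+C(1+\lambda_t^*r^2)$ by an amount growing like $(e^t-1)\lambda_t^*r^2+t$. For the same reason it does not by itself bound competitors uniformly near the central fiber. The paper gets the uniform bound in (3) by a separate argument that uses plurisubharmonicity of $\Phi$ across $\mathcal X_0$ (\cite[Prop.~6.6, Claim~3]{EAC}). Your proposal has no substitute for this step.

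A smaller point concerns your boundary barrier. To glue it to $\psi_{\rm amb}$ past $t=1$, the subgeodesic branch must lie strictly below $\psi_{\rm amb}$ near $t=1$, not below $\psi_{\rm amb}+B$. Shift the endpoint $\psi_1$ down by a large constant to arrange this.
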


\begin{proof}
    The construction in \cite[\S 4.4]{CE} does not use the AC hypothesis, so it works in our setting. The upper bound in \cref{eq-uniform upper bound} is stated in \cite[Lem.~4.18]{CE} only for $t \in [0, T]$, with a constant $C_T$ depending on $T$. However the argument in \cite[Prop.6.6, Claim~3]{EAC} shows that since $\Phi$ is a PSh metric defined on all of $\cX$ this can be upgraded to a bound near the central fiber, thus giving the upper bound claimed here.
\end{proof}

The proofs of \cite[Prop~5.3, Thm.5.5, Thm.~6.1]{CE} do not use the AC hypothesis. Thus we obtain
\begin{proposition} \label{prop:kpolystability-ding-minimum}
    Along the ambient subgeodesic ray $\psi_t$,
    \begin{equation}\label{eq:kpolystability-log-volume-slope}
        \lim_{t\to\infty}\frac{d}{dt}\mathcal{L}(\psi_t)=0, \quad \lim_{t\to\infty}
        \frac{d}{dt}E_{\xi}(\psi_t,\phi)
        =-2(2\pi)^nn!
        \operatorname{Fut}_\xi(\eta)
        \p
    \end{equation}
    and consequently
    \begin{enumerate}
        \item
            \(\lim_{t\to\infty}\frac{d}{dt}\cD_{\xi}(\psi_t)
            =\frac{2}{\bbW_X(\xi)}
            \operatorname{Fut}_\xi(\eta).\)
        \item The canonical shrinker weight minimizes the Ding functional in $\overline{\cH}_2^{\mathbb T}$: for any $\theta \in \overline{\cH}_2^{\mathbb T}$, we have
            \begin{equation}\label{eq:kpolystability-ding-minimum}
                \cD_{\xi}(\theta) \geq \cD_{\xi}(\phi)
                \p
            \end{equation}
        \item The associated polarized Fano fibration of a smooth K\"ahler--Ricci shrinker is K-semistable.
    \end{enumerate}
\end{proposition}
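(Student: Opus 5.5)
The plan follows the argument of \cite[Prop.~5.3, Thm.~5.5, Thm.~6.1]{CE}, run with the ambient ray $\psi_t$ of \cref{eq:kpolystability-ambient-flow-weight} and the geodesic machinery of \cref{sec:pluripotential_theory_on_polarized_fano_fibrations}. Nothing there uses asymptotic conicality; we only need uniform tail estimates for the quadratic-growth class.

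\emph{Slope of $\mathcal L$.} Write $e^{-\psi_t}=\lambda_t^*(e^{-\psi_{\rm amb}}|_{X_{e^{-t}}})$, so $\int_X e^{-\psi_t}=\int_{X_{e^{-t}}}e^{-\psi_{\rm amb}}$. As $t\to\infty$ the fibres converge to $\mathcal X_0$. The restrictions of $e^{-\psi_{\rm amb}}$ to the fibres have uniform Gaussian tails: use \cref{thm:GaussianMoments} on the ambient space, with the $\bbK$-invariant radius of \cref{eq:kpolystability-radius-homogeneity}. Hence the derivative $\frac{d}{dt}\mathcal L(\psi_t)$ is a normalized integral of $\dot\psi_t$ against $e^{-\psi_t}$. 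This equals the integral of the Hamiltonian of $\eta$ on the nearby fibre with respect to $e^{-\psi_{\rm amb}}$. It converges to the corresponding integral on $\mathcal X_0$, which vanishes by the same flow/Fubini argument as in \cref{prop:soliton-critical-after-matsushima}: the flow of $J\eta$ preserves total mass on $\mathcal X_0$.

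\emph{Slope of $E_\xi$.} By \cref{prop:weighted-energy}(1) and the cocycle property, $\frac{d}{dt}E_\xi(\psi_t,\phi)=\int_X\dot\psi_t\,\MA_\xi(\psi_t)$. The path $\psi_t$ is smooth, and $|\dot\psi_t|\le C(1+\lambda_t^*r^2)$ holds because $\eta$ is linear on the ambient space. Pushing forward to $X_{e^{-t}}$, this becomes $\int \eta(\psi_{\rm amb})\,e^{-h^{\psi_{\rm amb}}}\omega_{\rm amb}^n$ on the fibre. Its limit is the corresponding integral on $\mathcal X_0$, justified by \cref{thm:MAxitail}, whose constants are uniform in $t$ since the tail bounds come from the ambient space. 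That integral is the derivative of the weighted volume $\bbW_{\mathcal X_0}(\xi+s\eta)$ (\cite[Lemma~3.3, Prop.~5.9]{SunZhangFano}), which gives $-2(2\pi)^n n!\operatorname{Fut}_\xi(\eta)$ by \cref{eq:kpolystability-futaki-convention}. Combining with the normalization \cref{eq:modified-ding} yields (1).

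\emph{(2) Ding minimality.} First take $\theta$ continuous. Join $\phi$ to $\theta$ by the geodesic of \cref{prop:general-geodesic}. By \cref{prop:ding convexity}, $\cD_\xi$ is convex along it, so
\[
\cD_\xi(\theta)-\cD_\xi(\phi)\ge (\cD_\xi)'_+(0).
\]
Using \cref{prop:energy-endpoint-variation} and the differentiation of $\mathcal L$ at a shrinker endpoint from the proof of \cref{prop:ding-affinity}, we get
\[
(\cD_\xi)'_+(0)\ge \frac{\int\dot\Phi_0^+e^{-\phi}}{\int e^{-\phi}}-\frac{\int\dot\Phi_0^+\MA_\xi(\phi)}{(2\pi)^nn!\bbW_X(\xi)}=0,
\]
since $\MA_\xi(\phi)=e^{-\phi}$ and \cref{eq:endpoint-ma-equation} holds. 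General $\theta\in\overline\cH_2^\bbT$ then follows by decreasing continuous regularization and monotone continuity of $E_\xi$ and $\mathcal L$.

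\emph{(3) K-semistability.} Apply (2) to $\theta=\psi_t$, so $\cD_\xi(\psi_t)\ge\cD_\xi(\phi)$ for all $t$. By \cref{eq-uniform upper bound}, $\psi_t\le\Phi_t$ compares the ray with the geodesic ray of \cref{prop:kpolystability-geodesic-ray}, and $t\mapsto\cD_\xi(\psi_t)$ is convex (\cref{prop:ding convexity} for subgeodesics). A convex function bounded below has nonnegative limiting slope, so (1) gives $\operatorname{Fut}_\xi(\eta)\ge0$.

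The main obstacle is making the two limits of slopes uniform in $t$: the fibres $X_{e^{-t}}$ degenerate, and we have no shrinker on $\mathcal X_0$. The route to try first is to do all estimates on the ambient space $\bbP^N\times\bbC^M\times\bbC$, where the tail bounds of \cref{thm:GaussianMoments,thm:MAxitail} hold uniformly over the family because the cone radius is $\bbK$-homogeneous. Then convergence of the fibre integrals follows from flatness of $\mathcal X\to\bbC$ and dominated convergence.
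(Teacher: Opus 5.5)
Your outline matches the paper's, which simply transfers \cite[Prop.~5.3, Thm.~5.5, Thm.~6.1]{CE}. You compute the slopes of $\cL$ and $E_\xi$ along $\psi_t$, prove Ding minimality of $\phi$ via convexity along the geodesic of \cref{prop:general-geodesic} and the endpoint inequality of \cref{prop:energy-endpoint-variation}, and get $\operatorname{Fut}_\xi(\eta)\ge0$ from boundedness below. Part (2) is fine; start from smooth $\theta\in\cH_2^{\bbT}$, e.g.\ from \cref{lem:global-regularization}, so that \cref{prop:energy-endpoint-variation} applies as stated.

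\textbf{The gap is in the slope of $\cL$.} You pass $\int_{X_{e^{-t}}}(\cdot)\,e^{-\psi_{\rm amb}}$ to $\cX_0$ by ``flatness and dominated convergence'' and place the main difficulty in the tails. The tails are harmless: $\bbT$ acts fibrewise, and the uniform bound $\gamma_s^*\psi_{\rm amb}-\psi_{\rm amb}\ge ce^s-C$ reduces them to a compact slab. The real problem is near $\operatorname{Sing}(\cX_0)$. There the density of $e^{-\psi_{\rm amb}}|_{X_\tau}$ relative to $\omega_{\rm amb}^n|_{X_\tau}$ blows up, and flatness only gives $[X_\tau]\to[\cX_0]$ against continuous ambient forms. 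That is enough for your $E_\xi$ slope, since $e^{-h^{\xi}}\omega_{\rm amb}^n$ is such a form, but not for $\cL$.

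The missing input is that the test configuration is \emph{special}. Then $\cX_0$ is klt, so $(\cX,\cX_0)$ is plt by inversion of adjunction, and this gives continuity of $\tau\mapsto\int_{X_\tau\cap K}e^{-\psi_{\rm amb}}$ for compact $K$, as in \cite{BermanKPolystability}. Your argument never uses speciality. Yet speciality is exactly what makes the $\cL$-slope vanish: for non-special configurations that slope is governed by a log canonical threshold and is nonzero in general. Your flow-invariance step on $\cX_0$ also silently presupposes $\int_{\cX_0}(1+r^2)e^{-\psi_{\rm amb}}<\infty$, which again needs klt.

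Once that continuity is available, the natural route is shorter. $\cL(\psi_t)$ is convex along the smooth subgeodesic ray, since Berndtsson convexity needs only joint plurisubharmonicity. It is also bounded as $t\to\infty$: bounded above via Fatou on $\cX_0^{\rm reg}$, and bounded below via the klt continuity. A bounded convex function has slope tending to $0$, so no Hamiltonian computation on $\cX_0$ is needed.

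\textbf{A separate error in (3).} Your claim that $t\mapsto\cD_\xi(\psi_t)$ is convex is false in general. \Cref{prop:ding convexity} is for geodesics only, and along a subgeodesic $E_\xi$ is convex by \cref{prop:weighted-energy}(4), so $-E_\xi$ is concave. You do not need convexity, though. The bound $\cD_\xi(\psi_t)\ge\cD_\xi(\phi)$ together with the existence of the limiting slope in (1) already forces $\operatorname{Fut}_\xi(\eta)\ge0$, since a negative limit would drive $\cD_\xi(\psi_t)\to-\infty$. The comparison $\psi_t\le\Phi_t$ plays no role there.
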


To prove K-polystability we first observe that as a consequence of Berndtsson's theorem \cref{prop:complete-berndtsson} we have
\begin{proposition}
    \label{prop:kpolystability-zero-slope-rigidity}
    If $\operatorname{Fut}_\xi(\eta)=0$, then the geodesic ray in
    \cref{prop:kpolystability-geodesic-ray} satisfies
    \begin{equation}\label{eq:kpolystability-functional-rigidity}
        \cD_{\xi}(\Phi_t)=\cD_{\xi}(\phi),
        \qquad
        \mathcal{L}(\Phi_t)=\mathcal{L}(\phi)
        \qquad(t\geq0)
        \p
    \end{equation}
    For every $t\geq0$ there is a $\bbT$-equivariant biholomorphism
    $F_t:X\to X$, commuting with $\xi$, such that
    \begin{equation}\label{eq:kpolystability-weight-rigidity}
        F_t^*\Phi_t=\phi
        \p
    \end{equation}
\end{proposition}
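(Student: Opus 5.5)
The plan is to show that the Ding functional is constant along the ray and that $\mathcal L$ is affine on every finite segment. Then the rigidity part of \cref{prop:complete-berndtsson}, applied segment by segment, produces the biholomorphisms.

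\emph{Step 1: constancy of $\cD_\xi$.} By \cref{prop:kpolystability-geodesic-ray}(2),(4), on each $[0,T]$ the slices $\Phi_t$ form a bounded family in one quadratic-growth class. They also satisfy the quadratic Lipschitz bound \cref{eq:kpolystability-ray-time-lipschitz}. Hence \cref{prop:ding convexity} applies, and $t\mapsto\cD_\xi(\Phi_t)$ is convex on $[0,\infty)$. By \cref{prop:kpolystability-ding-minimum}(2) we have $\cD_\xi(\Phi_t)\geq\cD_\xi(\phi)=\cD_\xi(\Phi_0)$, so this convex function attains its minimum at $t=0$ and is nondecreasing.

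To get the reverse inequality, I would compare the slope at infinity of $\Phi_t$ with that of the ambient subgeodesic $\psi_t$. The sandwich $\psi_t\leq\Phi_t\leq\psi_t+C(1+\lambda_t^*r^2)$ from \cref{eq-uniform upper bound} controls the difference. One also has $\lambda_t^*r^2=e^{-t}r^2$ by \cref{eq:kpolystability-radius-homogeneity} (up to the identification of fibers), and $\int(1+r^2)\MA_\xi$ is uniformly finite by \cref{thm:MAxitail}. Together these show that $|\cL(\Phi_t)-\cL(\psi_t)|$ and $|E_\xi(\Phi_t,\phi)-E_\xi(\psi_t,\phi)|$ stay bounded as $t\to\infty$. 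For $E_\xi$ I would use the cocycle property \cref{eq:weighted-energy-cocycle} and the definition \cref{eq:weighted-energy}. Consequently
\[
\lim_{t\to\infty}\frac{\cD_\xi(\Phi_t)}{t}=\lim_{t\to\infty}\frac{d}{dt}\cD_\xi(\psi_t)=\frac{2}{\bbW_X(\xi)}\operatorname{Fut}_\xi(\eta)=0
\]
by \cref{prop:kpolystability-ding-minimum}(1). A convex nondecreasing function with zero asymptotic slope is constant, so $\cD_\xi(\Phi_t)=\cD_\xi(\phi)$.

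\emph{Step 2: constancy of $\cL$ and $E_\xi$.} On each finite segment, \cref{prop:energy-endpoint-variation} shows that $E_\xi(\Phi_t,\phi)$ is affine in $t$. There is one subtlety: that proposition is stated for geodesics produced by \cref{prop:general-geodesic}. I would check that its proof only uses boundedness and the Lipschitz bound, both of which hold here, or alternatively identify the ray restricted to $[0,T]$ with the Perron geodesic between its endpoints. Since $\cD_\xi(\Phi_t)$ is constant, $\cL(\Phi_t)=\cD_\xi(\Phi_t)+cE_\xi(\Phi_t)$ with $c=((2\pi)^nn!\bbW_X(\xi))^{-1}$ is affine in $t$. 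By \cref{prop:complete-berndtsson}, $\cL$ is also convex.

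Now use the sandwich from Step 1 again. It gives $\cL(\Phi_t)=\cL(\psi_t)+O(1)$, and $\frac{d}{dt}\cL(\psi_t)\to0$ by \cref{eq:kpolystability-log-volume-slope}. So the affine function $\cL(\Phi_t)$ has slope zero and is constant, equal to $\cL(\phi)$. This proves \cref{eq:kpolystability-functional-rigidity}.

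\emph{Step 3: biholomorphisms.} Fix $T>0$ and apply the affine case of \cref{prop:complete-berndtsson} to the geodesic $\Phi_t$, $0\leq t\leq T$ (after rescaling time to $[0,1]$). This gives a $\bbT$-invariant holomorphic vector field $V$ with flow $F_t$ satisfying $F_t^*\Phi_t=\Phi_0+\cL(t)-\cL(0)=\phi$. The vector field is $\bbT$-invariant, so $F_t$ is $\bbT$-equivariant and commutes with $\xi$. The field $V$ may depend on $T$, but that does not matter since we only need $F_t$ for each fixed $t$.

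\emph{Main obstacle.} I expect the hardest part to be the slope comparison between $\Phi_t$ and $\psi_t$ as $t\to\infty$. The bound in \cref{eq-uniform upper bound} involves $\lambda_t^*r^2$, and one needs uniform control of the $r^2$-moments of $e^{-\psi_t}$ and $\MA_\xi(\psi_t)$ along the degenerating family. I would obtain this as in \cite[Section~6]{CE}, using that the fibers converge to the central fiber $\mathcal X_0$ inside the fixed ambient space $\bbP^N\times\bbC^M$.
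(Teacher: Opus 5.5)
Your proposal is correct and follows the Berman-type argument that the paper invokes by citing \cite[Lem~6.3]{EAC}: convexity of $\cD_\xi$ along the ray, minimality at $\phi$ and zero asymptotic slope force $\cD_\xi$ to be constant, affinity of $E_\xi$ then makes $\cL$ affine with zero slope by comparison with $\psi_t$, and the affine case of \cref{prop:complete-berndtsson} on each $[0,T]$ yields $F_t$. One simplification is available: $\Phi_t\geq\psi_t$ gives $E_\xi(\Phi_t,\phi)\geq E_\xi(\psi_t,\phi)$ by the cocycle property and positivity of $\MA_\xi$, so the upper bound $\cD_\xi(\Phi_t)\leq\cD_\xi(\psi_t)+O(1)$ needs only the $\cL$-comparison, and you can drop the uniform $\MA_\xi(\psi_t)$-moment bound, which \cref{thm:MAxitail} does not give uniformly in $t$ because $\psi_t$ need not stay in a fixed class $\overline{\cH}_2(m,M)$.
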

\begin{proof}
    See \cite[Lem~6.3]{EAC}.
\end{proof}

Assume for the remainder of the section that $\operatorname{Fut}_\xi(\eta)=0$, and choose the maps $F_t$ from \cref{prop:kpolystability-zero-slope-rigidity}.  Set
\begin{equation}\label{eq:kpolystability-Gt}
    G_t=\lambda_t\circ F_t:X\longrightarrow X_{e^{-t}}
    \p
\end{equation}
They are $\bbT$-equivariant, commute with $\gamma_s$, and satisfy
\begin{equation}\label{eq:kpolystability-Gt-weight-order}
    G_t^* \psi_\mathrm{amb} = F_t^*\psi_t \leq F_t^*\Phi_t=\phi
    \p
\end{equation}

The following lemma serves as our replacement of \cite[Cor.~4.3]{CE} in the proof of polystability.
\begin{lemma}
    There is a constant $C>0$, independent of $t$, such that for  $t\geq0$,
    \begin{equation}\label{eq:kpolystability-Gt-properness}
        C^{-1} r^2 \leq r^2 \circ G_t \leq C r^2
    \end{equation}
\end{lemma}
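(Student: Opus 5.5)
The plan is to use the one-parameter group $\gamma_s$ and the two-sided comparison between $\phi$ and $G_t^*\psi_{\rm amb}$. Along each flow line we compare the exponential rates of growth of the two weights; the additive constants drop out after dividing by $e^s$. Throughout, $y\in\Sigma_0$, where $\Sigma_0$ is the transversal for $\phi$.

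\emph{Step 1 (Hamiltonian of the pulled-back ambient weight).} Since $G_t$ is $\bbT$-equivariant and commutes with $\gamma_s$, its canonical lift intertwines $X_0$, so $h^{G_t^*\psi_{\rm amb}}=h^{\psi_{\rm amb}}\circ G_t$. On the total space $\mathcal X$, $\psi_{\rm amb}$ is the sum of a cone potential, equal to $r^2/2$ outside a compact set, and a Fubini--Study weight. The latter has bounded Hamiltonian because $\bbP^N$ is compact and the action is linear. Hence $h^{\psi_{\rm amb}}=\tfrac12 r^2+O(1)$ uniformly on $\mathcal X$. Using \cref{eq:kpolystability-radius-homogeneity}, $r^2(\gamma_qG_ty)=e^q r^2(G_ty)$, and integrating $\frac{d}{ds}\gamma_s^*\psi=\gamma_s^*h^\psi$ gives
\[
    (G_t^*\psi_{\rm amb})(\gamma_s y)=\psi_{\rm amb}(G_ty)+\tfrac12(e^s-1)\,r^2(G_ty)+O(s),
\]
with the $O(s)$ term uniform in $t$ and $y$. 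By \cref{lem:psh-weight-growth} together with \cref{eq:weight-integral}, we have $me^s-C\le (\gamma_s^*\phi-\phi)(y)\le Me^s$ for $s\ge1$.

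\emph{Step 2 (upper bound).} By \cref{eq:kpolystability-Gt-weight-order}, $G_t^*\psi_{\rm amb}(\gamma_sy)\le\phi(\gamma_sy)$. Inserting Step 1 and the upper growth of $\phi$ gives
\[
    \tfrac12(e^s-1)r^2(G_ty)\le Me^s+\phi(y)-\psi_{\rm amb}(G_ty)+Cs.
\]
For fixed $t$ and $y$ the quantity $\phi(y)-\psi_{\rm amb}(G_ty)$ is finite, so dividing by $e^s$ and letting $s\to\infty$ yields $r^2(G_ty)\le 2M$, independently of $t$.

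\emph{Step 3 (lower bound).} From $\Phi_t\le\psi_t+C(1+\lambda_t^*r^2)$ in \cref{prop:kpolystability-geodesic-ray}(3) and $F_t^*\Phi_t=\phi$ we get $\phi\le G_t^*\psi_{\rm amb}+C(1+r^2\circ G_t)$, with $C$ independent of $t$. Evaluating at $\gamma_sy$ and using the lower growth of $\phi$ gives
\[
    me^s-C+\phi(y)\le \psi_{\rm amb}(G_ty)+\bigl(\tfrac12+C\bigr)e^s r^2(G_ty)+Cs.
\]
Dividing by $e^s$ and letting $s\to\infty$ yields $r^2(G_ty)\ge m/(\tfrac12+C)$.

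\emph{Step 4 (conclusion).} Every point $x$ with $r(x)>0$ can be written as $x=\gamma_sy$ with $y\in\Sigma_0$ and $s\in\bbR$, and then $r^2(x)=e^s$. Homogeneity of $r^2$ on both sides, together with $G_t\gamma_s=\gamma_sG_t$, turns the bounds $c\le r^2\circ G_t\le C$ on $\Sigma_0$ into \cref{eq:kpolystability-Gt-properness} away from $\{r=0\}$. On $\{r=0\}=E$ the inequality follows by continuity, since $r^2\circ G_t$ vanishes there by the upper bound.

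The main obstacle is the uniformity in $t$. Two things must be checked: that the $O(1)$ in $h^{\psi_{\rm amb}}$ and the constant $C$ in \cref{eq-uniform upper bound} are uniform over all fibers $X_{e^{-t}}$ including near the central fiber, which is why the bound on all of $\mathcal X$ in \cref{prop:kpolystability-geodesic-ray}(3) is needed; and that the non-uniform terms $\phi(y)-\psi_{\rm amb}(G_ty)$ really disappear in the limit $s\to\infty$.
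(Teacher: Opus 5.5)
Your proof is correct and follows essentially the paper's argument. You pull the sandwich $G_t^*\psi_{\rm amb}\le\phi\le G_t^*\psi_{\rm amb}+C(1+r^2\circ G_t)$ back along $\gamma_s$, divide by $e^s$, let $s\to\infty$ on $\Sigma_0$ against the growth constants $m,M$ of Lemma~\ref{lem:psh-weight-growth}, and conclude by $X_0$-equivariance. Tracking the Fubini--Study part through the Hamiltonian $h^{\psi_{\rm amb}}=\tfrac12 r^2+O(1)$ is a slightly cleaner bookkeeping than the paper's $O(t)$, $O(s)$ terms, and your handling of $\{r=0\}$ fills in a point the paper leaves implicit.
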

\begin{proof}
    Recall that by definition $\psi_\mathrm{amb} = \ell^{-1} \phi_\mathrm{FS} + r^2/2$ for $r \geq 99$. Thus pulling \cref{eq-uniform upper bound} by $G_t$ we obtain
    \[
        \frac1{\ell} G_t^* \phi_\mathrm{FS} + \frac12 r^2 \circ G_t \leq \phi \leq \frac1{\ell} G_t^* \phi_\mathrm{FS} + C(1 + r^2 \circ G_t)
    \]
    on the set $\{r \circ G_t \geq 99\}$.
    Since $\bbP^N$ is compact, we have $G_t^* \phi_\mathrm{FS} = \phi_\mathrm{FS} + O(t)$. Thus pulling the previous equation back by $\gamma_s$ we obtain
    \[
        \frac12 r^2 \circ G_t + O(t) \leq \phi - \phi_\mathrm{FS} \leq C r^2 \circ G_t + O(t)
        \quad \text{on $\{r \circ G_t \geq 99\}$}
        \p
    \]
    Pulling back by $\gamma_s^*$, again using that $\gamma_s^* \phi_\mathrm{FS} = \phi_\mathrm{FS} + O(s)$,  we obtain for $s$ large (depending on $t$)
    \[
        \frac12 e^s r^2 \circ G_t + O(t) \leq \gamma_s^* \phi - \phi_\mathrm{FS} + O(s) \leq C e^s r^2 \circ G_t + O(t)
    \]
    Taking the limit $s \to \infty$ on $\Sigma_0$ and using \cref{lem:psh-weight-growth} gives
    \begin{align}
        r^2 \circ G_t|_{\Sigma_0} & \leq 2M \\
        r^2 \circ G_t|_{\Sigma_0} & \geq C^{-1}m
    \end{align}
    This proves the claim by $X_0$-equivariance.
\end{proof}

Using \cref{eq:kpolystability-Gt-properness} and following the argument of \cite[Proposition 6.6]{CE}, we obtain the following result. For the reader’s convenience, we include the proof with the modifications needed for our setting.

\begin{proposition}
\label{prop:kpolystability-central-fiber-isomorphism}
If $\operatorname{Fut}_\xi(\eta)=0$, then a sequence $G_{t_j}$ converges locally uniformly to a $\bbT$-equivariant algebraic isomorphism
\[
    G_\infty:X\longrightarrow \mathcal X_0
    \p
\]
\end{proposition}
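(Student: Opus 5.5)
We treat the maps $G_t=\lambda_t\circ F_t:X\to X_{e^{-t}}\subset\bbP^N\times\bbC^M\times\bbC$ as a family of holomorphic maps into a fixed ambient space and extract a convergent subsequence by Montel. The properness bound \cref{eq:kpolystability-Gt-properness} gives, for each compact $K\subset X$, a uniform bound $r\circ G_t\le C_K$ on $K$. Hence the affine components of $G_t|_K$ lie in a fixed compact subset of $\bbC^M$, and the $\bbP^N$ components are automatically bounded. The family $\{G_t\}$ is therefore locally uniformly bounded. Montel's theorem, applied in local charts of $\bbP^N$ (or through the normal-family property for maps into compact Kähler targets plus bounded affine part), gives a subsequence $G_{t_j}\to G_\infty$ locally uniformly. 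The limit is holomorphic, $\bbT$-equivariant and commutes with $\gamma_s$. Since the $\tau$-coordinate of $G_t$ is $e^{-t}\to0$, the image lies in $\mathcal X_0$.

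For the $\bbP^N$ factor, we need equicontinuity rather than boundedness alone. I would obtain it from the weight comparison \cref{eq:kpolystability-Gt-weight-order} together with the upper bound in \cref{eq-uniform upper bound}. These give $G_t^*\psi_{\rm amb}=\phi+O(1+r^2)$, uniformly on compacts. Since $\sqrt{-1}\partial\bar\partial G_t^*\psi_{\rm amb}=G_t^*\omega_{\rm amb}$, a Chern--Lu or Schwarz type bound then follows. Concretely, the trace $\tr_{\omega_\phi}G_t^*\omega_{\rm amb}$ is controlled by testing against cutoffs, since the potentials $G_t^*\psi_{\rm amb}-\phi$ are uniformly bounded on compacts. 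This yields uniform $L^2$ bounds on $dG_t$ over compacts. I would upgrade these via holomorphicity (subharmonicity of $|dG_t|^2$ in charts), or simply use Bishop/Gromov compactness of graphs with bounded volume. I expect this equicontinuity in the projective factor to be the main technical point.

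Next we show that $G_\infty$ is an isomorphism onto $\mathcal X_0$. Applying the same argument to $G_t^{-1}=F_t^{-1}\circ\lambda_t^{-1}$, viewed as maps $X_{e^{-t}}\to X$, we use the two-sided bound \cref{eq:kpolystability-Gt-properness} to keep preimages of compact sets compact. This produces a limit $H_\infty:\mathcal X_0\to X$, first on the smooth locus and then by normality of $\mathcal X_0$ everywhere, with $G_\infty\circ H_\infty=\mathrm{id}$ and $H_\infty\circ G_\infty=\mathrm{id}$. Alternatively, $G_\infty$ is proper by \cref{eq:kpolystability-Gt-properness}, injective by a Hurwitz-type argument on the graphs, and dominant because it is $\gamma_s$-equivariant with image a closed cone containing the vertex. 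Together these make it a finite birational map onto the normal variety $\mathcal X_0$, hence an isomorphism.

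Finally, algebraicity follows because $G_\infty$ is $\bbT^\bbC$-equivariant. Pulling back homogeneous coordinate functions and homogeneous sections of $\mathcal O(1)$, it sends weight spaces to weight spaces, and these are finite-dimensional spaces of algebraic functions and sections by the argument of \cref{lem:analytic-algebraic-centralizer}. The map is therefore algebraic.
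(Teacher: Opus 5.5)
Your extraction step matches the paper's Step~1, which gets local bounds on $r\circ G_t$ from \eqref{eq:kpolystability-Gt-properness}. However, the mechanism you propose for equicontinuity of the $\bbP^N$-components does not work as written.

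An integral bound on $\tr_{\omega_\phi}G_t^*\omega_{\rm amb}$ over compacts cannot be upgraded by subharmonicity. For a positively curved target $|dG|^2$ is not subharmonic: already for $z\mapsto[1:z]$ it is proportional to $(1+|z|^2)^{-2}$, which has an interior maximum. Energy bounds also allow bubbling: $z\mapsto[1:jz]$ on the disc has bounded energy but is not equicontinuous at $0$. For the same reason, Bishop limits of graphs can acquire vertical components.

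What does give compactness is the \emph{pointwise} two-sided bound $0\le\phi-G_t^*\psi_{\rm amb}\le C_K$ on each compact $K$. You can use it in a localized Chern--Lu maximum principle, as your first sentence suggests. More simply, the pulled-back homogeneous coordinates $G_t^*Z_j$ are sections of $G_t^*\cO(1)\cong-\ell K_X$ lying in fixed finite-dimensional weight spaces. They are bounded above by $G_t^*\psi_{\rm amb}\le\phi$, and the lower bound prevents common zeros in the limit.

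The genuine gap is the nondegeneracy of $G_\infty$.

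In your second route, a Hurwitz-type argument only shows that a limit of injective maps is injective where it does not degenerate. It cannot exclude a limit that contracts subvarieties, as $z\mapsto\epsilon z$ with $\epsilon\to0$ shows. Likewise, a closed $\gamma_s$-invariant subvariety of $\mathcal X_0$ through the vertex may be a proper subcone. So neither injectivity nor dominance follows.

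In your first route, the domains $X_{e^{-t}}$ vary and degenerate, so ``the same argument'' would have to be redone in local trivializations near regular points of $\mathcal X_0$. More seriously, extending $H_\infty$ across $\operatorname{Sing}\mathcal X_0$ does not follow from normality. $X$ is not affine, and Riemann extension only handles the affine coordinates. A proper bimeromorphic $G_\infty$ contracting a curve to a singular point (a small contraction) is exactly the case where $G_\infty^{-1}$ is bounded on the regular locus yet does not extend.

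The missing idea is the paper's Step~2:
\begin{enumerate}
\item By \eqref{eq:kpolystability-Gt-properness}, the induced equivariant map $G_c:Y\to Y_0$ of affine cones is proper.
\item Hence its fibers are compact analytic subsets of the affine variety $Y$, and so are finite.
\item Therefore any positive-dimensional $Z$ contracted by $G_\infty$ lies in a fiber of $\pi$.
\item Since $G_\infty^*\cO(1)\cong-\ell K_X$ is $\pi$-ample, such a $Z$ must be a point.
\end{enumerate}
This makes $G_\infty$ a proper finite map, hence a surjective branched covering of the irreducible $\mathcal X_0$. Only then is it upgraded to a biholomorphism (the paper's Step~3), and only at that stage do degree or Hurwitz-type arguments become usable.

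Your final algebraicity argument via finite-dimensional weight spaces is fine.
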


\begin{proof}
    \textit{Step 1: Existence of $G_\infty$.} To show that $G_t^* r^2 = r^2 \circ G_t$ is locally uniformly bounded we use \cref{eq:kpolystability-Gt-properness}. All other arguments in \textit{loc.cit.} work in the non-AC case.

    \textit{Step 2: $G_\infty: X \to X_0$ is a branched covering.}
    Claim 3 in \textit{loc.cit.} proves a $t$-uniform lower bound on $G_t^* r^2 = r^2 \circ G_t$ which is positive away from $\{r = 0\}$. Here this is provided by \cref{eq:kpolystability-Gt-properness}.

    Once we have established that the map $G_c : Y \to Y_0 \subseteq \bbC^M$ induced by $G_\infty$ on the cone $Y$ contracts no positive-dimensional analytic subsets we can finish this step as in \textit{loc.cit.}: If an analytic subset $Z \subseteq X$ is contracted by $G_\infty$, then $G_c$ must contract $\pi(Z)$, so $Z$ is contained in a single fiber of $\pi$. However $G_\infty^* \cO(1) = - \ell K_X$ and $-K_X$ is $\pi$-ample, which forces $Z$ to be zero-dimensional. The end of Step 2 in \textit{loc.cit.} mentions that $X \setminus E \cong Y \setminus o_Y$, but is actually unnecessary for the argument there.

    \textit{Step 3: $G_\infty$ is a biholomorphism.} The original proof applies without modification. Note that Step 2, or equivalently \eqref{eq:kpolystability-Gt-properness}, shows that $G_{\infty}$ is proper.
\end{proof}

\begin{proof}[Proof of \cref{thm:kpolystability}]
    By \Cref{prop:kpolystability-ding-minimum}, we know that $\operatorname{Fut}_\xi(\eta)\geq0$ for every
    $\bbT$-equivariant special test configuration.  In the equality case,
    \cref{prop:kpolystability-central-fiber-isomorphism} identifies the central
    and generic fibers equivariantly. The
    product criterion
    \cite[Proposition~3.6]{CE} (see also \cite[Proposition 1]{Donaldson2012Stability}) therefore identifies
    the family equivariantly with $X\times\bbC$.
\end{proof}

We can use \cref{thm:kpolystability} to exclude shrinkers on products with an unstable factor.
\begin{thm}
    Let $X_1$ and $X_2$ be compact or noncompact manifolds admitting a polarized Fano fibration structure. If $X_1 \times X_2$ admits a shrinker metric then both $X_1$ and $X_2$ are K-polystable.
\end{thm}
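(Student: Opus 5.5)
The plan is to reduce to \cref{thm:kpolystability} applied to the product. The argument takes a destabilizing (or merely non-product, zero-Futaki) special test configuration of one factor and multiplies it by the trivial configuration of the other factor. This produces a special test configuration of $X_1\times X_2$, and the task is to compute its Futaki invariant.

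First I will fix the polarized Fano fibration structure on the product. Let $X_1\times X_2$ carry a shrinker $(\omega,f)$ with soliton field $\xi$. By \cref{thm:introduction-unique-soliton-vector-field} and the conjugacy of maximal tori, I may assume that $\xi$ lies in the Lie algebra of a maximal torus. I then want this torus to contain $\bbT_1\times\bbT_2$, where $\bbT_i$ are maximal tori of $X_i$, so that $\xi=\xi_1+\xi_2$ with $\xi_i\in\Lie\bbT_i$ Reeb fields on the factors.

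Second, I will show that $\bbW_{X_1\times X_2}(\zeta_1+\zeta_2)=c\,\bbW_{X_1}(\zeta_1)\bbW_{X_2}(\zeta_2)$ for $\zeta_i$ in the Reeb cones. With the normalization $(2\pi)^n n!$, the constant $c$ is a binomial factor. This should follow from the definition of $\bbW$ in \cite{SunZhangFano}, either via the index-character or via $\int e^{-h^\psi}(\sqrt{-1}\partial\bar\partial\psi)^n$ for a product metric $\psi=\psi_1+\psi_2$, where the integral factorizes by Fubini. Minimizing $\bbW$ over the product Reeb cone then shows that $\xi_i$ is the critical point of $\bbW_{X_i}$; in particular $\xi_i$ is the candidate soliton field on $X_i$.

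Third, given a $\bbT_1$-equivariant special test configuration $(\mathcal X_1,\eta_1)$ of $X_1$ with Reeb field $\xi_1$, I will form $\mathcal X=\mathcal X_1\times X_2$ over $\bbC$. This is a special test configuration of $X_1\times X_2$ with central fiber $\mathcal X_{1,0}\times X_2$, because $-\ell K$ of a product is the external tensor product, which preserves the embedding properties of \cref{eq:kpolystability-ambient-embedding}. By the product formula and \cref{eq:kpolystability-futaki-convention},
\[
\operatorname{Fut}_\xi(\eta_1)=c\,\bbW_{X_2}(\xi_2)\operatorname{Fut}_{\xi_1}(\eta_1).
\]
\Cref{thm:kpolystability} then gives $\operatorname{Fut}_{\xi_1}(\eta_1)\ge0$. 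If equality holds, the product configuration is an equivariant product $X_1\times X_2\times\bbC$. Restricting to a slice $X_1\times\{p\}$, where $p$ is fixed by the torus (for example a point of the compact central fiber $E_2$), shows that $\mathcal X_1\cong X_1\times\bbC$ equivariantly. This gives K-polystability of $X_1$, and symmetrically of $X_2$.

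The main obstacle is the first step: ensuring that the torus and soliton field split compatibly with the product, i.e.\ that $\Aut$ of the product contains $\bbT_1\times\bbT_2$ inside a maximal torus containing $\xi$. I would argue via \cref{lem:analytic-algebraic-centralizer} and \cref{prop:weighted-matsushima}, since the centralizer is reductive and contains the product tori. A second point to check is that the product structure is itself the one "associated" to the shrinker. Uniqueness in \cref{thm:introduction-unique-krs} and the canonical construction of \cite{SunZhangFano} from $\xi$ should handle this.
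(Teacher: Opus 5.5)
You have the same overall architecture as the paper: product test configuration, factorization of the Futaki invariant through $\bbW$, and an appeal to \cref{thm:kpolystability}. Two small remarks: with the normalization $\frac{1}{(2\pi)^n n!}$ the binomial coefficient cancels, so the factor is exactly $\bbW_{X_2}(\xi_2)$; and criticality of $\xi_i$ is not needed. The real problem is that the step you flag as the main obstacle stays open, and the route you suggest cannot close it. Conjugacy of maximal tori only places $\xi$ in \emph{some} maximal torus containing $\bbT_1\times\bbT_2$. To get $\xi=\xi_1+\xi_2$ you need that torus to \emph{be} $\bbT_1\times\bbT_2$, i.e.\ $\bbT_1\times\bbT_2$ must be maximal in $\Aut(X_1\times X_2)$. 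Neither reductivity (\cref{prop:weighted-matsushima}) nor finite-dimensionality (\cref{lem:analytic-algebraic-centralizer}) says anything about that.

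The missing idea is a splitting lemma. Let $V$ be a $\bbT_1\times\bbT_2$-invariant holomorphic vector field, and write $V=V_1+\sum_j a_j(x,z)\partial_{z_j}$ in local coordinates $z$ on $X_2$. For fixed $z$, $a_j(\cdot,z)$ is a $\bbT_1$-invariant holomorphic function on $X_1$, hence constant: a holomorphic function invariant under a Reeb field is constant, and trivially so if $X_1$ is compact. Interchanging the factors shows $V$ splits. So any torus containing $\bbT_1\times\bbT_2$ acts factorwise, and by maximality of each $\bbT_i$ it equals $\bbT_1\times\bbT_2$. Then \cite[Proposition~5.17]{EAC} gives $\xi=\xi_1+\xi_2$, and positivity of weights makes each $\xi_i$ a Reeb field.

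The equality case has a second gap. Restricting a product trivialization $\Phi:\mathcal X_1\times X_2\to X_1\times X_2\times\bbC$ to a slice $\mathcal X_1\times\{p\}$ gives an isomorphism onto $X_1\times\bbC$ only if the $X_1$-components of $\Phi$ and $\Phi^{-1}$ are independent of the $X_2$-variable. Nothing in your argument makes $\Phi$ respect the product decomposition. Choosing $p$ torus-fixed does not help when the fixed locus is positive-dimensional (e.g.\ $X_2$ a compact Fano manifold with trivial torus, where every point is fixed).

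The paper instead applies the equality case of \cref{thm:kpolystability} equivariantly with respect to the closures of the flows of $\xi_1$ and $\xi_2$, so the generator of the trivialization commutes with both. The same splitting lemma then puts it in the form $\eta_1+\eta_2+\tau\partial_\tau$. After normalizing $\Phi$ to be the identity at $\tau=1$, equivariance forces the first components of $\Phi$ and $\Phi^{-1}$ to be independent of the $X_2$-variable over $\bbC^*$, hence everywhere by continuity. This gives mutually inverse equivariant maps between $\mathcal X_1$ and $X_1\times\bbC$. Once you have this, your slice restriction is legitimate, but in both steps it is the splitting lemma that does the work.
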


\begin{proof}
    Choose maximal tori $\bbT_i \subseteq \Aut(X_i)$ containing the given Reeb fields. We claim that $\bbT_1 \times \bbT_2$ is maximal in $\Aut(X_1 \times X_2)$. Indeed, let $V$ be an invariant holomorphic vector field on the product. On a coordinate patch $U \subseteq X_2$, write
    \[
        V = V_1 + \sum_{j = 1}^m a_j(x,z)\frac{\partial}{\partial z_j},
        \qquad V_1 \in H^0(X_1 \times U,\pr_1^*\cT_{X_1}).
    \]
    For fixed $z$, each $a_j(\cdot,z)$ is a $\bbT_1$-invariant holomorphic function on $X_1$, hence constant by the Reeb condition. Thus the $X_2$-component depends only on $X_2$. Interchanging the factors shows that $V$ splits into vector fields on $X_1$ and $X_2$. Any torus containing $\bbT_1 \times \bbT_2$ therefore acts separately on the factors, so maximality of $\bbT_1$ and $\bbT_2$ proves the claim.

    By \cite[Proposition~5.17]{EAC}, after applying a biholomorphism, we may assume that the soliton field has the form
    \[
        \xi = \xi_1 + \xi_2,
        \qquad \xi_i \in \Lie(\bbT_i).
    \]
    Positivity of the $\xi$-weights therefore implies that each $\xi_i$ is a Reeb field for $X_i \to Y_i$. We prove K-polystability with respect to these induced fields.

    Let $\mathcal X_1 \to \mathcal Y_1$ be a special test configuration of $(X_1 \to Y_1,\xi_1)$ with generator $\eta$. Taking its product with $X_2 \to Y_2$ gives a special test configuration of $(X_1 \times X_2,\xi)$. Since anticanonical sections and their weights factor under products, \cref{eq:kpolystability-futaki-convention} gives
    \[
        \operatorname{Fut}_{\xi}(\eta)
        = \bbW_{X_2}(\xi_2)\operatorname{Fut}_{\xi_1}(\eta)
        \geq 0,
    \]
    where the inequality follows from \cref{thm:kpolystability}. Since $\bbW_{X_2}(\xi_2) > 0$, this proves K-semistability of $(X_1,\xi_1)$.
    Suppose equality holds. Apply the equivariant equality statement in the proof of \cref{thm:kpolystability} with respect to the closures of the flows of $\xi_1$ and $\xi_2$. We obtain an equivariant product trivialization
    \[
        \Phi:\mathcal X_1 \times X_2 \longrightarrow X_1 \times X_2 \times \bbC.
    \]
    Its generator commutes with both factor flows. The preceding splitting argument shows that this generator is
    \[
        \eta_1 + \eta_2 + \tau\partial_\tau,
    \]
    with $\eta_i$ a holomorphic vector field on $X_i$. Normalize $\Phi$ to be the identity at $\tau = 1$. Over $\bbC^*$, equivariance implies that the first components of $\Phi$ and $\Phi^{-1}$ are independent of the $X_2$-variable. This independence extends across $\tau = 0$, so these components induce mutually inverse equivariant maps between $\mathcal X_1$ and $X_1 \times \bbC$. Thus the original test configuration is a product, proving K-polystability of $(X_1,\xi_1)$. Interchanging the factors proves the assertion for $X_2$.
\end{proof}

\begin{appendix}

\section{Regularization}

\begin{thm}\label{lem:global-regularization}
    Let $\Phi_t \in \overline{\cH}^\bbT_2(m, M), t \in [0, 1]$ be a jointly PSh bounded family. Then there exists a sequence of smooth joint PSh families $\Phi^\nu_t \in \cH_2^\bbT(m/2, M')$ for $t \in [1/\nu, 1 - 1/\nu]$ and $a, b > 0$ such that for $\chi = \gamma_a^* \psi_\mathrm{amb} + b$
    \begin{enumerate}
        \item $\Phi^\nu_t$ is pointwise decreasing to $\Phi_t$ and $\Phi_t^\nu \leq \chi$;
        \item $\Phi^\nu_t \equiv \chi - C_\nu$ on $\{r \geq \nu\}$ for some fixed smooth $\chi \in \cH_2^\bbT$. In particular, $\partial_t \Phi^\nu_t$ is compactly supported and $i\partial\bar\partial \Phi_t^\nu$ are complete K\"ahler metrics;
        \item If $|\partial^\pm_t \Phi_t| \leq L(1 + r^2)$ then $|\partial_t \Phi^\nu_t| \leq (L+1) (1 + r^2)$.
    \end{enumerate}
\end{thm}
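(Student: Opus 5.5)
The plan is a two-stage construction: first regularize locally on a fixed compact region, then glue the regularization to the fixed smooth model $\chi=\gamma_a^*\psi_{\mathrm{amb}}+b$ at infinity by a regularized maximum.

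\textbf{Choice of $a,b$.} Apply \cref{lem:flow-quadratic-control} to the bounded family $\{\Phi_t\}\cup\{\psi_{\mathrm{amb}}\}$. This gives $a,b$ with
\[
\Phi_t\le \gamma_a^*\psi_{\mathrm{amb}}+b-1=\chi-1 .
\]
Applying the same lemma with the roles reversed gives $\Phi_t\ge\gamma_{-a}^*\psi_{\mathrm{amb}}-b$. By \cref{eq:flow-quadratic-control} and \cref{thm:HamiltonianBounds}, the gap $\chi-\Phi_t$ grows at least like $c\,r^2-C$ on the end, uniformly in $t$. This uniform quadratic gap is what makes the gluing possible.

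\textbf{Step 1: local regularization.} Regard $\Phi$ as a psh metric on $\operatorname{pr}_X^*K_X^{-1}$ over a neighborhood of $[1/\nu,1-1/\nu]\times\sqrt{-1}\bbR\times X$. It is independent of $\sigma$, so the problem is essentially one-dimensional in $t$. On a large compact set $K_\nu=\{r\le 2\nu\}$, I would use a Demailly/Richberg-type decreasing regularization for continuous or locally bounded psh metrics on the Stein-like neighborhood; the ambient embedding into $\bbC^N\times\bbP^M$ lets one smooth via local convolution and patch with a regularized max. I would then average over $\bbT$ and over $\sigma$-translations, and convolve in $t$ with a mollifier of width $<1/(2\nu)$. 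The result is a smooth, strictly jointly psh metric $\tilde\Phi^\nu$ with $\tilde\Phi^\nu\searrow\Phi$ on compacts. Strictness comes from adding $\epsilon_\nu(\psi_{\mathrm{amb}}-\text{const})$, or an $\epsilon_\nu|\tau|^2$ term in the $\tau$-direction. Time convolution preserves one-sided derivative bounds, which gives (3) on the compact region. Convexity in $t$ together with a small error $\epsilon_\nu\to0$ gives the bound $L+1$.

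\textbf{Step 2: gluing at infinity.} Set
\[
\Phi^\nu_t=\widetilde{\max}\bigl(\tilde\Phi^\nu_t,\ \chi-C_\nu\bigr),
\]
where $\widetilde{\max}$ is a regularized maximum and $C_\nu\nearrow\infty$ is chosen as follows. On $\{r\le\nu/2\}$ we need $\chi-C_\nu<\tilde\Phi^\nu-1$, which is possible by local boundedness. On $\{r\ge\nu\}$ we need $\chi-C_\nu>\tilde\Phi^\nu+1$, which follows from the quadratic gap once $\tilde\Phi^\nu\le\Phi+\delta$ holds near $r\sim\nu$. To ensure this, I would choose the regularization in Step 1 first on $\{r\le 2\nu\}$ and then pick $C_\nu\approx c\nu^2/2$. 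Then (2) holds, and $\partial_t\Phi^\nu$ is supported in $\{r\le\nu\}$. Monotonicity in $\nu$ can be arranged by taking the successive minimum with the previous term before re-smoothing, or by choosing $\tilde\Phi^\nu$ decreasing and $C_\nu$ increasing. Boundedness above by $\chi$ is clear from $\tilde\Phi^\nu\le\Phi+\delta\le\chi$ and $C_\nu>0$. The membership $\Phi^\nu_t\in\cH_2^\bbT(m/2,M')$ follows because on the end $\Phi^\nu_t$ equals $\chi$ up to a constant, and $\chi\in\cH_2$. The bound $m/2$ comes from the lower estimate of $\Phi$ along the flow up to scale $\nu$, while the constant shift only affects $m^{-1}$. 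Possibly one has to absorb this into the constants, since \cref{eq:H2-bounds} involves differences along the flow.

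\textbf{Main obstacle.} I expect the main difficulty to be making the derivative bound (3) and the class constants $m/2$ hold uniformly in $\nu$ across the gluing region. In that region the regularized max mixes $\partial_t\tilde\Phi^\nu$ with $\partial_t(\chi-C_\nu)=0$. This is harmless for (3), since a convex combination of bounds is still a bound. For the $\cH_2$ lower bound one needs $\chi$ itself to lie in $\cH_2(m/2,\cdot)$. This would be arranged by the choice of $a$, possibly by replacing $\psi_{\mathrm{amb}}$ with a rescaled cone potential.
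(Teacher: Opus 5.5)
Your architecture matches the paper's proof. Step 1 (chartwise convolution patched by a regularized maximum, strictness from $\psi_{\mathrm{amb}}$, $\bbT$-averaging) is in substance the B{\l}ocki--Ko{\l}odziej construction the paper uses. Step 2 is the paper's final $\widetilde{\max}$ with $\chi-C_\nu$, where $\chi=\gamma_a^*\psi_{\mathrm{amb}}+b$ and $C_\nu\sim\nu^2$, driven by a quadratic gap between $\chi$ and the family. Your argument for (3) is also the paper's: $\partial_t$ commutes with convolution, and $\widetilde{\max}$ produces convex combinations of derivatives.

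Two steps, however, fail as written.

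\emph{First, the comparison $\tilde\Phi^\nu\le\Phi+\delta$ assumes continuity.} You use this comparison near $r\sim\nu$, both for the gluing and for $\Phi^\nu\le\chi$. It requires $\Phi$ to be continuous, but $\Phi_t$ is only a locally bounded psh family; the theorem is applied to the Perron-envelope geodesics, which are not known to be continuous. At a point where $\liminf\Phi<\Phi$ by more than $\delta$, no continuous majorant of $\Phi$ can be uniformly $\delta$-close to it. What the gluing actually needs is an upper bound by a continuous barrier lying a quadratic amount below $\chi$, uniformly in the regularization parameters. The paper gets this from the uniform $O(r^2)$ bound on local weights, $\Psi_{t,\delta,\epsilon}-\psi_{\mathrm{amb}}\le K(1+h_{\mathrm{AC}})$. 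It then chooses $a,b$ depending on $K$ so that $\chi\ge\psi_{\mathrm{amb}}+3K(1+h_{\mathrm{AC}})$, and takes $C_\nu=K(1+\nu^2/2)$.

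\emph{Second, the decreasing property in (1) is not secured.} Taking the minimum with the previous term destroys plurisubharmonicity. Independent regularizations on the growing sets $\{r\le 2\nu\}$ need not be comparable. Strictness also cannot come from adding $\epsilon_\nu\psi_{\mathrm{amb}}$, since that is a metric on $(1+\epsilon_\nu)K_X^{-1}$. It must come from a convex combination $\frac{1}{1+2\epsilon}(\cdots)+\psi_{\mathrm{amb}}$. That shifts the weight by roughly $-2\epsilon(\Phi-\psi_{\mathrm{amb}})$, a term of no definite sign and quadratic size, so the approximants are not monotone in $\epsilon$.

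The paper fixes this in two moves. It uses a single global family $\Psi_{\delta,\epsilon}$ built on one fixed atlas (with equivariant charts on the end), which is decreasing in $\delta$. It then adds the psh correction $4K\epsilon_\nu(1+h_{\mathrm{AC}})$, which dominates that error, so $\Psi_{t,\delta_\nu,\epsilon_\nu}+4K\epsilon_\nu(1+h_{\mathrm{AC}})$ decreases in $\nu$. Because $C_\nu$ increases, the glued $\Phi^\nu_t$ decreases as well. This quadratic correction is also why $\chi$ has to be chosen after $K$ is fixed. With these two repairs your argument becomes the paper's.
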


The idea of the proof is as follows:
The B{\l}ocki--Ko{\l}odziej regularization gives us an approximation $\Psi^\nu$ on the compact set $\{r \leq \nu + 1\}$ for arbitrary $\nu$.
Then we pick a metric $\chi$ which grows faster than $\Phi$ as $r \to \infty$. This way, $\chi$ will also grow faster than all $\Psi^\nu$, so by picking appropriate $C_\nu \to \infty$ we can achieve that $\chi - C_\nu \geq \Psi^\nu$ for $r \geq \nu$. Then the maximum $\max \{\Psi^\nu, \chi - C_\nu\}$ will be our desired globally defined regularization.

\begin{proof}

    Choose a finite atlas $\cA = \{U_\alpha\}_\alpha$ of $X$ consisting of equivariant charts covering the end and finitely many precompact charts covering the remaining compact set. This also gives us an atlas for $X \times I$ with the obvious coordinate $t$ on $I$ and we will not differentiate between charts for $X$ and charts for $X \times I$. We pick relatively compact $V_\alpha \subseteq U_\alpha$ which still cover $X$, and functions $\eta_\alpha$ on $U_\alpha$ such that $\eta_\alpha|_{V_\alpha} = 0$ and $\eta_\alpha \equiv -1$ in a neighborhood of $\partial U_\alpha$.

    We denote the potentials of a metric $e^{-\psi}$ in a chart $\alpha \in \cA$ by $\psi_\alpha$.
    For $0 < \epsilon, \delta < 1$ we set
    \[
        \Psi_{\delta, \epsilon} = \frac{1}{1 + 2\epsilon} \widetilde{\max_\alpha}^\delta \left(\Phi_\alpha * \rho_\delta - \psi_{\mathrm{amb},\alpha} + \epsilon^2 \eta_\alpha \right) + \psi_\mathrm{amb} \qquad \text{on $X \times (\delta, 1-\delta)$}
        \c
    \]
    where $\rho$ is a mollifier in $\bbR \times \bbC^n$, supported on the unit ball and $\widetilde{\max}^\delta$ is the regularized max function $\max * \rho_\delta$.
    If we pick $\epsilon \leq \epsilon(\nu)$ small enough, then $\epsilon i \partial\bar{\partial} \eta_\alpha \geq -\omega_\mathrm{amb}$ on $\{r < \nu + 1\}$.
    By \cite[Eq.~(3)]{BlockiKolodziej}, $\Psi_{\delta, \epsilon}$ will be continuous on the precompact set $\{r < \nu + 1\} \times (\delta, 1-\delta)$ for $\delta \leq \delta(\epsilon)$ sufficiently small.
    $\Psi_{\delta, \epsilon}$ is jointly PSh in $t$ and $X$ and in fact we have
     \[
        i \partial_X\bar{\partial}_X \Psi_{\delta, \epsilon} \geq \left(\frac{-1 -\epsilon}{1 + 2 \epsilon} + 1\right) \omega_\mathrm{amb}
    \]
    on $\{r < \nu + 1\}$, so it is strictly PSh in the $X$-direction. It is not hard to see that $\Psi_{t, \delta, \epsilon} \in \cH_2(m/2, 2M)$ are a bounded family for $\epsilon$ and $\delta$ sufficiently small.

    Note that since $\Phi_t$ form a bounded family, we have $(\Phi_t)_\alpha ,\psi_{\mathrm{amb}, \alpha} = O(r^2)$ uniformly.
    Thus
    \begin{equation} \label{eq:Psiupper}
        \Psi_{t, \delta, \epsilon} - \psi_\mathrm{amb} \leq K (1 + h_\mathrm{AC})
    \end{equation}
    uniformly in $t, \delta, \epsilon$ for some $K > 1$. Recall from \cref{eq:hAC} that $h_\mathrm{AC}$ is a smooth function on $X$ which agrees with $r^2/2$ for large $r$.
    It is clear that $\Psi_{\delta, \epsilon}$ is decreasing as $\delta \to 0$, and from the quadratic bound above we see that $\Psi_{\delta, \epsilon} + 3 K \epsilon(1 + h_\mathrm{AC})$ is decreasing as $\epsilon \to 0$.

    By \Cref{lem:flow-quadratic-control} and taking $a, b \gg_K 1$ we obtain $\chi = \gamma_a^* \psi_\mathrm{amb} + b \in \cH_2^\bbT$ such that
    \begin{equation} \label{eq:Chilower}
        \psi_\mathrm{amb} + 3K (1 + h_\mathrm{AC}) \leq \chi
        \p
    \end{equation}
    Then pick $\epsilon_\nu \leq \min\left\{\epsilon(\nu), \frac{1}{4K} \right\}$ and $\delta_\nu \leq \delta(\epsilon_\nu)$, both decreasing to $0$ as $\nu \to \infty$ and set
    \[
        \Phi_t^\nu = \widetilde{\max}^1 \left( \Psi_{t, \delta_\nu, \epsilon_\nu} + 4 K \epsilon_\nu(1 + h_\mathrm{AC}), \chi - K(1 + \nu^2/2)\right)
        \p
    \]
    By \cref{eq:Psiupper,eq:Chilower} this agrees with $\chi - K(1 + \nu^2/2)$ for $r \geq \nu$ and thus defines a smooth PSh metric.

    Since convolution commutes with the $t$-derivative, we easily see that $|\partial_t \Psi_{\delta, \epsilon}| \leq (L+1)(1 + r^2)$ for $\delta \ll 1$, which then gives $|\partial_t \Phi_t^\nu| \leq (L+1)(1 + r^2)$
    $\bbT$-invariance can be achieved by averaging over $\bbT$, just as in \cite[Thm.~A.1]{EAC}.
\end{proof}

\end{appendix}

\linespread{1.0}
\bibliographystyle{alpha}
\bibliography{references}

@article{LWII,
  author  = {Li, Yu and Wang, Bing},
  title   = {Heat kernel on {Ricci} shrinkers ({II})},
  journal = {Acta Math. Sci. Ser. B (Engl. Ed.)},
  volume  = {44},
  number  = {5},
  pages   = {1639--1695},
  year    = {2024},
  doi     = {10.1007/s10473-024-0502-7}
}

@misc{Zhang2026,
      title={Some uniform estimates for K\"ahler--Ricci Shrinkers}, 
      author={Junsheng Zhang},
      year={2026},
      eprint={2608.30779},
      archivePrefix={arXiv},
      primaryClass={math.DG},
      url={https://arxiv.org/abs/2608.30779}, 
}

@misc{CD2026,
      title={Uniqueness of the soliton vector field of a shrinking gradient K\"ahler-Ricci soliton}, 
      author={Ronan J. Conlon and Alix Deruelle},
      year={2026},
      eprint={2610.02164},
      archivePrefix={arXiv},
      primaryClass={math.DG},
      url={https://arxiv.org/abs/2610.02164}, 
}

@book{helgason1979differential,
  title={Differential geometry, Lie groups, and symmetric spaces},
  author={Helgason, Sigurdur},
  volume={80},
  year={1979},
  publisher={Academic press}
}

@article{Berndtsson2015,
  author  = {Berndtsson, Bo},
  title   = {A {B}runn--{M}inkowski type inequality for {F}ano manifolds
             and some uniqueness theorems in {K}\"ahler geometry},
  journal = {Invent. Math.},
  volume  = {200},
  number  = {1},
  pages   = {149--200},
  year    = {2015},
  doi     = {10.1007/s00222-014-0532-1},
}

@article{BEGZ,
  author  = {Boucksom, S\'ebastien and Eyssidieux, Philippe and Guedj, Vincent
             and Zeriahi, Ahmed},
  title   = {Monge--{A}mp\`ere equations in big cohomology classes},
  journal = {Acta Math.},
  volume  = {205},
  number  = {2},
  year    = {2010},
  pages   = {199--262},
  eprint  = {0812.3674},
  archivePrefix = {arXiv},
  primaryClass  = {math.CV},
  mrnumber = {2746347},
  zbl      = {1213.32025},
}

@article{Donaldson2012Stability,
  author        = {Donaldson, Simon K.},
  title         = {Stability, birational transformations and the
                   {K{\"a}hler--Einstein} problem},
  journal       = {Surveys in Differential Geometry},
  volume        = {17},
  number        = {1},
  pages         = {203--228},
  year          = {2012},
  publisher     = {International Press},
  doi           = {10.4310/SDG.2012.v17.n1.a5},
  eprint        = {1007.4220},
  archivePrefix = {arXiv},
  primaryClass  = {math.DG}
}

@article{iwasawa1949,
  title={On some types of topological groups},
  author={Iwasawa, Kenkichi},
  journal={Annals of mathematics},
  volume={50},
  number={3},
  pages={507--558},
  year={1949},
  publisher={JSTOR}
}

@book{kobayashi2012transformation,
  title={Transformation groups in differential geometry},
  author={Kobayashi, Shoshichi},
  year={2012},
  publisher={Springer Science \& Business Media}
}

@article{TianZhuUniqueness,
  author  = {Tian, Gang and Zhu, Xiaohua},
  title   = {Uniqueness of {K}{\"a}hler--{R}icci solitons},
  journal = {Acta Mathematica},
  volume  = {184},
  number  = {2},
  pages   = {271--305},
  year    = {2000}
}

@article{ChenScalar,
  author  = {Chen, Bing-Long},
  title   = {Strong uniqueness of the {R}icci flow},
  journal = {Journal of Differential Geometry},
  volume  = {82},
  number  = {2},
  pages   = {363--382},
  year    = {2009}
}

@article{CDS,
  author  = {Conlon, Ronan J. and Deruelle, Alix and Sun, Song},
  title   = {Classification results for expanding and shrinking gradient {K}{\"a}hler--{R}icci solitons},
  journal = {Geometry \& Topology},
  volume  = {28},
  number  = {1},
  pages   = {267--351},
  year    = {2024},
  doi     = {10.2140/gt.2024.28.267}
}

@article{CDSIII,
  title={K{\"a}hler-{E}instein metrics on {F}ano manifolds. {III}: {L}imits as cone angle approaches $2\pi$ and completion of the main proof},
  author={Chen, Xiuxiong and Donaldson, Simon and Sun, Song},
  journal={Journal of the American Mathematical Society},
  volume={28},
  number={1},
  pages={235--278},
  year={2015}
}

@misc{EAC,
  author        = {Esparza, Carlos},
  title         = {Uniqueness of asymptotically conical shrinking gradient {K}{\"a}hler--{R}icci solitons},
  year          = {2025},
  eprint        = {2502.13521},
  archiveprefix = {arXiv},
  primaryclass  = {math.DG},
  doi           = {10.48550/arXiv.2502.13521}
}

@misc{CE,
  author        = {Cifarelli, Charles and Esparza, Carlos},
  title         = {K-polystability of Asymptotically Conical {K}{\"a}hler--{R}icci Shrinkers},
  year          = {2025},
  eprint        = {2512.03323},
  archiveprefix = {arXiv},
  primaryclass  = {math.DG},
  doi           = {10.48550/arXiv.2512.03323}
}

@misc{LiZhang2026,
  author        = {Li, Yu and Zhang, Junsheng},
  title         = {{Gromov--Hausdorff Limits of Noncollapsed K{\"a}hler--Ricci Flows and the Geometry of Ricci Shrinkers}},
  note          = {Preprint; arXiv:2607.25644 [math.DG]},
  year          = {2026},
  eprint        = {2607.25644},
  archiveprefix = {arXiv},
  primaryclass  = {math.DG}
}

@article{BermanKPolystability,
  author  = {Berman, Robert J.},
  title   = {{K}-polystability of {$\mathbb{Q}$}-{F}ano varieties admitting {K}{\"a}hler--{E}instein metrics},
  journal = {Inventiones Mathematicae},
  volume  = {203},
  number  = {3},
  pages   = {973--1025},
  year    = {2016},
  doi     = {10.1007/s00222-015-0607-7}
}

@article{CollinsSzekelyhidiSasakiEinstein,
  author  = {Collins, Tristan C. and Sz{\'e}kelyhidi, G{\'a}bor},
  title   = {Sasaki--{E}instein metrics and {K}-stability},
  journal = {Geometry \& Topology},
  volume  = {23},
  number  = {3},
  pages   = {1339--1413},
  year    = {2019},
  doi     = {10.2140/gt.2019.23.1339}
}

@misc{SunZhangFano,
  author        = {Sun, Song and Zhang, Junsheng},
  title         = {{K}{\"a}hler--{R}icci shrinkers and {F}ano fibrations},
  year          = {2024},
  eprint        = {2410.09661},
  archiveprefix = {arXiv}
}

@article{BlockiKolodziej,
  author  = {B{\l}ocki, Zbigniew and Ko{\l}odziej, S{\l}awomir},
  title   = {On regularization of plurisubharmonic functions on manifolds},
  journal = {Proceedings of the American Mathematical Society},
  volume  = {135},
  number  = {7},
  pages   = {2089--2093},
  year    = {2007},
  doi     = {10.1090/S0002-9939-07-08858-2}
}

@misc{BermanWittNystrom,
  author        = {Berman, Robert J. and Witt Nystr{\"o}m, David},
  title         = {Complex optimal transport and the pluripotential theory of {K}{\"a}hler--{R}icci solitons},
  year          = {2014},
  eprint        = {1401.8264},
  archiveprefix = {arXiv}
}

@book{GuedjZeriahi,
  author    = {Guedj, Vincent and Zeriahi, Ahmed},
  title     = {Degenerate Complex {M}onge--{A}mp{\`e}re Equations},
  series    = {EMS Tracts in Mathematics},
  volume    = {26},
  publisher = {European Mathematical Society},
  address   = {Z{\"u}rich},
  year      = {2017},
  doi       = {10.4171/167}
}
\end{document}